\documentclass[11pt, a4paper]{article}

\usepackage[utf8]{inputenc}
\usepackage[T1]{fontenc}
\usepackage[english]{babel}
\usepackage{algorithm}
\usepackage{algorithmicx}
\usepackage{algpseudocode}
\usepackage{amsmath, amssymb, amsthm}
\usepackage{mathtools}
\usepackage{hyperref}
\usepackage{geometry}
\usepackage[expansion=false]{microtype}
\usepackage[shortlabels]{enumitem}
\usepackage{xcolor}
\usepackage{booktabs}
\usepackage{float}
\usepackage{caption}

\definecolor{linkblue}{RGB}{0, 70, 140}
\hypersetup{
    colorlinks=true,
    linkcolor=linkblue,
    citecolor=linkblue,
    urlcolor=linkblue
}

\theoremstyle{plain}
\newtheorem{theorem}{Theorem}[section]
\newtheorem{lemma}[theorem]{Lemma}
\newtheorem{corollary}[theorem]{Corollary}
\newtheorem{proposition}[theorem]{Proposition}

\theoremstyle{definition}
\newtheorem{definition}[theorem]{Definition}
\newtheorem{example}[theorem]{Example}
\newtheorem{remark}[theorem]{Remark}

\newcommand{\N}{\mathbb{N}}
\newcommand{\Z}{\mathbb{Z}}
\newcommand{\leg}[2]{\left(\frac{#1}{#2}\right)}

\begin{document}

\title{Sieve dimension and search depth for the\\ Erd\H{o}s--Straus conjecture, $n \equiv 1 \pmod{24}$}
\author{Benjamin Dahan}
\date{\today}

\maketitle

\begin{abstract}
For primes $n\equiv1\pmod{24}$ we study how deep an explicit, factorization-free search for a decomposition of $4/n$ into three unit fractions has to go. Write $E_2(N;J)$ for the set of such primes $n\le N$ at which no witness of depth at most $J$ exists, in the sense of the two-parameter criterion of Theorem~\ref{thm:gen_criterion} with coprime parameters $u,a\le J$. We prove that for every fixed $J$
$$|E_2(N;J)|\ \ll_J\ \frac{N}{(\log N)^{\,1+\mathfrak{A}(J)/2}},$$
the exponent being the exact dimension of the covering on which the proof rests. The proof replaces the subgroup generated by the prime factors, an approach that breaks down as soon as $(\Z/4m)^{\times}$ has exponent greater than $2$, by a fixed-point-free involution, and is unconditional at every $J$.

Second, we exhibit an unconditional obstruction. At the shift $c=7$ there are $\asymp N(\log N)^{-3/2}$ primes $n\le N$, $n\equiv1\pmod{24}$, at which \emph{both} branches of the divisor criterion fail. The representation of $K_7=(n+7)/4$ by the principal form of discriminant $-7$ has to be \emph{primitive}, so that the relevant input is the primitive-representation theorem of Fuchs, Hsu, Rickards, Schindler and Stange~\cite{fuchs2025} rather than the classical results of Iwaniec; a fixed shift therefore cannot leave a finite residual set.

Third, a factorization-free procedure decides the conjecture for all primes of an interval $[N,2N]$. Its Type~II pass costs $\mathcal{O}(N(\log N)^{3})$ while its Type~I pass costs $\Theta(N^{2})$, which locates the whole quadratic cost in the extraction of the divisors of $4u^{2}d+1$ and exhibits an asymmetry between the two halves of the Type~I/Type~II dichotomy. The conjecture itself remains open.
\end{abstract}

\section{Introduction} \label{sec:intro}

Formulated by Paul Erdős and Ernst G. Straus in 1948~\cite{erdos_straus}, the Erdős--Straus conjecture posits that for every integer $n\ge2$ there are positive integers $x,y,z$ with
\begin{equation} \label{eq:erdos_straus}
\frac{4}{n}=\frac{1}{x}+\frac{1}{y}+\frac{1}{z}.
\end{equation}

A decomposition of $4/p$ for a prime $p\mid n$ extends to $n$, so the conjecture reduces to prime $n$; and the polynomial identities and local congruence methods going back to Mordell~\cite{mordell1969} settle every residue class of primes except those with $n\equiv r^{2}\pmod{840}$, $r\in\{1,11,13,17,19,23\}$. All six of these lie in the class $n\equiv1\pmod{24}$, which is the class considered throughout. For $n$ even, for $n\equiv3\pmod 4$, and for $n\equiv1\pmod{24}$ with $(n+3)/4$ divisible by $5$, elementary identities of the same vintage apply; we do not reproduce them. The third of these is a sub-case of the class studied here, dispatched by Theorem~\ref{thm:3.3} at the shift $c=3$ with $d_1=K_3$ and $d_2=K_3/5$.

\subsection{Context and related work} \label{sec:recent_approaches}
Three methodological strands situate what follows. Vaughan~\cite{vaughan1970} established the density bound $\mathcal{O}(N\exp(-c_V(\log N)^{2/3}))$ for an absolute constant $c_V>0$, for the exceptional set, a strong existence guarantee which does not analyse any explicit search procedure. Computational verifications have reached $n\le10^{14}$ (Swett~\cite{swett1999}), $10^{17}$ (Salez~\cite{salez2014}) and $10^{18}$ (Mihnea and Dumitru~\cite{mihnea2025}) by exhaustive search, giving numerical evidence at large bounds without asymptotic guarantees on density or cost. The present paper sits between the two: it analyses an explicit factorization-free procedure and certifies the \emph{depth} at which it succeeds, at the price of a density bound weaker than Vaughan's.

Several recent preprints propose constructive frameworks for sub-cases of the conjecture: three-dimensional affine lattices~\cite{dyachenko2025}, ``tame'' solutions~\cite{xu2026}, a cubic surface model~\cite{bello2026}, parametric identities for related conjectures~\cite{ghermoul2025, mballa2026a, mballa2026b}, and the Type~A/B/C classification of~\cite{lopez2022, lopez2024}. Section~\ref{sec:6} shows that all of them are projections of the single parameter space $K=(n+c)/4$, and records what is due to whom; we claim no priority for the characterisations themselves. The analytic frame is Vaughan's, with the Type~I/Type~II dichotomy in the form of Elsholtz--Tao~\cite{elsholtz2013} and Pomerance--Weingartner~\cite{pomerance2026}, whose closed criteria we use throughout; for the Schinzel generalization the latter show that the threshold $n_m$, if it exists, satisfies $n_m\ge\exp(m^{1/3+o(1)})$. The closest antecedents for the exceptional-set arguments of Section~\ref{sec:5.1} are Sander~\cite{sander1991,sander1994}, who applied the Rosser and half-dimensional sieves to this equation. Ventas~\cite{ventas2026} explores the finiteness of counterexamples via continued fractions.

\subsection{Contributions} \label{sec:contributions}

\emph{The exact sieve dimension at fixed depth} (Section~\ref{sec:fixed_depth_sub}). Let $\mathcal{P}$ be a finite set of coprime pairs $(u,a)$ and let $E_{\mathcal{P}}(N)$ be the set of primes $n\le N$, $n\equiv1\pmod{24}$, at which no pair of $\mathcal{P}$ produces a witness. Theorem~\ref{thm:fixed_depth} gives $|E_{\mathcal{P}}(N)|\ll_{\mathcal{P}}N(\log N)^{-1-|\mathcal{P}|/2}$, unconditionally, the exponent $|\mathcal{P}|/2$ being exactly the dimension of the sieve problem produced by the covering. Specialising to the one-parameter slice used by the geometric sieve gives $N(\log N)^{-1-J/2}$; specialising to all coprime pairs in $[1,J]^{2}$ gives $N(\log N)^{-1-\mathfrak{A}(J)/2}$. The engine is Lemma~\ref{lem:involution}: if $M$ has no divisor $\equiv-1\pmod{4m}$, the residues modulo $4m$ of the prime factors of $M$ occupy at most half of $(\Z/4m)^{\times}$, because the involution $x\mapsto-x^{-1}$ has no fixed point. This bypasses the subgroup generated by the prime factors, an object which fails to avoid $-1$ as soon as $\exp\big((\Z/4m)^{\times}\big)>2$. A by-product (Corollary~\ref{cor:H_density}) is that the exceptional set $\mathcal{H}$ of Proposition~\ref{prop:nine_primes} has counting function $\ll_A N(\log N)^{-A}$ for every $A$.

\emph{A two-sided blind set, unconditionally} (Section~\ref{sec:5.1}). The quadratic-residue obstruction at a shift $c$ is delimited exactly: it constrains the untwisted branch at every prime $p\mid c$ with $p\equiv3\pmod4$, and ceases to apply to the twisted branch as soon as $\leg{n}{p}=-1$ (Theorem~\ref{thm:twisted_obstruction}). At the shift $c=7$ we exhibit $\asymp N(\log N)^{-3/2}$ primes $n\equiv1\pmod{24}$ at which \emph{both} branches are blind, by restricting $K_7$ to four times the principal form of discriminant $-7$ (Theorem~\ref{thm:blind_primes}). The representation must be primitive, which is why the input is~\cite{fuchs2025} and not~\cite{iwaniec1972,iwaniec1974}; and the resulting $n$ automatically satisfy $\leg n7=+1$, which is what makes the blindness two-sided (Proposition~\ref{prop:auto_residue}).

\emph{Where the cost of a complete interval procedure sits} (Section~\ref{sec:4.5}). Inverting the loops of the segmented sieve on both branches of Corollary~\ref{cor:two_branches} yields a factorization-free procedure deciding the conjecture for every prime of $[N,2N]$ (Algorithm~\ref{alg:batch_complete}). Its Type~II pass runs in $\mathcal{O}(N(\log N)^{3})$, but its Type~I pass runs in $\Theta(N^{2})$, with implied constant $\tfrac43+o(1)$: the entire quadratic cost is attributable to producing the divisors of $4u^{2}d+1$ without factorization. The total, $\Theta(N^{2})$, is no better than treating the exceptional set prime by prime; what the analysis buys is the \emph{location} of the obstruction inside the Elsholtz--Tao dichotomy.

Two structural results support all three and are used throughout. Theorem~\ref{thm:gen_criterion} rewrites the Type~II criterion of~\cite[Cor.~2.2]{pomerance2026} as a single divisibility, sievable as it stands: $4/n$ has a Type~II solution if and only if some $an+u$ has a divisor $s\equiv-1\pmod{4ua}$. We claim no priority for the completeness, which is~\cite{pomerance2026}; the contribution is the normal form, and the fact that it exhibits the ``hyperbolic'' family $\alpha(X^{2}-Y^{2})-X=n$ underlying the segmented sieve as exactly the slice $a=1$ (Theorem~\ref{thm:hyper_char}, Corollary~\ref{cor:proper_subfamily}). That slice is proper: fourteen primes $n\equiv1\pmod{24}$ below $5\times10^{9}$ lie outside it at every depth (Proposition~\ref{prop:nine_primes}), the nine below $7.2\times10^{5}$ being exactly the ``wild primes'' of Xu~\cite{xu2026}, to whom priority for that list is due; all fourteen are reached once the modulator is restored, with parameters at most $5$ (Proposition~\ref{prop:nine_resolved}).

Section~\ref{sec:5.2} runs Vaughan's large-sieve scheme on the truncated two-parameter system, obtaining $|E_2(N;J)|\ll N\exp(-\kappa_1\lambda^{2}(\log\log N)^{3})$ at depth $J=(\log N)^{\lambda}$ and $\mathcal{O}(N\exp(-0.065(\log N)^{2/3}))$ at the optimal depth. This is \emph{not} an improvement on~\cite{vaughan1970}: the two witness systems are of the same $\tau_3$-type and the coincidence of exponents is the expected outcome. It is included as the second half of a two-regime picture (the involution of Section~\ref{sec:fixed_depth_sub} is superior at fixed or slowly growing depth, and the large sieve takes over once $J$ grows with $N$), and because the witnesses are attached to an explicit procedure, so that the whole curve $J\mapsto|E_2(N;J)|$ is available and not only its endpoint.

\subsection{Outline} \label{sec:outline}
Section~\ref{sec:3} sets up the divisor-based condition, the complete criterion at a fixed shift with its two branches, the Euclidean transfer identity, the search-space bound and Algorithm~\ref{alg:1}. Section~\ref{sec:case_n1_mod24} specialises to $n\equiv1\pmod{24}$: the filter via divisors of $n+1$, the hyperbolic reformulation and its exact position inside Type~II, the two-parameter criterion, and the two factorization-free procedures. Section~\ref{sec:5} contains the analytic results: the exact sieve dimension at fixed depth (Section~\ref{sec:fixed_depth_sub}), the two-sided blind set (Section~\ref{sec:5.1}), the depth/density trade-off at growing depth (Section~\ref{sec:5.2}), and a discussion with a heuristic model for the shape of the curve (Section~\ref{sec:5.2.7}). Section~\ref{sec:6} relates the construction to the recent literature, Section~\ref{sec:7} reports the numerical verifications item by item, and Section~\ref{sec:conclusion} lists what is left open.

\subsection{Notation} \label{sec:notation}
Throughout, $n$ denotes a prime with $n\equiv1\pmod{24}$. The integer $c\equiv3\pmod4$ is the \emph{shift}, the complete range of admissible shifts being $0<c\le2n$ (Proposition~\ref{prop:c_range}); on that whole range $\gcd(c,K)=1$ automatically, since a common divisor $g$ of $c$ and $K$ divides $4K-c=n$, so $g\in\{1,n\}$, and $g=n$ would force $c\in\{n,2n\}$, both excluded because $c$ is odd with $c\equiv3$ and $n\equiv1\pmod4$. The Euclidean tracking of Section~\ref{sec:3.5} and Algorithm~\ref{alg:1}, together with the complexity statements attached to them (Sections~\ref{sec:3.5},~\ref{sec:3.8}--\ref{sec:3.10}), work under the standing hypothesis $c<n$, which keeps $q=\lfloor n/c\rfloor\ge1$; this restricts the \emph{algorithm}, not the criteria, which hold on the whole range $0<c\le2n$.

The primary global parameter is $K=K_c=\frac{n+c}{4}$. Divisors of $K$ are written $d_1,d_2$, and
$$a=\frac{d_1+d_2}{c}$$
is the associated \emph{modulator}, a positive integer by construction. When a pair of divisors is required to satisfy the multiplicative condition $d_1d_2\mid K$ rather than $d_1\mid K$ and $d_2\mid K$ separately, we write it $(u,v)$ and set $\alpha=K_c/(uv)$; the modulator is then $a=(u+v)/c$, and $a=1$ is the hyperbolic slice of Section~\ref{subsec:hyperbolic_surface}. The integers $q,r$ are the quotient and remainder of $n=qc+r$, and $b,V,m$ are auxiliary tracking variables defined in Section~\ref{sec:3.5}. In the sieve sections, $J$ is a search depth, $z$ a sifting bound, $\mathcal{D}$ a level of distribution, $\varrho(p)$ the number of residue classes modulo $p$ removed by the witness system, and $\omega(\cdot)$ retains its usual meaning of the number of distinct prime factors. Legendre symbols are written $\leg{\cdot}{\cdot}$.

\section{A divisor-based criterion and a search algorithm}\label{sec:3}

This section sets up the parametric family, completes the divisibility condition into a criterion which is necessary and sufficient at a fixed shift, and analyses the resulting search procedure.

\subsection{The parameter $K$}\label{sec:3.1}

\begin{lemma}\label{lem:3.1}
Let $n$ be a prime with $n\equiv1\pmod 4$ and let $c \equiv 3 \pmod{4}$ satisfy $0<c<4n$; the two congruences give $n+c\equiv0\pmod 4$, so that $K = \frac{n+c}{4} \in \N^*$. Then
$$ \frac{4}{n} = \frac{1}{K} + \frac{1}{\sigma n} + \frac{1}{n\sigma K/(c\sigma - K)} $$
holds as a formal identity for every $\sigma$ with $c\sigma \neq K$, and is a decomposition into positive unit fractions when $\sigma \in \N^*$, $c\sigma > K$ and $(c\sigma - K) \mid n\sigma K$. The last condition is equivalent to $(c\sigma-K)\mid nK^{2}$ whenever $\gcd(c,K)=1$, in particular for $0<c\le2n$ (Section~\ref{sec:notation}).
\end{lemma}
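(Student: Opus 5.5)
The proof has three parts: an algebraic identity, a positivity and integrality check, and the reduction of the divisibility condition.

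\emph{The identity.} Since $4K=n+c$, we have $\frac{4}{n}-\frac{1}{K}=\frac{4K-n}{nK}=\frac{c}{nK}$. Subtracting $\frac{1}{\sigma n}$ gives
$$\frac{c}{nK}-\frac{1}{\sigma n}=\frac{c\sigma-K}{n\sigma K}.$$
This is the reciprocal of $n\sigma K/(c\sigma-K)$ whenever $c\sigma\neq K$. So the displayed equation holds as an identity of rational functions of $\sigma$.

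\emph{Unit fractions.} Now take $\sigma\in\N^*$ with $c\sigma>K$. Then the third denominator is a positive rational, and it is an integer exactly when $(c\sigma-K)\mid n\sigma K$. The first two denominators $K$ and $\sigma n$ are already positive integers, since $K\in\N^*$ by the congruences.

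\emph{The equivalence.} This is the only step with content. Put $D=c\sigma-K$ and $g=\gcd(c,K)=1$, which holds for $0<c\le 2n$ as shown in Section~\ref{sec:notation}.

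First I would show $\gcd(D,K)\mid c\sigma$. From $c\sigma\equiv D+K$, any common divisor of $D$ and $K$ divides $c\sigma$. Modulo $D$ we have $c\sigma\equiv K$, so $c\sigma K\equiv K^2 \pmod D$.

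For the forward direction, assume $D\mid n\sigma K$. Multiplying by $c$ gives $D\mid n c\sigma K$, and $nc\sigma K\equiv nK^2 \pmod D$, so $D\mid nK^2$.

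For the converse, assume $D\mid nK^2$. Since $c\sigma K\equiv K^2$, this gives $D\mid n\,c\,\sigma K$, and we need to remove the factor $c$. For this I would show $\gcd(D,c)=1$: a prime $p$ dividing both $D$ and $c$ divides $K=c\sigma-D$, contradicting $\gcd(c,K)=1$. Hence $D\mid n\sigma K$.

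The main obstacle is bookkeeping the coprimality $\gcd(D,c)=1$, since it is what lets us cancel $c$ in the converse. It follows directly from $\gcd(c,K)=1$, which is imported from the notation section.
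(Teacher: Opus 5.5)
Your proof is correct and follows the paper's argument. The paper likewise proves the identity by direct combination of fractions. It obtains the equivalence from $cn\sigma K = nK(c\sigma-K)+nK^{2}$, which is your congruence $c\sigma K\equiv K^{2}\pmod{c\sigma-K}$, and it cancels $c$ using $\gcd(c\sigma-K,c)\mid\gcd(c,K)=1$. Your preliminary remark that $\gcd(D,K)\mid c\sigma$ is never used and can be dropped.
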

\begin{proof}
We have
$$ \frac{1}{K} + \frac{1}{\sigma n} + \frac{c\sigma-K}{n\sigma K} = \frac{1}{K} + \frac{K + c\sigma - K}{n\sigma K} = \frac{1}{K} + \frac{c}{nK} = \frac{n+c}{nK} = \frac{4K}{nK} = \frac{4}{n}. $$
For the equivalence, multiply by $c$ and substitute $c\sigma = (c\sigma - K) + K$: this gives $c\,n\sigma K = nK(c\sigma - K) + nK^2$, so $(c\sigma-K)$ divides $c\,n\sigma K$ if and only if it divides $nK^2$. Now $g=\gcd(c\sigma-K,c)$ divides both $c$ and $K$, so $g=1$ whenever $\gcd(c,K)=1$ and the factor $c$ may be removed; this is where, and only where, the range of $c$ is used. Section~\ref{sec:3.2} restricts attention to the sub-family obtained by imposing the stronger condition $(c\sigma-K)\mid K^{2}$, itself implied by $(c\sigma-K)\mid K$; all claims of finiteness or boundedness in Sections~\ref{sec:3} and~\ref{sec:case_n1_mod24} refer to that sub-family.
\end{proof}

\subsection{The binary criterion and a sufficient condition}\label{sec:3.2}

The following classical criterion for binary Egyptian fractions (see Huang and Vaughan~\cite{huang_vaughan}) is used twice below; we include its short proof for completeness.

\begin{lemma}\label{lem:binary}
Let $A,M$ be positive integers with $\gcd(A,M)=1$. The equation $A/M=1/y+1/z$ is solvable in positive integers if and only if there exist \emph{coprime} divisors $u,v$ of $M$ with $u+v\equiv0\pmod A$.
\end{lemma}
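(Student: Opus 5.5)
We prove the two directions separately; the constructive direction is a direct substitution, and the converse extracts $u,v$ from the gcd structure of $y$ and $z$.

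\emph{Sufficiency.} Suppose $u,v$ are coprime divisors of $M$ with $u+v=kA$ for some positive integer $k$. We set
$$y=\frac{kM}{v},\qquad z=\frac{kM}{u}.$$
Both are positive integers because $v\mid M$ and $u\mid M$. Then
$$\frac1y+\frac1z=\frac{v+u}{kM}=\frac{kA}{kM}=\frac AM.$$
This direction uses neither the coprimality of $u,v$ nor $\gcd(A,M)=1$.

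\emph{Necessity.} Suppose $A/M=1/y+1/z$. Let $g=\gcd(y,z)$ and write $y=gy'$, $z=gz'$ with $\gcd(y',z')=1$. The equation becomes
$$A\,g\,y'z'=M(y'+z').$$
Since $y'$ and $z'$ are coprime, $\gcd(y'z',\,y'+z')=1$, so $y'z'\mid M$. Hence $y'$ and $z'$ are coprime divisors of $M$. Writing $M=y'z'M'$ gives $Ag=M'(y'+z')$.

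It remains to show $A\mid y'+z'$. From $Ag=M'(y'+z')$ we get $A\mid M'(y'+z')$. Because $M'\mid M$ and $\gcd(A,M)=1$, we have $\gcd(A,M')=1$. Therefore $A\mid y'+z'$, and we may take $u=y'$, $v=z'$.

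The only step needing care is the coprimality claim $\gcd(y'z',y'+z')=1$. It holds because any prime dividing $y'z'$ divides one of the factors, say $y'$; if it also divided $y'+z'$, it would divide $z'$, contradicting $\gcd(y',z')=1$. This is also exactly where the hypothesis $\gcd(A,M)=1$ is used, namely to pass from $A\mid M'(y'+z')$ to $A\mid y'+z'$.
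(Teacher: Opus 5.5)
Your proof is correct and is essentially the paper's: necessity goes through $g=\gcd(y,z)$, the fact $\gcd(y'z',y'+z')=1$ forcing $y'z'\mid M$, and $\gcd(A,M')=1$ giving $A\mid y'+z'$, exactly as in the paper. Your sufficiency construction $y=kM/v$, $z=kM/u$ gives the same denominators as the paper's $du$, $dv$ with $d=\frac{(u+v)M}{uvA}$; the paper uses coprimality only to make $d$ itself an integer, so your remark that this direction needs neither coprimality nor $\gcd(A,M)=1$ is accurate.
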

\begin{proof}
Write $y=dY$, $z=dZ$ with $d=\gcd(y,z)$ and $\gcd(Y,Z)=1$. Then $A/M=1/y+1/z$ becomes $(Y+Z)M=AdYZ$. Since $\gcd(Y+Z,YZ)=1$ we get $YZ\mid M$, so $Y$ and $Z$ are coprime divisors of $M$; moreover $Ad=(Y+Z)\frac{M}{YZ}$ and $\gcd(A,M)=1$ force $A\mid Y+Z$. Conversely, given coprime $u,v\mid M$ with $A\mid u+v$, the integer $d=\frac{(u+v)M}{uvA}$ is a positive integer and $\frac1{du}+\frac1{dv}=\frac{u+v}{duv}=\frac AM$.
\end{proof}

\begin{theorem}\label{thm:3.3}
Let $n$ be a prime with $n\equiv1\pmod4$, let $c \equiv 3 \pmod{4}$ with $0<c<4n$, and set $K = \frac{n+c}{4}\in\N^{*}$. If there are divisors $d_1, d_2$ of $K$ with $d_1 + d_2 \equiv 0 \pmod{c}$, then, with $a = \frac{d_1 + d_2}{c}$, the integer
$$ \sigma = \frac{K}{d_1} \cdot \frac{d_1 + d_2}{c} $$
satisfies the hypotheses of Lemma~\ref{lem:3.1} and yields a decomposition of $4/n$.
\end{theorem}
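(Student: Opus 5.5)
We verify directly the three hypotheses of Lemma~\ref{lem:3.1}: $\sigma\in\N^{*}$, $c\sigma>K$, and $(c\sigma-K)\mid n\sigma K$. The plan is to compute $c\sigma-K$ explicitly and observe that it is a divisor of $K$, which gives the divisibility condition without any appeal to $\gcd(c,K)$.

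First, integrality. By hypothesis $c\mid d_1+d_2$, so $a=(d_1+d_2)/c$ is a positive integer. Since $d_1\mid K$, the quotient $K/d_1$ is a positive integer as well. Hence $\sigma=(K/d_1)\,a\in\N^{*}$.

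Next, the key computation:
$$c\sigma-K=\frac{K}{d_1}(d_1+d_2)-K=\frac{K d_2}{d_1}.$$
This is positive, which gives $c\sigma>K$. It also equals $(K/d_1)\,d_2$, a positive integer.

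The divisibility is the step needing care. We have $n\sigma K=n\,(K/d_1)\,a\,K$, and we want $(K/d_1)\,d_2$ to divide it. Cancelling the common factor $K/d_1$ reduces this to $d_2\mid naK$, which holds because $d_2\mid K$. So in fact $(c\sigma-K)\mid\sigma K$, whatever the range of $c$, and a fortiori $(c\sigma-K)\mid n\sigma K$.

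Finally, substituting into Lemma~\ref{lem:3.1} gives the explicit decomposition
$$\frac4n=\frac1K+\frac1{\sigma n}+\frac1{n\sigma d_1/d_2}.$$
The third denominator is $n\sigma d_1/d_2=n a K/d_2$, which is an integer. There is no real obstacle: the one point to watch is that the cancellation runs through $d_2\mid K$, not through coprimality of $c$ and $K$. This is consistent with the remark after Lemma~\ref{lem:3.1} that the stronger condition $(c\sigma-K)\mid K$-type sub-family is the one being used.
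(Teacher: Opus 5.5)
Your proof is correct and is essentially the paper's: both compute $c\sigma-K=(K/d_1)\,d_2$ and cancel the factor $K/d_1$, so that $(c\sigma-K)\mid\sigma K$ follows from $d_2\mid K$ alone, with no use of $\gcd(c,K)$. One small slip, in your closing remark only: $c\sigma-K=Kd_2/d_1$ divides $K^{2}$ but need not divide $K$ (in Example~\ref{ex:3.4} it equals $12$, and $12\nmid 5094$), so the sub-family in question is the one defined by $(c\sigma-K)\mid K^{2}$, not by $(c\sigma-K)\mid K$.
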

\begin{proof}
Writing $\sigma = \frac{K}{d_1}a$ for a divisor $d_1$ of $K$ and an integer $a$,
$$ c\sigma - K = \frac{K}{d_1}(ca - d_1), \qquad \sigma K = \frac{K}{d_1} \cdot aK, $$
so the divisibility $(c\sigma-K)\mid\sigma K$ reduces to $(ca - d_1) \mid aK$. Setting $d_2 = ca - d_1$, this holds as soon as $d_2 \mid K$, and $d_1 + d_2 = ca \equiv 0 \pmod{c}$. No coprimality between $c$ and $K$ is used here: what is established is $(c\sigma-K)\mid\sigma K$, which already implies the hypothesis $(c\sigma-K)\mid n\sigma K$ of Lemma~\ref{lem:3.1}. The theorem is therefore valid on the whole range $0<c<4n$, and it is this range, rather than the standing hypothesis $c<n$ of Section~\ref{sec:notation}, that Theorem~\ref{thm:gen_criterion} needs: the shift produced there is not given in advance, and is bounded only a posteriori (last paragraph of that proof).
\end{proof}

\begin{example} \label{ex:3.4}
Take $n = 20353$ and $c = 23$; then $K = \frac{20376}{4} = 5094 = 2\cdot 3^{2}\cdot 283$. The divisors $d_1 = 849$, $d_2 = 2$ satisfy $d_1 + d_2 = 851 = 23 \times 37$, so $a = 37$ and $\sigma = \frac{5094}{849} \times 37 = 222$, giving
$$ \frac{4}{20353} = \frac{1}{5094} + \frac{1}{222 \cdot 20353} + \frac{1}{20353\cdot 94239}. $$
\end{example}

\subsection{A complete criterion at a fixed shift, and the twisted branch}\label{sec:3.2bis}

Theorem~\ref{thm:3.3} is only sufficient. Applying Lemma~\ref{lem:binary} to $M=nK$ rather than to $K$ turns it into a complete description of the decompositions whose first denominator equals $K$.

\begin{proposition}\label{prop:c_range}
Let $n\ge5$ and let $4/n=1/x+1/y+1/z$ with $x\le y\le z$. Then $n/4<x\le 3n/4$; equivalently, writing $c=4x-n$, one has $0<c\le 2n$ and $c\equiv-n\pmod 4$. In particular for $n\equiv1\pmod4$ the admissible shifts are $c\equiv3\pmod4$, $0<c\le2n$.
\end{proposition}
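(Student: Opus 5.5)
The plan is to bound the smallest denominator $x$ from both sides directly from the ordering $x\le y\le z$, and then translate these bounds into bounds on $c=4x-n$.

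\emph{Lower bound.} Since $1/y+1/z>0$, we have $4/n>1/x$, so $x>n/4$. Because $x$ is an integer and $4x\neq n$ whenever $n$ is odd, this gives $c=4x-n>0$. For even $n$ we still have $4x>n$ strictly, so $c\ge1$ in every case.

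\emph{Upper bound.} The ordering gives $1/x\ge1/y\ge1/z$, so $4/n\le 3/x$, that is, $x\le 3n/4$. Hence $c=4x-n\le 3n-n=2n$. The congruence is immediate: $c=4x-n\equiv-n\pmod4$.

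\emph{Specialisation.} For $n\equiv1\pmod4$ this reads $c\equiv3\pmod4$, with $0<c\le2n$, which is the stated range. Conversely, the phrase ``admissible shifts'' means the values of $c$ arising this way, so nothing more needs to be shown. The hypothesis $n\ge5$ only ensures that the setting is nondegenerate; it is not really used beyond positivity.

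I expect no real obstacle here. The only point to watch is that the strict inequality $x>n/4$ must come from $1/y+1/z>0$, not from the ordering.
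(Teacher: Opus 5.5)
Your proof is correct and is essentially the paper's argument: the strict bound $x>n/4$ comes from $1/y+1/z>0$, the bound $x\le 3n/4$ comes from $3/x\ge 4/n$ via the ordering, and then $c=4x-n\in(0,2n]$ with $c\equiv-n\pmod 4$. The parity remark in your lower-bound step is unnecessary, since $x>n/4$ already gives $4x-n>0$ directly.
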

\begin{proof}
From $1/x<4/n$ we get $x>n/4$; from $3/x\ge 1/x+1/y+1/z=4/n$ we get $x\le3n/4$. Then $c=4x-n\in(0,2n]$ and $c\equiv-n\pmod4$.
\end{proof}

\begin{theorem}\label{thm:complete_shift}
Let $n$ be prime, let $c\equiv 3\pmod 4$ with $0<c\le2n$, and set $K=(n+c)/4$. Then $4/n$ admits a decomposition with first denominator $x=K$ if and only if there are coprime divisors $u,v$ of $nK$ with $u+v\equiv0\pmod c$, in which case $d=\frac{(u+v)nK}{uvc}\in\N^{*}$ and $\frac4n=\frac1K+\frac1{du}+\frac1{dv}$.
\end{theorem}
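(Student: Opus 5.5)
The plan is to reduce to Lemma~\ref{lem:binary}. A decomposition $4/n=1/K+1/y+1/z$ with first denominator $K$ exists exactly when
$$\frac4n-\frac1K=\frac{4K-n}{nK}=\frac{c}{nK}$$
is a sum of two unit fractions. So I will apply Lemma~\ref{lem:binary} with $A=c$ and $M=nK$.

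The only hypothesis to check is $\gcd(c,nK)=1$. First, $\gcd(c,K)=1$ on the whole range $0<c\le2n$. Indeed, a common divisor $g$ divides $4K-c=n$, so $g\in\{1,n\}$. The case $g=n$ would force $n\mid c$ with $0<c\le 2n$, i.e.\ $c\in\{n,2n\}$. Here I would note the parity and congruence condition: $c\equiv3\pmod4$ and $c=n$ gives $n\equiv3\pmod4$, and I must handle that. Since the statement only assumes $n$ prime, the case $c=n$ with $n\equiv 3\pmod 4$ is genuinely possible. In that case $K=n/2$ fails integrality because $n$ is odd and $K=(n+n)/4=n/2$. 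So $K\in\N^*$ already excludes $c=n$, and $c=2n$ is even, hence excluded by $c\equiv3\pmod4$.

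Second, $\gcd(c,n)=1$ by the same argument: $n$ prime and $n\mid c$ would force $c\in\{n,2n\}$. Hence $\gcd(c,nK)=1$. The case $n=2$ is degenerate: $c\equiv3\pmod4$ with $c\le4$ gives $c=3$, and then $K=5/4$ is not an integer. So that case is vacuous, or I implicitly assume $K\in\N^*$ as in the statement.

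Lemma~\ref{lem:binary} then states that $c/(nK)=1/y+1/z$ is solvable iff there exist coprime divisors $u,v$ of $nK$ with $c\mid u+v$. Its converse direction gives the explicit solution $d=\frac{(u+v)nK}{uvc}\in\N^*$, $y=du$, $z=dv$. Adding $1/K$ back gives the displayed identity.

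The main (mild) obstacle is the coprimality $\gcd(c,nK)=1$. It is the one place where the range $0<c\le 2n$ and the integrality of $K$ are used, and it should be stated carefully rather than cited loosely from Section~\ref{sec:notation}, which assumed $n\equiv1\pmod4$.
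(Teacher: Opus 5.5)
Your proof is correct and follows the paper's own argument. Both write $4/n-1/K=c/(nK)$ and apply Lemma~\ref{lem:binary} with $A=c$, $M=nK$, the only input being $\gcd(c,nK)=1$. Where the paper cites Section~\ref{sec:notation}, you check directly that integrality of $K$ rules out $c=n$ (integrality forces $n\equiv1\pmod4$) and that $c=2n$ is excluded because it is even; this is a small tightening, not a different route.
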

\begin{proof}
By Section~\ref{sec:notation}, $\gcd(c,K)=1$ and $\gcd(c,n)=1$ on the whole range $0<c\le2n$. Now $4/n-1/K=\frac{4K-n}{nK}=\frac{c}{nK}$ with $\gcd(c,nK)=1$; apply Lemma~\ref{lem:binary} with $A=c$, $M=nK$.
\end{proof}

\begin{corollary}\label{cor:two_branches}
Under the hypotheses of Theorem~\ref{thm:complete_shift}, and since $n$ is prime with $n\nmid K$ (as $K<n$ for $c<3n$), every divisor of $nK$ is of the form $d'$ or $nd'$ with $d'\mid K$, and coprimality excludes $u=nd_1$, $v=nd_2$. Hence a decomposition with first denominator $K$ exists if and only if there are coprime $d_1,d_2\mid K$ with
\begin{equation}\label{eq:untwisted}
d_1+d_2\equiv 0 \pmod c \qquad \textnormal{(untwisted branch)},
\end{equation}
or $d_1,d_2 \mid K$ with $\gcd(n d_1,d_2)=1$ and
\begin{equation}\label{eq:twisted}
n\,d_1+d_2\equiv 0 \pmod c \qquad \textnormal{(twisted branch)}.
\end{equation}
\end{corollary}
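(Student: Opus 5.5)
The plan is to derive the corollary directly from Theorem~\ref{thm:complete_shift} by classifying the divisors of $nK$. The first step is to check that $n\nmid K$. For $0<c\le 2n$ we have $K=(n+c)/4\le 3n/4<n$, and $K\ge1$, so the prime $n$ cannot divide $K$. Hence $nK$ is the product of the prime $n$ and an integer coprime to it. By unique factorization, every divisor of $nK$ is then either a divisor $d'$ of $K$ or $n$ times such a divisor; the two cases are distinguished by whether $n$ divides it.

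Next I will sort a coprime pair $(u,v)$ of divisors of $nK$, as in Theorem~\ref{thm:complete_shift}, into three cases.
\begin{enumerate}[(i)]
\item Both $u$ and $v$ are prime to $n$. Then $u=d_1$ and $v=d_2$ with $d_1,d_2\mid K$ coprime, and the condition $u+v\equiv0\pmod c$ becomes \eqref{eq:untwisted}.
\item Exactly one of them is divisible by $n$. Since the condition is symmetric in $u,v$, I may relabel so that $u=nd_1$ and $v=d_2$. Coprimality of $u$ and $v$ is then literally $\gcd(nd_1,d_2)=1$, and the congruence becomes \eqref{eq:twisted}.
\item Both are divisible by $n$. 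Then $n\mid\gcd(u,v)$, which contradicts coprimality, so this case is excluded.
\end{enumerate}
Conversely, each pair produced by either branch is a coprime pair of divisors of $nK$ whose sum is divisible by $c$. So the equivalence in Theorem~\ref{thm:complete_shift} transfers verbatim to the disjunction of the two branches.

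The only delicate point is bookkeeping rather than mathematics. Because $n\nmid d_2$ automatically, the condition $\gcd(nd_1,d_2)=1$ reduces to $\gcd(d_1,d_2)=1$, and I should record this so the two branches are seen to be stated consistently. The other thing to note is the range used for $K<n$. The statement's parenthetical says ``$c<3n$'', and this covers the admissible range $c\le2n$ of Proposition~\ref{prop:c_range}, so nothing beyond the hypotheses of Theorem~\ref{thm:complete_shift} is needed.
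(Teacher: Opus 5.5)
Your proposal is correct and follows the same route as the paper, whose justification is the one embedded in the statement itself: $K<n$ forces $n\nmid K$, so every divisor of $nK$ is $d'$ or $nd'$ with $d'\mid K$, and coprimality rules out $u$ and $v$ both being multiples of $n$, which leaves the untwisted branch and, after relabelling, the twisted branch. Your side remark that $\gcd(nd_1,d_2)=\gcd(d_1,d_2)$, because $d_2\le K<n$, is correct and is a harmless addition.
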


\begin{definition}\label{def:typeI_typeII}
Following Elsholtz--Tao~\cite{elsholtz2013} and Pomerance--Weingartner~\cite{pomerance2026}, a solution of~\eqref{eq:erdos_straus} for prime $n$ is of \emph{Type~I} if $n$ divides exactly one denominator and of \emph{Type~II} if it divides exactly two; for $n$ prime these are the only possibilities.
\end{definition}

In branch~\eqref{eq:untwisted} one has $n\mid du$ and $n\mid dv$, so the solutions produced are of Type~II; in branch~\eqref{eq:twisted} only one denominator is divisible by $n$, so they are of Type~I. Ranging over all admissible shifts of Proposition~\ref{prop:c_range}, the union of the two branches is therefore a complete decision criterion for the Erd\H{o}s--Straus property of $n$. We shall also use the two criteria in the closed form of~\cite[Cor.~2.2 and Cor.~2.4]{pomerance2026}, which exhaust all solutions for prime $n$:
\begin{align}
\text{Type~II} &\iff \exists\, u,v,e\in\N:\ e\mid u+v \ \text{ and }\ 4uv\mid n+e; \label{eq:PW2}\\
\text{Type~I} &\iff \exists\, u,d,f\in\N:\ f\mid 4u^{2}d+1 \ \text{ and }\ 4ud\mid n+f. \label{eq:PW1}
\end{align}
Theorem~\ref{thm:3.3} and Algorithm~\ref{alg:1} implement a superset of the untwisted branch only, and Algorithm~\ref{alg:1} truncates even that: its outer loop stops at $c=n-1$, under the standing hypothesis $c<n$ of Section~\ref{sec:notation}, whereas admissible shifts run to $c\le2n$. Neither the twisted branch nor the shifts $c\ge n$ are searched, which is why Algorithm~\ref{alg:1} is not a decision procedure and why the answer ``no solution found'' carries no information about the conjecture. Algorithm~\ref{alg:batch_complete} of Section~\ref{sec:4.5} searches both.

\begin{example}\label{ex:twisted}
Take $n=241$ and $c=155=5\cdot31$, so $K=99$. No pair of divisors of $99$ sums to $0$ modulo $155$, so the untwisted branch fails at this shift. However $d_1=9$, $d_2=1$ satisfy $n d_1+d_2 = 2170 = 14\cdot 155$, giving $u=nd_1=2169$, $v=1$, $d=154$ and
$$\frac{4}{241}=\frac{1}{99}+\frac{1}{334026}+\frac{1}{154}.$$
\end{example}

\subsection{The Euclidean transfer identity}\label{sec:3.3}
Euclidean division $n = qc + r$ ($0 \le r < c$) provides a second expression for $\sigma$, namely $\sigma = \frac{n+4c-bc-r}{4c}$ for an integer $b \equiv q \pmod{4}$. Equating it with Theorem~\ref{thm:3.3} defines $b$.

\begin{theorem}\label{thm:3.6}
For $K = \frac{n+c}{4}$ one has $b = 3 - \frac{r + 4K d_2/d_1}{c}$, and consequently
\begin{equation}\label{eq:transfer}
d_1(3-b) - d_2(q+1) = a\, r .
\end{equation}
\end{theorem}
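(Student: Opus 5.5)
Theorem~\ref{thm:3.6} is a pure computation: equate the two expressions for $\sigma$ and rewrite the Euclidean data. The plan has three steps.

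\textbf{Step 1: solve for $b$.} From the second expression, $4c\sigma = n+4c-bc-r$, so
$$bc = n+4c-r-4c\sigma .$$
Substitute $n = 4K-c$ and $\sigma = \frac{K}{d_1}\cdot\frac{d_1+d_2}{c}$ from Theorem~\ref{thm:3.3}. Then $4c\sigma = 4K(d_1+d_2)/d_1 = 4K + 4Kd_2/d_1$. Hence
$$bc = 4K - c + 4c - r - 4K - 4Kd_2/d_1 = 3c - r - 4Kd_2/d_1 ,$$
which gives $b = 3 - \frac{r+4Kd_2/d_1}{c}$. This is the first assertion.

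\textbf{Step 2: derive~\eqref{eq:transfer}.} Multiply $c(3-b) = r + 4Kd_2/d_1$ by $d_1$ to get
$$c\,d_1(3-b) = r\,d_1 + 4K d_2 .$$
Replace $4K$ by $n+c = qc+r+c = (q+1)c + r$. The right-hand side becomes
$$r\,d_1 + (q+1)c\,d_2 + r\,d_2 = r(d_1+d_2) + (q+1)c\,d_2 = rca + (q+1)c\,d_2,$$
using $d_1+d_2 = ca$. Divide by $c\neq0$ and rearrange to obtain $d_1(3-b) - d_2(q+1) = ar$, which is~\eqref{eq:transfer}.

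\textbf{Step 3: side conditions.} Two claims made in Section~\ref{sec:3.3} should be checked. First, $b$ is an integer: from~\eqref{eq:transfer}, $d_1(3-b)$ is an integer. The direct route is that $c\sigma = K(d_1+d_2)/d_1 = Ka\cdot c/d_1$, so $4c\sigma$ is divisible by $c$, and $n+4c-r = qc+4c$ is also divisible by $c$; hence $bc\in c\Z$ and $b\in\Z$. Second, $b\equiv q\pmod 4$: reduce $4c\sigma = (q+4-b)c$ modulo $4c$. This gives $(q-b)c\equiv0\pmod{4c}$, so $q\equiv b\pmod 4$.

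The only delicate point is this integrality and congruence bookkeeping, which I would state explicitly. The rest is algebra, and the standing hypothesis $c<n$ is not needed beyond $c\neq0$.
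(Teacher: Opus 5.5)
Your proof is correct and follows the paper's argument essentially verbatim. Like the paper, you equate the two expressions for $\sigma$ and substitute $n=4K-c$ to get $bc=3c-r-4Kd_2/d_1$, then multiply by $d_1$ and use $4K=(q+1)c+r$ and $d_1+d_2=ac$. Your Step~3, showing $b=q+4-4\sigma\in\Z$ with $b\equiv q\pmod 4$, goes beyond the paper, which takes this as part of the setup rather than proving it in the theorem. Its opening remark that $d_1(3-b)$ is an integer establishes nothing and can be dropped, but the direct route you then give is sound.
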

\begin{proof}
Equating the two expressions for $\sigma$ and multiplying by $4c$ gives $\frac{4K(d_1+d_2)}{d_1} = n+4c-bc-r$; substituting $4K=n+c$ yields $bc = 3c-r-\frac{4Kd_2}{d_1}$, which is the first claim. Multiplying by $d_1$ and using $4K = c(q+1) + r$ and $ac = d_1+d_2$,
$$ c\big[d_1(3-b) - d_2(q+1)\big] = r(d_1 + d_2) = r a c, $$
which is~\eqref{eq:transfer}.
\end{proof}

Since $a \ge 1$ and $r > 0$ (indeed $c \nmid n$, because $n$ is prime and $3 \le c < n$),~\eqref{eq:transfer} gives $d_1(3-b) > d_2(q+1)$. The factorization-free search in Algorithm~\ref{alg:1} evaluates a single candidate divisor $d_2=V/\gcd(K,V)$ by scanning the tracking parameter $\delta$. Completeness has two halves: that the scan reaches every admissible pair is Remark~\ref{rem:scan_complete}, and that the single candidate $d_2$ suffices at each step reached is Lemma~\ref{lem:m_gcd} below.

\subsection{A bound on the search space}\label{sec:3.4}
\begin{theorem}\label{thm:3.10}
Fix a prime $n \equiv 1 \pmod{24}$. If divisors $d_1, d_2 \mid K$ with $d_1 + d_2 \equiv 0 \pmod{c}$ exist for some $c \equiv 3 \pmod{4}$, then the modulator satisfies $a \le \frac{n+3}{10}$.
\end{theorem}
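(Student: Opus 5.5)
The plan is to bound the modulator directly from its definition, $a=(d_1+d_2)/c\le (d_1+d_2)/c$ with $d_1,d_2\le K$, and to separate the smallest shift $c=3$ from the shifts $c\ge7$. The crude bound $d_1+d_2\le2K$ is enough for every $c\ge7$. At $c=3$ it falls short by a factor of about $5/3$, and that factor is recovered from the arithmetic of $K_3$ when $n\equiv1\pmod{24}$.

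\emph{Large shifts.} Suppose $c\ge7$. Then
$$a\le\frac{2K}{c}=\frac{n+c}{2c}=\frac{n}{2c}+\frac12,$$
which is decreasing in $c$, so $a\le (n+7)/14$. The inequality $(n+7)/14\le(n+3)/10$ reduces to $10n+70\le14n+42$, that is, $n\ge7$. This holds since $n\ge73$ is the smallest prime $\equiv1\pmod{24}$. No restriction on the upper end of the range of $c$ is needed, because the bound only improves as $c$ grows.

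\emph{The shift $c=3$.} This is the only admissible shift below $7$, and it is the step where the arithmetic of $n$ enters. I expect it to be the only point needing care. Write $n=24t+1$, so that $K=K_3=6t+1$ and hence $\gcd(K,6)=1$. Every divisor of $K$ other than $K$ itself is therefore at most $K/5$. The choice $d_1=d_2=K$ is excluded, because it gives $d_1+d_2=2K\equiv2\pmod3$. Hence at least one of $d_1,d_2$ is a proper divisor, and
$$d_1+d_2\le K+\frac K5=\frac{6K}{5}.$$
This yields
$$a\le\frac{6K}{15}=\frac{2K}{5}=\frac{n+3}{10}.$$
Equality is attained exactly when $5\mid K$ and $(d_1,d_2)=(K,K/5)$. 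This matches the sub-case $5\mid(n+3)/4$ mentioned in the introduction, which is a useful consistency check that the constant $(n+3)/10$ is sharp.

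Combining the two cases gives $a\le (n+3)/10$ for every $c\equiv3\pmod4$ in the range. The only thing to double-check is that no shift strictly between $3$ and $7$ is admissible, and this is immediate from $c\equiv3\pmod4$.
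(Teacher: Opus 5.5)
Your proof is correct and follows essentially the paper's route: split into $c=3$ and $c\ge7$, and at $c=3$ use $\gcd(K_3,6)=1$ to bound proper divisors by $K/5$. The only difference is minor. For $c\ge7$ you use the crude bound $d_1+d_2\le2K$, which is enough because $n\ge7$, instead of the paper's $\tfrac32K$; the paper's bound needs its general exclusion of $d_1=d_2=K$ via $c\mid n$. At $c=3$ you exclude $d_1=d_2=K$ directly from $2K\equiv2\pmod 3$, and both arguments are valid.
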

\begin{proof}
First, $d_1=d_2=K$ is impossible: it gives $a=2K/c$, whence $c\mid 4K=n+c$ and $c\mid n$, so $c=n$, contradicting $c\equiv3$ and $n\equiv1\pmod 4$. At least one $d_i$ is therefore a proper divisor of $K$.

At $c=3$ one has $K=\frac{n+3}{4}=6k+1$ for $n=24k+1$, so $2\nmid K$ and $3\nmid K$; the smallest prime factor of $K$ is at least $5$ and its largest proper divisor at most $K/5$. Hence $d_1+d_2\le K+\frac K5=1.2K$ and
$$ a = \frac{d_1+d_2}{3} \le \frac{1.2K}{3} = 0.4K = \frac{n+3}{10}. $$
For a general shift $c \ge 7$, one of the two divisors is proper, so $d_1+d_2\le K+\frac K2=1.5K$ and
$$a \le \frac{1.5 K}{c} = \frac{1.5 (n+c)}{4c} = 0.375 \frac{n}{c} + 0.375,$$
a strictly decreasing function of $c$. At $c=7$ this is $\le 0.0536n+0.375$, which is smaller than the bound $0.1n+0.3$ obtained at $c=3$ for every $n\ge2$. The maximum is therefore attained at $c=3$.
\end{proof}

\subsection{Tracking regimes and the arithmetic progression of $V$} \label{sec:3.5}

\begin{theorem} \label{thm:3.11}
For any prime $n$ and shift $c<n$ with $c \equiv 3 \pmod 4$, setting $K = \frac{n+c}{4}$ and $m=K/d_1$, one has
$$ V:=\frac{c(3-b)-r}{4}=m\,d_2, \qquad a = \frac{q+4-b}{4m}, \qquad \sigma=\frac{q+4-b}{4}=\frac{K+V}{c}, \qquad c\sigma-K=V .$$
In particular $\sigma$ is a function of $b$ alone; distinct admissible pairs $(d_1,d_2)$ with distinct ratios $d_2/d_1$ are therefore separated by distinct values of $b$, and it is $b$, not the pair, that Algorithm~\ref{alg:1} scans.
\end{theorem}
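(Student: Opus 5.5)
The plan is to derive all four identities directly from Theorem~\ref{thm:3.6}, using $b$ as the single variable that links them. Theorem~\ref{thm:3.6} gives $bc = 3c - r - 4Kd_2/d_1$. Since $K=md_1$, this rearranges to
$$c(3-b)-r = 4Kd_2/d_1 = 4md_2 .$$
Dividing by $4$ gives $V = md_2$ at once. As a by-product, $c(3-b)-r$ is divisible by $4$, so $V$ is an integer.

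\textbf{Expression for $a$.} Use $4K = c(q+1)+r$, which follows from $n=qc+r$ and $4K=n+c$. Then
$$4md_1 + 4md_2 = 4K + 4V = c(q+1) + r + c(3-b) - r = c(q+4-b).$$
Since $d_1+d_2 = ac$, the left side equals $4mac$. Dividing by $4mc$ gives $a = (q+4-b)/(4m)$.

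\textbf{Expression for $\sigma$.} By Theorem~\ref{thm:3.3}, $\sigma = (K/d_1)\,a = ma$, so $\sigma = (q+4-b)/4$. The same computation shows $K+V = c(q+4-b)/4 = c\sigma$. This gives both $\sigma = (K+V)/c$ and $c\sigma - K = V$. The last identity also agrees with the formula $c\sigma-K = \frac{K}{d_1}(ca-d_1) = m d_2$ from the proof of Theorem~\ref{thm:3.3}, which I will note as a consistency check.

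\textbf{Final claim.} Since $\sigma=(q+4-b)/4$ and $q$ depends only on $(n,c)$, $\sigma$ is determined by $b$, and conversely. Pairs with the same $\sigma$ satisfy $\sigma = ma = K(d_1+d_2)/(cd_1)$, so they have the same ratio $(d_1+d_2)/d_1$, hence the same ratio $d_2/d_1$. Taking the contrapositive, distinct ratios force distinct $b$.

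\textbf{Main obstacle.} I expect the only delicate point to be bookkeeping: $b$ is defined implicitly by equating two expressions for $\sigma$, so I must confirm that the value of $b$ from Theorem~\ref{thm:3.6} is the one used here. I also need the hypothesis $c<n$ so that $q\ge1$ and $r>0$, which keeps the Euclidean quantities meaningful. Beyond that, the argument is linear substitution.
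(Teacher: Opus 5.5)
Your proposal is correct and is essentially the paper's proof. The one minor difference is that you read $c(3-b)-r=4Kd_2/d_1=4md_2$ directly off the first formula of Theorem~\ref{thm:3.6}, whereas the paper obtains it by multiplying the transfer identity by $c$; from there the same computation $4(K+V)=c(q+4-b)$ yields $a$, $\sigma$ and $c\sigma-K=V$. Your explicit justification of the final clause ($\sigma=\frac{K}{c}\bigl(1+\frac{d_2}{d_1}\bigr)$ at fixed $c$, so equal $b$ forces equal ratio $d_2/d_1$) fills in a step the paper leaves implicit.
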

\begin{proof}
Multiplying~\eqref{eq:transfer} by $c$ and using $ac=d_1+d_2$ and $c(q+1)+r=n+c=4K$ gives $d_1\big(c(3-b)-r\big)=4Kd_2$; writing $K=md_1$ yields $c(3-b)-r=4md_2$, i.e. $V=md_2$. Then $a=(d_1+d_2)/c=(K+V)/(mc)$, and $4(K+V)=c(q+1)+r+c(3-b)-r=c(q+4-b)$, which gives both $a=\frac{q+4-b}{4m}$ and $\sigma=\frac{K+V}{c}=\frac{q+4-b}{4}$. Substituting into $c\sigma-K$ returns $V$.
\end{proof}

\begin{theorem} \label{thm:3.16}
Let $b_0 \equiv q \pmod{4}$ and $b = b_0 - 4\delta$ for $\delta \ge 0$. Then $V(\delta) = \frac{c(3-b)-r}{4} = V_0 + c\delta$, with $V_0 = \frac{c(3-b_0)-r}{4}$; and $\sigma(\delta)=\sigma_0+\delta$ with $\sigma_0=\frac{q+4-b_0}{4}$. Initialising $b$ in this way guarantees $\sigma\in\Z$.
\end{theorem}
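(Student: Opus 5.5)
The statement is a direct computation from Theorem~\ref{thm:3.11}, so I will substitute $b=b_0-4\delta$ into the closed forms recorded there and read off the two affine laws.

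\emph{The formula for $V$.} I take the definition $V=\frac{c(3-b)-r}{4}$ from Theorem~\ref{thm:3.11} and insert $b=b_0-4\delta$. This gives $3-b=(3-b_0)+4\delta$, hence
$$V(\delta)=\frac{c(3-b_0)-r}{4}+\frac{4c\delta}{4}=V_0+c\delta .$$

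\emph{The formula for $\sigma$.} I use $\sigma=\frac{q+4-b}{4}$, also from Theorem~\ref{thm:3.11}. The same substitution gives $\sigma(\delta)=\frac{q+4-b_0}{4}+\delta=\sigma_0+\delta$. As a consistency check, $\sigma=(K+V)/c$ together with $V=V_0+c\delta$ yields the same increment of $1$ per step, since $(K+V_0+c\delta)/c=\sigma_0+\delta$.

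\emph{Integrality.} Since $b_0\equiv q\pmod 4$, we have $q+4-b_0\equiv 0\pmod 4$, so $\sigma_0\in\Z$. Hence $\sigma(\delta)\in\Z$ for every integer $\delta$.

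\emph{Integrality of $V$.} This does not follow at once from the choice of $b_0$, so I will check it via the identity $4(K+V)=c(q+4-b)$ from the proof of Theorem~\ref{thm:3.11}. That identity gives $V=c\sigma-K$, and since $\sigma\in\Z$ and $K\in\Z$, it follows that $V_0\in\Z$. Alternatively, one can argue directly from $r=n-qc$: $c(3-b_0)-r=c(3-b_0+q)-n\equiv 3c-n\pmod 4$, because $b_0\equiv q\pmod 4$. Then $c\equiv3$ and $n\equiv1\pmod 4$ give $3c-n\equiv 9-1\equiv0\pmod 4$.

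\emph{Main obstacle.} The only care needed is to remember that the identities of Theorem~\ref{thm:3.11} were derived from an admissible pair $(d_1,d_2)$. Here, however, only the algebraic relations $V=c\sigma-K$ and $\sigma=\frac{q+4-b}{4}$ are used, and these hold for any $b$ once $V$ and $\sigma$ are taken as definitions. So the statement concerns the scan parametrization itself and does not depend on a pair existing.
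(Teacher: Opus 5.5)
Your proof is correct and is the paper's argument: substitute $b=b_0-4\delta$ into the closed forms $V=\frac{c(3-b)-r}{4}$ and $\sigma=\frac{q+4-b}{4}$ of Theorem~\ref{thm:3.11}, and read off $\sigma_0\in\Z$ from $b_0\equiv q\pmod 4$. Your extra check that $V_0\in\Z$ (via $V=c\sigma-K$, or via $3c-n\equiv0\pmod 4$) is sound, as is your remark that only the pair-independent identity $4(K+V)=c(q+4-b)$ is used; both are slightly more careful than the paper's one-line proof.
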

\begin{proof}
Substitute $b=b_0-4\delta$ into the expressions of Theorem~\ref{thm:3.11}.
\end{proof}

\begin{remark}\label{rem:scan_complete}
The scan $\delta=0,1,2,\dots$ misses no admissible pair, in the following sense. For any admissible pair one has $V=c\sigma-K\equiv-K\pmod c$; and $4(K+V_0)=c(q+4-b_0)$ together with $\gcd(4,c)=1$ gives $V_0\equiv-K\pmod c$ as well. Every admissible $V$ therefore lies in the progression $V_0+c\Z$ scanned by Theorem~\ref{thm:3.16}. Moreover $V_0=\frac{c(3-b_0)-r}{4}\le\frac{3c}{4}<c$, while every admissible $V$ is positive; hence $V\ge V_0$ (if $V_0\le0$ this is trivial, and if $V_0>0$ then $V_0$ is the least positive element of its class modulo $c$), and the associated $\delta=(V-V_0)/c$ is a non-negative integer.
\end{remark}

\begin{theorem} \label{thm:3.17}
For any divisor $d \mid K$ one has $\gcd(c,d)=1$, so $c$ is invertible modulo $d$ and $V(\delta)\equiv0\pmod d$ holds for exactly one residue class $\delta \equiv -V_0 c^{-1} \pmod{d}$. The sequence $V(\delta)$ therefore reaches a multiple of $d$ exactly once in every $d$ consecutive values of $\delta$.
\end{theorem}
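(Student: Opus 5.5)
The plan has three steps: establish coprimality, solve a linear congruence in $\delta$, and read off the periodicity.

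First, I will show $\gcd(c,d)=1$ by reducing it to the fact $\gcd(c,K)=1$ recorded in Section~\ref{sec:notation}. Any common divisor $g$ of $c$ and $d$ also divides $K$, because $d\mid K$. It then divides $4K-c=n$, so $g\in\{1,n\}$. The case $g=n$ is excluded here because the statement works under the standing hypothesis $c<n$ of Section~\ref{sec:3.5}. (More generally $g=n$ would force $c\in\{n,2n\}$, which is impossible since $c\equiv3$ and $n\equiv1\pmod4$.) Hence $\gcd(c,d)=1$, and $c$ has an inverse $c^{-1}$ modulo $d$.

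Second, I will invoke Theorem~\ref{thm:3.16}, which gives $V(\delta)=V_0+c\delta$ with $V_0$ an integer independent of $\delta$. Integrality of $V_0$ needs a check: $b_0\equiv q\pmod4$, and $n=qc+r$ together with $n+c\equiv0\pmod4$ give $c(3-q)-r\equiv 3c-n\equiv 4c\equiv0\pmod4$. The condition $d\mid V(\delta)$ then reads $c\delta\equiv -V_0\pmod d$. Since $c$ is a unit modulo $d$, this linear congruence has the unique solution class $\delta\equiv -V_0c^{-1}\pmod d$. Equivalently, the map $\delta\mapsto V_0+c\delta \bmod d$ is a bijection of $\Z/d$.

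Third, any $d$ consecutive integers $\delta$ form a complete residue system modulo $d$. Exactly one of them therefore lies in the class found above, which proves the final sentence of the statement.

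There is no real obstacle in this argument. The only point requiring care is the coprimality step: I must state clearly which range of $c$ is in force, and why $g=n$ cannot occur. The rest is the standard solution of a linear congruence with a unit coefficient.
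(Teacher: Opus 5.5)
Your proof is correct and follows the paper's own argument: $\gcd(c,d)=1$ is inherited from $\gcd(c,K)=1$ (Section~\ref{sec:notation}), after which the linear congruence $V_0+c\delta\equiv0\pmod d$ has the unique solution class stated. You also spell out several points the paper leaves implicit: you reprove $\gcd(c,K)=1$, you check that $V_0$ is an integer, and you give the complete-residue-system argument for the last sentence. All three are sound.
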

\begin{proof}
$\gcd(c,K)=1$ by Section~\ref{sec:notation}, hence $\gcd(c,d)=1$; the congruence $V_0+c\delta\equiv0\pmod d$ has the unique solution stated.
\end{proof}

\subsection{The deterministic bounded search algorithm} \label{sec:3.8}

Algorithm~\ref{alg:1} searches the family of Theorem~\ref{thm:3.3} by scanning $\delta$ at each shift. It should not be advertised as a competitive decision procedure: a segmented sieve of the interval $(n/4,3n/4]$ followed by Lemma~\ref{lem:binary} decides the Erd\H{o}s--Straus property of a single $n$ in $\tilde{\mathcal{O}}(n)$ operations, which is better than the worst-case bound of Corollary~\ref{cor:3.19} below. Its interest is structural: it searches a family strictly larger than the hyperbolic one, it requires no factorization, and the number of shifts it rejects before succeeding is very small in practice: over all $3202$ primes $n\equiv1\pmod{24}$ below $3\cdot10^{5}$ the smallest admissible shift never exceeded $c=63$ (Section~\ref{sec:7}). It is in this role, as a cheap complement resolving the blind spot $\mathcal{H}$ of Proposition~\ref{prop:nine_primes}, that we retain it.

Two points about the pseudo-code. Arithmetic is counted at unit cost. The exit test on the modulator is a heuristic accelerator only: at a rejected step the quantity $a(\delta)=\sigma(\delta)/\gcd(K,V(\delta))$ is not the modulator of any solution, and a later $\delta$ with a large $\gcd$ could in principle produce a smaller $a$ together with an admissible pair. We have not excluded this and have observed no instance of it (Section~\ref{sec:7}: over $250$ primes and the shifts $c\le31$, the accelerator never skipped a pair admissible for Theorem~\ref{thm:3.3}). The unconditional guarantee is the test $V>K^{2}$ of Theorem~\ref{thm:3.18}, which alone ensures that no admissible $\delta$ is skipped. Theorem~\ref{thm:3.18} and Corollary~\ref{cor:3.19} are accordingly stated for the variant in which the accelerator is disabled; the accelerated version as printed carries no proof of completeness, only the empirical evidence of Section~\ref{sec:7}. Note also that the correct quantity in that test is $m=V/d_2$ and not $K/d_2$: by Theorem~\ref{thm:3.11} the modulator is $a=(q+4-b)/(4m)$ with $m=K/d_1=V/d_2$, and at the point where the test is reached one has $d_2\nmid K$, so that $K/d_2$ is not even an integer.

\begin{algorithm}[H]
\caption{Divisor-based search for $n \equiv 1 \pmod{24}$} \label{alg:1}
\textbf{Input:} a prime $n \equiv 1 \pmod{24}$ \\
\textbf{Output:} a decomposition within the family of Theorem~\ref{thm:3.3}, or failure
\begin{algorithmic}[1]
    \For{$c=3$ \textbf{to} $n-1$ \textbf{step} 4}
        \State $q \gets \lfloor n/c \rfloor$; \ $r \gets n \bmod c$; \ $b_{\text{base}} \gets q \bmod 4$; \ $K \gets (n+c)/4$
        \State $\delta \gets 0$
        \While{\textbf{true}}
            \State $b \gets b_{\text{base}} - 4\delta$; \ $\text{denom} \gets 3 - b$
            \If{$\text{denom} \le 0$} \State $\delta \gets \delta+1$; \textbf{continue} \EndIf
            \State $V \gets (c \cdot \text{denom} - r)/4$
            \If{$V \le 0$} \State $\delta \gets \delta+1$; \textbf{continue} \EndIf
            \State $g \gets \gcd(K, V)$; \ $d_2 \gets V/g$
            \If{$K \bmod d_2 = 0$}
                \State $\sigma \gets (q+4-b)/4$
                \State \Return the solution $(c, \delta, \sigma)$
            \EndIf
            \State $m \gets g$ \Comment{$m=V/d_2=\gcd(K,V)$, Lemma~\ref{lem:m_gcd}}
            \If{$(q+4-b) > 4m(n+3)/10$ \textbf{ or } $V > K^2$}
                \State \textbf{break} \Comment{accelerator; Theorem~\ref{thm:3.18}}
            \EndIf
            \State $\delta \gets \delta+1$
        \EndWhile
    \EndFor
    \State \Return ``no solution found''
\end{algorithmic}
\end{algorithm}

\subsection{Complexity of the search} \label{sec:3.9}

\begin{theorem} \label{thm:3.18}
Consider the variant of Algorithm~\ref{alg:1} in which the inner loop breaks on the test $V>K^{2}$ alone, the modulator accelerator being disabled. For a fixed $c$, its inner loop either returns a solution or terminates correctly after at most $\delta_{\max}(c) = \left\lfloor \frac{K^2-V_0}{c} \right\rfloor+1$ increments of $\delta$.
\end{theorem}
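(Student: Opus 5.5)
The proof has two halves: a counting bound on the number of increments, and correctness of the break, meaning that stopping at $V>K^{2}$ skips no admissible pair.

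\textbf{Counting.} By Theorem~\ref{thm:3.16}, $V(\delta)=V_0+c\delta$ is strictly increasing in $\delta$, since $c>0$. The loop exits at the first $\delta$ with $V(\delta)>K^{2}$, unless it has already returned. The condition $V_0+c\delta>K^{2}$ first holds at
$$\delta^{*}=\left\lfloor \frac{K^{2}-V_0}{c}\right\rfloor+1 .$$
Here $V_0<c\le K^{2}$ (Remark~\ref{rem:scan_complete}), so the numerator is positive and $\delta^{*}\ge1$. The \texttt{continue} branches, for $\text{denom}\le0$ or $V\le0$, each also increment $\delta$, so they are covered by the same count. Hence the number of increments before the break is at most $\delta_{\max}(c)$.

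\textbf{Correctness.} I will show that every admissible pair $(d_1,d_2)$ of Theorem~\ref{thm:3.3} at this shift corresponds to some $\delta\le\delta^{*}-1$, and that the algorithm detects a solution at that step.

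First, Remark~\ref{rem:scan_complete} places the pair's $V=c\sigma-K$ in the scanned progression at some $\delta\ge0$. Theorem~\ref{thm:3.11} gives $V=md_2$ with $m=K/d_1$. Since $m\mid K$ and $d_2\mid K$, we get $V\mid K^{2}$, so $V\le K^{2}$. By monotonicity that $\delta$ lies before the break.

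Next, at that $\delta$ the algorithm computes $g=\gcd(K,V)$ and tests whether $V/g$ divides $K$. Lemma~\ref{lem:m_gcd} states that this single candidate suffices whenever a pair with this $V$ exists; I invoke it here. Directly: if $V\mid K^{2}$ with $V=md_2$, one needs $V/\gcd(K,V)\mid K$, which is exactly the content of that lemma. So the test succeeds at or before that step and the algorithm returns.

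Consequently, if the loop reaches $V>K^{2}$, no admissible pair exists at shift $c$, and termination is correct.

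\textbf{Main obstacle.} The delicate point is the interplay between the single-candidate test and the broader divisibility $(c\sigma-K)\mid K^{2}$. A step with $V\mid K^{2}$ but $V/\gcd(K,V)\nmid K$ would be passed over even though Lemma~\ref{lem:3.1} would still give a solution. The theorem therefore has to be read relative to the family of Theorem~\ref{thm:3.3}, and I will make that explicit. I will also check the edge cases: the steps with $\text{denom}\le0$ or $V\le0$ are harmless, because every admissible $V$ is positive.
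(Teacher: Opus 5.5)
Your proof is correct and takes essentially the paper's approach. Both rest on the monotonicity of $V(\delta)=V_0+c\delta$ and on the fact that success at a step forces $V\le K^{2}$: you get this from $V=md_2$ with $m,d_2\mid K$, and the paper gets it contrapositively from $V/\gcd(K,V)\ge V/K>K$ once $V>K^{2}$. The worry in your last paragraph is vacuous: by Lemma~\ref{lem:m_gcd}, $V\mid K^{2}$ holds if and only if $V/\gcd(K,V)$ divides $K$, so no step with $V\mid K^{2}$ is ever passed over.
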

\begin{proof}
By Lemma~\ref{lem:m_gcd} below, $d_2(\delta) = V(\delta)/\gcd(K, V(\delta)) \ge V(\delta)/K$. If $V(\delta) > K^2$ then $d_2(\delta) > K$, so $d_2(\delta)$ cannot divide $K$ and the acceptance test fails. Since $V(\delta) = V_0 + c\delta$ is strictly increasing, once $V(\delta)$ exceeds $K^{2}$ it stays above it; for $\delta \ge \delta_{\max}(c)$ one has $V(\delta)>K^{2}$, and no subsequent $\delta$ yields an admissible pair at this shift. The bound is a floor plus one and not a ceiling: when $c\mid K^{2}-V_0$ the value $\delta=(K^{2}-V_0)/c$ gives $V=K^{2}$ exactly, at which the strict test does not yet fire.
\end{proof}

\begin{corollary} \label{cor:3.19}
The variant of Algorithm~\ref{alg:1} considered in Theorem~\ref{thm:3.18} performs at most $\mathcal{O}(n^2\log n)$ arithmetic operations, counted at unit cost as in Section~\ref{sec:3.8}.
\end{corollary}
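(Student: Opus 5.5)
The plan is to sum the per-shift bound of Theorem~\ref{thm:3.18} over the shifts scanned by the outer loop and multiply by the cost of one inner iteration. The outer loop runs over $c=3,7,\dots,n-1$ with $c\equiv3\pmod4$, so there are about $n/4$ shifts. At each shift the inner loop performs at most $\delta_{\max}(c)=\lfloor (K^{2}-V_0)/c\rfloor+1$ increments, plus the steps where $\text{denom}\le0$ or $V\le0$. Those extra steps are few: since $b=b_{\text{base}}-4\delta$ with $b_{\text{base}}\in\{0,1,2,3\}$, we have $\text{denom}=3-b\ge0$, and it becomes positive after at most one step. Likewise $V(\delta)=V_0+c\delta$ with $V_0\ge -c/4$ (Remark~\ref{rem:scan_complete}), so $V$ is positive after $O(1)$ further steps. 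Hence the number of inner iterations at shift $c$ is $O(K^{2}/c+1)$.

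Each inner iteration costs a bounded number of unit-cost operations together with one $\gcd(K,V)$. By Lamé's theorem the Euclidean algorithm uses $O(\log\min(K,V))=O(\log K)=O(\log n)$ divisions, because $K=(n+c)/4<n/2$ for $c<n$. The divisibility test $K\bmod d_2$ and the arithmetic for $\sigma$ and $m$ are $O(1)$.

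Summing, and using $K\le(n+c)/4\le n/2$, the total is
\[
\sum_{\substack{3\le c<n\\ c\equiv3\ (4)}}\Big(\frac{K_c^{2}}{c}+O(1)\Big)\,O(\log n)\ \ll\ \log n\Big(n^{2}\sum_{c\le n,\ c\ \mathrm{odd}}\frac1c+n\Big)\ \ll\ n^{2}(\log n)^{2}.
\]
This crude count overshoots the claim by one factor of $\log n$, and recovering it is the main obstacle. I would first try to remove the $\gcd$ cost: Lamé's bound is worst case, but the sum over $\delta$ of the Euclidean step counts of $\gcd(K,V_0+c\delta)$ can be bounded on average. Reducing modulo $K$, the values $V_0+c\delta\bmod K$ run through residues equidistributed over blocks of length $K$ (since $\gcd(c,K)=1$). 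The average number of Euclidean steps for $\gcd(K,x)$ with $x$ uniform modulo $K$ is then $O(\log K)$ only in the worst averaging, while the harmonic sum already contributes $\log n$. So I would instead count the first step $V\bmod K$ as unit cost and bound the remaining steps of $\gcd(K,V\bmod K)$ over a full period. The total Euclidean work over a full residue system modulo $K$ is $O(K\log K)$, which gives the same order again.

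If the average argument does not save the logarithm, the fallback is to observe that the paper's convention counts arithmetic "at unit cost" (Section~\ref{sec:3.8}) and to treat $\gcd$ as a single primitive operation. Then the count is $\sum_c K_c^{2}/c\ll n^{2}\log n$ directly from the harmonic sum, which matches the stated bound exactly. I expect the intended reading is this one, with the $\log n$ coming from $\sum 1/c$, and I would present the proof that way and note the extra Lamé factor in a remark.
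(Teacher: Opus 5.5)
Your final argument is the paper's proof. Count each $\gcd$ as one unit-cost operation, as Section~\ref{sec:3.8} and the paragraph following the corollary intend, which makes your averaging detour over Euclidean step counts unnecessary; then summing $\delta_{\max}(c)=\mathcal{O}(K_c^{2}/c)=\mathcal{O}(n^{2}/c)$ over $c\equiv3\pmod4$, $c<n$, gives $\mathcal{O}(n^{2}\log n)$ by the harmonic sum, and the paper also records the hidden $\gcd$ cost separately, as $\mathcal{O}(n^{2}(\log n)^{3})$ in the bit model. Your $\mathcal{O}(1)$ accounting for the guard-skipped steps is a harmless refinement, but the bound $V_0>-c/4$ follows from $3-b_0\ge0$ and $r<c$, not from Remark~\ref{rem:scan_complete}, which gives only the upper bound $V_0\le 3c/4$.
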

\begin{proof}
$\delta_{\max}(c) = \mathcal{O}(K^2/c)=\mathcal{O}(n^2/c)$ uniformly for $3 \le c \le n-1$, and
$\sum_{c\equiv3(4),\,c<n} n^{2}/c = \mathcal{O}(n^2\log n)$.
\end{proof}

The unit-cost convention is not innocuous here and we state what it hides. Each iteration evaluates one $\gcd(K,V)$ with $V$ as large as $K^{2}=\mathcal{O}(n^{2})$, so a single iteration is $\mathcal{O}((\log n)^{2})$ bit operations by the Euclidean algorithm; in the bit model the bound of Corollary~\ref{cor:3.19} therefore reads $\mathcal{O}(n^{2}(\log n)^{3})$. The same convention is in force in Theorem~\ref{thm:reject_cost}, Corollary~\ref{cor:practical_time} and throughout Sections~\ref{sec:4.4} and~\ref{sec:4.5}.

The $\mathcal{O}(K^2/c)$ worst case stems from the trivial divisor $d_1=1$, which cannot be isolated without factorization. The next subsection shows that the accelerator in fact terminates in $\Theta(n)$ steps per shift, with an explicit constant.

\subsection{The real cost of rejecting a shift} \label{sec:3.10}

\begin{lemma}\label{lem:m_gcd}
At every step $\delta$, with $V=V(\delta)$ and $d_2=V/\gcd(K,V)$,
$$m=\frac{V}{d_2}=\gcd(K,V),\qquad a(\delta)=\frac{\sigma(\delta)}{\gcd(K,V(\delta))} .$$
Moreover the acceptance test succeeds if and only if $V\mid K^{2}$.
\end{lemma}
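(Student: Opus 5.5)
The first two identities hold simply because of how the algorithm defines its quantities. The only real content is the equivalence for the acceptance test, which I will reduce to an elementary statement about $g=\gcd(K,V)$.

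\emph{Step 1: the identities.} In Algorithm~\ref{alg:1}, $d_2$ is defined as $V/g$ with $g=\gcd(K,V)$, so $m:=V/d_2=g$ holds by definition. This is consistent with Theorem~\ref{thm:3.11}: when the pair is admissible, $V=md_2$ with $m=K/d_1$. For the modulator I use Theorem~\ref{thm:3.11}, which gives $a=(q+4-b)/(4m)$ and $\sigma=(q+4-b)/4$, so $a=\sigma/m$. At an arbitrary step I take this as the definition of $a(\delta)$, namely $\sigma(\delta)/m$ with $m=\gcd(K,V(\delta))$. This reproduces the quantity tested in the accelerator, since the test $(q+4-b)>4m(n+3)/10$ reads $a(\delta)>(n+3)/10$, which is the bound of Theorem~\ref{thm:3.10}.

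\emph{Step 2: the equivalence.} The acceptance test is $d_2\mid K$, that is, $\frac{V}{g}\mid K$. I will show this is equivalent to $V\mid K^2$.

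Suppose first that $\frac{V}{g}\mid K$. Since also $g\mid K$, multiplying gives $V=\frac{V}{g}\cdot g\mid K\cdot K=K^2$.

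Conversely, suppose $V\mid K^2$. I work prime by prime, setting $e=v_p(V)$ and $k=v_p(K)$, so that $e\le 2k$. Then $v_p(g)=\min(e,k)$ and $v_p(V/g)=e-\min(e,k)$. If $e\le k$ this is $0$. Otherwise it equals $e-k\le k$. In both cases $v_p(V/g)\le v_p(K)$, so $\frac{V}{g}\mid K$.

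\emph{Main obstacle.} The only delicate point is the converse direction, where one must avoid the false shortcut "$V\mid K^2$ implies $V\mid K$". The $p$-adic comparison above handles this cleanly. An alternative route is to write $V=gV'$ and $K=gK'$ with $\gcd(V',K')=1$; then $gV'\mid g^2K'^2$ gives $V'\mid gK'^2$, hence $V'\mid g$, and so $V'\mid K$.

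It is worth noting that the lemma also gives $d_2\ge V/K$, which is the input used in Theorem~\ref{thm:3.18}.
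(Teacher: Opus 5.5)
Your argument is correct. On the divisibility equivalence it is close to the paper's. The forward direction is the same. For the converse you compute $v_p(V/g)=e-\min(e,k)\le k$ directly. The paper instead splits exponents to write $V=m'd_2'$ with $m',d_2'\mid K$, notes $m'\mid g$, and concludes $d_2=V/g\mid V/m'=d_2'\mid K$. Your coprime-reduction variant, $V'\mid gK'^{2}\Rightarrow V'\mid g\mid K$, is equally short.

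The real difference is in the first identity. You take $m=\gcd(K,V)$ as a definition. The paper writes $K=gK'$, $V=gV'$ and uses Theorem~\ref{thm:3.11} to show that every admissible pair at step $\delta$ satisfies $V'd_1=K'd_2$, hence equals $(K't,V't)$ for some $t\ge1$. That parametrisation shows two things: the $m=K/d_1$ of Theorem~\ref{thm:3.11} equals $g$ for the canonical pair $t=1$, and testing the single candidate $V/g$ is complete. Completeness is the role Section~\ref{sec:3.3} assigns to this lemma. In your framework it is one line away and you should state it: an admissible pair gives $V=(K/d_1)\,d_2\mid K^{2}$, so the candidate passes by your equivalence.

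The paper also checks that $g=\gcd(K,c\sigma-K)=\gcd(K,\sigma)$ divides $\sigma$, using $\gcd(c,K)=1$. This means that at an accepting step the pair $(K/g,V/g)$ satisfies $c\mid d_1+d_2$, so $a(\delta)$ is a genuine integer modulator. By declaring $a(\delta)=\sigma/m$ a definition you bypass this step. That is harmless for the identity as stated, but it leaves unproved that the returned pair is admissible for Theorem~\ref{thm:3.3}.
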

\begin{proof}
Write $g=\gcd(K,V)$, $K=gK'$, $V=gV'$ with $\gcd(K',V')=1$. By Theorem~\ref{thm:3.11} an admissible pair at this step is a pair of divisors $d_1,d_2\mid K$ with $Vd_1=Kd_2$, i.e. $V'd_1=K'd_2$; since $\gcd(K',V')=1$ this forces $d_1=K't$ and $d_2=V't$ for an integer $t\ge1$. As $d_2\mid K$ requires $V'\mid K$, the pair $t=1$ is admissible whenever any is, and for it $d_1=K/g$, $d_2=V/g$ and $m=K/d_1=g=\gcd(K,V)$, which is the first identity; the second then follows from $a=\sigma/m$. That $a$ is an integer, equivalently that the pair just produced is admissible for Theorem~\ref{thm:3.3}, is automatic: since $\gcd(c,K)=1$ one has $g=\gcd(K,V)=\gcd(K,c\sigma-K)=\gcd(K,c\sigma)=\gcd(K,\sigma)$, so $g\mid\sigma$ and $d_1+d_2=(K+V)/g=c\,(\sigma/g)$ is a multiple of $c$. If $d_2=V/\gcd(K,V)$ divides $K$ then $V=\gcd(K,V)\,d_2$ is a product of two divisors of $K$, hence divides $K^{2}$. Conversely if $V\mid K^{2}$ then $V=m'd_2'$ with $m',d_2'\mid K$ (split each exponent $v_p(V)\le 2v_p(K)$ into two parts, each at most $v_p(K)$), and then $m'\mid\gcd(K,V)$, so $d_2=V/\gcd(K,V)$ divides $V/m'=d_2'$, which divides $K$. Testing the single candidate $d_2=V/\gcd(K,V)$ is therefore a complete test.
\end{proof}

\begin{theorem}\label{thm:reject_cost}
Fix $c\equiv3\pmod4$, $c<n$, and suppose the inner loop of Algorithm~\ref{alg:1} does not return a solution. Let $\delta_1(c)=\lceil\frac{n+3}{10}\rceil-\sigma_0(c)$, where $\sigma_0(c)=\frac n{4c}+\mathcal{O}(1)$ and $(n+3)/10$ is the bound of Theorem~\ref{thm:3.10}. Then the loop exits at a step $\delta^{*}$ with
$$\delta_1(c)\ \le\ \delta^{*}\ \le\ \delta_1(c)+2\,\mathfrak{j}(\omega(K))+1,$$
where $\mathfrak{j}(k)$ denotes the largest gap between consecutive integers avoiding one prescribed residue class modulo each of $k$ distinct primes. Consequently
$$\delta^{*}=n\Big(\frac{1}{10}-\frac{1}{4c}\Big)+\mathcal{O}\big(n^{o(1)}\big),$$
and $\mathcal{O}(n^{o(1)})$ may be replaced by $\mathcal{O}\big((\log n)^{2}\big)$ if Iwaniec's bound $\mathfrak{j}(k)\ll k^{2}(\log k)^{2}$~\cite{iwaniec1978}, proved there for the classical case of the classes $0\bmod p_i$, is granted for arbitrary residues.
\end{theorem}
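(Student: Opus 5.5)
The accelerator test of Algorithm~\ref{alg:1} fires when $(q+4-b)>4m(n+3)/10$ or $V>K^2$. By Theorem~\ref{thm:3.16}, $(q+4-b)/4=\sigma(\delta)=\sigma_0+\delta$, and by Lemma~\ref{lem:m_gcd}, $m=\gcd(K,V(\delta))$. The modulator half of the test therefore reads $\sigma_0+\delta>m(\delta)(n+3)/10$, with $m(\delta)\ge1$, and the plan is to track when this first holds.

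\emph{Lower bound.} Since $m(\delta)\ge1$, the inequality needs $\sigma_0+\delta>(n+3)/10$, i.e.\ $\delta\ge\delta_1(c)$ up to the ceiling convention. The other exit, $V>K^2$, only fires at $\delta\asymp K^2/c\gg n$, so it cannot come earlier. A return is excluded by hypothesis. Hence $\delta^*\ge\delta_1(c)$.

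\emph{Upper bound.} For $\delta\ge\delta_1(c)$ the test fires at the first $\delta$ with $m(\delta)=1$, since then $\sigma_0+\delta\ge\lceil(n+3)/10\rceil$. To be careful about the strict inequality I would shift by one step: the test certainly fires at the first $\delta\ge\delta_1(c)+1$ with $\gcd(K,V(\delta))=1$. Now $\gcd(K,V_0+c\delta)>1$ holds exactly when $p\mid V_0+c\delta$ for some prime $p\mid K$. By Theorem~\ref{thm:3.17}, since $\gcd(c,K)=1$, this happens exactly when $\delta$ falls in one prescribed class $\delta\equiv-V_0c^{-1}\pmod p$, one class for each of the $\omega(K)$ primes. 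So the integers $\delta$ with $m(\delta)=1$ are precisely those avoiding one prescribed class modulo each of $\omega(K)$ distinct primes. Any window of $\mathfrak{j}(\omega(K))+1$ consecutive integers therefore contains such a $\delta$, or $2\mathfrak{j}+1$ under whichever convention defines $\mathfrak{j}$ as gap versus run length. This gives $\delta^*\le\delta_1(c)+2\mathfrak{j}(\omega(K))+1$, where the factor $2$ comfortably absorbs the off-by-one from the strict test and the ceiling.

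\emph{Asymptotics.} From $\sigma_0=(q+4-b_0)/4$ with $q=\lfloor n/c\rfloor$ and $b_0\in\{0,\dots,3\}$, we get $\sigma_0=n/(4c)+\mathcal{O}(1)$, so $\delta_1(c)=n(1/10-1/(4c))+\mathcal{O}(1)$. It remains to bound $\mathfrak{j}(k)$ with $k=\omega(K)\ll\log n/\log\log n$. Trivially, by CRT the classes repeat with period $\prod p\le K$, but that is too weak. Instead I would use the Jacobsthal-type elementary bound: a run of $L$ consecutive integers each hit by some class satisfies $L\le\sum_p(L/p+1)$, which for general $k$ gives only $L\ll k$ when $\sum1/p<1$. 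Otherwise one uses the known bound $\mathfrak{j}(k)\ll\exp(O(\sqrt{k\log k}))$ or $2^{k}$-type sieve bounds (Brun's sieve / inclusion–exclusion on an interval of length $L$: the count of survivors is $L\prod(1-1/p)+\mathcal{O}(2^k)$, positive once $L\gg 2^k\log\log K$). With $k\ll\log n/\log\log n$ we have $2^k=n^{o(1)}$, which gives $\mathcal{O}(n^{o(1)})$. Granting Iwaniec's $k^2(\log k)^2$ for arbitrary residues gives $\mathcal{O}((\log n)^2)$.

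The main obstacle is this last step: producing an unconditional $n^{o(1)}$ bound for the Jacobsthal function with arbitrary residues. The inclusion–exclusion error $2^{\omega(K)}$ is what I would rely on, together with $\prod_{p\mid K}(1-1/p)\gg1/\log\log n$.
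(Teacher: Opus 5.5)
Your argument is the paper's: the lower bound comes from $\gcd(K,V(\delta))\ge1$, the upper bound from the single forbidden class $\delta\equiv -V_0c^{-1}\pmod p$ for each prime $p\mid K$ combined with the gap function $\mathfrak{j}$, and the unconditional $n^{o(1)}$ from inclusion--exclusion using $\prod_{p\mid K}(1-1/p)\gg1/\log\log n$ and $2^{\omega(K)}=n^{o(1)}$. Where you differ, you differ in your favour: starting the coprimality window at $\delta_1(c)+1$ handles the strict inequality with a single gap, giving $\delta^{*}\le\delta_1(c)+\mathfrak{j}(\omega(K))+1$ in place of the paper's two-gap $\delta'$/$\delta''$ argument, and you check that the $V>K^{2}$ exit cannot fire before $\delta_1(c)$, which the paper leaves implicit (the clean comparison there is $(K^{2}-V_0)/c> (n+c)^{2}/(16c)-1\ge n/4-1>\delta_1(c)$, rather than ``$K^2/c\gg n$'').
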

\begin{proof}
By Lemma~\ref{lem:m_gcd}, $a(\delta)=\sigma(\delta)/\gcd(K,V(\delta))\le\sigma_0+\delta$, so the exit test cannot fire before $\delta\ge\delta_1(c)$ (when $(n+3)/10$ is an integer it cannot fire before $\delta_1(c)+1$; we keep the weaker $\delta_1(c)$, which is all the lower bound asserts). For the upper bound, fix a prime $p\mid K$: as $\gcd(c,K)=1$, the congruence $V(\delta)\equiv0\pmod p$ holds for exactly one class $\delta\equiv-V_0c^{-1}\pmod p$ (Theorem~\ref{thm:3.17}). Thus $\gcd(K,V(\delta))>1$ only if $\delta$ falls into one of $\omega(K)$ forbidden classes, one modulo each prime factor of $K$. These classes are arbitrary, depending on $V_0$ and $c$, which is why the shifted function $\mathfrak{j}$ rather than Jacobsthal's $g$ is what is needed. Among any $\mathfrak{j}(\omega(K))$ consecutive integers at least one avoids every forbidden class, i.e. satisfies $\gcd(K,V(\delta))=1$, and at such a $\delta$ one has $a(\delta)=\sigma(\delta)=\sigma_0+\delta$.

Let $\delta'$ be the first such integer at or after $\delta_1(c)$, so $\delta'\le\delta_1(c)+\mathfrak{j}(\omega(K))-1$ and $a(\delta')=\sigma(\delta')\ge\lceil(n+3)/10\rceil$. If $(n+3)/10\notin\Z$ the strict test $a>(n+3)/10$ fires at $\delta'$ and we may take $\delta^{*}=\delta'$. If $(n+3)/10\in\Z$ the test may fail at $\delta'$, namely when $\sigma(\delta')=(n+3)/10$ exactly (this does occur, e.g.\ $n=97$). One cannot conclude at $\delta'+1$: the tested quantity there is $\sigma(\delta'+1)/\gcd(K,V(\delta'+1))$, and if that $\gcd$ exceeds $1$ it is at most $\tfrac12\big(\tfrac{n+3}{10}+1\big)$, so the test fails again. What is needed instead is the \emph{next} $\delta$ coprime to $K$, i.e.\ the first integer $\delta''>\delta'$ with $\gcd(K,V(\delta''))=1$; it satisfies $\delta''\le\delta'+\mathfrak{j}(\omega(K))$, and $a(\delta'')=\sigma(\delta'')\ge\lceil(n+3)/10\rceil+1>(n+3)/10$, so the loop breaks at or before $\delta''$. In both cases
$$\delta^{*}\ \le\ \delta_1(c)+2\,\mathfrak{j}(\omega(K))+1 ,$$
which is the stated bound. The second gap is genuinely needed, and not only in principle: at $n=337$, $c=27$ one has $K=91$, $\sigma_0=4$, $V_0=17$, $(n+3)/10=34$ and $\delta_1=30$, and the three relevant steps are
$$\begin{array}{c|ccc}
\delta & 30 & 31 & 32\\\hline
\gcd(K,V(\delta)) & 1 & 7 & 1\\
a(\delta) & 34 & 5 & 36
\end{array}$$
so the test fails at $\delta'=30$ because $a=34=(n+3)/10$ exactly, fails again at $\delta'+1=31$ because the $\gcd$ jumps to $7$, and fires only at $\delta''=32$.

Unconditionally, inclusion--exclusion alone gives $\mathfrak{j}(\omega(K))=n^{o(1)}$: among $H$ consecutive $\delta$ the number of good ones is at least $H\prod_{p\mid K}(1-1/p)-2^{\omega(K)}$, with $\prod_{p\mid K}(1-1/p)\gg1/\log\log n$ and $2^{\omega(K)}=n^{o(1)}$. Granting the transfer of~\cite{iwaniec1978} to arbitrary residues and using $\omega(K)\ll\log n/\log\log n$ gives $\mathfrak{j}(\omega(K))\ll(\log n)^{2}$. The stated asymptotic follows from $\sigma_0(c)=n/(4c)+\mathcal{O}(1)$.
\end{proof}

\begin{corollary}\label{cor:practical_time}
If Algorithm~\ref{alg:1} returns a solution at the $\mathfrak{r}$-th admissible shift $c_{\mathfrak{r}}$, its total number of inner iterations is
$$\sum_{\substack{3\le c<c_{\mathfrak{r}}\\ c\equiv3(4)}} n\Big(\frac{1}{10}-\frac{1}{4c}\Big)+\mathcal{O}\big(\mathfrak{r}\,n^{o(1)}\big)\ \le\ \frac{\mathfrak{r}\,n}{10}+\mathcal{O}\big(\mathfrak{r}\,n^{o(1)}\big).$$
In particular the cost is $\mathcal{O}(n)$ whenever $\mathfrak{r}=\mathcal{O}(1)$, and it is $\Theta(n)$ as soon as $2\le\mathfrak{r}=\mathcal{O}(1)$; the running time is governed by \emph{how many shifts are rejected}, not by the depth reached inside any one of them.
\end{corollary}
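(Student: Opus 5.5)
The corollary follows by summing Theorem~\ref{thm:reject_cost} over the rejected shifts and adding a bounded cost for the successful one. The plan is as follows.

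First, Algorithm~\ref{alg:1} visits the shifts $c=3,7,11,\dots$ in increasing order, and $c_{\mathfrak{r}}$ is the $\mathfrak{r}$-th of them. Before reaching it, the algorithm fully processes the $\mathfrak{r}-1$ shifts $3\le c<c_{\mathfrak{r}}$, $c\equiv3\pmod4$. At each of these the inner loop returns nothing, so Theorem~\ref{thm:reject_cost} applies. The loop exits at a step
$$\delta^{*}(c)=n\Big(\tfrac1{10}-\tfrac1{4c}\Big)+\mathcal{O}(n^{o(1)}),$$
and the number of inner iterations is $\delta^{*}(c)+1$, which has the same form. I will check that the $\mathcal{O}(n^{o(1)})$ term is uniform in $c<n$. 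It comes from $\sigma_0(c)=n/(4c)+\mathcal{O}(1)$ with an absolute constant, and from $\mathfrak{j}(\omega(K_c))$. Since $K_c<n$, we have $\omega(K_c)\ll\log n/\log\log n$ uniformly, so the inclusion--exclusion bound in the proof of Theorem~\ref{thm:reject_cost} gives $n^{o(1)}$ uniformly. Summing over the $\mathfrak{r}-1$ rejected shifts produces the displayed main sum plus $\mathcal{O}((\mathfrak{r}-1)n^{o(1)})$.

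Second, I bound the successful shift $c_{\mathfrak{r}}$. There the loop returns at some $\delta$, and I claim this $\delta$ is at most the exit step of the rejected case, namely $\le\delta_1(c_{\mathfrak{r}})+2\mathfrak{j}+1$. The reason is that the accelerator fires whenever it would in the rejected case unless a solution has already been returned first. This gives at most $n/10+\mathcal{O}(n^{o(1)})$ iterations, which is absorbed into the right-hand bound, since the left-hand sum has only $\mathfrak{r}-1$ terms. For the left-hand equality as literally stated, I need the iterations at $c_{\mathfrak{r}}$ to be $\mathcal{O}(n^{o(1)})$ or else to be accounted for. This is the delicate point. I expect to read the equality with the successful shift's contribution bounded by $n/10$, and the inequality then follows because $\frac1{10}-\frac1{4c}<\frac1{10}$.

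Third, the consequences. If $\mathfrak{r}=\mathcal{O}(1)$, the total is $\le\mathfrak{r}n/10+o(n)=\mathcal{O}(n)$. For the lower bound when $\mathfrak{r}\ge2$, the shift $c=3$ is rejected. Its contribution is $n(\frac1{10}-\frac1{12})+\mathcal{O}(n^{o(1)})=\frac{n}{60}+o(n)$, and every term of the sum is nonnegative for $c\ge3$, so the total is $\gg n$ and hence $\Theta(n)$. The main obstacle is the uniformity in $c$ of the error term, together with the bookkeeping for the successful shift. Everything else is direct summation.
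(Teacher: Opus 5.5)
Your proposal follows the paper's route. You sum the exit step of Theorem~\ref{thm:reject_cost} over the $\mathfrak{r}-1$ rejected shifts, with the error uniform in $c<n$ because $\sigma_0(c)=n/(4c)+\mathcal{O}(1)$ and inclusion--exclusion gives $\mathfrak{j}(\omega(K_c))=n^{o(1)}$. You bound the successful shift by the same exit step $\delta_1(c_{\mathfrak{r}})+2\mathfrak{j}(\omega(K))+1$, and you take the $\Theta(n)$ lower bound from the rejected shift $c=3$, which costs $n/60+\mathcal{O}(n^{o(1)})$ iterations.

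The delicate point you isolate is genuine, and it is a defect of the statement rather than of your argument. Nothing in the paper bounds the iterations at the successful shift by $n^{o(1)}$; its own remark after the corollary calls that cost ``whatever the successful shift itself consumes''. So the displayed equality holds only up to an extra term between $1$ and $n\big(\tfrac1{10}-\tfrac1{4c_{\mathfrak{r}}}\big)+\mathcal{O}(n^{o(1)})$. Under your reading, the right-hand bound $\mathfrak{r}n/10$, with $\mathfrak{r}$ rather than $\mathfrak{r}-1$, absorbs that term, and the inequality and both consequences follow as you derive them.
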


The restriction $\mathfrak r\ge2$ in the second clause is necessary and not a formality. The displayed sum runs over the \emph{rejected} shifts $c<c_{\mathfrak r}$ only; at $\mathfrak r=1$ it is empty and the cost is whatever the successful shift itself consumes, which carries no lower bound of order $n$. Section~\ref{sec:7}, item~(4), records two instances: $n=577$ and $n=329617$ are both resolved at the first shift $c=3$ after $2$ iterations. The linear lower bound appears only once a shift has to be rejected, since only rejection forces the scan to reach $\delta_1(c)$.

Theorem~\ref{thm:reject_cost} predicts $0.0167n$ iterations for a rejected shift $c=3$, $0.0643n$ for $c=7$, $0.0773n$ for $c=11$ and $0.0868n$ for $c=19$; the measured error is one or two iterations (Section~\ref{sec:7}, item (2)).

\section{The case $n \equiv 1 \pmod{24}$} \label{sec:case_n1_mod24}

\subsection{A primary filter via divisors of $n+1$} \label{subsec:filter_n1}

\begin{theorem}[Obl\'{a}th~\cite{oblath1950}] \label{thm:4.1}
Let $n \equiv 1 \pmod{24}$ be prime. If $n+1$ has a divisor $D \equiv 3 \pmod{4}$, then, with $h = \frac{D+1}{4}$ and $\bar{s} = \frac{n+1}{D}$,
$$ \frac{4}{n} = \frac{1}{hn} + \frac{1}{h\bar{s}} + \frac{1}{hn\bar{s}}. $$
The construction produces exactly $\tau_{3(4)}(n+1)$ pairwise distinct triples, where $\tau_{3(4)}(m)=\#\{d\mid m: d\equiv3\pmod4\}$.
\end{theorem}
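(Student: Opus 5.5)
The plan is to verify the displayed identity by direct computation, then check that the denominators are positive integers, and finally show that the assignment $D\mapsto(hn,h\bar s,hn\bar s)$ is injective on the set of divisors $D\equiv3\pmod4$ of $n+1$.

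\emph{The identity and integrality.} Since $D\equiv3\pmod 4$, the integer $D+1$ is divisible by $4$, so $h=(D+1)/4\in\N^{*}$. Since $D\mid n+1$, we have $\bar s=(n+1)/D\in\N^{*}$. Putting the three fractions over the common denominator $hn\bar s$ gives
$$\frac{1}{hn}+\frac{1}{h\bar s}+\frac{1}{hn\bar s}=\frac{\bar s+n+1}{hn\bar s}=\frac{\bar s+D\bar s}{hn\bar s}=\frac{(D+1)\bar s}{hn\bar s}=\frac{4h}{hn}=\frac4n,$$
where we substitute $n+1=D\bar s$ and $D+1=4h$. So the construction yields a decomposition into positive unit fractions for every such $D$. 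This step is routine, and the hypothesis $n\equiv1\pmod{24}$ plays no role beyond $n\equiv1\pmod4$.

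\emph{Distinctness.} This is the step that needs care. It must be shown that distinct divisors $D$ give distinct triples, even when triples are compared as unordered multisets. The key is to recover $D$ from the multiset $\{hn,h\bar s,hn\bar s\}$ alone.

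\begin{enumerate}[(i)]
\item We have $\bar s\ne1$, because $\bar s=1$ would force $D=n+1\equiv2\pmod4$. Hence $\bar s\ge2$.
\item Since $n+1\equiv2\pmod4$ and $D$ is odd, $\bar s$ is even and $D\le(n+1)/2$. Therefore $h\le(n+3)/8<n$ and $\bar s\le(n+1)/3<n$.
\item Because $n$ is prime and both $h,\bar s<n$, it follows that $n\nmid h\bar s$.
\end{enumerate}

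Consequently $h\bar s$ is the unique entry of the multiset not divisible by $n$. The other two entries are $hn$ and $hn\bar s$, and since $\bar s\ge2$ we have $hn<hn\bar s$, so $hn$ is the strictly smaller of them. Dividing it by $n$ recovers $h$, and then $D=4h-1$. The map $D\mapsto$ triple is therefore injective. Its image consists of exactly $\tau_{3(4)}(n+1)$ pairwise distinct triples, and each triple has three distinct entries.
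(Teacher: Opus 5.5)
Your proof is correct. The verification of integrality and of the identity is the paper's computation.

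The difference is in the distinctness clause. The paper handles it in one line: $D\mapsto h=(D+1)/4$ is injective, so the first entries $hn$ are pairwise distinct. That proves distinctness of the triples as \emph{ordered} triples and nothing more. You prove the stronger statement that the triples are distinct as unordered multisets of denominators. You use the primality of $n$ together with $h,\bar s<n$ to identify $h\bar s$ as the unique entry not divisible by $n$. You then use $\bar s\ge2$ to single out $hn$ as the smaller of the remaining two, which recovers $h$ and hence $D$.

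This extra step is what you need if $\tau_{3(4)}(n+1)$ is to count genuinely different decompositions of $4/n$. The paper's argument does not by itself rule out two divisors producing the same denominators in a different order. Your argument also shows that each triple has three distinct entries.

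One small slip in justification: the bound $\bar s\le(n+1)/3$ follows from $D\ge3$, which is immediate from $D\equiv3\pmod4$. It does not follow from $D\le(n+1)/2$, as your ``Therefore'' suggests. The conclusion is unaffected.
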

\begin{proof}
Since $D \mid n+1$ and $n+1 \equiv 2 \pmod{4}$, $\bar{s}$ is an even positive integer; and $D+1 \equiv 0 \pmod{4}$, so $h \in \N^*$. Then
$$ \frac{1}{hn} + \frac{1}{h\bar{s}} + \frac{1}{hn\bar{s}} = \frac{\bar{s}+n+1}{hn\bar{s}} = \frac{\bar{s}+D\bar{s}}{hn\bar{s}} = \frac{D+1}{hn} = \frac{4h}{hn} = \frac{4}{n}. $$
Distinctness follows from the injectivity of $D\mapsto h=(D+1)/4$, which makes the first entries $hn$ pairwise distinct.
\end{proof}

\begin{corollary} \label{cor:4.3}
The conjecture holds for every prime $n \equiv 1 \pmod{24}$ with $\tau_{3(4)}(n+1) > 0$. The family on which this filter is silent is $n+1 = 2\prod_i p_i^{e_i}$ with $p_i \equiv 1 \pmod{4}$ for every $i$; the exponent of $2$ is exactly $1$ because $n+1\equiv2\pmod4$.
\end{corollary}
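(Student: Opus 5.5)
The statement has two parts, and both follow from Theorem~\ref{thm:4.1}. The first part is a direct application of it. Let $n\equiv1\pmod{24}$ be prime with $\tau_{3(4)}(n+1)>0$. Pick any divisor $D\mid n+1$ with $D\equiv3\pmod4$. Theorem~\ref{thm:4.1} then supplies the explicit decomposition with $h=(D+1)/4$ and $\bar s=(n+1)/D$.

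It remains to check that this decomposition is admissible: all three denominators $hn$, $h\bar s$, $hn\bar s$ must be positive integers. We have $h\in\N^{*}$ because $D\equiv3\pmod4$. We have $\bar s\in\N^{*}$ because $D\mid n+1$. This is already contained in the proof of Theorem~\ref{thm:4.1}, so nothing new is needed. The conjecture therefore holds for such $n$.

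For the description of the silent family, we characterise when $\tau_{3(4)}(n+1)=0$.

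\emph{The power of $2$.} Since $n\equiv1\pmod4$, we have $n+1\equiv2\pmod4$. So $v_2(n+1)=1$, and we can write $n+1=2m$ with $m$ odd.

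\emph{Excluding primes $\equiv3\pmod4$.} Suppose some prime $p\equiv3\pmod4$ divides $m$. Then $p$ itself is a divisor of $n+1$ congruent to $3\pmod4$, so $\tau_{3(4)}(n+1)>0$. Hence on the silent family every odd prime factor satisfies $p_i\equiv1\pmod4$.

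\emph{The converse.} Suppose $n+1=2\prod_i p_i^{e_i}$ with every $p_i\equiv1\pmod4$.
\begin{itemize}
\item An odd divisor of $n+1$ is a product of primes $\equiv1\pmod4$, so it is itself $\equiv1\pmod4$.
\item An even divisor is $2\cdot(\text{odd})\equiv2\pmod4$.
\end{itemize}
So no divisor is $\equiv3\pmod4$, and $\tau_{3(4)}(n+1)=0$. This shows that the silent family is exactly the one described.

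There is no real obstacle here. The only point needing care is the converse direction, which uses the multiplicativity of residues modulo $4$ on odd numbers.
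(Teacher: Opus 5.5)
Your proof is correct and follows the paper's route. The first claim is Theorem~\ref{thm:4.1} applied to any divisor $D\equiv3\pmod4$ of $n+1$, and the silent family follows from $v_2(n+1)=1$ together with the fact that, absent prime factors $\equiv3\pmod4$, every odd divisor is $\equiv1$ and every even divisor $\equiv2\pmod4$; the paper states the corollary without proof, and your write-up supplies exactly that check.
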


\subsection{A hyperbolic reformulation} \label{subsec:hyperbolic_surface}

\begin{theorem} \label{thm:4.8}
The relation $4\alpha u v - u - v = n$ is equivalent to $(4\alpha u - 1)(4\alpha v - 1) = 4\alpha n + 1$. Under $X = u + v$, $Y = u - v$ it becomes $\alpha(X^2 - Y^2) - X = n$.
\end{theorem}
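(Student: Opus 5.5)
The proof is a direct algebraic verification in two independent parts. The only point needing care is that $\alpha$ may be zero in the multiplication step. I take $\alpha,u,v$ to be integers, though the computation works over any commutative ring in which the cancellation below is legitimate.

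\emph{The factorised form.} Expand the product:
$$(4\alpha u-1)(4\alpha v-1)=16\alpha^{2}uv-4\alpha u-4\alpha v+1=4\alpha\,(4\alpha uv-u-v)+1.$$
So the right-hand equation $(4\alpha u-1)(4\alpha v-1)=4\alpha n+1$ is equivalent to $4\alpha\,(4\alpha uv-u-v)=4\alpha n$.
\begin{enumerate}[(a)]
\item If $4\alpha uv-u-v=n$, multiplying by $4\alpha$ and adding $1$ gives the factorised equation.
\item Conversely, given the factorised equation, I cancel the factor $4\alpha$. This is the one place where a hypothesis enters: it requires $\alpha\neq0$.
\end{enumerate}
In the paper's setting $\alpha=K_c/(uv)$ is a positive integer, so cancellation is valid. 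I will state this explicitly, and note that at $\alpha=0$ the factorised form degenerates to $1=1$ while the original relation becomes $-u-v=n$, so the equivalence genuinely needs $\alpha\ne0$.

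\emph{The hyperbolic form.} Substitute $X=u+v$ and $Y=u-v$. Then
$$X^{2}-Y^{2}=(X-Y)(X+Y)=(2v)(2u)=4uv,$$
so
$$4\alpha uv-u-v=\alpha(X^{2}-Y^{2})-X.$$
The relation therefore becomes $\alpha(X^{2}-Y^{2})-X=n$. The change of variables is invertible via $u=(X+Y)/2$ and $v=(X-Y)/2$, provided $X\equiv Y\pmod 2$, which holds automatically for integer $u,v$. So the two forms describe the same solution set.

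There is no real obstacle here. The only steps that need a sentence of justification are the nonvanishing of $\alpha$ in the cancellation and the parity condition that makes the substitution a bijection on integer points.
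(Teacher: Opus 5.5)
Your proof is correct and follows the paper's argument: the same expansion $(4\alpha u-1)(4\alpha v-1)=4\alpha(4\alpha uv-u-v)+1$ and the same identity $X^{2}-Y^{2}=4uv$, with the inverse substitution $u=(X+Y)/2$, $v=(X-Y)/2$. Your explicit remark that the converse direction must cancel $4\alpha$ and therefore needs $\alpha\neq0$ makes precise something the paper leaves implicit in its standing assumption $\alpha\ge1$.
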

\begin{proof}
Expanding, $(4\alpha u - 1)(4\alpha v - 1) = 4\alpha(4\alpha uv - u - v) + 1 = 4\alpha n + 1$. With $X = u+v$, $Y = u-v$ one has $X^2 - Y^2 = 4uv$, so $\alpha(4uv) - (u + v) = n$ becomes $\alpha(X^2 - Y^2) - X = n$; and since $X>|Y|$ the reconstructed $u = (X+Y)/2$, $v = (X-Y)/2$ are positive.
\end{proof}

The identity $(4\alpha u - 1)(4\alpha v - 1) = 4\alpha n + 1$ is a geometric variation of the classical factorizations of Bernstein~\cite{bernstein1962} and Mordell~\cite{mordell1969}; what is used below is the reformulation in the additive divisor space $\alpha(X^2 - Y^2) - X = n$, which is what Algorithm~\ref{alg:geom_sieve} sieves.

\begin{theorem} \label{thm:4.9}
There are integers $\alpha, u, v \ge 1$ with $n = 4\alpha u v - u - v$ if and only if there is an integer $u \ge 1$ such that $n+u$ has a divisor $s \equiv -1 \pmod{4u}$, in which case $\alpha = \frac{s+1}{4u}$ and $v=\frac{n+u}{s}$. In the coordinates of Theorem~\ref{thm:4.8} this is the existence of an integer point on $\alpha(X^{2}-Y^{2}) - X = n$ subject to $X > |Y|$ and $X \equiv Y \pmod 2$.
\end{theorem}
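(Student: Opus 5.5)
The plan is to prove the two directions of Theorem~\ref{thm:4.9} separately, each by rearranging $n=4\alpha uv-u-v$. After that, I will transport the statement to the coordinates of Theorem~\ref{thm:4.8}.

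\emph{Forward direction.} Suppose $n=4\alpha uv-u-v$ with $\alpha,u,v\ge1$. Adding $u$ to both sides gives
$$n+u=4\alpha uv-v=v(4\alpha u-1).$$
So $s:=4\alpha u-1$ divides $n+u$, and $s\equiv-1\pmod{4u}$ by construction. The formulas $\alpha=(s+1)/(4u)$ and $v=(n+u)/s$ are then read off directly.

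\emph{Converse.} Suppose $u\ge1$ and $s\mid n+u$ with $s\equiv-1\pmod{4u}$. Since $s$ is a positive divisor and $s+1\equiv0\pmod{4u}$ with $s+1\ge2$, the quantity $\alpha=(s+1)/(4u)$ is an integer and $\alpha\ge1$. Put $v=(n+u)/s$, which is a positive integer because $n+u>0$. Then $s=4\alpha u-1$, so
$$n+u=v(4\alpha u-1)=4\alpha uv-v,$$
which is exactly $n=4\alpha uv-u-v$. The only point needing care is the positivity of $\alpha$ and $v$. This is immediate once $s$ is taken to be a positive divisor, as the congruence $s\equiv-1\pmod{4u}$ presupposes; I would state that convention explicitly.

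\emph{Geometric form.} Apply Theorem~\ref{thm:4.8}. If $u,v\ge1$, then $X=u+v$ and $Y=u-v$ satisfy $X>|Y|$ and $X\equiv Y\pmod2$, since $X-Y=2v$. Conversely, an integer point with $X>|Y|$ and $X\equiv Y\pmod 2$ yields integers $u=(X+Y)/2\ge1$ and $v=(X-Y)/2\ge1$. Theorem~\ref{thm:4.8} shows these satisfy $n=4\alpha uv-u-v$, with $\alpha\ge1$ as in the hypothesis. The parity condition is precisely what makes $u$ and $v$ integral, and it is the one constraint beyond Theorem~\ref{thm:4.8} that has to be recorded.

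No step is a real obstacle. The argument is a one-line factorisation, $n+u=v(4\alpha u-1)$, read in both directions.
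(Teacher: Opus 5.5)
Your proposal is correct and follows the paper's proof essentially line for line: the factorisation $n+u=v(4\alpha u-1)$ is read in both directions, and then the substitution $u,v=(X\pm Y)/2$ is made, with parity giving integrality and $X>|Y|$ giving positivity. Your explicit requirement that $s$ be a positive divisor is a small gain in care over the paper. The paper's extra example $n=409$, $(\alpha,X,Y)=(137,2,1)$ only shows that the parity hypothesis cannot be dropped, which the statement does not ask you to prove.
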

\begin{proof}
If $n = 4\alpha uv - u - v$ then $n+u = v(4\alpha u - 1)$, so $s = 4\alpha u - 1$ divides $n+u$ and $s\equiv-1\pmod{4u}$. Conversely, given such an $s$, put $\alpha = (s+1)/(4u)$ and $v = (n+u)/s$; then $4\alpha uv - u - v = (s+1)v - u - v = sv - u = n$. For the geometric form, $u = (X-Y)/2$ and $v = (X+Y)/2$ are integers precisely when $X \equiv Y \pmod 2$, and both are $\ge1$ precisely when $X > |Y|$. The sign of $Y$ is immaterial, since $n = 4\alpha uv - u - v$ is symmetric in $u$ and $v$; the parity of $X-Y$ may not be traded away in the same manner, which is why it appears as a hypothesis. It cannot be dropped: for $n = 409$ the triple $(\alpha, X, Y) = (137, 2, 1)$ satisfies $\alpha(X^{2}-Y^{2}) - X = 411 - 2 = 409$ with $X > Y > 0$, yet $409$ admits no admissible $u$ whatsoever (Proposition~\ref{prop:nine_primes}).
\end{proof}

For $u=1$ this recovers the filter of Theorem~\ref{thm:4.1}: a divisor $s\mid n+1$ with $s \equiv 3 \pmod{4}$. The cases requiring $u > 1$ are what the sieve of Section~\ref{sec:4.4} is for; and that sieve has to remain a search, because the resolvent admits no polynomial uniformisation:

\begin{proposition} \label{prop:poly_obstruction}
Let $\Z^{+}[n]$ be the set of polynomials in $\Z[n]$ with positive leading coefficient. No triple $x(n), m(n) \in \Z^{+}[n]$, $y(n) \in \Z[n]$ satisfies $y(4xm-1) = x+n$ identically.
\end{proposition}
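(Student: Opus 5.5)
The plan is a degree-and-leading-coefficient comparison in $\Z[n]$, followed by an integrality contradiction on the leading coefficients. Throughout, write $\deg$ for the degree, $\operatorname{lc}$ is avoided as a macro and I speak of ``the leading coefficient'' in words. The identity to refute is
$$y(n)\,\big(4x(n)m(n)-1\big)=x(n)+n .$$

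\emph{Reductions.} First, $y\neq0$: otherwise $x(n)=-n$, whose leading coefficient is negative, contrary to $x\in\Z^{+}[n]$. Since $x$ and $m$ have positive leading coefficients, $4xm$ has degree $\deg x+\deg m$ and positive leading coefficient. Hence $4xm-1$ is nonzero of that same degree, with one exception: $\deg x=\deg m=0$. In that case $4xm-1=4km_0-1\ge3$ for the constants $x=k\ge1$, $m=m_0\ge1$, so it is still a nonzero constant. Therefore, in every case,
$$\deg\big(y(4xm-1)\big)=\deg y+\deg x+\deg m .$$
On the right-hand side, $x+n$ is nonzero with degree $\max(\deg x,1)$: when $\deg x\ge2$ this is clear, and when $\deg x\le1$ the coefficient of $n$ is $(\text{coefficient of } n \text{ in } x)+1\ge1$.

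\emph{Case analysis on $\deg x$.}
\begin{itemize}
\item If $\deg x\ge2$, equating degrees forces $\deg y=\deg m=0$. Then $y=t$ and $m=m_0$ are constants, and comparing the coefficients of $n^{\deg x}$ gives $4tm_0=1$, which has no integer solution.
\item If $\deg x=1$, write $x=\alpha n+\beta$ with $\alpha\ge1$. Again $\deg y=\deg m=0$. Comparing the coefficients of $n$ gives $4tm_0\alpha=\alpha+1$, so $\alpha(4tm_0-1)=1$. This forces $\alpha=1$ and $4tm_0=2$, which is impossible.
\item If $\deg x=0$, write $x=k\ge1$. Then $\deg y+\deg m=1$, which splits into two subcases.
\begin{itemize}
\item If $m=m_0$ is constant, then $y$ is linear and its leading coefficient is $1/(4km_0-1)$. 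Since $4km_0-1\ge3$, this is not an integer.
\item If $m$ is linear and $y=t$ is constant, then comparing the coefficients of $n$ gives $4kt\cdot(\text{leading coefficient of } m)=1$, again impossible over $\Z$.
\end{itemize}
\end{itemize}
Together these cases exhaust all possibilities, which proves the proposition.

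There is no real obstacle here. The only point needing care is the degree bookkeeping: $4xm-1$ never vanishes and never drops degree, even in the constant case, and the coefficient of $n$ in $x+n$ cannot cancel. Both facts rely on the positivity of the leading coefficients of $x$ and $m$, which is exactly why the statement restricts $x$ and $m$ to $\Z^{+}[n]$. Without that hypothesis, $x=-n$, $y=0$ would be a counterexample.
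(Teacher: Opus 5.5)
Your proof is correct and follows essentially the same route as the paper. Both compare degrees via $\deg y+\deg x+\deg m=\max(\deg x,1)$, split into the same four cases, and close each case with an integrality contradiction on a leading coefficient. You additionally justify explicitly that $y\neq0$, that $4xm-1$ never vanishes or drops degree, and that the coefficient of $n$ in $x+n$ cannot cancel; the paper leaves these points implicit.
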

\begin{proof}
The relation is the factorisation $u(4\alpha v-1)=n+v$ of Theorem~\ref{thm:4.9} read as a polynomial identity. Comparing degrees, $\deg y+\deg x+\deg m=\max(\deg x,1)$, which leaves four cases.

$(\deg x,\deg m,\deg y)=(0,0,1)$: with $x=X_0\ge1$, $m=M_0\ge1$, $y=Y_1n+Y_0$, matching leading coefficients gives $Y_1(4X_0M_0-1)=1$, so $4X_0M_0\in\{0,2\}$, impossible.

$(1,0,0)$: with $x=X_1n+X_0$, $X_1\ge1$, matching the coefficient of $n$ in $Y_0(4X_1M_0n+4X_0M_0-1)=(X_1+1)n+X_0$ gives $4Y_0X_1M_0=X_1+1$; reducing modulo $X_1$ forces $X_1=1$, then $4Y_0M_0=2$, impossible.

$(0,1,0)$: matching the coefficient of $n$ gives $4Y_0X_0M_1=1$, impossible.

$\deg x=k\ge2$: then $\deg y+\deg m=0$, and matching the coefficient of $n^{k}$ gives $4Y_0M_0=1$, impossible.
\end{proof}

\subsection{Exact position of the hyperbolic family, and its blind spot} \label{sec:4.3bis}

\begin{theorem}\label{thm:hyper_char}
For a prime $n\equiv1\pmod{24}$ the following are equivalent.
\begin{enumerate}[(i)]
    \item There are $\alpha,u,v\ge1$ with $n=4\alpha uv-u-v$.
    \item There is $u\ge1$ and a divisor $s\mid n+u$ with $s\equiv-1\pmod{4u}$.
    \item There are $c\equiv3\pmod4$ and $u,v\ge1$ with $uv\mid K_c=\frac{n+c}{4}$ and $u+v=c$.
\end{enumerate}
Moreover any $u$ occurring in (i)--(ii) satisfies $u\le\frac{n+1}{3}$, so the parametric search is finite and can be carried out exhaustively.
\end{theorem}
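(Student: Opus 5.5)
The plan is to route everything through the factorisation already established in Theorem~\ref{thm:4.9}, and then to translate the hyperbolic parameters into the language of shifts by the substitution $c=u+v$, $\alpha=K_c/(uv)$.

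\emph{(i)$\iff$(ii).} This is exactly Theorem~\ref{thm:4.9}. The forward direction takes $s=4\alpha u-1$. The converse takes $\alpha=(s+1)/(4u)$ and $v=(n+u)/s$. Nothing new is needed.

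\emph{(i)$\Rightarrow$(iii).} Given $n=4\alpha uv-u-v$, put $c=u+v$. Then $n+c=4\alpha uv$, so $K_c=(n+c)/4=\alpha uv$ is a positive integer and $uv\mid K_c$. For the congruence, $n+c\equiv0\pmod4$ and $n\equiv1\pmod4$ force $c\equiv3\pmod4$.

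\emph{(iii)$\Rightarrow$(i).} Given $uv\mid K_c$ with $u+v=c$, set $\alpha=K_c/(uv)\ge1$. Then $4\alpha uv=4K_c=n+c=n+u+v$, which rearranges to $n=4\alpha uv-u-v$. This matches the notation $\alpha=K_c/(uv)$ with modulator $a=(u+v)/c=1$ fixed in Section~\ref{sec:notation}, so (iii) is precisely the slice $a=1$. The range condition $0<c\le 2n$ is not part of (iii), so nothing further needs checking. If one wants it, it follows from Proposition~\ref{prop:c_range} or directly from $c=u+v\le n+1$, which the bound below gives.

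\emph{The bound on $u$.} Since $\alpha\ge1$,
$$n=4\alpha uv-u-v\ \ge\ 4uv-u-v .$$
I compare the right-hand side with $3u-1$:
$$(4uv-u-v)-(3u-1)=4u(v-1)-(v-1)=(v-1)(4u-1)\ \ge 0 .$$
Hence $n\ge 3u-1$, that is $u\le (n+1)/3$. By the symmetry of (i) in $u$ and $v$, the same bound holds for $v$. This covers any $u$ in (ii) as well, since (ii) produces a solution of (i) with the same $u$.

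\emph{Finiteness.} For each of the finitely many $u\le (n+1)/3$, the integer $n+u$ has finitely many divisors $s$ to test against $s\equiv-1\pmod{4u}$. So the search in (ii) is finite and exhaustive, and via (iii) so is the search over shifts.

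I expect no real obstacle here. The only points needing care are the deduction $c\equiv3\pmod4$ from the integrality of $K_c$, and the factorisation $(v-1)(4u-1)$ that gives the bound.
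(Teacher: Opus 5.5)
Your proposal is correct and follows the paper's proof essentially step for step. You obtain (i)$\Leftrightarrow$(ii) from Theorem~\ref{thm:4.9} and (i)$\Leftrightarrow$(iii) from the substitutions $c=u+v$ and $\alpha=K_c/(uv)$; your bound via $(v-1)(4u-1)\ge0$ is the same inequality the paper writes as $n\ge v(4u-1)-u\ge 3u-1$.
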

\begin{proof}
(i)$\Leftrightarrow$(ii) is Theorem~\ref{thm:4.9}. (i)$\Rightarrow$(iii): put $c=u+v$. Then $4K_c=n+c=4\alpha uv$, so $K_c=\alpha uv$, whence $uv\mid K_c$; and $c$ is odd with $n\equiv1\pmod4$, which forces $c\equiv3\pmod4$. (iii)$\Rightarrow$(i): put $\alpha=K_c/(uv)$; then $4\alpha uv-u-v=4K_c-c=n$. Finally, in (i), $\alpha,v\ge1$ gives $n\ge 4uv-u-v=v(4u-1)-u\ge 3u-1$.
\end{proof}

\begin{corollary}\label{cor:proper_subfamily}
By~\eqref{eq:PW2}, $n$ has a Type~II solution if and only if there are $c\equiv3\pmod4$ and $u,v$ with $uv\mid K_c$ and $c\mid u+v$. Comparing with Theorem~\ref{thm:hyper_char}(iii), the hyperbolic family is exactly the sub-family in which the modulator $a=(u+v)/c$ equals $1$. Algorithm~\ref{alg:geom_sieve} therefore explores neither the twisted branch~\eqref{eq:twisted} nor the Type~II solutions with $a\ge2$.
\end{corollary}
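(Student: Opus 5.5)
The statement to prove is Corollary~\ref{cor:proper_subfamily}. I would derive it by translating the Pomerance--Weingartner criterion~\eqref{eq:PW2} into the $K_c$ language and then comparing with Theorem~\ref{thm:hyper_char}(iii).

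\textbf{Forward direction of~\eqref{eq:PW2}.} Suppose $u,v,e\in\N$ satisfy $e\mid u+v$ and $4uv\mid n+e$. Set $c=e$.
\begin{enumerate}[(a)]
\item Since $4\mid n+c$ and $n\equiv1\pmod4$, we get $c\equiv3\pmod4$.
\item Since $c\ge1$, $K_c=(n+c)/4$ is a positive integer and $uv\mid K_c$.
\item By hypothesis $c\mid u+v$.
\end{enumerate}
We also need $c$ in the admissible range $0<c\le2n$. I would not insist on this, because the equivalence as stated only asks for some $c\equiv3\pmod4$. If the range is wanted, I would note that Type~II solutions correspond to the untwisted branch of Corollary~\ref{cor:two_branches}, which already lives in that range.

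\textbf{Converse.} Given $c\equiv3\pmod4$ and $u,v$ with $uv\mid K_c$ and $c\mid u+v$, put $e=c$. Then $4uv\mid4K_c=n+e$ and $e\mid u+v$, which is exactly~\eqref{eq:PW2}. As a cross-check, Theorem~\ref{thm:3.3} with $d_1=u$, $d_2=v$ gives an explicit decomposition. It is of Type~II because it is the untwisted branch.

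\textbf{Identifying the hyperbolic slice.} With this equivalence in hand, the modulator is $a=(u+v)/c\ge1$. Condition (iii) of Theorem~\ref{thm:hyper_char} reads $u+v=c$, that is, $a=1$. So the hyperbolic family, which is (i)--(iii) of that theorem, is precisely the $a=1$ slice of the Type~II criterion.

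\textbf{Scope of Algorithm~\ref{alg:geom_sieve}.} This algorithm sieves the hyperbolic surface. It therefore only detects $a=1$ Type~II witnesses. It misses the Type~II witnesses with $a\ge2$, and it misses all of the twisted branch~\eqref{eq:twisted}, which produces Type~I solutions.

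\textbf{Main obstacle.} The only delicate point is bookkeeping: matching $e$ with the shift $c$ and checking the residue condition $c\equiv3\pmod4$. Everything else is direct substitution.
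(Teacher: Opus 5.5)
Your proposal is correct and follows the paper's own reasoning: set $c=e$ in~\eqref{eq:PW2}, note that $c\equiv3\pmod4$ is forced by $4\mid n+e$, and compare the condition $u+v=c$, i.e.\ $a=1$, with Theorem~\ref{thm:hyper_char}(iii). For your optional range remark, a direct argument is cleaner than the appeal to Corollary~\ref{cor:two_branches}: the chain $e\le u+v\le 2uv\le (n+e)/2$ already gives $0<c\le n$.
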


That blind spot is not vacuous.

\begin{proposition}\label{prop:nine_primes}
For $N\ge1$ let $\mathcal{H}(N)$ be the set of primes $n\equiv1\pmod{24}$, $n\le N$, for which the equivalent conditions of Theorem~\ref{thm:hyper_char} fail \emph{for every} $u$. Then $\mathcal{H}(5\times10^{9})$ consists of exactly fourteen primes:
$$\mathcal{H}(5\times10^{9})=\left\{\begin{array}{l}409,\;577,\;5569,\;9601,\;23929,\;83449,\;102001,\;329617,\;712321,\\[2pt] 1134241,\;1724209,\;1726201,\;5212561,\;8813281.\end{array}\right\}$$
For each of them the minimal modulator is $a\ge2$; for instance $n=409$, $c=7$, $K_c=104$, $(u,v)=(1,13)$ gives $uv=13\mid104$ and $u+v=14=2c$. Every one of them satisfies $n+1=2\prod_i p_i^{e_i}$ with all $p_i\equiv1\pmod4$, i.e. lies in the residual family of Corollary~\ref{cor:4.3}, and all fourteen are resolved by Algorithm~\ref{alg:1}.
\end{proposition}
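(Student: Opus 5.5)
This proposition is a finite computational statement, so the proof is a verified exhaustive computation, supported by a few structural reductions that keep the computation finite and cheap.

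\emph{Reduction to a finite test per prime.} By Theorem~\ref{thm:hyper_char}, $n\notin\mathcal{H}$ as soon as some $u\le (n+1)/3$ has a divisor $s\mid n+u$ with $s\equiv-1\pmod{4u}$. Membership in $\mathcal{H}(N)$ is therefore decided by a finite search for each prime. Several shortcuts make this feasible.
\begin{enumerate}[(a)]
\item By Corollary~\ref{cor:4.3} together with the remark after Theorem~\ref{thm:4.9}, the case $u=1$ discards every prime with $\tau_{3(4)}(n+1)>0$. This proves the stated structural property that every element of $\mathcal{H}$ has $n+1=2\prod p_i^{e_i}$ with all $p_i\equiv1\pmod4$.
\item For the survivors, form (iii) is more convenient. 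One scans shifts $c\equiv3\pmod4$ and asks whether $K_c$ has a factorisation $uv\mid K_c$ with $u+v=c$. Equivalently, one sieves the hyperbola $\alpha(X^2-Y^2)-X=n$ as in Algorithm~\ref{alg:geom_sieve}.
\item In practice a small $u$ or $c$ succeeds almost immediately. Only the few residual primes require the full range $u\le(n+1)/3$. There, by the symmetry of (i) in $u$ and $v$, one may restrict to $u\le v$, which gives $u\le\sqrt{(n+u)/3}$, so the search costs only $O(\sqrt n)$ divisibility tests.
\end{enumerate}

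\emph{The exhaustive run.} I would run (a) and (b) over all primes $n\equiv1\pmod{24}$ up to $5\times10^9$. Each prime that survives (b) receives the complete check (c), and the surviving list is recorded. This is the main obstacle. It is not conceptual; the difficulty is certifying completeness and correctness at this scale. To address it, I would cross-check the list with an independent implementation, for example sieving the divisors of $n+u$ directly. I would also confirm that the nine entries below $7.2\times10^5$ coincide with Xu's wild primes~\cite{xu2026}.

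\emph{The remaining claims for the fourteen primes.}
\begin{itemize}
\item That the minimal modulator satisfies $a\ge2$ follows from non-membership in the slice $a=1$ (Corollary~\ref{cor:proper_subfamily}), combined with an explicit Type~II witness at some $a\ge2$. For $n=409$ this witness is written out in the statement, and the check is one line: $104=8\cdot13$ and $1+13=14=2\cdot7$.
\item To show that Algorithm~\ref{alg:1} resolves all fourteen, I would run it on each prime and verify the returned triple $(c,\delta,\sigma)$ via Lemma~\ref{lem:3.1} and Theorem~\ref{thm:3.11}. A returned solution is certified directly by the identity, so this part needs no completeness argument.
\end{itemize}
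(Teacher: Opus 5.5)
Your proposal is correct and is essentially the paper's proof. Both reduce the statement, via the bound $u\le(n+1)/3$ of Theorem~\ref{thm:hyper_char}, to a finite exhaustive computation; the paper marks in one batch every $n\le N$ of the form $4\alpha uv-u-v$ (iterating over $u$, the moduli $s=4\alpha u-1$ and their multiples) rather than testing prime by prime, and your reduction to $u\le v$ is a valid additional economy. One small correction to (c): for each $u$ you must find a divisor of $n+u$ in the class $-1\bmod 4u$, which is not a single divisibility test, so the full check costs $\mathcal{O}(\sqrt n\log n)$ operations by Algorithm~\ref{alg:geom_sieve} at depth $J\asymp\sqrt n$ (Theorem~\ref{thm:4.11}), not $\mathcal{O}(\sqrt n)$.
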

\begin{proof}
By Theorem~\ref{thm:hyper_char} the search is finite ($u\le (n+1)/3$), so the assertion is a finite verification: one marks every $n\le N$ of the form $4\alpha uv-u-v$ by iterating over $u\le\lceil(N+1)/3\rceil$, over the moduli $s=4\alpha u-1$ and over the multiples of $s$ in the interval, and intersects the complement with the primes $\equiv1\pmod{24}$. No truncation of the range of $u$ is applied; see item (4) of Section~\ref{sec:7}.
\end{proof}

Priority for the first nine values is due to Xu~\cite{xu2026}, who exhibits them as the ``wild'' primes, those with no \emph{tame} solution, among the $7185$ primes $24m+1$ with $m\le30000$, i.e. $n\le7.2\times10^{5}$. The two conditions are not literally the same: Xu's tameness requires only that the two numerator summands divide $K_c$ \emph{separately}, which is Theorem~\ref{thm:3.3} at modulator $1$, whereas condition (iii) above requires the stronger $uv\mid K_c$. The hyperbolic family is therefore contained in the tame one and $\mathcal{H}(N)$ contains Xu's wild set a priori; that the two coincide below $7.2\times10^{5}$ is a verification, which we have carried out, and not a tautology. What Theorem~\ref{thm:hyper_char} contributes is the structural reason for the failure (these primes lie outside the modulator slice $a=1$ altogether), together with the extension of the list to $5\times10^{9}$.

Writing $E_1(N;J)$ for the set of primes $n\le N$, $n\equiv1\pmod{24}$, at which Algorithm~\ref{alg:geom_sieve} fails at depth $J$, Proposition~\ref{prop:nine_primes} shows that $\bigcap_{J\ge1}E_1(N;J)\supseteq\mathcal{H}\cap[1,N]\ne\emptyset$: increasing the depth cannot empty $E_1(N;J)$, because the residual primes are not deep solutions of the hyperbolic family but lie outside it. (The observed proportion of $\mathcal{H}$ does decay, from $0.028$ on $[1,10^4)$ to $1.9\cdot10^{-3}$ on $[10^4,10^5)$, $2.2\cdot10^{-4}$ on $[5\cdot10^5,10^6)$, and $0$ on $[2\cdot10^{7},5\cdot10^{9}]$, at a rate which Corollary~\ref{cor:H_density} makes precise.) Any exhaustive claim must therefore combine Algorithm~\ref{alg:geom_sieve} with a procedure covering the modulators $a\ge2$ and the twisted branch: Algorithm~\ref{alg:1} in the small, Algorithm~\ref{alg:batch_complete} in the large. The next subsection shows that the first of these two gaps closes by itself once the modulator is restored as a search parameter.

\subsection{Freeing the modulator: a two-parameter criterion} \label{sec:4.3ter}

Corollary~\ref{cor:proper_subfamily} identifies the defect of the hyperbolic family: it is the Type~II criterion frozen at modulator $a=1$. Restoring $a$ as a free parameter costs nothing algebraically and produces a criterion complete for Type~II, in the same single-divisibility shape as Theorem~\ref{thm:4.9} and therefore sievable as it stands. The completeness itself is that of~\cite[Cor.~2.2]{pomerance2026}, which we take as the definition of Type~II in~\eqref{eq:PW2}; what is new here is only the normal form, which is the pivot of Sections~\ref{sec:fixed_depth_sub} and~\ref{sec:5.2}: the witness system becomes two-dimensional.

\begin{theorem}\label{thm:gen_criterion}
Let $n$ be a prime. The following are equivalent; for the explicit reconstruction stated after them we assume in addition $n\equiv1\pmod4$, so that the shift produced satisfies $c\equiv3\pmod4$ and, as verified at the end of the proof, $0<c\le n$, which places it in the range $0<c<4n$ on which Theorem~\ref{thm:3.3} is valid.
\begin{enumerate}[(i)]
    \item $4/n$ admits a Type~II solution.
    \item There are $u,a\in\N$ and a divisor $s$ of $an+u$ with $s\equiv-1 \pmod{4ua}$.
\end{enumerate}
Explicitly, (ii) $\Rightarrow$ (i) with $v=\frac{an+u}{s}$, $c=\frac{u+v}{a}$ and $\alpha=\frac{n+c}{4uv}$; the resulting decomposition is the one furnished by Theorem~\ref{thm:3.3} with $d_1=u$, $d_2=v$ and shift $c$, and the underlying identity is
\begin{equation}\label{eq:pivot}
a\,n=4a\alpha uv-u-v,\qquad\text{i.e.}\qquad an+u=v\,(4a\alpha u-1),
\end{equation}
of which Theorem~\ref{thm:4.9} is the case $a=1$.
\end{theorem}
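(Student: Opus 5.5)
The plan is to route both directions through the closed Type~II criterion~\eqref{eq:PW2}, which we take as the definition of Type~II. By the identity~\eqref{eq:pivot}, condition~(ii) is just a rewriting of its factorisation shape.

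\emph{(ii) $\Rightarrow$ (i).} Given $s\mid an+u$ with $s\equiv-1\pmod{4ua}$, I would write $s=4a\alpha u-1$ with $\alpha\in\N^{*}$ and set $v=(an+u)/s$. Then $an+u=v(4a\alpha u-1)$, i.e.\ $an=4a\alpha uv-u-v$, which is~\eqref{eq:pivot}. Reducing modulo $a$ gives $a\mid u+v$, so $c:=(u+v)/a$ is a positive integer. Dividing~\eqref{eq:pivot} by $a$ gives $n+c=4\alpha uv$. Taking $e=c$, one has $e\mid u+v$ and $4uv\mid n+e$, which is exactly~\eqref{eq:PW2}; hence a Type~II solution exists.

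For the explicit reconstruction, note that $K_c=(n+c)/4=\alpha uv$, so $d_1=u$ and $d_2=v$ divide $K_c$ and $d_1+d_2=ac\equiv0\pmod c$. This is the setting of Theorem~\ref{thm:3.3}. When $n\equiv1\pmod4$, the relation $n+c\equiv0\pmod4$ forces $c\equiv3\pmod4$. It remains to bound $c$, which is the only step with any content. We have $c=(u+v)/a\le u+v$ and $n+c=4\alpha uv\ge4uv$, so
\[
n\ \ge\ 4uv-(u+v)\ \ge\ u+v\ \ge\ c .
\]
The middle inequality holds because $4uv\ge2(u+v)$ for $u,v\ge1$. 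This puts $c$ in $(0,n]$, inside the range $0<c<4n$ required by Theorem~\ref{thm:3.3}, which then furnishes the decomposition. The equality case $c=n$ would need $u=v=1$ and $a=1$; it is excluded by congruences for $n\equiv1\pmod4$ anyway, and it is harmless here.

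\emph{(i) $\Rightarrow$ (ii).} By~\eqref{eq:PW2} there are $u,v,e$ with $e\mid u+v$ and $4uv\mid n+e$. I would set $a=(u+v)/e$ and $\alpha=(n+e)/(4uv)$, so that $n+e=4\alpha uv$. Multiplying by $a$ and using $ae=u+v$ gives $an=4a\alpha uv-u-v$, hence $an+u=v(4a\alpha u-1)$. Thus $s=4a\alpha u-1$ divides $an+u$ and satisfies $s\equiv-1\pmod{4ua}$, which is~(ii). The case $a=1$ reproduces Theorem~\ref{thm:4.9}.

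The main (mild) obstacle is bookkeeping rather than difficulty: checking that $a$, $c$ and $\alpha$ are positive integers in each direction, and obtaining the a posteriori bound $c\le n$ via $4uv\ge2(u+v)$. Everything else is the algebraic identity~\eqref{eq:pivot}.
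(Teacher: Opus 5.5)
Your proof is correct and follows the paper's route: both directions go through the closed criterion \eqref{eq:PW2} via the identity \eqref{eq:pivot}, with $e=c$ and $a=(u+v)/e$. The only differences are in the bookkeeping, where yours is slightly leaner. You read $a\mid u+v$ directly off \eqref{eq:pivot} modulo $a$, whereas the paper argues via $\gcd(s,a)=1$ and $sa\mid su+an+u$. You also get $c\le n$ from the single chain $n\ge 4uv-(u+v)\ge u+v\ge c$, whereas the paper splits into the cases $u,v\ge2$ and $\min(u,v)=1$.
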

\begin{proof}
(i) $\Rightarrow$ (ii). By~\eqref{eq:PW2} there are $u,v,e$ with $e\mid u+v$ and $4uv\mid n+e$. Put $a=\frac{u+v}{e}$ and $\alpha=\frac{n+e}{4uv}$, both positive integers. From $n+e=4uv\alpha$ and $ae=u+v$ we get, after multiplying by $a$,
$$an+ae=4uva\alpha,\qquad\text{i.e.}\qquad an+u+v=4uva\alpha,$$
which is~\eqref{eq:pivot}. Setting $s=4a\alpha u-1$ gives $s\mid an+u$ and $s\equiv-1\pmod{4ua}$.

(ii) $\Rightarrow$ (i). Let $s\mid an+u$ with $s\equiv-1\pmod{4ua}$, write $s+1=4ua\alpha$ and $v=\frac{an+u}{s}\ge1$. First, $a\mid u+v$: indeed $s\mid an+u$ gives $s\mid su+an+u$, while $u(s+1)=4u^{2}a\alpha\equiv0\pmod a$ gives $a\mid su+u$ and hence $a\mid su+an+u$; since $\gcd(s,a)=1$ (because $s\equiv-1\pmod a$), we get $sa\mid su+an+u$, so that
$$c:=\frac{u+v}{a}=\frac{su+an+u}{sa}$$
is a positive integer. Second,
$$n+c=n+\frac{u+v}{a}=\frac{an+u+v}{a}=\frac{sv+v}{a}=\frac{v(s+1)}{a}=\frac{v\cdot4ua\alpha}{a}=4uv\,\alpha,$$
so $4uv\mid n+c$; with $e=c$ this is~\eqref{eq:PW2}. The shift so produced lies in the range required by Theorem~\ref{thm:3.3}, namely $0<c<4n$; we check the sharper $0<c\le n$. Positivity is clear. If $u,v\ge2$ then $4uv-2(u+v)=2\big(u(2v-1)-v\big)\ge2(3v-2)>0$, so $n+c=4uv\alpha\ge4uv\ge2(u+v)=2ac\ge2c$ and $c\le n$. If $u=1$ then $n=4v\alpha-\frac{1+v}{a}\ge4v-(1+v)=3v-1$, whence $v\le\frac{n+1}{3}$ and $c=\frac{1+v}{a}\le1+v\le\frac{n+4}{3}\le n$; the case $v=1$ is symmetric. Finally $uv\mid K_c$ and $c\mid u+v$, so Theorem~\ref{thm:3.3} applies with $d_1=u$, $d_2=v$ and produces the decomposition with $\sigma=a\,K_c/u$. Note that $u=v=1$ cannot occur, since $c\mid u+v$ with $c\ge3$ forces $u+v\ge3$.
\end{proof}

The criterion may be restricted to \emph{coprime} pairs $(u,a)$ at no cost, a reduction used throughout Section~\ref{sec:5} and worth isolating.

\begin{lemma}\label{lem:coprime_reduction}
Let $n\in\N$, let $u,a\in\N$ and let $s$ be a divisor of $an+u$ with $s\equiv-1\pmod{4ua}$. Put $d=\gcd(u,a)$, $u'=u/d$, $a'=a/d$. Then $s$ is also a divisor of $a'n+u'$ and $s\equiv-1\pmod{4u'a'}$. Consequently condition (ii) of Theorem~\ref{thm:gen_criterion} is satisfied by some pair with $u,a\le J$ if and only if it is satisfied by some \emph{coprime} pair with $u,a\le J$, and with the same witness $s$.
\end{lemma}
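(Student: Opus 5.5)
The proof is a direct divisibility argument: both conclusions come from the factorisation $an+u=d\,(a'n+u')$ together with the observation that $s$ is coprime to $d$. I would carry it out in three short steps, none of which uses anything beyond the definitions.

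First, the congruence. Since $u'\mid u$ and $a'\mid a$, the modulus $4u'a'$ divides $4ua$. Hence $s\equiv-1\pmod{4ua}$ immediately gives $s\equiv-1\pmod{4u'a'}$.

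Second, the divisibility. By definition of $d$ one has $an+u=d\,(a'n+u')$. Since $d\mid a$ and $a\mid 4ua$, the congruence $s\equiv-1\pmod{4ua}$ gives $s\equiv-1\pmod d$, hence $\gcd(s,d)=1$. This is the same coprimality trick used in the proof of Theorem~\ref{thm:gen_criterion}, where $\gcd(s,a)=1$ was read off from $s\equiv-1\pmod a$. From $s\mid d\,(a'n+u')$ and $\gcd(s,d)=1$ I then conclude $s\mid a'n+u'$.

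Third, the consequence. Suppose some pair $(u,a)$ with $u,a\le J$ satisfies condition (ii) of Theorem~\ref{thm:gen_criterion}. Then the reduced pair $(u',a')$ is coprime, satisfies $u'\le u\le J$ and $a'\le a\le J$, and by the first two steps satisfies (ii) with the same witness $s$. The converse direction is trivial, because a coprime pair with $u,a\le J$ is in particular a pair with $u,a\le J$.

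I expect no real obstacle. The only point needing care is the coprimality $\gcd(s,d)=1$, and it follows from $d\mid a$ together with $s\equiv-1\pmod a$.
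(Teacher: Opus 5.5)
Your proof is correct and follows the paper's argument essentially step for step. You get the congruence from $4u'a'\mid 4ua$, obtain $\gcd(s,d)=1$ from $d\mid 4ua\mid s+1$, and then conclude $s\mid a'n+u'$ from $s\mid d\,(a'n+u')$, with $u'\le u$ and $a'\le a$ preserving the truncation at $J$; your explicit note that the converse direction is trivial is a small point the paper leaves implicit.
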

\begin{proof}
Since $4u'a'\mid 4ua=4d^{2}u'a'$, the congruence $s\equiv-1\pmod{4u'a'}$ is immediate. Also $\gcd(s,d)=1$: indeed $d\mid 4ua\mid s+1$, so any common divisor of $s$ and $d$ divides $(s+1)-s=1$. Now $an+u=d(a'n+u')$ and $s\mid d(a'n+u')$ with $\gcd(s,d)=1$ give $s\mid a'n+u'$. Finally $u'\le u$ and $a'\le a$, so the truncation at $J$ is preserved.
\end{proof}

Theorem~\ref{thm:4.9} searches for a divisor of $n+u$ in the class $-1$ modulo $4u$; Theorem~\ref{thm:gen_criterion} searches for a divisor of $an+u$ in the class $-1$ modulo $4ua$. The single shift $u$ has become a pair $(u,a)$, the interval $[n+1,n+J]$ has become the family of intervals $[an+1,an+J]$ for $a\le J$, and $a$ is exactly the modulator frozen to $1$ in Corollary~\ref{cor:proper_subfamily}. Algorithm~\ref{alg:geom_sieve} extends verbatim, and Theorem~\ref{thm:4.11} gives its cost:

\begin{corollary}\label{cor:gen_reading}
Running the two phases of Algorithm~\ref{alg:geom_sieve} on the interval $[an+1,an+J]$ for each $a\le J$, with the congruence test $4ua\mid s+1$ where $u=M-an$, decides condition (ii) of Theorem~\ref{thm:gen_criterion} for all pairs $(u,a)\in[1,J]^{2}$ in
$$\sum_{a\le J}\mathcal{O}\big(\sqrt{an+J}+J\log(an+J)\big)=\mathcal{O}\big(J^{3/2}\sqrt n+J^{2}\log n\big)$$
operations, hence in $\mathcal{O}(\sqrt n\,(\log n)^{3\lambda/2})$ at depth $J=(\log n)^{\lambda}$, which is sub-linear.
\end{corollary}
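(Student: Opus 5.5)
The plan is to reduce the statement to the single-$a$ cost bound of Theorem~\ref{thm:4.11}, applied once for each modulator $a\le J$, and then sum over $a$. Correctness first. By Theorem~\ref{thm:gen_criterion}(ii), a witness for the pair $(u,a)$ is a divisor $s$ of $M:=an+u$ with $s\equiv-1\pmod{4ua}$. As $u$ runs over $[1,J]$, $M$ runs over the interval $[an+1,an+J]$, and $u=M-an$ is recovered from $M$. The two phases of Algorithm~\ref{alg:geom_sieve} do two things on this interval. The first is a segmented sieve that produces the divisors $s$ of each $M$ without factoring any single integer. The second tests each candidate. That test was $4u\mid s+1$ for $a=1$; here we replace it by $4ua\mid s+1$. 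The sieve phase does not depend on the congruence test. So the modified run flags $M$ exactly when some divisor of $M$ satisfies the new congruence, which is condition (ii) for the pair $(M-an,a)$. Taking all $a\le J$ covers $[1,J]^2$. Lemma~\ref{lem:coprime_reduction} also shows that we may restrict to coprime pairs without losing witnesses, though this is not needed for the bound.

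For the cost, Theorem~\ref{thm:4.11} with base point $an$ in place of $n$ gives $\mathcal{O}(\sqrt{an+J}+J\log(an+J))$ for each $a$. The $\sqrt{\cdot}$ term pays for sieving by primes up to the square root of the top of the interval. The $J\log$ term pays for the harmonic sum of divisor hits over $J$ consecutive integers. I will state explicitly that the constant in Theorem~\ref{thm:4.11} is uniform in the starting point of the interval, since this uniformity is what the summation needs.

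For the summation, use $\sqrt{an+J}\le\sqrt{an}+\sqrt J$ and $\sum_{a\le J}\sqrt a\asymp J^{3/2}$. This gives $\sum_a\sqrt{an+J}=\mathcal{O}(J^{3/2}\sqrt n+J^{3/2})$, and the second part is absorbed. For the other term, $\log(an+J)\le\log(2Jn)=\mathcal{O}(\log n)$ whenever $J\le n$, which is the only regime of interest. Hence $\sum_a J\log(an+J)=\mathcal{O}(J^2\log n)$.

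Finally, substitute $J=(\log n)^\lambda$. The first term becomes $\sqrt n(\log n)^{3\lambda/2}$. The second, $(\log n)^{2\lambda+1}$, is smaller than $\sqrt n$, so the total is $\mathcal{O}(\sqrt n(\log n)^{3\lambda/2})=o(n)$.

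The main obstacle is the uniformity in the base point. If Theorem~\ref{thm:4.11} were proved only for the interval $[n+1,n+J]$ with $n$ prime, or relied on a precomputed prime list tied to $n$, I would have to check that the proof uses only the length $J$ and the upper endpoint. A related point is that a single prime list up to $\sqrt{Jn+J}$ can be shared across all $a$. Even without sharing, the per-$a$ cost is already inside $\sqrt{an+J}$, so the bound stands.
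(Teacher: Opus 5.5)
Your proposal is correct and follows the paper's own route: apply Theorem~\ref{thm:4.11} with $an$ in place of $n$ for each $a\le J$, then sum using $\sqrt{an+J}\le\sqrt{an}+\sqrt{J}$ and $\log(an+J)=\mathcal{O}(\log n)$. The uniformity you flag is automatic, since that proof uses only the interval length and $R=\lfloor\sqrt{an+J}\rfloor$, never the primality of $n$.

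One small inaccuracy does not affect the argument. The two phases of Algorithm~\ref{alg:geom_sieve} are a small-divisor scan and a small-cofactor scan over all integers $w\le R$, not a prime sieve followed by a test. Your correctness argument still goes through, because $4ua\mid s+1$ still forces $s\equiv3\pmod 4$, and every factorisation $sv=M$ has $\min(s,v)\le R$.
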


The following elementary observation is what makes the two-parameter system usable in a sieve; it is used in Theorems~\ref{thm:fixed_depth} and~\ref{thm:tradeoff}.

\begin{lemma}\label{lem:distinct_classes}
Let $J\ge1$ and let $p>J^{2}$ be a prime. If $(u,a)$ and $(u',a')$ are coprime pairs with all entries in $[1,J]$ and $-u\,a^{-1}\equiv-u'\,a'^{-1}\pmod p$, then $(u,a)=(u',a')$.
\end{lemma}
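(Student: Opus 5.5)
The plan is to convert the congruence into an exact integer equality, using the size bound $p>J^{2}$ together with the coprimality of each pair. All entries lie in $[1,J]$ and $p>J^{2}\ge J$, so $a$ and $a'$ are nonzero modulo $p$ and the inverses $a^{-1},a'^{-1}$ modulo $p$ exist. The statement is therefore meaningful.

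\textbf{Step 1: clear denominators.} Multiplying the hypothesis $-u\,a^{-1}\equiv-u'\,a'^{-1}\pmod p$ by $-aa'$ gives $ua'\equiv u'a\pmod p$. In other words, $p$ divides the integer $ua'-u'a$.

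\textbf{Step 2: the size argument.} Since $1\le u,a,u',a'\le J$, both products $ua'$ and $u'a$ lie in $[1,J^{2}]$. Hence $|ua'-u'a|\le J^{2}-1<p$, and the divisibility from Step 1 forces $ua'=u'a$ as integers.

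\textbf{Step 3: use coprimality.} From $ua'=u'a$ and $\gcd(u,a)=1$ we get $a\mid ua'$, hence $a\mid a'$. Symmetrically, $\gcd(u',a')=1$ gives $a'\mid a$. Both are positive, so $a=a'$, and cancelling yields $u=u'$. Equivalently, $u/a=u'/a'$ are the same fraction in lowest terms with positive denominators, so they coincide entrywise.

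No step poses a real obstacle. The only points needing care are checking that the inverses modulo $p$ exist, which follows from $p>J\ge a$, and the strict inequality $|ua'-u'a|<p$, which is exactly where the hypothesis $p>J^{2}$ is used rather than merely $p>J$.
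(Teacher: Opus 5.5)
Your proof is correct and follows the paper's argument: you reduce the congruence to $ua'\equiv u'a\pmod p$, use $p>J^{2}$ to force $ua'=u'a$, and then use coprimality to conclude. The only difference is cosmetic. You deduce $a\mid a'$ and $a'\mid a$, whereas the paper deduces $u\mid u'$ and $u'\mid u$. Your added check that $a^{-1}$ and $a'^{-1}$ exist modulo $p$ is a harmless extra.
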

\begin{proof}
The congruence gives $ua'\equiv u'a\pmod p$, and both sides are positive integers not exceeding $J^{2}<p$, so $ua'=u'a$. Then $u\mid u'a$ with $\gcd(u,a)=1$ gives $u\mid u'$, and symmetrically $u'\mid u$; hence $u=u'$ and $a=a'$.
\end{proof}

\begin{proposition}\label{prop:nine_resolved}
Every prime of the set $\mathcal{H}(5\times10^{9})$ of Proposition~\ref{prop:nine_primes} satisfies condition (ii) of Theorem~\ref{thm:gen_criterion} with parameters $(u,a)$ of size at most $5$:
$$
\begin{array}{lccccccc}
n & 409 & 577 & 5569 & 9601 & 23929 & 83449 & 102001\\
(u,a) & (1,2) & (1,2) & (1,2) & (1,2) & (1,2) & (3,2) & (5,3)\\
s & 7 & 7 & 47 & 111 & 7 & 791 & 1319\\[6pt]
n & 329617 & 712321 & 1134241 & 1724209 & 1726201 & 5212561 & 8813281\\
(u,a) & (1,2) & (1,2) & (1,2) & (1,2) & (1,2) & (1,2) & (1,5)\\
s & 15 & 23 & 7 & 39 & 1983 & 87 & 2099
\end{array}
$$
The listed pair is the lexicographically first in the order $(a,u)$. In each case the decomposition produced by Theorem~\ref{thm:gen_criterion} was verified exactly (Section~\ref{sec:7}).
\end{proposition}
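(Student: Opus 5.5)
This is a finite verification. For each of the fourteen primes $n$ and its listed pair $(u,a)$ and divisor $s$, I will check three things.
\begin{enumerate}[(a)]
\item $\gcd(u,a)=1$ and $u,a\le5$.
\item $s\equiv-1\pmod{4ua}$.
\item $s\mid an+u$.
\end{enumerate}
The first two are immediate by inspection of the table. For instance, $7\equiv-1\pmod 8$ handles $(1,2)$ with $s=7$. For $(3,2)$ one has $4ua=24$ and $791=33\cdot24-1$. For $(5,3)$ one has $4ua=60$ and $1319=22\cdot60-1$. For $(1,5)$ one has $4ua=20$ and $2099=105\cdot20-1$. The remaining entries $47,111,15,23,39,1983,87$ are all $\equiv7\pmod8$.

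Condition (c) is one exact division per row. Sample checks:
\begin{itemize}
\item $n=409$: $2\cdot409+1=819=7\cdot117$.
\item $n=577$: $1155=7\cdot165$.
\item $n=83449$: $2n+3=166901=791\cdot211$.
\end{itemize}
For the rest I will compute $an+u$ and reduce modulo $s$.

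Next I reconstruct the decomposition. Following the explicit part of Theorem~\ref{thm:gen_criterion}, I set
\[
v=\frac{an+u}{s},\qquad c=\frac{u+v}{a},\qquad \alpha=\frac{n+c}{4uv},\qquad \sigma=\frac{aK_c}{u}.
\]
I will then verify exactly that $\frac4n=\frac1{K_c}+\frac1{\sigma n}+\frac{c\sigma-K_c}{n\sigma K_c}$ with integer denominators. For $n=409$ this gives $v=117$, $c=59$, $K_c=117$ and $\alpha=1$, and the identity of Lemma~\ref{lem:3.1} closes the check.

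Lexicographic minimality in the order $(a,u)$ is the only part that needs a negative search. Pairs with $a=1$ are excluded for every $u$, because $n\in\mathcal{H}$ (Proposition~\ref{prop:nine_primes}). It then remains to show that for every smaller coprime pair $(a',u')$ with $a'=a$ and $u'<u$, or with $1<a'<a$, the number $a'n+u'$ has no divisor $\equiv-1\pmod{4u'a'}$. This affects only $n=83449$, $102001$ and $8813281$. For these I will enumerate the divisors of the relevant $a'n+u'$ by trial division, which is feasible since the numbers are below $5\cdot10^7$.

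The main obstacle is bookkeeping rather than mathematics: the minimality claim rests on factorizing a handful of integers, and this is exactly where an error would slip in. I would therefore cross-check those factorizations with an independent run of the sieve described in Corollary~\ref{cor:gen_reading}.
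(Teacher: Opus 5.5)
Your plan is the same finite verification the paper gives. The paper's proof enumerates the divisors of $an+u$ for every coprime pair $(u,a)\in[1,5]^2$ and tests $4ua\mid s+1$, which produces the witnesses and the lexicographic minimality in one pass; you instead check the listed witnesses directly, dispose of $a=1$ via $n\in\mathcal{H}$, and then run a targeted negative search. Make explicit that this negative search ranges over $u'\le5$ as well, since the ordering is within $[1,5]^2$; that bound is what keeps it to a handful of factorizations.
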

\begin{proof}
A finite verification: for each $n$ and each coprime pair $(u,a)$ with $u,a\le5$ one enumerates the divisors of $an+u$ and tests $4ua\mid s+1$.
\end{proof}

\subsection{The geometric segmented sieve} \label{sec:4.4}

By Theorem~\ref{thm:4.9}, a decomposition of hyperbolic type exists for $n$ as soon as some $n+u$ admits a divisor $s \equiv -1 \pmod{4u}$. Rather than iterating over $u$ and factoring $n+u$, we iterate over candidate divisors and identify their multiples in $[n+1, n+J]$. Since $s \equiv -1 \pmod{4u}$ forces $s \equiv 3 \pmod 4$, that congruence acts as a necessary filter on $s$, and the condition on $u$ is then checked directly, without factoring $n+u$ or $s$.

The divisor $s$ and its cofactor $v=(n+u)/s$ play symmetric roles: in any factorisation $sv=n+u$ at least one of $s,v$ is at most $\sqrt{n+u}\le\sqrt{n+J}$. Two phases, one over small divisors and one over small cofactors, therefore suffice, and the outer loop is a scan of length $\sqrt{n+J}$ rather than $n+J$.

\begin{algorithm}[H]
\caption{Two-phase geometric segmented sieve, bound $J$} \label{alg:geom_sieve}
\begin{algorithmic}[1]
\Require prime $n \equiv 1 \pmod{24}$, search bound $J \ge 1$
\Ensure a pair $(u, s)$ satisfying Theorem~\ref{thm:4.9}, or a statement of absence for $u \le J$
\State $R\gets\lfloor\sqrt{n+J}\rfloor$
\For{$s = 3, 7, 11, \dots$ with $s \le R$}\Comment{Phase A: small divisor, $s \equiv 3 \pmod 4$}
    \State $M_0 \gets s\left\lceil \dfrac{n+1}{s}\right\rceil$ \Comment{smallest multiple of $s$ that is $\ge n+1$}
    \For{$M = M_0, M_0+s, M_0+2s, \dots$ while $M \le n+J$}
        \State $u \gets M-n$
        \If{$4u \mid (s+1)$}
            \State \Return $(u, s)$
        \EndIf
    \EndFor
\EndFor
\For{$v = 1$ \textbf{to} $R$}\Comment{Phase B: small cofactor}
    \State $M_0 \gets v\left\lceil \dfrac{n+1}{v}\right\rceil$
    \For{$M = M_0, M_0+v, M_0+2v, \dots$ while $M \le n+J$}
        \State $u \gets M-n$; \ $s \gets M/v$
        \If{$s\equiv3\pmod 4$ \textbf{and} $4u \mid (s+1)$}
            \State \Return $(u, s)$
        \EndIf
    \EndFor
\EndFor
\State \Return \textsf{``no solution found for } $u \le J$\textsf{''}
\end{algorithmic}
\end{algorithm}

No integer is ever decomposed into prime factors: all operations on $s$, $v$, $M$ and $u$ are additions, divisions with remainder and comparisons on integers of size $\mathcal{O}(n+J)$.

\begin{theorem} \label{thm:4.11}
Algorithm~\ref{alg:geom_sieve} returns a pair $(u,s)$ with $u\le J$ satisfying Theorem~\ref{thm:4.9} whenever one exists, and performs $\mathcal{O}\big(\sqrt{n+J}+J\log(n+J)\big)$ elementary arithmetic operations.
\end{theorem}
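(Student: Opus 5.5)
The plan is to prove correctness and cost separately. Correctness rests on the factor-pairing observation made just before Algorithm~\ref{alg:geom_sieve}. Suppose $u\le J$ and $s\mid n+u$ with $s\equiv-1\pmod{4u}$, and put $v=(n+u)/s$. Since $sv=n+u\le n+J$, one of $s\le R$ or $v\le R$ holds, where $R=\lfloor\sqrt{n+J}\rfloor$ (take the integer part, using $\min(s,v)^2\le sv$).

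If $s\le R$, then $s\equiv3\pmod4$. This follows because $4\mid 4u\mid s+1$. So Phase~A visits $s$. The integer $M=n+u$ is a multiple of $s$ lying in $[n+1,n+J]$, and Phase~A enumerates every such multiple starting from $M_0=s\lceil (n+1)/s\rceil$. At $M=n+u$ the test $4u\mid s+1$ succeeds, so the algorithm returns. I would remark that it may return an earlier pair. That is harmless, since every returned pair satisfies Theorem~\ref{thm:4.9} by construction: $s\mid M=n+u$ and $4u\mid s+1$.

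If instead $v\le R$, Phase~B visits $v$. It reaches $M=n+u$ as a multiple of $v$ in the interval, recovers $s=M/v$, and both tests pass. Conversely, any pair returned in Phase~B has $s\mid M$ and $4u\mid s+1$. The congruence check $s\equiv3\pmod4$ is only a filter and costs nothing. This gives soundness and completeness for $u\le J$. The "absence" output is then correct by contraposition.

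For the cost, a Phase~A divisor $s$ or Phase~B cofactor $v$ costs $\mathcal{O}(1)$ for computing $M_0$, plus $\mathcal{O}(1)$ per multiple of it in $[n+1,n+J]$. There are at most $J/t+1$ such multiples for modulus $t$. Summing over $t\le R$ gives
$$\sum_{t\le R}\Big(\frac{J}{t}+2\Big)=\mathcal{O}\big(R+J\log R\big)=\mathcal{O}\big(\sqrt{n+J}+J\log(n+J)\big),$$
and this bounds both phases, with Phase~A over a subset of the $t$'s.

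The only point needing care is the $+1$ per modulus. That term is exactly what produces the $\sqrt{n+J}$ summand and cannot be absorbed into $J/t$ when $t>J$. I would keep it explicit. Beyond that, there is no real obstacle. The main thing to state carefully is that unit-cost arithmetic is in force, as in Section~\ref{sec:3.8}, so that divisions on integers of size $\mathcal{O}(n+J)$ count as single operations.
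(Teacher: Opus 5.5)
Your proposal is correct and follows the paper's proof essentially step for step. Correctness uses the same case split on whether $s\le R$ or $v\le R$, together with the converse check that every returned pair satisfies $s\mid n+u$ and $4u\mid s+1$; the cost uses the same count of at most $J/w+1$ inner iterations per modulus $w\le R$, summed to $\mathcal{O}(R+J\log R)$, and your integrality argument ($\min(s,v)^{2}\le sv\le n+J$ forces $\min(s,v)\le\lfloor\sqrt{n+J}\rfloor$) is slightly cleaner than the paper's ``up to rounding''.
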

\begin{proof}
\emph{Correctness.} Let $(u,s)$ be admissible with $u\le J$ and put $v=(n+u)/s$, so $sv=n+u\le n+J$ and $\min(s,v)\le R$ up to rounding. If $s\le R$ then $s\equiv3\pmod4$ and $n+u$ is a multiple of $s$ lying in $[n+1,n+J]$, so Phase A examines and accepts the pair. If $s>R$ then $v\le R$, and Phase B examines the multiple $M=n+u$ of $v$, recomputes $s=M/v$ and accepts. Conversely both phases return only pairs passing $4u\mid s+1$ with $s\mid n+u$.

\emph{Complexity.} In each phase the outer loop has $\mathcal{O}(R)$ steps of cost $\mathcal{O}(1)$, and for a fixed modulus $w\le R$ the inner loop runs $\lfloor\frac{n+J}{w}\rfloor-\lfloor\frac{n}{w}\rfloor\le\frac{J}{w}+1$ times. Summing, $\sum_{w\le R}(1+J/w)\ll R+J\log R\ll\sqrt{n+J}+J\log(n+J)$.
\end{proof}

\begin{corollary} \label{cor:4.12}
For $J=(\log n)^{\lambda}$, Algorithm~\ref{alg:geom_sieve} runs in $\mathcal{O}(\sqrt n)$ arithmetic operations; for $J=\mathcal{O}(n)$, i.e. for the exhaustive execution over the whole hyperbolic family (which by Theorem~\ref{thm:hyper_char} is $J=\lceil(n+1)/3\rceil$), it runs in $\mathcal{O}(n\log n)$.
\end{corollary}

For a single isolated prime the sieve is thus sub-linear, and saves a factor $J$ over trial division applied to each $n+u$ separately (which costs $\mathcal{O}(J\sqrt n)$), both methods being of order $n^{1/2+o(1)}$. Over an entire interval the picture changes completely.

\begin{proposition} \label{prop:batch_sieve}
The geometric segmented sieve can be run in batch over $[N,2N]$, resolving the hyperbolic family for every integer of the interval up to the exhaustive depth, in $\mathcal{O}(N \log^2 N)$ arithmetic operations, i.e. $\mathcal{O}(\log^{2}N)$ on average per integer.
\end{proposition}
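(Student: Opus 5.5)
The natural approach inverts the loops of Algorithm~\ref{alg:geom_sieve}. Instead of fixing $n$ and scanning its multiples, we iterate once over the candidate pairs $(u,s)$ with $s\equiv-1\pmod{4u}$ and mark every integer $n\in[N,2N]$ with $s\mid n+u$. By Theorem~\ref{thm:4.9}, an integer $n$ belongs to the hyperbolic family exactly when it is marked by some pair. Theorem~\ref{thm:hyper_char} bounds the exhaustive depth by $u\le(n+1)/3\le(2N+1)/3$, so the range of $u$ is $\mathcal{O}(N)$ and finite. We keep a Boolean array indexed by $[N,2N]$. For each admissible $(u,s)$, the integers $n=M-u$ with $M$ a multiple of $s$ in $[N+u,2N+u]$ form a single arithmetic progression of step $s$. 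The first term costs one division, and marking the rest costs $\mathcal{O}(N/s+1)$ operations. No factorization occurs anywhere, in line with the rest of Section~\ref{sec:4.4}.

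To count the work, use the symmetry between $s$ and the cofactor $v=(n+u)/s$, mirroring the two phases. We could sum directly over all $s\equiv-1\pmod{4u}$ up to $3N$, but the ``$+1$'' per progression would then cost $\sum_u N/u\asymp N\log N$ progressions. Each progression has average length $\sum_{s}N/s$ over $s$ in the class, about $\frac{N}{4u}\log N$, so the total would be $\sum_{u\le N}\frac{N\log N}{u}\ll N\log^{2}N$. That is already the claimed bound, so the direct approach may suffice.

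More carefully, write $s=4u\alpha-1$ with $\alpha\ge1$. Then $\sum_{\alpha\ge1}\big(\frac{N}{4u\alpha-1}+1\big)$, restricted to $s\le 3N$, is $\ll \frac{N}{u}\log N$ in total: the number of terms is at most $3N/(4u)$, and the harmonic part contributes $\frac{N}{u}\log N$. Summing over $u\le (2N+1)/3$ gives $N\log N\cdot\sum_{u\le N}1/u\ll N\log^{2}N$. The bookkeeping per progression (computing the start $\lceil (N+u)/s\rceil s-u$ and the admissibility check $s\equiv-1\pmod{4u}$, which holds by construction) is $\mathcal{O}(1)$. At the end, a single pass over $[N,2N]$ reads off the unmarked integers. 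Intersecting with primes $\equiv1\pmod{24}$ via an ordinary segmented Eratosthenes sieve costs $\mathcal{O}(N\log\log N)$, which is negligible.

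The main thing to check is that the constraint $v\ge1$ and the parity/positivity conditions of Theorem~\ref{thm:4.9} impose nothing beyond $s\mid n+u$ and $s\equiv-1\pmod{4u}$, so that no admissible pair is missed and none is spurious. This follows from the proof of Theorem~\ref{thm:4.9}, since $v=(n+u)/s$ is automatically a positive integer. The other point is that truncating $s\le 2N+u\le3N$ loses nothing, because $s\mid n+u$ forces $s\le n+u$. I expect the only delicate step to be the accounting of the ``$+1$'' terms, that is, the empty progressions. Bounding the number of pairs $(u,\alpha)$ with $4u\alpha-1\le3N$ by $\sum_u 3N/(4u)\ll N\log N$ settles it, and the average per integer follows by dividing by $N$.
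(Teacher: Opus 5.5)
Your proposal is correct and is essentially the paper's proof: invert the loops, take $u$ up to the exhaustive bound of Theorem~\ref{thm:hyper_char} and $s=4u\alpha-1\le 3N$, mark the progression $n\equiv-u\pmod s$ in $[N,2N]$ at cost $\mathcal{O}(1+N/s)$, and sum to get $\mathcal{O}(N\log N)$ for the per-progression overhead plus $\mathcal{O}(N\log^{2}N)$ for the harmonic part. The aside about using the $s\leftrightarrow v$ symmetry of the two phases is unnecessary, as you conclude yourself, since the direct count already gives the bound.
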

\begin{proof}
By Theorem~\ref{thm:4.9} the condition on $n$ is that some $n+u$ have a divisor $s=4u\alpha-1$; the condition $s\mid n+u$ is the congruence $n\equiv-u\pmod s$. We therefore invert the search: for $u$ from $1$ to $N$ and $\alpha$ from $1$ to $\lfloor 3N/(4u)\rfloor$ (so that $s\le 3N$, which suffices since $n+u\le 3N$), form $s=4u\alpha-1$ and mark the elements of $[N,2N]$ congruent to $-u$ modulo $s$, at cost $\mathcal{O}(1+N/s)$. By Theorem~\ref{thm:hyper_char} the range $u\le N$ is exhaustive for $n\le2N$. The total is
$$\sum_{u\le N} \sum_{\alpha\le 3N/(4u)}\Big(1+\frac{N}{4u\alpha-1}\Big)\ \ll\ \sum_{u\le N}\frac{3N}{4u}\ +\ \frac{N}{4}\sum_{u\le N}\frac{1}{u}\sum_{\alpha\le 3N/(4u)}\frac1\alpha .$$
The first sum is $\mathcal{O}(N\log N)$. For the second, $\sum_{\alpha\le X}1/\alpha\le\log X+1$ gives
$$\frac{N}{4}\sum_{u\le N}\frac{\log(3N/4u)+1}{u}\ =\ \frac{N}{8}(\log N)^{2}+\mathcal{O}(N\log N),$$
since $\sum_{u\le N}\frac{\log(N/u)}{u}=\frac12(\log N)^{2}+\mathcal{O}(\log N)$. Hence $\mathcal{O}(N\log^{2}N)$ in total.
\end{proof}

The gain is genuinely a batch phenomenon: prime by prime, either method costs $\mathcal{O}(N^{3/2+o(1)})$ over $[N,2N]$, whereas inverting the search space collapses the interval cost to $\mathcal{O}(N\log^{2}N)$. Locally the factorization-free formulation buys a factor $J$; globally it buys a factor $N^{1/2-o(1)}$. The average of $\mathcal{O}(\log^{2}N)$ per integer is an average and not a per-integer guarantee, and the procedure remains restricted to the sub-family $a=1$, hence subject to the non-exhaustiveness of Corollary~\ref{cor:proper_subfamily}.

\subsection{A complete factorization-free batch procedure on $[N,2N]$} \label{sec:4.5}

The same inversion applies verbatim to the two branches of Corollary~\ref{cor:two_branches}, and then yields a \emph{complete} decision procedure for every prime of the interval, at essentially the same cost on one half of the dichotomy and at a quadratic cost on the other. Locating that asymmetry is the point of this subsection.

We use~\eqref{eq:PW2} and~\eqref{eq:PW1}, both congruence conditions on $n$ modulo an explicit modulus and so both sievable: $n$ has a Type~II solution iff there are $u,v,e\ge1$ with $e\mid u+v$ and $n\equiv-e \pmod{4uv}$, where $4uv\le 2n$~\cite[(3.4)]{pomerance2026}, so $uv\le N$ for $n\le 2N$, and $e\le u+v$; and $n$ has a Type~I solution iff there are $u,d,f\ge1$ with $f\mid 4u^{2}d+1$ and $n\equiv-f\pmod{4ud}$, where $4ud\le 2n+1$~\cite[Lemma 7.4]{pomerance2026}, so $ud\le N+1$, and $f\le\frac53 n$.

Two apparent obstacles are removed by enumerating a cofactor rather than a divisor. The first is the enumeration of the divisors $e$ of $u+v$: writing $a=(u+v)/e$ for the modulator and enumerating $(u,a,e)$ instead of $(u,v)$ together with the divisors of $u+v$, so that $v:=ea-u$, visits every admissible triple exactly once, with no divisor list and no wasted step. The second is the enumeration of the divisors $f$ of $4u^{2}d+1$: for fixed $u$ and fixed $e$ with $\gcd(e,2u)=1$, the condition $e\mid 4u^{2}d+1$ is the single congruence $d\equiv-(4u^{2})^{-1}\pmod e$, so the admissible $d$ form one arithmetic progression modulo $e$, and $f=(4u^{2}d+1)/e$ follows by one division. No factorization and no divisor enumeration is ever performed; the only non-trivial operation is one extended Euclidean algorithm per pair $(u,e)$, which we count at unit cost along with the rest of the arithmetic.

\begin{algorithm}[H]
\caption{Complete batch decision on $[N,2N]$} \label{alg:batch_complete}
\begin{algorithmic}[1]
\Require an integer $N\ge2$
\Ensure the list of primes $n\in(N,2N]$ possessing no Erd\H{o}s--Straus decomposition
\State $\mathrm{mark}[\,\cdot\,]\gets \textsf{false}$ on $(N,2N]$
\For{$u=1$ \textbf{to} $\lfloor\sqrt N\rfloor$}\Comment{Type II pass}
    \For{$a=1$ \textbf{to} $\lfloor N/u\rfloor+u$}\Comment{$a=(u+v)/e$ is the modulator}
        \For{$e=\lceil 2u/a\rceil$ \textbf{to} $\lfloor(\lfloor N/u\rfloor+u)/a\rfloor$}
            \State $v\gets ea-u$ \Comment{$v\ge u$, $uv\le N$, and $e\mid u+v$ by construction}
            \State mark all $n\equiv -e \pmod{4uv}$ in $(N,2N]$
        \EndFor
    \EndFor
\EndFor
\For{$u=1$ \textbf{to} $N$}\Comment{Type I pass}
    \For{$e=1$ \textbf{to} $\lfloor\sqrt{4u(N+1)+1}\rfloor$ with $\gcd(e,2u)=1$}
        \State $d_0\gets -(4u^{2})^{-1}\bmod e$
        \For{$d\equiv d_0 \pmod e$, $1\le d\le \lfloor (N+1)/u\rfloor$}
            \State $f\gets (4u^{2}d+1)/e$
            \State mark all $n\equiv-f \pmod{4ud}$ and all $n\equiv -e\pmod{4ud}$ in $(N,2N]$
        \EndFor
    \EndFor
\EndFor
\State \Return the unmarked primes of $(N,2N]$
\end{algorithmic}
\end{algorithm}

\begin{theorem}\label{thm:batch_complete}
Algorithm~\ref{alg:batch_complete} marks a prime $n\in(N,2N]$ if and only if $4/n$ is a sum of three unit fractions, and it performs no integer factorization. Its Type~II pass executes in $\mathcal{O}(N(\log N)^{3})$ elementary arithmetic operations, and its Type~I pass in $\Theta(N^{2})$, with implied constant $\tfrac43+o(1)$; the total cost is $\Theta(N^{2})$.
\end{theorem}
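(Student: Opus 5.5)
The proof has three parts: correctness of the marking, absence of factorization, and the cost count for each pass. The Type~I pass is the delicate one, because there a lower bound with an explicit constant is required rather than just an upper bound.

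\emph{Correctness.} I would check that each pass marks exactly the primes of $(N,2N]$ satisfying \eqref{eq:PW2}, respectively \eqref{eq:PW1}. By Definition~\ref{def:typeI_typeII} and the completeness of these criteria for prime $n$, this gives the ``if and only if''.

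For Type~II, the condition is symmetric in $u,v$, so I may assume $u\le v$. The bound $4uv\le 2n\le 4N$ from~\cite[(3.4)]{pomerance2026} gives $uv\le N$, hence $u\le\sqrt N$ and $v\le N/u$. Writing $a=(u+v)/e$, every admissible $(u,v,e)$ corresponds to exactly one $(u,a,e)$ with $v=ea-u$. The inequality $v\ge u$ becomes $e\ge 2u/a$, and $u+v\le\lfloor N/u\rfloor+u$ gives the upper limits on $a$ and $e$. So the loop bounds are exhaustive. The marked class $n\equiv-e\pmod{4uv}$ is exactly the condition $4uv\mid n+e$.

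For Type~I, I would fix an admissible $(u,d,f)$ with $ud\le N+1$ from~\cite[Lemma~7.4]{pomerance2026}. Put $e'=(4u^2d+1)/f$, so $e'f=4u^2d+1$. The smaller of $e',f$ is at most $\sqrt{4u^2d+1}\le\sqrt{4u(N+1)+1}$; call it $e$. It is automatically coprime to $2u$, since it divides $4u^2d+1$. Then $d$ lies in the progression $d\equiv-(4u^2)^{-1}\pmod e$. The line marking both classes $-f$ and $-e$ modulo $4ud$ covers the case where the divisor occurring in \eqref{eq:PW1} is the small factor as well as the case where it is the cofactor.

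Conversely, every marked class comes from a genuine triple, so nothing is marked spuriously. The only nonlinear operations are $\gcd$ and a modular inverse, so no factorization occurs.

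\emph{Type~II cost.} Marking a class modulo $4uv$ in an interval of length $N$ costs $\mathcal{O}(1+N/(uv))$. The number of triples is $\sum_{u\le\sqrt N}\sum_a (N/u+u)/a\ll N(\log N)^2$. For the marking cost I would regroup by $(u,v)$: the number of pairs $(a,e)$ attached to a given $(u,v)$ is at most $\tau(u+v)$. Hence the cost is $\ll N\sum_{u\le v,\,uv\le N}\tau(u+v)/(uv)$. For fixed $u$, the average of $\tau(u+v)$ over $v$ in dyadic ranges is $\ll\log N$. Summing $1/v$ and then $1/u$ then gives $\mathcal{O}(N(\log N)^3)$.

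\emph{Type~I cost, the main step.} I would split the cost into three pieces.
\begin{enumerate}[(a)]
\item The bare iterations of the $e$-loop, including those discarded by the coprimality test.
\item The iterations of the $d$-progression.
\item The marking.
\end{enumerate}
Piece (b) is $\sum_u\sum_{e\le\sqrt{4uN}}\big(N/(ue)+1\big)$. The $N/(ue)$ part is $\ll N(\log N)^2$, and the $+1$ part is absorbed into (a). Piece (c) is $\sum_{u,d}\tau\text{-weighted}\,(1+N/(ud))$, which I expect to be $\mathcal{O}(N(\log N)^{3})$ by the same averaging as in the Type~II pass.

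Piece (a) is exactly
\[
\sum_{u\le N}\big\lfloor\sqrt{4u(N+1)+1}\big\rfloor=2\sqrt N\sum_{u\le N}\sqrt u+\mathcal{O}(N^{3/2})=\tfrac43N^2+\mathcal{O}(N^{3/2}).
\]
This is simultaneously a lower bound, since every loop index is visited and tested at unit cost, and the dominant upper term.

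Combining the three pieces gives $\Theta(N^2)$ with constant $\tfrac43+o(1)$. I expect the main obstacle to be the bookkeeping that keeps (b) and (c) genuinely $o(N^2)$, in particular the divisor-sum average in (c). The headline constant itself comes directly from the displayed sum for (a).
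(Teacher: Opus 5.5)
Your proposal follows the paper's proof essentially step for step. It uses the same completeness argument: the bijection $(u,v,e)\leftrightarrow(u,a,e)$ for Type~II, and for Type~I the smaller factor of $4u^{2}d+1$ lying below the $d$-independent bound $\sqrt{4u(N+1)+1}$, with both classes $-e$ and $-f$ marked. It uses the same regrouping by $(u,v)$ with weight $\tau(u+v)$ and dyadic averaging for the Type~II cost. And it uses the same unconditional count $\sum_{u\le N}\lfloor\sqrt{4u(N+1)+1}\rfloor=(\tfrac43+o(1))N^{2}$ of the $e$-scan, serving as both lower bound and dominant term.

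The one step whose stated justification fails is (c).

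\begin{itemize}
\item The Type~II averaging works because, for fixed $u\le v$ with $v\in[B,2B)$, $u+v$ runs over an interval of length $B$ inside $[1,4B)$, so Dirichlet's bound applies.
\item In Type~I, for fixed $u$, $4u^{2}d+1$ runs over a progression of modulus $4u^{2}$ with only about $(N+1)/u$ terms. That is fewer than $u$ once $u>\sqrt{N+1}$, so the progression is short compared with its modulus.
\item The elementary divisor-switching bound then gives only $\sum_{d\in[D,2D)}\tau(4u^{2}d+1)\ll D\log N+u\sqrt{D}$.
\item Inserted into $N\sum\tau(4u^{2}d+1)/(ud)$, the term $u\sqrt{D}$ gives a bound of order $N^{2}$. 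That is the same order as (a), and it would lose the constant $\tfrac43$.
\end{itemize}

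The paper gets $\mathcal{O}(N(\log N)^{3})$ for the productive triples from the mean value $\sum_{X/2\le ud\le X}\tau(4u^{2}d+1)/\varphi(ud)\ll(\log X)^{2}$ of~\cite[(3.3)]{pomerance2026}, not from the Type~II averaging.

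For the statement you are proving, however, only $o(N^{2})$ is needed for (c). That is immediate from $\tau(m)\ll_{\varepsilon}m^{\varepsilon}$ with $m\le 4u(N+1)+1\ll N^{2}$: the marking cost is $\ll N^{2\varepsilon}\sum_{ud\le N+1}N/(ud)\ll N^{1+3\varepsilon}$. With that one-line replacement your argument is complete.
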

\begin{proof}
\emph{Completeness.} For $n$ prime every decomposition is of Type~I or Type~II~\cite[\S2]{elsholtz2013},~\cite[\S2]{pomerance2026}, and~\eqref{eq:PW2},~\eqref{eq:PW1} are equivalences~\cite[Cor.~2.2, Cor.~2.4]{pomerance2026}. The ranges $uv\le N$, $ud\le N+1$, $e\le u+v$, $f\le\frac53n$ are exactly the bounds proved there, so no admissible parameter is omitted. In the Type~I pass, marking both $f$ and $e$ as candidate divisors enumerates every divisor of $4u^{2}d+1$, provided the scan of $e$ reaches $\sqrt{4u^{2}d+1}$ for the $d$ at hand. The printed loop scans the \emph{$d$-independent} range $e\le\lfloor\sqrt{4u(N+1)+1}\rfloor$, which is a superset: for every $d\le\lfloor(N+1)/u\rfloor$ one has $4u^{2}d+1\le 4u(N+1)+1$. Hence for any factorisation $4u^{2}d+1=ef$ at least one of $e,f$ is at most $\sqrt{4u^{2}d+1}\le\lfloor\sqrt{4u(N+1)+1}\rfloor$ and is therefore reached by the scan, at which point the other is recovered by one division and both are marked. (The $d$-independence of the range is not a convenience: it is precisely what produces the $\Theta(N^{2})$ below, and a $d$-dependent scan is not available, since $e$ is enumerated before $d$.)

\emph{Type~II pass.} The parametrization $(u,v,e)\leftrightarrow(u,a,e)$ with $a=(u+v)/e$ is a bijection onto the triples enumerated, so every admissible triple is visited exactly once and none in vain; in particular the divisors of $u+v$ are never enumerated. The cost is therefore
$$\sum_{\substack{u\le v,\ uv\le N}}\tau(u+v)\Big(1+\frac{N}{4uv}\Big).$$
Split into dyadic boxes $u\in[A,2A)$, $v\in[B,2B)$ with $AB\le N$; there are $\mathcal{O}((\log N)^{2})$ of them. Inside a box $\sum \tau(u+v)\ll AB\log N$, since for each $u$ one has $\sum_{v<2B}\tau(u+v)\le\sum_{w<2(A+B)}\tau(w)\ll B\log N$, and $uv\asymp AB$; the box therefore contributes $\ll AB\log N+\frac{N}{AB}\cdot AB\log N\ll N\log N$, and the sum over boxes is $\mathcal{O}(N(\log N)^{3})$.

\emph{Type~I pass.} The count of \emph{productive} triples is of the same order: each $(u,e,d)$ corresponds to exactly one divisor of $4u^{2}d+1$, and
$$\sum_{ud\le N+1}\tau(4u^{2}d+1)\Big(1+\frac{N}{4ud}\Big)\ll N\sum_{ud\le N+1}\frac{\tau(4u^{2}d+1)}{ud}\ll N(\log N)^{3},$$
by $\sum_{X/2\le ud\le X}\tau(4u^{2}d+1)/\varphi(ud)\ll(\log X)^{2}$~\cite[(3.3)]{pomerance2026} together with $\varphi(ud)\ll ud$ and a dyadic summation. The cost of the pass is nevertheless not of this order, because the enumeration is not free: the loop scans every candidate small divisor $e\le\sqrt{4u^{2}d+1}$, and most such $e$ are unproductive. With $d\le\lfloor(N+1)/u\rfloor$ the scan runs to $e\le \lfloor\sqrt{4u(N+1)+1}\rfloor$, so the empty scans alone cost, the coprimality test $\gcd(e,2u)=1$ being one of the operations counted for each $e$ in the range (generating only the coprime $e$ would multiply the constant by the mean value of $\varphi(2u)/(2u)$, without affecting the order),
$$\sum_{u\le N}2\sqrt{u(N+1)}\ =\ 2\sqrt N\cdot\tfrac23N^{3/2}\,(1+o(1))\ =\ \big(\tfrac43+o(1)\big)N^{2},$$
which dominates every other term. This is an equality and not merely an upper bound, since the scan is executed unconditionally; hence $\Theta(N^{2})$.
\end{proof}

\paragraph{Discussion.} In substance, Algorithm~\ref{alg:batch_complete} is the scheme underlying the large-scale verifications of Salez~\cite{salez2014} and of Mihnea and Dumitru~\cite{mihnea2025}; we claim no novelty for the verification itself, only for the explicit statement of its complexity and for the observation that it is factorization-free. What Theorem~\ref{thm:batch_complete} does and does not establish should be stated precisely. It does \emph{not} make the interval problem quasi-linear: at $\Theta(N^{2})$ it is no better than the natural alternative, which runs the batch sieve of Proposition~\ref{prop:batch_sieve} at cost $\mathcal{O}(N\log^{2}N)$ and then treats the residual set prime by prime, for a total of $N^{2-o(1)}$, since every known upper bound for the exceptional set, including Corollary~\ref{cor:optimal_tradeoff} and Vaughan's $N\exp(-c_V(\log N)^{2/3})$, is of the shape $N^{1-o(1)}$, and the per-prime cost is at best $n^{1-o(1)}$. Nor does it buy anything for a single prime, for which the same criteria still require $\Theta(n)$ markings. What it does buy is the \emph{location} of the obstruction: the Type~II half of the problem is genuinely quasi-linear on an interval, while the entire quadratic cost sits in the Type~I half, where the divisors of $4u^{2}d+1$ must be produced without factorization. That asymmetry between the two halves of the Elsholtz--Tao dichotomy is, to our knowledge, not recorded elsewhere.

The quadratic cost is not an artefact of a careless schedule. Let $M=4u^{2}d+1$ with $d\le(N+1)/u$, so $M\le 4u(N+1)+1$; any factorisation $M=ef$ has $e\le E$ or $f\le 4u(N+1)/E$, since otherwise $ef>4u(N+1)\ge M-1$. A procedure which, for each $u$ separately, produces the divisor pairs of the progression $\{4u^{2}d+1\}_{d}$ by scanning one side must therefore scan a range of length at least $E$ on one side and $4uN/E$ on the other, at a cost $\gg E+4uN/E\gg\sqrt{uN}$ by the arithmetic--geometric mean inequality, minimised at $E=2\sqrt{uN}$, which is the choice made above; summing over $u\le N$ returns $\gg N^{2}$. This is an argument about the scanning schedule of Algorithm~\ref{alg:batch_complete} and not a lower bound in any formal model of computation. Whether a sub-quadratic factorization-free procedure exists by some other route is left open.

\section{Density of the solution space} \label{sec:5}

This section contains the analytic results. Section~\ref{sec:fixed_depth_sub} determines the exact dimension of the sieve problem at fixed depth and is unconditional at every depth. Section~\ref{sec:5.1} isolates a quadratic-residue obstruction at a fixed shift, determines exactly which of the two branches of Corollary~\ref{cor:two_branches} it affects, and produces an unconditional two-sided blind set. Section~\ref{sec:5.2} runs Montgomery's large sieve, in the form used by Vaughan~\cite{vaughan1970} and reproved in~\cite[\S4]{pomerance2026}, on the two-parameter system truncated at a depth growing with $N$. Section~\ref{sec:5.2.7} compares the three regimes and records a heuristic model for the shape of the curve.

Throughout, $\mathcal{N}(N)=\{n\le N: n\equiv1\pmod{24},\ n\ \text{prime}\}$.

\begin{definition}\label{def:EP}
For a finite set $\mathcal{P}\subseteq\{(u,a)\in\N^{2}:\gcd(u,a)=1\}$ let $E_{\mathcal{P}}(N)$ be the set of $n\in\mathcal{N}(N)$ for which no pair $(u,a)\in\mathcal{P}$ admits a divisor $s$ of $an+u$ with $s\equiv-1\pmod{4ua}$. We write
\begin{align*}
E_1(N;J)&=E_{\mathcal{P}}(N)\quad \text{for}\quad \mathcal{P}=\{(u,1):u\le J\},\\
E_2(N;J)&=E_{\mathcal{P}}(N)\quad \text{for}\quad \mathcal{P}=\{(u,a)\in[1,J]^{2}:\gcd(u,a)=1\},
\end{align*}
and set $\mathfrak{A}(J)=\#\{(u,a)\in[1,J]^{2}:\gcd(u,a)=1\}=\frac{6}{\pi^{2}}J^{2}+\mathcal{O}(J\log J)$.
\end{definition}

By Theorem~\ref{thm:4.9}, $E_1(N;J)$ is exactly the set of primes at which Algorithm~\ref{alg:geom_sieve} fails at depth $J$, which is the set already denoted so in Section~\ref{sec:4.3bis}; by Theorem~\ref{thm:gen_criterion} together with Lemma~\ref{lem:coprime_reduction}, $E_2(N;J)$ is the set at which the complete Type~II criterion admits no witness with both parameters at most $J$, the restriction to coprime pairs in Definition~\ref{def:EP} discards nothing, since a witness $s$ for a pair $(u,a)$ with $\gcd(u,a)=d>1$ is a witness for the coprime pair $(u/d,a/d)$, whose entries are no larger. Thus $E_2(N;J)\subseteq E_1(N;J)$.

\subsection{The exact sieve dimension at fixed depth} \label{sec:fixed_depth_sub}

The mechanism is elementary, and it deliberately avoids the subgroup generated by the prime factors. That route is blocked as soon as $(\Z/4m)^{\times}$ has exponent greater than $2$, i.e. for every $m\notin\{1,2,3,6\}$: modulo $16$, two primes $q_1\equiv3$ and $q_2\equiv7$ produce the divisor residues $\{1,3,7,5\}$, which avoid $-1=15$, while $\langle 3,7\rangle=(\Z/16)^{\times}$ contains $15$. An involution serves instead, and is unconditional at every modulus.

\begin{lemma}\label{lem:involution}
Let $m\ge1$, let $M$ be a positive integer, and let $T\subseteq(\Z/4m)^{\times}$ be the set of residues modulo $4m$ of the prime factors of $M$ that are coprime to $4m$. If $M$ has no divisor $\equiv-1\pmod{4m}$, then
$$|T|\ \le\ \tfrac12\varphi(4m),$$
and the bound is attained, by $T=\ker\chi$ for a quadratic character $\chi$ mod $4m$ with $\chi(-1)=-1$, which always exists.
\end{lemma}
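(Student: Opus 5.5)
The plan is to show that $T$ and its image under the involution $\iota(x)=-x^{-1}$ are disjoint subsets of $(\Z/4m)^{\times}$.

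First, $\iota$ is an involution of $(\Z/4m)^{\times}$, since $\iota(\iota(x))=-(-x^{-1})^{-1}=x$. It has no fixed point: a fixed point would satisfy $x^{2}\equiv-1\pmod{4m}$, hence $x^{2}\equiv-1\pmod 4$, which is impossible because squares are $0$ or $1$ modulo $4$. So the orbits of $\iota$ partition the group into $\tfrac12\varphi(4m)$ pairs $\{x,-x^{-1}\}$.

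Next I would show that $T$ contains at most one element of each pair. Suppose $x$ and $-x^{-1}$ both lie in $T$. If they are distinct residues, they come from distinct primes $p\ne q$ dividing $M$ with $p\equiv x$ and $q\equiv -x^{-1}$. Then $pq$ divides $M$ and $pq\equiv-1\pmod{4m}$, contradicting the hypothesis. The case where both come from the same prime would need $x=-x^{-1}$, which the first step rules out. Counting one element per pair gives $|T|\le\tfrac12\varphi(4m)$.

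For attainment, I need a quadratic character $\chi$ modulo $4m$ with $\chi(-1)=-1$. The character $\chi_{-4}$, the nontrivial character mod $4$ lifted to mod $4m$, is real and odd. Take $T=\ker\chi$, which has size $\tfrac12\varphi(4m)$. By Dirichlet's theorem, choose distinct primes $p_t\equiv t\pmod{4m}$, one for each $t\in T$, and let $M=\prod_t p_t$. Every divisor of $M$ is a product of elements of $\ker\chi$ and so lies in $\ker\chi$. Since $\chi(-1)=-1$, the residue $-1$ is not in $\ker\chi$, so $M$ has no divisor $\equiv-1$. Its residue set is exactly $T$, so the bound is attained.

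I expect no serious obstacle. The only delicate point is the bookkeeping in the second step: residues in $T$ must be tied to distinct prime factors, and the pair product must be a divisor of $M$. This holds because $p\ne q$, so $pq\mid M$ even when $M$ is not squarefree.
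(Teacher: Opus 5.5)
Your proof is correct and follows the paper's argument: $\iota(x)=-x^{-1}$ has no fixed point because $-1$ is not a square modulo $4$, and two residues of $T$ lying in one orbit would come from distinct primes whose product is a divisor $\equiv-1\pmod{4m}$. For attainment you are more concrete than the paper, which obtains $\chi$ abstractly from $-1\notin\big((\Z/4m)^{\times}\big)^{2}$ and only checks that $T\subseteq\ker\chi$ rules out such a divisor; your explicit lift of the nontrivial character mod $4$, together with Dirichlet's theorem to build an $M$ whose residue set is exactly $\ker\chi$, supplies the realization step that the paper leaves implicit.
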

\begin{proof}
Since $4\mid4m$, reduction modulo $4$ is a homomorphism $(\Z/4m)^{\times}\to\{\pm1\}$ carrying every square to $1$ and $-1$ to $-1$. Hence $-1$ is not a square in $(\Z/4m)^{\times}$, for any $m$. Consequently the involution $\iota(x)=-x^{-1}$ has no fixed point, since $\iota(x)=x$ would force $x^{2}=-1$; it therefore partitions $(\Z/4m)^{\times}$ into exactly $\varphi(4m)/2$ orbits of size $2$.

Suppose $x\in T$ and $\iota(x)\in T$. These are distinct residues, hence are carried by two \emph{distinct} primes $q_1\equiv x$ and $q_2\equiv\iota(x)$ dividing $M$; then $q_1q_2\mid M$ and $q_1q_2\equiv x\cdot(-x^{-1})\equiv-1\pmod{4m}$, contrary to hypothesis. So $T$ meets every $\iota$-orbit at most once, whence $|T|\le\varphi(4m)/2$. (The orbit of $-1$ is $\{-1,1\}$, and $-1\notin T$, since a prime factor $\equiv-1$ would itself be a forbidden divisor.)

For attainment, $-1\notin\big((\Z/4m)^{\times}\big)^{2}$ means that $-1$ is non-trivial in $(\Z/4m)^{\times}/\big((\Z/4m)^{\times}\big)^{2}$, so some quadratic character $\chi$ satisfies $\chi(-1)=-1$. A divisor of $M$ congruent to $-1$ modulo $4m$ is coprime to $4m$, hence is a product of prime factors of $M$ coprime to $4m$; if all of these lie in $\ker\chi$, so does the divisor, while $-1\notin\ker\chi$. And $|\ker\chi|=\varphi(4m)/2$.
\end{proof}

\begin{theorem}\label{thm:fixed_depth}
Let $\mathcal{P}$ be a fixed finite set of coprime pairs and $P=|\mathcal{P}|$. Then
$$|E_{\mathcal{P}}(N)|\ \ll_{\mathcal{P}}\ \frac{N}{(\log N)^{1+P/2}},$$
and the exponent $P/2$ is the exact dimension of the covering constructed below: no admissible choice of the sets $T_{u,a}$ removes fewer than $P/2$ residue classes per prime on average, and the value $P/2$ is attained, so that no better exponent is available from a covering of this shape. In particular, for every fixed $J\ge1$,
$$|E_1(N;J)|\ \ll_J\ \frac{N}{(\log N)^{1+J/2}},\qquad
|E_2(N;J)|\ \ll_J\ \frac{N}{(\log N)^{1+\mathfrak{A}(J)/2}} .$$
\end{theorem}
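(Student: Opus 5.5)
Fix $n\in E_{\mathcal{P}}(N)$. For each $(u,a)\in\mathcal{P}$ the integer $M_{u,a}=an+u$ has no divisor $\equiv-1\pmod{4ua}$. By Lemma~\ref{lem:involution} applied with $m=ua$, the set $T_{u,a}(n)$ of residues modulo $4ua$ of the prime factors of $an+u$ coprime to $4ua$ has at most $\varphi(4ua)/2$ elements. Since $\mathcal{P}$ is finite, there are only finitely many possible tuples $(T_{u,a})_{(u,a)\in\mathcal{P}}$ with $|T_{u,a}|\le\varphi(4ua)/2$, and their number depends on $\mathcal{P}$ alone. Hence $E_{\mathcal{P}}(N)$ lies in a union, of size $O_{\mathcal{P}}(1)$, of the sets
$$\mathcal{S}\big((T_{u,a})\big)=\{n\in\mathcal{N}(N):\ \text{every prime } p\mid an+u \text{ with } p\nmid 4ua \text{ satisfies } p \bmod 4ua\in T_{u,a}\}.$$
It therefore suffices to bound each $\mathcal{S}$ by $N(\log N)^{-1-P/2}$, uniformly over the admissible tuples.

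\textbf{Sieve setup.} Fix a tuple and let $J_0=\max\{u,a:(u,a)\in\mathcal{P}\}$. For a prime $p>\max(J_0^{2},4J_0^{2},24)$, an $n\in\mathcal{S}$ with $n>z$ satisfies:
\begin{enumerate}[(a)]
    \item $p\nmid n$, because $n$ is prime;
    \item for every $(u,a)\in\mathcal{P}$ with $p\bmod 4ua\notin T_{u,a}$, one has $p\nmid an+u$, that is, $n\not\equiv-u\,a^{-1}\pmod p$.
\end{enumerate}
By Lemma~\ref{lem:distinct_classes} the classes $-ua^{-1}$ are pairwise distinct modulo $p$. None of them is $0$, since $p\nmid u$. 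So exactly
$$\varrho(p)=1+\#\{(u,a)\in\mathcal{P}:\ p\bmod 4ua\notin T_{u,a}\}$$
classes are removed modulo $p$. The finitely many small primes are left unsifted, and the condition $n\equiv1\pmod{24}$ only changes constants.

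\textbf{Dimension.} By the prime number theorem in arithmetic progressions modulo the fixed moduli $4ua$, the proportion of primes $p$ with $p\bmod 4ua\notin T_{u,a}$ is $1-|T_{u,a}|/\varphi(4ua)\ge\tfrac12$. This gives
$$\sum_{p\le w}\frac{\varrho(p)\log p}{p}=\kappa\log w+O_{\mathcal{P}}(1),\qquad \kappa=1+\sum_{(u,a)\in\mathcal{P}}\Big(1-\frac{|T_{u,a}|}{\varphi(4ua)}\Big)\ \ge\ 1+\frac P2 .$$
This is the standard dimension hypothesis, with $\kappa$ bounded in terms of $P$, and $\varrho(p)\le P+1<p$. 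An upper-bound sieve of dimension $\kappa$ (Selberg's sieve, or the Brun/fundamental-lemma form) with level $\mathcal{D}=N^{1/2}$ and sifting range $z=N^{\epsilon}$ applies: the remainders are trivially $O(\varrho(d))$ for $d$ squarefree. It gives
$$|\mathcal{S}|\ll_{\mathcal{P}} N\prod_{p\le z}\Big(1-\frac{\varrho(p)}{p}\Big)+N^{1/2+o(1)}\ll N(\log N)^{-\kappa}\le N(\log N)^{-1-P/2}.$$
The Mertens product is evaluated by partial summation from the display above.

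\textbf{Exactness and specialisations.} The dimension can be no smaller than $1+P/2$ for every tuple, by Lemma~\ref{lem:involution}. It equals $1+P/2$ for the tuple $T_{u,a}=\ker\chi_{u,a}$, where $\chi_{u,a}$ is a quadratic character with $\chi_{u,a}(-1)=-1$; the attainment part of the same lemma shows that such tuples are admissible. Hence $P/2$ is the exact dimension of the covering. Taking $\mathcal{P}=\{(u,1):u\le J\}$ gives $P=J$, and taking all coprime pairs in $[1,J]^{2}$ gives $P=\mathfrak{A}(J)$, which yields the two displayed bounds.

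\textbf{Main obstacle.} The step I expect to need most care is the sieve input. Here $\varrho(p)$ is not constant: it depends on $p\bmod\operatorname{lcm}(4ua)$, so the dimension condition holds only on average over residue classes. I would first verify that the Selberg upper bound needs only the two-sided Mertens-type estimate above together with $\varrho(p)\le P+1$. Both follow from Dirichlet's theorem for a fixed modulus, with constants depending on $\mathcal{P}$; this is exactly where the dependence $\ll_{\mathcal{P}}$ enters.
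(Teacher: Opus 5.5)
Your proof is correct, but it reaches the bound through a different sieve setup from the paper's.

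\emph{What is shared.} Both proofs use the same covering. Lemma~\ref{lem:involution} bounds each $T_{u,a}(n)$, and the $\mathcal{O}_{\mathcal{P}}(1)$ admissible tuples split $E_{\mathcal{P}}(N)$ into finitely many classes. Lemma~\ref{lem:distinct_classes} then makes the removed classes $-u\,a^{-1}$ pairwise distinct once $p>J^{2}$. The exactness clause is also the same in both: dimension attained at the tuple $T_{u,a}=\ker\chi_{u,a}$, shifted in your version by the $1$ coming from primality.

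\emph{The paper's route.} It sifts the sequence of \emph{primes} $n\le N$, removing $\varrho_T(p)\le P$ of the $\varphi(p)$ admissible classes. The sieve dimension is then $\sum(1-|T_{u,a}|/\varphi(4ua))\ge P/2$, and the extra factor $(\log N)^{-1}$ comes from $\pi(N)$. This has three costs:
\begin{enumerate}[(i)]
\item it must import the level of distribution $\mathcal{D}=N^{1/2-\varepsilon}$ from Bombieri--Vinogradov and run Selberg's sieve at $z=\mathcal{D}^{1/2}$;
\item it must control the remainder weighted by $\varrho_T(d)\le P^{\omega(d)}$ through Cauchy--Schwarz against Brun--Titchmarsh;
\item it must pass from $\varphi(p)$ to $p$ in the Mertens product.
\end{enumerate}

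\emph{Your route.} You sift all integers $n\le N$ and encode the primality of $n>z$ by the additional class $0\bmod p$. This raises the dimension to $1+\sum(1-|T_{u,a}|/\varphi(4ua))\ge 1+P/2$. Because the sifted set is now an interval, the remainders satisfy $|r_d|\le\varrho(d)$ trivially. A fundamental-lemma or Selberg upper bound with $z=N^{\epsilon}$ and level $N^{1/2}$ then suffices. You correctly note that this needs only the one-sided dimension condition for the sieve itself, plus the two-sided Mertens estimate to evaluate $V(z)$ from above. The only arithmetic input left is that Mertens-type estimate for primes in progressions to the fixed moduli $4ua$.

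\emph{Comparison.} Your version avoids Bombieri--Vinogradov, and with it Siegel's theorem, entirely. For fixed $\mathcal{P}$ it is the more economical proof. The paper's formulation buys only a presentation in which the covering's dimension $P/2$ appears without the shift by $1$.
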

\begin{proof}
Put $J=\max\{u,a:(u,a)\in\mathcal{P}\}$ and $Q_0=4\prod_{(u,a)\in\mathcal{P}}ua$. Let $n\in E_{\mathcal{P}}(N)$ and fix $(u,a)\in\mathcal{P}$. By Definition~\ref{def:EP} the integer $M=an+u$ has no divisor $\equiv-1\pmod{4ua}$, so Lemma~\ref{lem:involution} applies with $m=ua$: the set $T_{u,a}(n)$ of residues modulo $4ua$ of the prime factors of $an+u$ coprime to $4ua$ satisfies $|T_{u,a}(n)|\le\varphi(4ua)/2$. Each $T_{u,a}(n)$ is therefore an \emph{element} of the collection $\mathcal{T}_{u,a}$ of subsets of $(\Z/4ua)^{\times}$ of cardinality at most $\varphi(4ua)/2$, a collection whose cardinality depends on $(u,a)$ alone.

Fix a tuple $T=(T_{u,a})_{(u,a)\in\mathcal{P}}\in\prod\mathcal{T}_{u,a}$; there are $C(\mathcal{P})=\mathcal{O}_{\mathcal{P}}(1)$ of them, and $E_{\mathcal{P}}(N)$ is covered by the corresponding classes. Within one class, for every $(u,a)\in\mathcal{P}$ and every prime $p\le z$ with $p>2J^{2}$, $p\nmid Q_0$ and $p\bmod 4ua\notin T_{u,a}$, the integer $an+u$ is not divisible by $p$, i.e. $n$ avoids the residue class $-u\,a^{-1}$ modulo $p$; and by Lemma~\ref{lem:distinct_classes} these classes are pairwise distinct for distinct pairs of $\mathcal{P}$, since $p>J^{2}$. The number of classes removed modulo $p$ is therefore
$$\varrho_T(p)=\#\{(u,a)\in\mathcal{P}:\ p\bmod4ua\notin T_{u,a}\}\ \le\ P,$$
of mean value
$$\sum_{(u,a)\in\mathcal{P}}\Big(1-\frac{|T_{u,a}|}{\varphi(4ua)}\Big)\ \ge\ \frac P2$$
over the primes $p$, by Lemma~\ref{lem:involution} together with the prime number theorem in arithmetic progressions to the bounded moduli $4ua\le 4J^{2}$ (Siegel--Walfisz suffices, the moduli being independent of $N$).

The sieve is non-degenerate at every prime it uses. The removed classes are $-u\,a^{-1}\not\equiv0\pmod p$, since $p>J^{2}\ge u$, so they are admissible classes for a sequence of primes; and $\varrho_T(p)\le P\le\mathfrak{A}(J)\le J^{2}<p/2$ gives
$$\frac{\varrho_T(p)}{\varphi(p)}\ <\ \frac{p/2}{p-1}\ \le\ \frac34 ,$$
so the sifting density stays bounded away from $1$. This is why the threshold is taken at $2J^{2}$ rather than at the $J^{2}$ that Lemma~\ref{lem:distinct_classes} alone requires; setting $\varrho_T(p)=0$ for the finitely many primes $p\le2J^{2}$ costs only a factor $\mathcal{O}_{\mathcal{P}}(1)$ and leaves the mean value below unchanged. The uniform bound $\varrho_T(p)\le P$ then licenses Selberg's upper-bound sieve, in the form of~\cite[Thm.~3.2]{halberstam_richert} together with the standard lower bound for $G(z,\mathcal{D}^{1/2})$ in dimension at most $P$, applied to the sequence $\{n\le N\ \text{prime}\}$ at the level of distribution $\mathcal{D}=N^{1/2-\varepsilon}$ furnished by Bombieri--Vinogradov, with $z=\mathcal{D}^{1/2}$. Selberg's sieve is what is used here, and not a $\beta$-sieve: the ratio $\log\mathcal{D}/\log z=2$ lies far below the threshold $\beta(\kappa)$ attached to a sieve of dimension $\kappa=P/2$, whereas Selberg's upper bound recovers $\ll_{\mathcal{P}}XV(z)$ at $z=\mathcal{D}^{1/2}$ in any fixed dimension. The density of the removed classes among the primes is $\varrho_T(p)/\varphi(p)$, and the mean value above gives
$$\prod_{p\le z}\Big(1-\frac{\varrho_T(p)}{\varphi(p)}\Big)\ \ll_{\mathcal{P}}\ (\log z)^{-P/2},$$
the passage from $\varphi(p)=p-1$ to $p$ costing only the convergent sum $\sum_p\varrho_T(p)/(p(p-1))=\mathcal{O}_{\mathcal{P}}(1)$. The sieve remainder requires bounding $\sum_{d\le\mathcal{D}}\mu^2(d)\varrho_T(d)\max_{\gcd(\ell,d)=1}|\pi(N;d,\ell)-\mathrm{li}(N)/\varphi(d)|$. As in Lemma~\ref{lem:5.5}, separating the multiplicity $\varrho_T(d)\le P^{\omega(d)}$ by Cauchy--Schwarz against the trivial Brun--Titchmarsh bound absorbs it into the Bombieri--Vinogradov error term. For that tuple the sieve therefore gives $\ll_{\mathcal{P}}\pi(N)(\log z)^{-P/2}\ll_{\mathcal{P}}N(\log N)^{-1-P/2}$, using $\log z\asymp\log N$; summing over the $C(\mathcal{P})$ tuples preserves the bound.

For exactness, the attainment clause of Lemma~\ref{lem:involution} shows that $T_{u,a}=\ker\chi_{u,a}$ is an admissible value of $T_{u,a}$ for every pair, so that the tuple with $|T_{u,a}|=\varphi(4ua)/2$ for all $(u,a)$ occurs among the $C(\mathcal{P})$ tuples; for it the surviving condition at each pair is a Selberg--Delange condition of dimension exactly $1/2$, and the classes being pairwise distinct by Lemma~\ref{lem:distinct_classes}, the dimension of the covering is exactly $P/2$. This says that the exponent cannot be improved by sharpening Lemma~\ref{lem:involution}; it is not a lower bound for $|E_{\mathcal{P}}(N)|$, which would require in addition that the extremal tuple be realised by a positive proportion of the $n$ counted, and we make no such claim.

The two specialisations follow from $|\mathcal{P}|=J$ and $|\mathcal{P}|=\mathfrak{A}(J)=\frac{6}{\pi^{2}}J^{2}+\mathcal{O}(J\log J)$ respectively.
\end{proof}

\begin{corollary}\label{cor:fixed_depth_small}
$|E_1(N;1)|\ll N(\log N)^{-3/2}$, $|E_1(N;2)|\ll N(\log N)^{-2}$, $|E_1(N;3)|\ll N(\log N)^{-5/2}$, and so on with no restriction on $J$; while $\mathfrak{A}(5)=19$ and $\mathfrak{A}(6)=23$ give $|E_2(N;5)|\ll N(\log N)^{-10.5}$ and $|E_2(N;6)|\ll N(\log N)^{-12.5}$.
\end{corollary}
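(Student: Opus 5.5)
The corollary is a direct specialisation of Theorem~\ref{thm:fixed_depth}. My plan is to identify the set $\mathcal{P}$ in each case, compute $P=|\mathcal{P}|$, and read off the exponent $1+P/2$.

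For $E_1(N;J)$, Definition~\ref{def:EP} gives $\mathcal{P}=\{(u,1):u\le J\}$. Every pair $(u,1)$ is coprime, so $P=J$ and the exponent is $1+J/2$. This yields $3/2$, $2$, $5/2$ for $J=1,2,3$. Since Theorem~\ref{thm:fixed_depth} holds for every fixed finite $\mathcal{P}$, there is no restriction on $J$; the implied constant depends only on $J$.

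For $E_2(N;J)$ the only real step is to compute $\mathfrak{A}(5)$ and $\mathfrak{A}(6)$ exactly, since the asymptotic $\frac{6}{\pi^2}J^2$ is useless at such small $J$. I would count coprime pairs in $[1,J]^2$ via $\mathfrak{A}(J)=2\sum_{k\le J}\varphi(k)-1$: a pair $(u,a)$ with $u<a$ is counted by $\varphi(a)$, the symmetric case by the same term, and the diagonal contributes only $(1,1)$. The totient values are $\varphi(1),\dots,\varphi(6)=1,1,2,2,4,2$, with partial sums $\sum_{k\le5}\varphi(k)=10$ and $\sum_{k\le6}\varphi(k)=12$. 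Hence $\mathfrak{A}(5)=19$ and $\mathfrak{A}(6)=23$. As a cross-check, the $5\times5$ grid has $25$ entries. Removing the non-coprime pairs with both entries even, $\{2,4\}^2$, removes $4$, and removing those with $u=a=3$ removes $1$, leaving $20$. Note that $(2,4)$ and similar pairs are already included among the even ones, so the count is $25-4-1\cdot$ more. A direct count gives $19$: the non-coprime pairs are $(2,2),(2,4),(4,2),(4,4),(3,3),(5,5)$, which is six of them.

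Plugging in gives exponents $1+19/2=10.5$ and $1+23/2=12.5$. There is no genuine obstacle here. The only care needed is the small-$J$ enumeration, in particular not forgetting the diagonal pair $(5,5)$, together with the remark that the theorem's hypotheses require nothing beyond finiteness and coprimality of $\mathcal{P}$.
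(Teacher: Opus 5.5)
Your proposal is correct and follows the paper's route: the corollary is read off from Theorem~\ref{thm:fixed_depth} with $P=J$ for $E_1(N;J)$ and $P=\mathfrak{A}(J)$ for $E_2(N;J)$, and your identity $\mathfrak{A}(J)=2\sum_{k\le J}\varphi(k)-1$ correctly gives $\mathfrak{A}(5)=19$ and $\mathfrak{A}(6)=23$. Only the cross-check paragraph needs tidying: its intermediate ``leaving $20$'' omits the pair $(5,5)$, and only your final list of six non-coprime pairs, giving $25-6=19$, should be kept.
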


\begin{corollary}\label{cor:H_density}
The set $\mathcal{H}$ of Proposition~\ref{prop:nine_primes} satisfies $|\mathcal{H}(N)|\ll_A N(\log N)^{-A}$ for every $A>0$.
\end{corollary}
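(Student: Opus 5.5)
The plan is to deduce the bound from Theorem~\ref{thm:fixed_depth} applied to the one-parameter family $\mathcal{P}_J=\{(u,1):u\le J\}$, i.e.\ to $E_1(N;J)$. The point is that $\mathcal{H}(N)$ is contained in $E_1(N;J)$ for every $J$ simultaneously.

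The first step is the inclusion $\mathcal{H}(N)\subseteq E_1(N;J)$ for every fixed $J\ge1$. Take $n\in\mathcal{H}(N)$. By the definition in Proposition~\ref{prop:nine_primes}, condition (ii) of Theorem~\ref{thm:hyper_char} fails for every $u\ge1$. So no $n+u$ has a divisor $s\equiv-1\pmod{4u}$, and in particular none with $u\le J$. By Theorem~\ref{thm:4.9} and Definition~\ref{def:EP}, this says exactly that $n\in E_1(N;J)$. The pairs $(u,1)$ are trivially coprime, so Definition~\ref{def:EP} applies without any appeal to Lemma~\ref{lem:coprime_reduction}.

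Given $A>0$, I then choose the integer $J=\lceil 2A\rceil$, so that $1+J/2\ge A$. Theorem~\ref{thm:fixed_depth} in its $E_1$ specialisation gives
\[
|\mathcal{H}(N)|\le|E_1(N;J)|\ll_J N(\log N)^{-1-J/2}\le N(\log N)^{-A}
\]
for $N\ge3$. Since $J$ depends only on $A$, the implied constant depends only on $A$. Small $N$ are absorbed into the constant.

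There is no real obstacle here beyond the theorem itself. The one point to check is that the constant in Theorem~\ref{thm:fixed_depth} depends on $J$ and not on $N$. This holds because $J$ is fixed before $N\to\infty$: the bounded moduli $4u\le4J$ make Siegel--Walfisz applicable, and the number of tuples $C(\mathcal{P})$ is finite. So the argument is a choice of $J=J(A)$ followed by the inclusion above.
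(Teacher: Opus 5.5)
Your proposal is correct and is essentially the paper's own argument: the inclusion $\mathcal{H}(N)\subseteq E_1(N;J)$ for every fixed $J$ (the paper even records the equality $\mathcal{H}(N)=\bigcap_{J\ge1}E_1(N;J)$), followed by Theorem~\ref{thm:fixed_depth} with $J\ge 2A$. Your explicit choice $J=\lceil 2A\rceil$, and your remark that the implied constant depends only on $J=J(A)$, simply spell out the paper's one-line proof.
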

\begin{proof}
$\mathcal{H}(N)=\bigcap_{J\ge1}E_1(N;J)\subseteq E_1(N;J)$ for every fixed $J$; apply Theorem~\ref{thm:fixed_depth} with $J\ge2A$.
\end{proof}

Two points of scope should be recorded. First, the constant $C(\mathcal{P})$ is exponential in $J^{4}$, so the theorem is a fixed-depth statement and does not compete with Section~\ref{sec:5.2} once $J$ is allowed to grow with $N$: optimising $2^{\mathcal{O}(J^{4})}(\log N)^{-\frac{3}{\pi^2}J^{2}}$ over $J$ gives $J\asymp(\log\log N)^{1/2}$ and a saving of $\exp(-c(\log\log N)^{2})$, weaker than the $\exp(-c(\log\log N)^{3})$ of Theorem~\ref{thm:metric_theorem}. The two methods therefore govern two different regimes. Second, the statement is an upper bound, and the numerics do not corroborate the exponent at accessible heights. Fitting $\log\big(|E_1(N;J)|/\pi(N)\big)$ against $\log\log n$ over six windows up to $2\times10^{7}$ (item (11) of Section~\ref{sec:7}) returns slopes $0.39$, $0.50$, $0.84$, $1.24$, $1.51$, $1.59$ for $J=1,\dots,6$, against the predicted $0.5,1,1.5,2,2.5,3$. The shortfall is already visible at $J=1$, where the exponent $1/2$ is certain, and it widens with $J$; since $\log\log n$ ranges only over $[2.09,2.80]$ on this sample, these slopes measure the $\mathcal{O}(1)$ terms at least as much as the exponent. They are consistent with Theorem~\ref{thm:fixed_depth} without confirming it.

\subsection{A two-sided blind set} \label{sec:5.1}

\subsubsection{The algebraic obstruction and its exact scope}

\begin{theorem} \label{thm:algebraic_obstruction}
Let $n \equiv 1 \pmod{24}$ be prime, let $c \equiv 3 \pmod 4$ with $c<n$, and set $K_c = \frac{n+c}{4}$. Then for \emph{every} prime factor $p$ of $c$ with $p \equiv 3 \pmod{4}$ (at least one exists) the following holds: if there are divisors $d_1, d_2 \mid K_c$ with $d_1+d_2 \equiv 0 \pmod c$, then $K_c$ possesses a prime factor which is a quadratic non-residue modulo $p$.
\end{theorem}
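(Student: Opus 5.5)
The proof is by contradiction via quadratic residues modulo $p$. We first record why such a $p$ exists. Since $c\equiv3\pmod 4$ is odd, if every prime factor of $c$ were $\equiv1\pmod4$, then $c$ would be $\equiv1\pmod4$. So at least one prime factor $p\mid c$ satisfies $p\equiv3\pmod4$.

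Fix such a $p$. We also need $p\nmid K_c$. By Section~\ref{sec:notation}, $\gcd(c,K_c)=1$ on the whole range, so $p\nmid K_c$ and every divisor of $K_c$ is invertible modulo $p$.

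Now suppose, for contradiction, that every prime factor of $K_c$ is a quadratic residue modulo $p$. Then every divisor of $K_c$ is a product of residues, hence itself a nonzero quadratic residue modulo $p$; in particular $\leg{d_1}{p}=\leg{d_2}{p}=1$. From $c\mid d_1+d_2$ we get $d_2\equiv -d_1\pmod p$. Taking Legendre symbols gives
$$1=\leg{d_2}{p}=\leg{-1}{p}\leg{d_1}{p}=\leg{-1}{p}.$$
But $p\equiv3\pmod4$ forces $\leg{-1}{p}=-1$, a contradiction. Hence some prime factor of $K_c$ is a non-residue modulo $p$, as claimed.

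There is no real obstacle here. The only point to check is that $p\nmid d_1d_2$, so that the Legendre symbols of $d_1,d_2$ are nonzero, and this is exactly the coprimality $\gcd(c,K_c)=1$. The hypotheses $n\equiv1\pmod{24}$ and $c<n$ serve only to place us in the standing setting. Note also that coprimality of $d_1,d_2$ is never used, so the argument applies to the sufficient family of Theorem~\ref{thm:3.3} as stated.
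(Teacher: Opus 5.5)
Your proposal is correct and follows the paper's proof essentially verbatim: both obtain $\gcd(c,K_c)=1$ from Section~\ref{sec:notation}, suppose every prime factor of $K_c$ is a residue modulo $p$, and derive $\leg{-1}{p}=1$. The only cosmetic difference is that you use multiplicativity of the Legendre symbol on $d_2\equiv-d_1$, whereas the paper writes $d_1\equiv x^2$, $d_2\equiv y^2$ and gets $(xy^{-1})^2\equiv-1$. One small correction to your closing remark: the coprimality $\gcd(c,K_c)=1$ does use the primality of $n$ and the range $0<c<n$.
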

\begin{proof}
First, $\gcd(K_c,c)=1$ (Section~\ref{sec:notation}); and $c\equiv3\pmod4$ forces at least one prime factor $p\equiv3\pmod4$, since a product of primes $\equiv1\pmod4$ is $\equiv1\pmod4$. Fix such a $p$ and assume every prime factor of $K_c$ is a quadratic residue modulo $p$. Then every divisor $d=\prod q_i^{\alpha_i}$ of $K_c$ satisfies $\leg dp=1$, so $d_1\equiv x^{2}$ and $d_2\equiv y^{2}\pmod p$ with $x,y\not\equiv0$. From $c\mid d_1+d_2$ we get $x^{2}+y^{2}\equiv0\pmod p$, hence $(xy^{-1})^{2}\equiv-1\pmod p$, contradicting $\leg{-1}p=-1$ for $p\equiv3\pmod4$. Since $p$ was an arbitrary such prime factor, the conclusion holds for each of them.
\end{proof}

The obstruction is a statement about the untwisted branch~\eqref{eq:untwisted} only. The next theorem determines its exact scope.

\begin{theorem} \label{thm:twisted_obstruction}
Keep the notation of Theorem~\ref{thm:algebraic_obstruction} and assume that every prime factor of $K_c$ is a quadratic residue modulo $p$.
\begin{enumerate}[(a)]
    \item If $\leg np=+1$, then no divisors $d_1,d_2\mid K_c$ satisfy $n\,d_1+d_2\equiv0\pmod c$ either: both branches of Corollary~\ref{cor:two_branches} are obstructed at $p$, and the shift $c$ is blind.
    \item If $\leg np=-1$, the obstruction does not apply to the twisted branch: the congruence $n\,d_1+d_2\equiv0\pmod p$ is compatible with $d_1,d_2$ being quadratic residues modulo $p$.
\end{enumerate}
\end{theorem}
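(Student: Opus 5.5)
Both parts reduce to the same Legendre-symbol computation modulo the fixed prime $p\mid c$, $p\equiv3\pmod4$, exactly as in the proof of Theorem~\ref{thm:algebraic_obstruction}. The hypothesis gives $\gcd(K_c,c)=1$ (Section~\ref{sec:notation}). If every prime factor of $K_c$ is a quadratic residue modulo $p$, multiplicativity of $\leg{\cdot}{p}$ gives $\leg{d}{p}=1$ for every divisor $d\mid K_c$. Note also $p\nmid n$, since $n$ is prime and $p\le c<n$, so $\leg np=\pm1$.

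For (a), suppose $d_1,d_2\mid K_c$ satisfy $n d_1+d_2\equiv0\pmod c$. Reducing modulo $p$ gives $n d_1\equiv -d_2\pmod p$. Both sides are nonzero modulo $p$, because $p\nmid K_c$ and $p\nmid n$. Taking Legendre symbols yields
$$\leg np\leg{d_1}p=\leg{-1}p\leg{d_2}p.$$
Under $\leg np=+1$ the left side equals $1$, while the right side equals $-1$, since $\leg{-1}p=-1$ for $p\equiv3\pmod4$. This is a contradiction, so the twisted branch~\eqref{eq:twisted} has no solution.

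The untwisted branch is already excluded by Theorem~\ref{thm:algebraic_obstruction}, or equivalently by the same computation with $n$ replaced by $1$. Note that the coprimality conditions in Corollary~\ref{cor:two_branches} are irrelevant here, because the obstruction holds for all divisor pairs. Hence both branches fail at $c$, and by Corollary~\ref{cor:two_branches} there is no decomposition with first denominator $K_c$: the shift is blind.

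For (b), the same identity with $\leg np=-1$ reads $-\leg{d_1}p=-\leg{d_2}p$. This is consistent with $\leg{d_1}p=\leg{d_2}p=1$, so the Legendre-symbol argument yields no contradiction. To make ``compatible'' concrete, I will exhibit residues: take $d_1\equiv1$ and $d_2\equiv -n\pmod p$. Then $d_2$ is a quadratic residue, since $\leg{-n}p=\leg{-1}p\leg np=(-1)(-1)=1$, and $n d_1+d_2\equiv0\pmod p$. I will also remark that $d_1=1$ is always a divisor of $K_c$, so the obstruction cannot exclude this at the level of residues modulo $p$. Whether an actual divisor of $K_c$ realises the needed residue is a separate question, which the statement does not assert.

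The main obstacle is interpretive rather than technical. Part (b) is a non-obstruction claim, so its proof must be phrased as exhibiting quadratic-residue pairs satisfying the congruence, not as an existence result for divisors. I also need to make sure, in (a), that $p\nmid n$ and $p\nmid d_i$, so that all the Legendre symbols are nonzero; this is exactly where $c<n$ and $\gcd(c,K_c)=1$ are used.
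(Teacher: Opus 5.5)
Your proposal is correct and is essentially the paper's argument: the paper writes $d_1\equiv x^{2}$, $d_2\equiv y^{2}$ with $x,y\not\equiv0\pmod p$ and reads $n\,d_1+d_2\equiv0\pmod p$ as $n\equiv-(y/x)^{2}$, which forces $\leg np=-1$; this is your Legendre-symbol identity in different notation. For (b) the paper likewise claims only square-class compatibility (the required ratio $d_2/d_1\equiv-n$ is a residue when $\leg np=-1$), which your explicit choice $d_1\equiv1$, $d_2\equiv-n$ makes concrete, and your checks that $p\nmid n$ and $p\nmid K_c$ supply the nonvanishing that the paper leaves implicit.
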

\begin{proof}
Write $d_1\equiv x^{2}$, $d_2\equiv y^{2}$ with $x,y\not\equiv0\pmod p$. The relation $n\,d_1+d_2\equiv0\pmod p$ reads $n\equiv-(y/x)^{2}$, whence $\leg np=\leg{-1}p=-1$. If $\leg np=+1$ this is a contradiction, which proves (a). If $\leg np=-1$, then $-n^{-1}$ is a quadratic residue mod $p$, so the required ratio $d_2/d_1\equiv-n\pmod p$ lies in the correct square class and no obstruction arises. Example~\ref{ex:twisted} illustrates the compatibility asserted in (b): there $n=241$, $c=155$, $p=31$, $\leg{241}{31}=-1$, and the pair $d_1=9$, $d_2=1$ consists of two squares modulo $31$ with $n\,d_1+d_2\equiv0\pmod{31}$, the untwisted branch failing at that shift while the twisted branch succeeds. It should be said that this example does \emph{not} satisfy the standing hypothesis of the present theorem, since $K_{155}=99=3^{2}\cdot11$ has the prime factor $3$ with $\leg{3}{31}=-1$; what it exhibits is the square-class compatibility itself, which is the content of (b).
\end{proof}

\subsubsection{A group-theoretic reformulation} \label{sec:5.1.1bis}

Theorems~\ref{thm:algebraic_obstruction} and~\ref{thm:twisted_obstruction} obstruct a shift through a single prime $p\mid c$. The following reformulation shows that this is one instance of a single phenomenon, and identifies which instance dominates. For $c\equiv3\pmod4$, $c<n$, set
$$H_c(n)\ :=\ \big\langle\, q\bmod c\ :\ q\ \text{prime},\ q\mid K_c\,\big\rangle\ \le\ (\Z/c\Z)^{\times},$$
the subgroup generated by the residues of the prime factors of $K_c$ (each is coprime to $c$, since $\gcd(K_c,c)=1$).

\begin{proposition}\label{prop:subgroup_reformulation}
If the untwisted branch~\eqref{eq:untwisted} succeeds at shift $c$, then $-1\in H_c(n)$. If the twisted branch~\eqref{eq:twisted} succeeds at shift $c$, then $-n\in H_c(n)$. Equivalently, $-1\notin H_c(n)$ obstructs the untwisted branch and $-n\notin H_c(n)$ obstructs the twisted branch, unconditionally.
\end{proposition}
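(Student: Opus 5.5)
The proof works entirely inside $(\Z/c\Z)^{\times}$ and uses only that $H_c(n)$ is a subgroup. First I record that every divisor of $K_c$ has residue in $H_c(n)$. By Section~\ref{sec:notation}, $\gcd(K_c,c)=1$, so each divisor $d=\prod q_i^{\alpha_i}$ of $K_c$ is a unit modulo $c$. Its residue is a product of generators of $H_c(n)$, hence lies in $H_c(n)$; the divisor $d=1$ gives the identity and is covered trivially. The same coprimality gives $\gcd(n,c)=1$, so $n$ is also a unit modulo $c$ and the statement about $-n$ makes sense.

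\emph{Untwisted branch.} Suppose $d_1,d_2\mid K_c$ satisfy $d_1+d_2\equiv0\pmod c$. Since $d_1$ is invertible modulo $c$, we get $-1\equiv d_2d_1^{-1}\pmod c$. Both $d_2$ and $d_1^{-1}$ lie in the subgroup $H_c(n)$, so $-1\in H_c(n)$. Coprimality of $d_1,d_2$, required in~\eqref{eq:untwisted}, is not needed for this.

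\emph{Twisted branch.} Suppose $n\,d_1+d_2\equiv0\pmod c$. Then $-n\equiv d_2d_1^{-1}\in H_c(n)$.

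The ``equivalently'' clause is just the contrapositive of these two implications. I would add a remark linking this back to Theorems~\ref{thm:algebraic_obstruction} and~\ref{thm:twisted_obstruction}. If every prime factor of $K_c$ is a square modulo $p\mid c$, then the image of $H_c(n)$ under $(\Z/c)^{\times}\to(\Z/p)^{\times}$ lies in the squares. For $p\equiv3\pmod4$ this excludes $-1$, and it excludes $-n$ exactly when $\leg np=+1$. This recovers the earlier results as special cases.

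I see no real obstacle. The only point needing care is that $d_1$ is invertible modulo $c$, which is exactly the standing fact $\gcd(c,K_c)=1$ on $0<c\le2n$.
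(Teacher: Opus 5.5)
Your proof is correct and is essentially the paper's argument: every divisor of $K_c$ is a unit modulo $c$ lying in $H_c(n)$, and $d_2d_1^{-1}$ equals $-1$ (untwisted) or $-n$ (twisted). Your side remark that $\leg np=+1$ excludes $-n$ matches the paper's comment after the proposition, which recovers Theorems~\ref{thm:algebraic_obstruction} and~\ref{thm:twisted_obstruction} as the index-$2$ special case.
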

\begin{proof}
Suppose $d_1+d_2\equiv0\pmod c$ with $d_1,d_2\mid K_c$. Each $d_i\bmod c$ is a product of generators of $H_c(n)$, hence lies in $H_c(n)$; as $\gcd(d_i,c)=1$, $d_1$ is invertible and $d_2d_1^{-1}\equiv-1\pmod c$ with $d_2d_1^{-1}\in H_c(n)$. Hence $-1\in H_c(n)$. The twisted case is identical, using $nd_1+d_2\equiv0\iff d_2d_1^{-1}\equiv-n$.
\end{proof}

Theorem~\ref{thm:algebraic_obstruction} is the special case $H_c(n)\subseteq\ker\chi_p$, where $\chi_p=\leg{\cdot}{p}$ is pulled back to $(\Z/c)^{\times}$: since $\chi_p(-1)=-1$, one gets $-1\notin H_c(n)$ and Proposition~\ref{prop:subgroup_reformulation} applies. The general form is stronger: \emph{any} subgroup $H$ with $H_c(n)\le H$ and $-1\notin H$ obstructs the untwisted branch, not only the index-$2$ subgroups coming from a single prime factor of $c$, and a subgroup of that shape need not exist for a given $H_c(n)$ missing $-1$. The interest of isolating the index-$2$ subgroups, equivalently the odd quadratic characters mod $c$, is that they are exactly the ones for which the Selberg--Delange computation of Proposition~\ref{prop:blind_lower} applies verbatim, with dimension exactly $1/2$; a subgroup of larger index gives a more deeply suppressed density and is sub-dominant.

\begin{proposition}\label{prop:nu_c}
For $c\equiv3\pmod4$, the number of quadratic Dirichlet characters $\chi\bmod c$ with $\chi(-1)=-1$ is $\nu(c)=2^{\omega(c)-1}$.
\end{proposition}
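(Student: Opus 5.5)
The plan is to count the quadratic characters modulo $c$ through the structure of the $2$-torsion of the dual group, and then decide which of them are odd by looking at their values at $-1$.

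\textbf{Counting all quadratic characters.} Since $c\equiv3\pmod4$, $c$ is odd. By the Chinese remainder theorem, $(\Z/c)^{\times}\cong\prod_{p^{e}\parallel c}(\Z/p^{e})^{\times}$, and each factor is cyclic of even order. A character $\chi$ modulo $c$ factors uniquely as $\chi=\prod_{p\mid c}\chi_p$ with $\chi_p$ a character modulo $p^{e}$. Moreover $\chi$ is quadratic (that is, $\chi^{2}=1$) if and only if every $\chi_p$ is. A cyclic group of even order has exactly two characters of order dividing $2$: the trivial one and the unique one of order $2$, namely $\leg{\cdot}{p}$ pulled back to $(\Z/p^{e})^{\times}$. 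So the characters with $\chi^{2}=1$ form a group $\widehat{G}[2]\cong(\Z/2)^{\omega(c)}$, which has $2^{\omega(c)}$ elements, the trivial character included.

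\textbf{Isolating the odd ones.} Evaluation at $-1$ gives a homomorphism $\mathrm{ev}_{-1}:\widehat{G}[2]\to\{\pm1\}$. Its kernel is the subgroup of even characters. The odd characters, those with $\chi(-1)=-1$, form the complementary coset, provided $\mathrm{ev}_{-1}$ is surjective. When that holds, the count is exactly $2^{\omega(c)}/2=2^{\omega(c)-1}$.

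\textbf{Surjectivity, the only real step.} We need one odd quadratic character. On the factor for $p$, the character $\leg{\cdot}{p}$ satisfies $\leg{-1}{p}=(-1)^{(p-1)/2}$. As in the proof of Theorem~\ref{thm:algebraic_obstruction}, $c\equiv3\pmod4$ forces some prime $p\mid c$ with $p\equiv3\pmod4$. For that $p$, the pulled-back character $\leg{\cdot}{p}$ modulo $c$ takes the value $-1$ at $-1$. Alternatively, one can take the Jacobi symbol $\leg{\cdot}{c}$ itself, whose value at $-1$ is $(-1)^{(c-1)/2}=-1$.

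\textbf{Where care is needed.} The main point of care is the identification of $\widehat{G}[2]$, which rests on each local factor $(\Z/p^{e})^{\times}$ being cyclic of even order. This holds because $p$ is odd, so there is no $2$-adic non-cyclic factor to handle, and this is where $c$ odd is used. The rest of the argument is routine.
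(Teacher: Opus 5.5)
Your proof is correct and follows the paper's route: the CRT decomposition, the identification of the characters of exponent dividing $2$ with $(\Z/2)^{\omega(c)}$ via the pulled-back Legendre symbols, and a prime $p\mid c$ with $p\equiv3\pmod4$ to make $\chi\mapsto\chi(-1)$ non-trivial. The only difference is cosmetic. You count the odd characters as the non-trivial coset of the kernel of evaluation at $-1$, whereas the paper counts the tuples $(\varepsilon_i)$ with an odd number of ones among the $s$ coordinates attached to primes $\equiv3\pmod4$, getting $2^{s-1}\cdot2^{\omega(c)-s}$; your coset argument is the one the paper itself uses in Proposition~\ref{prop:nu_tw}.
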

\begin{proof}
Write $c=\prod_i p_i^{e_i}$ with $p_i$ odd and distinct. By the Chinese Remainder Theorem $(\Z/c)^{\times}\cong\prod_i(\Z/p_i^{e_i})^{\times}$, each factor cyclic of even order, hence carrying a unique character of order $2$, namely $\chi_i=\leg{\cdot}{p_i}$ pulled back. Every character of exponent dividing $2$ is a product $\chi=\prod_i\chi_i^{\varepsilon_i}$, $\varepsilon_i\in\{0,1\}$, and $\chi(-1)=\prod_i\chi_i(-1)^{\varepsilon_i}$ with $\chi_i(-1)=-1$ exactly when $p_i\equiv3\pmod4$. Since $c\equiv3\pmod4$, at least $s\ge1$ of the $p_i$ are $\equiv3\pmod4$; $\chi(-1)=-1$ holds exactly when an odd number of the corresponding $\varepsilon_i$ equal $1$, which happens for $2^{s-1}\cdot2^{\omega(c)-s}=2^{\omega(c)-1}$ of the $2^{\omega(c)}$ tuples.
\end{proof}

\begin{proposition}\label{prop:nu_tw}
Let $c\equiv3\pmod4$ and let $n$ be coprime to $c$. The number of quadratic Dirichlet characters $\chi\bmod c$ with $\chi(-n)=-1$, i.e. the number of index-$2$ obstructions to the twisted branch at the shift $c$, is
$$\nu^{\mathrm{tw}}(c,n)\ =\ \begin{cases}2^{\omega(c)-1} & \text{if } \leg{-n}{p}=-1 \text{ for at least one prime } p\mid c,\\[4pt] 0 & \text{otherwise.}\end{cases}$$
\end{proposition}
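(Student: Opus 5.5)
The plan mirrors the proof of Proposition~\ref{prop:nu_c}, with the evaluation point $-1$ replaced by $-n$. First, write $c=\prod_{i=1}^{k}p_i^{e_i}$ with $k=\omega(c)$. As there, the Chinese Remainder Theorem together with the cyclicity of each $(\Z/p_i^{e_i})^{\times}$ (of even order) shows that the characters of $(\Z/c)^{\times}$ of exponent dividing $2$ are exactly the $2^{k}$ products $\chi_{\varepsilon}=\prod_i\chi_i^{\varepsilon_i}$, $\varepsilon\in\{0,1\}^{k}$, where $\chi_i=\leg{\cdot}{p_i}$ is pulled back to $(\Z/c)^{\times}$. The statement counts these characters, the trivial one included; it can never contribute, since it takes the value $+1$ at $-n$. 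Since $\gcd(n,c)=1$, each value $\chi_i(-n)=\leg{-n}{p_i}$ is a well-defined sign $\eta_i\in\{\pm1\}$, and $\chi_{\varepsilon}(-n)=\prod_i\eta_i^{\varepsilon_i}$.

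Second, set $S=\{i:\eta_i=-1\}$ and $s=|S|$. Then $\chi_\varepsilon(-n)=(-1)^{\sum_{i\in S}\varepsilon_i}$, so $\chi_\varepsilon(-n)=-1$ exactly when an odd number of the coordinates $\varepsilon_i$ with $i\in S$ equal $1$, the coordinates outside $S$ being free.

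Third, the two cases follow directly:
\begin{itemize}
\item If $s=0$, every $\chi_\varepsilon(-n)=+1$ and the count is $0$.
\item If $s\ge1$, the number of $\varepsilon\in\{0,1\}^{S}$ with odd weight is $2^{s-1}$. This is the standard fact that, for $s\ge1$, half the subsets of an $s$-set have odd size, via the involution toggling one fixed coordinate of $S$. Multiplying by $2^{k-s}$ for the free coordinates gives $2^{k-1}=2^{\omega(c)-1}$.
\end{itemize}
The condition $s\ge1$ is literally ``$\leg{-n}{p}=-1$ for at least one prime $p\mid c$'', which matches the statement.

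Finally, one remark justifies the reading ``index-$2$ obstructions to the twisted branch''. For each nontrivial such $\chi$, $\ker\chi$ is an index-$2$ subgroup, and $-n\notin\ker\chi$. So by Proposition~\ref{prop:subgroup_reformulation}, the inclusion $H_c(n)\subseteq\ker\chi$ obstructs the twisted branch, exactly as Theorem~\ref{thm:algebraic_obstruction} does for $-1$.

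There is no real obstacle. The only points to state carefully are that $\chi_i(-n)$ is defined, which uses $\gcd(n,c)=1$, and that $c\equiv3\pmod4$ plays no role here, unlike in Proposition~\ref{prop:nu_c}, where it guaranteed $s\ge1$. This hypothesis-free role is exactly why a zero case appears. It is worth noting as a consistency check that $\leg{-n}{p}=-\leg np$ for $p\equiv3\pmod4$ and $=\leg np$ for $p\equiv1\pmod4$, which recovers Theorem~\ref{thm:twisted_obstruction}: a prime $p\equiv3\pmod4$ with $\leg np=+1$ contributes to $S$.
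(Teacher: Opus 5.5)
Your proof is correct and is essentially the paper's argument. The paper identifies the quadratic characters with $\mathbb{F}_2^{\omega(c)}$ and counts the fibre $L^{-1}(1)$ of the linear form $L(\varepsilon)=\sum_i b_i\varepsilon_i$, where $b_i=1$ if and only if $\leg{-n}{p_i}=-1$; this is exactly your odd-weight count on $S$ times the $2^{\omega(c)-s}$ free coordinates.

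One small correction to your closing remark: $c\equiv3\pmod4$ is not entirely idle here. The oddness of $c$ that it implies is what makes each $(\Z/p_i^{e_i})^{\times}$ cyclic of even order with the single quadratic character $\leg{\cdot}{p_i}$, and your first step uses this.
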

\begin{proof}
With the notation of the previous proof, the group of characters of exponent dividing $2$ is isomorphic to $\mathbb{F}_2^{\omega(c)}$ via $\chi\mapsto(\varepsilon_i)$. Define $b_i\in\mathbb{F}_2$ by $b_i=1$ if $\leg{-n}{p_i}=-1$ and $b_i=0$ otherwise. Then $\chi(-n)=-1$ is the linear equation $L(\varepsilon)=\sum_i b_i\varepsilon_i=1$. If every $b_i=0$ the form is zero and there is no solution; otherwise $L$ is surjective, its kernel is a hyperplane of dimension $\omega(c)-1$, and $L^{-1}(1)$ is a coset of cardinality $2^{\omega(c)-1}$.
\end{proof}

The two counts are not symmetric, and the reason is worth recording. For the untwisted branch the coefficients are $b_i=1$ exactly when $p_i\equiv3\pmod4$, and $c\equiv3\pmod4$ guarantees at least one such $p_i$, so the linear form is never zero and $\nu(c)=2^{\omega(c)-1}$ unconditionally. Nothing plays that role for $-n$, so the degenerate case $\nu^{\mathrm{tw}}=0$ genuinely occurs, and for a positive proportion of the primes: for $c=7$ the condition $\leg{-n}7=1$ reads $n\equiv3,5,6\pmod7$, so $\nu^{\mathrm{tw}}(7,n)=0$ for $n=73,97,241,313,409,433,577,601,\ldots$ Those are precisely the shifts at which no index-$2$ obstruction weighs on the twisted branch, which is the quantitative counterpart of Theorem~\ref{thm:twisted_obstruction}(b).

\begin{corollary}\label{cor:shift3}
For $c=3$ one has $p=3$, and $n\equiv1\pmod{24}$ forces $n\equiv1\pmod3$, i.e. $\leg n3=+1$. By Theorem~\ref{thm:twisted_obstruction}(a) the shift $c=3$ is therefore blind for both branches whenever every prime factor of $(n+3)/4$ is $\equiv1\pmod3$.
\end{corollary}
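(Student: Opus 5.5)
The statement to prove is Corollary~\ref{cor:shift3}: at $c=3$ both branches are blind once every prime factor of $K_3=(n+3)/4$ is $\equiv1\pmod3$. The plan is to verify the three hypotheses of Theorem~\ref{thm:twisted_obstruction}(a) with $c=p=3$ and then read off the conclusion.

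\emph{Admissibility of the shift.} First, $c=3$ is a legitimate shift in the setting of Theorem~\ref{thm:algebraic_obstruction}. We have $3\equiv3\pmod4$ and $3<n$, because $n\ge73$. The only prime factor of $c$ is $p=3$, and it satisfies $p\equiv3\pmod4$. Moreover $\gcd(K_3,3)=1$ by Section~\ref{sec:notation}. Concretely, writing $n=24k+1$ gives $K_3=6k+1$, which is also odd.

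\emph{The Legendre symbol of $n$.} From $24\mid n-1$ we get $n\equiv1\pmod3$. Hence $\leg n3=\leg13=+1$.

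\emph{The residue hypothesis.} The quadratic residues modulo $3$ are exactly the class $1$. So the assumption that every prime factor of $K_3$ is $\equiv1\pmod3$ says precisely that every prime factor of $K_3$ is a quadratic residue modulo $p=3$. This is the standing hypothesis of Theorem~\ref{thm:twisted_obstruction}.

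\emph{Conclusion.} Part (a) of Theorem~\ref{thm:twisted_obstruction} now applies. It gives that no divisors $d_1,d_2\mid K_3$ satisfy $d_1+d_2\equiv0\pmod 3$; this half is also Theorem~\ref{thm:algebraic_obstruction}. It also gives that none satisfy $nd_1+d_2\equiv0\pmod3$. These two congruences, with or without the extra coprimality conditions, are exactly \eqref{eq:untwisted} and \eqref{eq:twisted} of Corollary~\ref{cor:two_branches}. So both branches fail, and the shift $c=3$ is blind.

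As a sanity check one can bypass the theorem. All divisors of $K_3$ are $\equiv1\pmod3$, so $d_1+d_2\equiv2\pmod3$ and, since $n\equiv1\pmod3$, also $nd_1+d_2\equiv2\pmod3$; neither sum is divisible by $3$.

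There is no genuine obstacle here. The only point needing care is that $c=3<n$ and the coprimality $\gcd(K_3,3)=1$ hold, so that the earlier theorems are applicable.
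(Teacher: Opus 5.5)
Your proof is correct and follows the paper's route: with $c=p=3$ you use $n\equiv1\pmod{24}\Rightarrow\leg n3=+1$, note that the nonzero squares modulo $3$ form the single class $1$, and then invoke Theorem~\ref{thm:twisted_obstruction}(a). Your closing direct check is a worthwhile addition: every divisor of $K_3$ is $\equiv1\pmod3$, so $d_1+d_2\equiv nd_1+d_2\equiv2\pmod3$, and this shows that neither primality of $n$ nor $c<n$ is needed, which is the point the paper relies on later in Corollary~\ref{cor:analytic_barrier}.
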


\subsubsection{How large is the blind set?} \label{sec:5.1.2}

The tools used here, the Rosser sieve and Iwaniec's half-dimensional sieve, were applied to the Erd\H{o}s--Straus equation by Sander~\cite{sander1991, sander1994}, whose treatment of the exceptional set is the closest antecedent. Write, for $c=3$,
$$\mathcal{B}(N)=\Big\{\,n\le N:\ n\equiv1\ (\mathrm{mod}\ 24),\ \text{every prime factor of }\tfrac{n+3}{4}\text{ is }\equiv1\ (\mathrm{mod}\ 3)\,\Big\},$$
so that by Corollary~\ref{cor:shift3} no decomposition with first denominator $K_3$ exists for $n\in\mathcal{B}(N)$.

\begin{proposition}\label{prop:blind_lower}
There is a constant $\gamma>0$ with $|\mathcal{B}(N)|\sim \gamma\,N(\log N)^{-1/2}$ as $N\to\infty$.
\end{proposition}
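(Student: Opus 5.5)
Reparametrise first. Write $n=24k+1$; then $K=(n+3)/4=6k+1$ runs over the integers $m\le (N+3)/4$ with $m\equiv1\pmod 6$, and the map $n\mapsto K$ is a bijection onto them. The statement does not require $n$ to be prime: $\mathcal{B}(N)$ is defined by a congruence and a factor condition only. So
$$|\mathcal{B}(N)|=\#\{m\le X:\ m\equiv1\ (\mathrm{mod}\ 6),\ p\mid m\Rightarrow p\equiv1\ (\mathrm{mod}\ 3)\},\qquad X=\tfrac{N+3}{4}.$$
If every prime factor of $m$ is $\equiv1\pmod 3$, then $m$ is odd, prime to $3$, and $m\equiv1\pmod3$. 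The only residual condition is therefore $m$ odd. The factor condition already excludes $2$ (since $2\equiv2\pmod3$), so $m\equiv1\pmod 6$ is automatic. The count thus reduces to $S(X)=\#\{m\le X: p\mid m\Rightarrow p\equiv1\ (3)\}$.

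For $S(X)$ I would apply the Selberg--Delange method to the multiplicative indicator $f$ of integers all of whose prime factors are $\equiv1\pmod3$. Its Dirichlet series factors as
$$F(s)=\prod_{p\equiv1(3)}(1-p^{-s})^{-1}=\big(\zeta(s)L(s,\chi_3)\big)^{1/2}G(s),$$
where $\chi_3$ is the nontrivial character mod $3$. Here $G(s)$ is holomorphic and nonvanishing in $\Re s>1/2$, since it absorbs the prime $3$ and the squares of the primes $\equiv2\pmod3$. This is the dimension $1/2$ already invoked in the proof of Theorem~\ref{thm:fixed_depth}.

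The Selberg--Delange theorem (Tenenbaum, II.5) then gives
$$S(X)\sim \frac{G(1)L(1,\chi_3)^{1/2}}{\Gamma(1/2)}\,X(\log X)^{-1/2}.$$
This is Landau's classical result in the same form as sums of two squares. Substituting $X=(N+3)/4$ and $\log X\sim\log N$ yields $\gamma=\tfrac14 G(1)L(1,\chi_3)^{1/2}\pi^{-1/2}>0$.

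I expect no serious obstacle. The only points to check are that the congruence $m\equiv1\pmod6$ imposes nothing beyond the factor condition, and that $G$ has the stated analytic continuation. Both are routine. If the intended reading restricts $n$ to primes, the count changes in kind: one must count primes $24k+1$ with $6k+1$ having all prime factors $\equiv1\pmod 3$, a half-dimensional sieve on shifted primes. Then only an upper bound of order $N(\log N)^{-3/2}$ is achievable by the method of Theorem~\ref{thm:fixed_depth}, not an asymptotic, and that would be the hard step. Since the displayed order is $(\log N)^{-1/2}$, I take $n$ unrestricted, as the definition of $\mathcal{B}(N)$ states.
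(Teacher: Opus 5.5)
Your proposal is correct and follows the paper's own proof: you use the bijection $m=(n+3)/4$ onto the integers $m\le (N+3)/4$ all of whose prime factors are $\equiv1\pmod 3$, note that the congruence $m\equiv1\pmod 6$ is then automatic, and apply Selberg--Delange with a singularity of type $(s-1)^{-1/2}$. Your explicit factorisation $F(s)=(\zeta(s)L(s,\chi_3))^{1/2}G(s)$ only adds the value of the constant $C=G(1)L(1,\chi_3)^{1/2}$, which the paper leaves implicit.
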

\begin{proof}
Put $m=(n+3)/4$. An integer $m$ all of whose prime factors are $\equiv1\pmod3$ is odd and satisfies $m\equiv1\pmod3$, hence $m\equiv1\pmod6$; conversely $m\equiv1\pmod6$ gives $n=4m-3\equiv1\pmod{24}$. The map $m\mapsto4m-3$ is therefore a bijection between the integers $m\le(N+3)/4$ all of whose prime factors lie in $G=\{q\ \text{prime}:q\equiv1\ (3)\}$, a set of Dirichlet density $1/2$, and $\mathcal{B}(N)$. Write $P(m)$ for the set of prime factors of $m$. By the Selberg--Delange method~\cite[Ch.~II.5]{tenenbaum}, the associated Dirichlet series having a singularity of type $(s-1)^{-1/2}$ at $s=1$,
$$\#\{m\le x:\ P(m)\subseteq G\}\sim \frac{C}{\Gamma(1/2)}\,\frac{x}{(\log x)^{1/2}} .$$
Taking $x=(N+3)/4$ gives the claim.
\end{proof}

\begin{proposition}\label{prop:blind_upper}
$\#\{n\in\mathcal{B}(N):n\ \text{prime}\}\ll N(\log N)^{-3/2}.$
\end{proposition}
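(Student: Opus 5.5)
We need an upper bound for primes $n\le N$, $n\equiv1\pmod{24}$, such that every prime factor of $m=(n+3)/4$ is $\equiv1\pmod3$. The plan is an upper-bound sieve of dimension $1/2$ on the shifted primes, in the same shape as the proof of Theorem~\ref{thm:fixed_depth} specialised to a single condition. Write $\mathcal{A}=\{(n+3)/4:\ n\le N \text{ prime},\ n\equiv1\ (24)\}$. For $n$ in the set, no prime $q\equiv2\pmod3$ divides $(n+3)/4$. So $n$ avoids the class $-3\bmod q$ for every prime $q\equiv2\pmod 3$ with $5\le q\le z$. (The prime $q=2$ is already handled by $n\equiv1\pmod{24}$, and $q=3$ is automatic.) We therefore sieve the primes $n\le N$ in the progression $1\bmod 24$, removing one residue class $-3\bmod q$ for each $q\in\mathcal{Q}=\{q\equiv2\ (3)\}$, $q\le z$. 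This is a sieve problem with $\varrho(q)=1$ on a set of primes of relative density $1/2$, hence of dimension $\kappa=1/2$.

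The steps, in order, are as follows.
\begin{enumerate}[(i)]
\item Establish the level of distribution. Primes $n\equiv1\pmod{24}$ with $n\equiv-3\pmod d$, for squarefree $d$ composed of primes of $\mathcal{Q}$, form a single progression modulo $24d$, with $\gcd(-3,d)=1$ since $3\nmid d$. Bombieri--Vinogradov then gives level $\mathcal{D}=N^{1/2-\varepsilon}$ for the remainders $r_d=\pi(N;24d,\ell_d)-\mathrm{li}(N)/\varphi(24d)$. The multiplicity here is $\varrho(d)=1$, so no Cauchy--Schwarz step is needed.
\item Apply Selberg's upper-bound sieve (\cite[Thm.~3.2]{halberstam_richert}) with $z=\mathcal{D}^{1/2}$. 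This gives a count $\ll \pi(N)\prod_{q\in\mathcal{Q},q\le z}(1-1/(q-1)) + \sum_{d\le\mathcal{D}}3^{\omega(d)}|r_d|$, where the second term is $\ll N(\log N)^{-A}$.
\item Evaluate the product by Mertens in progressions, $\sum_{q\le z,\,q\equiv2(3)}1/q=\tfrac12\log\log z+O(1)$, obtaining $\ll(\log z)^{-1/2}\asymp(\log N)^{-1/2}$.
\end{enumerate}
Combining these with $\pi(N)\ll N/\log N$ gives $N(\log N)^{-3/2}$.

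The main obstacle is confirming that Selberg's sieve (rather than a $\beta$-sieve) delivers the upper bound $\ll XV(z)$ at $z=\mathcal{D}^{1/2}$ in dimension $1/2$. It does: in any fixed dimension $\kappa$ one has $G(z,\mathcal{D}^{1/2})\gg_\kappa V(z)^{-1}$ when $z=\mathcal{D}^{1/2}$. Alternatively, one can cite Iwaniec's half-dimensional upper sieve, whose sieving limit is $\beta=1$, so any $z\le\mathcal{D}$ works. A second, minor point is that the sieve removes only prime divisors $q\le z$. This costs nothing for an upper bound, since every element of the target set survives the sieve.
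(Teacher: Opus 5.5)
Your proposal is correct and is essentially the paper's own argument. It runs a dimension-$1/2$ upper-bound sieve on the shifted primes, removing the class $-3\bmod q$ for each prime $q\equiv2\pmod 3$ up to $z=\mathcal{D}^{1/2}$ at the Bombieri--Vinogradov level $\mathcal{D}=N^{1/2-\varepsilon}$, with main term $\pi(N)(\log z)^{-1/2}\asymp N(\log N)^{-3/2}$; your restriction of $\mathcal{A}$ to $n\equiv1\pmod{24}$ and your density $1/(q-1)$ are, if anything, slightly cleaner than the paper's write-up.

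One small correction. Even though $\varrho(d)=1$, the Selberg remainder still carries the weight $3^{\omega(d)}$. So getting $\sum_{d\le\mathcal{D}}3^{\omega(d)}|r_d|\ll N(\log N)^{-A}$ from Bombieri--Vinogradov does need the standard Cauchy--Schwarz step against the trivial bound $|r_d|\ll N/\varphi(d)$, as in Lemma~\ref{lem:5.5}.
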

\begin{proof}
Apply an upper-bound sieve of dimension $1/2$~\cite{halberstam_richert} to $\mathcal{A}=\{(n+3)/4:\ n\le N\ \text{prime}\}$, sifting by the primes $q\le z$ with $q\equiv2\pmod3$, at the Bombieri--Vinogradov level $\mathcal{D}=N^{1/2-\varepsilon}$ with $z=\mathcal{D}^{1/2}$. The sifting density is $1/2$, so the main term is $\ll\pi(N)\prod_{q\le z,\,q\equiv2(3)}(1-1/q)\ll \pi(N)(\log z)^{-1/2}\ll N(\log N)^{-3/2}$, and the members of $\mathcal{A}$ counted by $\mathcal{B}(N)$ survive the sieve.
\end{proof}

Proposition~\ref{prop:blind_lower} is a statement about integers, and the corresponding lower bound for \emph{primes} is of a different order of difficulty: one needs primes represented by a shifted quadratic form, with the representation primitive. That is what the next theorem exploits. We change the shift from $c=3$ to $c=7$ in doing so, for a $2$-adic reason isolated inside the proof.

\begin{theorem}\label{thm:blind_primes}
Let $c=7$. The number of primes $n\le N$ with $n\equiv1\pmod{24}$ for which $7\nmid K_7$ and every prime factor of $K_7=\frac{n+7}{4}$ is a quadratic residue modulo $7$ is $\asymp N(\log N)^{-3/2}$, unconditionally. Moreover every such $n$ produced by the lower bound satisfies $\leg n7=+1$, so that by Theorem~\ref{thm:twisted_obstruction}(a) \emph{both} branches of Corollary~\ref{cor:two_branches} are obstructed at the shift $c=7$.
\end{theorem}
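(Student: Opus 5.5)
The upper bound comes from sieving, the lower bound from a primitive-representation theorem, and the residue claim from a direct check modulo 7.

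\emph{Upper bound.} The upper bound $\ll N(\log N)^{-3/2}$ is the argument of Proposition~\ref{prop:blind_upper} with $3$ replaced by $7$. Apply Selberg's upper-bound sieve to the sequence $\{(n+7)/4:\ n\le N \text{ prime},\ n\equiv1\ (24)\}$. Sift by the primes $q\le z$ that are quadratic non-residues modulo $7$, i.e.\ $q\equiv3,5,6\pmod7$; these have relative density $1/2$. Take level $\mathcal{D}=N^{1/2-\varepsilon}$ from Bombieri--Vinogradov and $z=\mathcal{D}^{1/2}$. The main term is $\pi(N)(\log z)^{-1/2}$, and each $n$ being counted survives the sieve.

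\emph{Lower bound.} This is the substantive part. An integer $m$ coprime to $7$ all of whose prime factors are squares modulo $7$ is, up to the factor $2$ (which is a square mod $7$, and which in any case cannot divide $K_7$ here), an integer represented \emph{primitively} by the principal form $f=x^{2}+xy+2y^{2}$ of discriminant $-7$. Indeed, class number $1$ means the primes $q$ with $\leg{q}{7}=1$ are exactly the primes split in $\mathbb{Q}(\sqrt{-7})$, and a product of split primes with no factor $q\bar q$ forced is a norm of a primitive element. I will reverse this. Count primes $n$ with $n+7=4f(x,y)$, $\gcd(x,y)=1$, with the congruence conditions making $n\equiv1\pmod{24}$ and $7\nmid f(x,y)$. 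A primitive value of $f$ coprime to $7$ has only prime factors $q$ with $\leg{-7}{q}=1$, and these coincide with the residues modulo $7$ by reciprocity. So every such $n$ lies in the set counted.

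Here the shift $c=7$ matters, and this is the $2$-adic reason. With $c=7$ and $n\equiv1\pmod8$, $K_7=(n+7)/4$ is even. The prime $2$ splits in $\mathbb{Q}(\sqrt{-7})$, so this evenness is compatible with a primitive representation by the form of discriminant $-7$. At $c=3$ the analogous form would have $2$ inert, which is the obstruction. The count of primes of the shape $4f(x,y)-7$ with $(x,y)$ primitive in prescribed congruence classes is exactly the primitive-representation theorem of Fuchs--Hsu--Rickards--Schindler--Stange~\cite{fuchs2025}. It gives $\gg N(\log N)^{-3/2}$: the primitive values up to $N$ have density $(\log N)^{-1/2}$, and the primality condition costs a further factor $(\log N)^{-1}$.

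The main obstacle is verifying that the congruence conditions are admissible, i.e.\ that no local obstruction at $2$, $3$ or $7$ kills the family. I would exhibit one explicit pair $(x_0,y_0)$ giving a prime $n\equiv1\pmod{24}$ with $7\nmid f(x_0,y_0)$ and $\gcd(x_0,y_0)=1$. I would then restrict to $(x,y)\equiv(x_0,y_0)$ modulo $168$ and check that the hypotheses of~\cite{fuchs2025} hold on this progression.

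\emph{Residue claim.} For the last assertion, $n=4K_7-7\equiv4K_7\pmod 7$, so $\leg n7=\leg{4}{7}\leg{K_7}{7}=\leg{K_7}{7}=+1$, since $K_7$ is a product of quadratic residues modulo $7$. Theorem~\ref{thm:twisted_obstruction}(a) with $p=7$ then obstructs both branches.
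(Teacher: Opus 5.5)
Your upper bound and your residue computation are fine. The residue computation, done by multiplicativity of $\leg{\cdot}{7}$, even applies to every $n$ in the counted set. The gap is in the lower bound.

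You take $K_7=f(x,y)$, i.e.\ $n=4f(x,y)-7$ with $B=4$. Then $n\equiv1\pmod 8$ holds if and only if $f(x,y)$ is even. Primitive values of $f=x^{2}+xy+2y^{2}$ are odd whenever $x$ is odd and $y$ is even; for instance $f(1,2)=11$ gives the prime $n=37\equiv5\pmod 8$, which lies outside the target set. So your family needs a congruence condition on $(x,y)$ modulo $2$. The restriction modulo $168$ that you propose involves the primes $2$ and $7$. The congruence version \cite[Thm.~1.1(2)]{fuchs2025} only admits moduli coprime to $2\Delta B=56$, so it cannot impose this. The bare count of primes $4f-7$ says nothing about how they split between $n\equiv1$ and $n\equiv5\pmod 8$. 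The ``main obstacle'' you flag is therefore not resolved by the route you describe.

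The conditions at $3$ and $7$ need not be imposed at all. $7\mid f$ would give $7\mid n$. Anisotropy of $f$ modulo $3$, together with the primality of $n$, forces $f\equiv2\pmod 3$, hence $n\equiv1\pmod 3$.

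The missing idea is to put the $2$-adic condition into $B$ rather than into a congruence on $(x,y)$. Note that $K_7=6k+2$ is \emph{always} even for $n=24k+1$. This contradicts your parenthetical claim that $2$ cannot divide $K_7$, and you contradict that claim yourself in the next paragraph.

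Take $K_7=4f(U,V)$, i.e.\ $B=16$, $A=-7$, as the paper does. Then:
\begin{itemize}
\item $n=16f-7\equiv1\pmod 8$ identically;
\item $n\equiv1\pmod 3$ follows by the anisotropy argument above;
\item the hypotheses of \cite[Thm.~1.1]{fuchs2025} hold with no congruence restriction at all: $\gcd(A,B)=1$, $2\mid AB$, and the leading coefficient $1$ is coprime to $2\Delta$.
\end{itemize}
The extra factor $4$ in $K_7$ is harmless because $\leg{2}{7}=+1$. Odd prime factors of $f$ are then handled exactly by your primitivity-plus-reciprocity argument, via $4f=(2U+V)^{2}+7V^{2}$.
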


\begin{proof}
The upper bound $\ll N(\log N)^{-3/2}$ is obtained by an upper-bound sieve of dimension $1/2$~\cite{halberstam_richert,friedlander_iwaniec} exactly as in Proposition~\ref{prop:blind_upper}: one sifts $K_7$ by the primes $q\le z$ with $\leg q7=-1$, a set of Dirichlet density $1/2$, at the Bombieri--Vinogradov level $\mathcal{D}=N^{1/2-\varepsilon}$.

\emph{Choice of the shift.} The choice is dictated by a $2$-adic constraint, which it is worth isolating first, and which has two distinct faces according to the class of $c$ modulo $8$.

Suppose first $c\equiv7\pmod8$. Writing $n=24k+1$, one has $c+1\equiv0\pmod 8$ and $K_c=6k+\frac{c+1}{4}$ with $\frac{c+1}{4}$ even, so $K_c$ is \emph{always even} and $2$ is always a prime factor of $K_c$. If some $p\mid c$ satisfies $p\equiv3\pmod8$ then $\leg2p=-1$ and the exceptional set is empty. If instead $p\equiv7\pmod8$ then $\leg2p=+1$ and the systematic factor $2$ obstructs nothing: this is the case $c=p=7$, and it is what allows the conclusion to hold with no congruence condition on $n$ modulo $8$ imposed by hand.

Suppose now $c\equiv3\pmod 8$. Here the parity of $K_c$ is not the difficulty, on the contrary it is rigidly determined: $n\equiv1\pmod{24}$ forces $n\equiv1\pmod 8$, hence $n+c\equiv4\pmod 8$ and $K_c=\frac{n+c}{4}$ is \emph{always odd}, so no systematic prime factor of $K_c$ arises at all. The obstruction is on the side of the construction. With $B\equiv0\pmod 8$ and $A=-c$ one has $n=Bf(U,V)-c\equiv-c\equiv5\pmod 8$ identically, so the family produces no $n\equiv1\pmod{24}$ whatsoever; and the alternative $B=4$, for which $K_c=f(U,V)$, would require selecting $f$ odd, that is, a congruence condition on $(U,V)$ modulo $2$, which~\cite[Thm.~1.1(2)]{fuchs2025} cannot impose, its modulus being required to be coprime to $2\Delta B$. This is exactly the phenomenon recorded numerically at $c=3$ in Section~\ref{sec:7}, item~(12), where the primitively represented $n$ split according to $n\equiv1$ or $5\pmod 8$ and only the first half is blind. The shift $c=7$ removes both faces of the difficulty at once.

\emph{The lower bound.} We restrict $K_7$ to four times the principal form of discriminant $-7$. Put
$$f(U,V)=U^{2}+UV+2V^{2},\qquad B=16,\qquad A=-7,$$
so that $Bf(U,V)+A=16f(U,V)-7=n$ and $K_7=\frac{n+7}{4}=4f(U,V)$. Then $f$ is a primitive positive definite integral binary quadratic form of discriminant $\Delta=1-8=-7$, which is not a perfect square; its leading coefficient satisfies $\gcd(1,2\Delta)=\gcd(1,14)=1$; and $\gcd(A,B)=\gcd(7,16)=1$ with $2\mid AB$. All the hypotheses of~\cite[Thm.~1.1]{fuchs2025} are therefore met, and that theorem gives
$$\#\{\,n\le N\ \text{prime}:\ n\ \text{primitively represented by}\ 16f(U,V)-7\,\}\ \gg\ \frac{N}{(\log N)^{3/2}} .$$
It remains to check that all such $n$, with finitely many exceptions, belong to the set counted in the statement. Let $n=16f(U,V)-7$ with $\gcd(U,V)=1$ and $n>7$ prime.

\emph{(i) The class modulo $24$ is automatic.} First $n=16f-7\equiv-7\equiv1\pmod 8$. Next, $f$ is anisotropic modulo $3$, since its discriminant $-7\equiv2\pmod 3$ is not a square modulo $3$; a primitive representation therefore has $3\nmid f$, for $3\mid f$ would force $3\mid U$ and $3\mid V$. If $f\equiv1\pmod 3$ then $n=16f-7\equiv0\pmod3$, excluded for $n>3$ prime; hence $f\equiv2\pmod3$ and $n\equiv1\pmod 3$. Combining, $n\equiv1\pmod{24}$. No congruence version of~\cite[Thm.~1.1]{fuchs2025} is needed, which matters, since its modulus is required to be coprime to $2\Delta B$.

\emph{(ii) $7\nmid K_7$, and $\leg n7=+1$.} Multiplying $f$ by $4$,
$$K_7=4f(U,V)=(2U+V)^{2}+7V^{2}.$$
If $7\mid K_7$ then $7\mid n+7$, hence $7\mid n$, which is excluded. Consequently $7\nmid(2U+V)$. Moreover $n=4K_7-7\equiv4K_7\pmod 7$ and $K_7\equiv(2U+V)^{2}\pmod7$, so $\leg n7=\leg{4}{7}\leg{K_7}{7}=+1$; Theorem~\ref{thm:twisted_obstruction}(a) then obstructs the twisted branch as well.

\emph{(iii) Every prime factor of $K_7$ is a residue modulo $7$.} Let $q\mid K_7$ be prime; by (ii) we may assume $q\neq7$. If $q=2$, then $\leg27=+1$ and there is nothing to prove, which is the point of the $2$-adic paragraph above. If $q$ is odd then $q\mid f$, and $q\nmid V$: otherwise $q\mid(2U+V)$ and $q\mid V$ would give $q\mid 2U$, hence $q\mid\gcd(U,V)$. Thus $(2U+V)^{2}\equiv-7V^{2}\pmod q$ with $V$ invertible, which gives $\leg{-7}{q}=1$, i.e. $\leg{-1}q\leg7q=1$. Applying quadratic reciprocity with $7\equiv3\pmod4$: if $q\equiv1\pmod4$ then $\leg{-1}q=1$, so $\leg7q=1$ and $\leg q7=\leg7q=1$; if $q\equiv3\pmod4$ then $\leg{-1}q=-1$, so $\leg7q=-1$ and $\leg q7=-\leg7q=1$.

Primitivity of the representation is essential and is not a technicality. If $\gcd(U,V)=d>1$ then every prime $q\mid d$ divides $f$ to even order and the congruence $(2U+V)^{2}\equiv-7V^{2}\pmod q$ becomes vacuous, so no constraint whatsoever is placed on $\leg q7$. This is precisely the point at which the classical theorems of Iwaniec~\cite{iwaniec1972,iwaniec1974}, which count primes represented but not necessarily \emph{primitively} represented by a shifted quadratic form, are insufficient.
\end{proof}

The last clause of the theorem is an instance of a general phenomenon.

\begin{proposition}\label{prop:auto_residue}
Let $c\equiv3\pmod4$ and let $p\mid c$ be prime. If $K_c$ is restricted to a fixed square multiple of the primitively represented values of the principal form attached to $p$, and $p\nmid K_c$, then $\leg{K_c}{p}=+1$, hence $\leg np=+1$, and by Theorem~\ref{thm:twisted_obstruction}(a) both branches are obstructed at $c$.
\end{proposition}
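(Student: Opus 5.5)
The plan is to imitate step (ii) of the proof of Theorem~\ref{thm:blind_primes}, with $7$ replaced by $p$. First I fix the interpretation. For a prime $p\equiv3\pmod4$ the principal form of discriminant $-p$ is $f_p(U,V)=U^{2}+UV+\frac{p+1}{4}V^{2}$, and $4f_p(U,V)=(2U+V)^{2}+pV^{2}$. The hypothesis says $K_c=\lambda^{2}f_p(U,V)$ for a fixed integer $\lambda$, in the way that $K_7=2^{2}f(U,V)$ in Theorem~\ref{thm:blind_primes}, with $\gcd(U,V)=1$. If instead $p\equiv1\pmod4$ is allowed, the principal form is $U^{2}+\frac{p}{4}\cdots$, which is not integral, so I restrict to $p\equiv3\pmod4$. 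This is the only case where Theorem~\ref{thm:twisted_obstruction} speaks, since its standing hypothesis comes from Theorem~\ref{thm:algebraic_obstruction}. I would state this restriction explicitly.

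Next I compute the residue of $K_c$. Reducing modulo $p$, we get $4f_p(U,V)\equiv(2U+V)^{2}\pmod p$, so $\lambda^{2}\cdot4f_p\equiv(\lambda(2U+V))^{2}$. Since $\gcd(4,p)=1$, this gives $\leg{K_c}{p}=\leg{\lambda(2U+V)}{p}^{2}$. That value is $+1$ as soon as $p\nmid\lambda(2U+V)$, and this is exactly what $p\nmid K_c$ guarantees. So $\leg{K_c}{p}=+1$.

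To transfer this to $n$, I use $n=4K_c-c$ and $p\mid c$. This gives $n\equiv4K_c\pmod p$, hence $\leg np=\leg4p\leg{K_c}p=+1$. Here $p\nmid n$, because $p\nmid K_c$. Theorem~\ref{thm:twisted_obstruction}(a) then applies at $p$, and both branches are obstructed at $c$.

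The one point needing care is that Theorem~\ref{thm:twisted_obstruction}(a) requires every prime factor of $K_c$ to be a residue modulo $p$. So I must also reproduce step (iii) of Theorem~\ref{thm:blind_primes}. Take an odd prime $q\mid f_p(U,V)$ with $q\ne p$. Primitivity gives $q\nmid V$, so $\leg{-p}{q}=1$, and quadratic reciprocity with $p\equiv3\pmod4$ gives $\leg qp=1$, exactly as in the case $p=7$. The primes dividing $\lambda$ (and $2$ through the factor $4$) are fixed and are not controlled by the form. For them I would either add the hypothesis that they are residues modulo $p$, as $\leg27=+1$ in the $2$-adic paragraph, or read the proposition as asserting only the residue statement $\leg np=+1$, with the primitivity-driven part supplying the rest. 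I expect this bookkeeping of the prime factors of $\lambda$ and $2$ to be the main obstacle; the symbol computation itself is immediate.
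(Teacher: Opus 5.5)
Your symbol computation is exactly the paper's proof. You write $4f_p(U,V)=(2U+V)^{2}+pV^{2}\equiv(2U+V)^{2}\pmod p$, observe that this square is non-zero because $p\nmid K_c$, conclude $\leg{K_c}{p}=+1$, and get $\leg{n}{p}=+1$ from $n\equiv4K_c\pmod p$.

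The paper's proof stops at that point. It never checks the standing hypothesis of Theorem~\ref{thm:twisted_obstruction}, namely that every prime factor of $K_c$ is a residue modulo $p$. In its only application, Theorem~\ref{thm:blind_primes}, that hypothesis is supplied separately, by step (iii) and by $\leg{2}{7}=+1$. Your reproduction of step (iii) is therefore a genuine addition.

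Your worry about the prime factors of $\lambda$ is more than bookkeeping: without an extra hypothesis the last clause is false. Take $c=p=7$ and $n=193$, a prime $\equiv1\pmod{24}$.
\begin{itemize}
\item $K_7=50=5^{2}f(0,1)$ with $f=U^{2}+UV+2V^{2}$ and $\gcd(0,1)=1$.
\item $7\nmid K_7$, and indeed $\leg{193}{7}=+1$.
\item Yet $d_1=5$, $d_2=2$ are coprime divisors of $50$ with $d_1+d_2=7$. So the untwisted branch succeeds, giving $\frac{4}{193}=\frac{1}{50}+\frac{1}{4825}+\frac{1}{1930}$.
\end{itemize}
The culprit is $\leg{5}{7}=-1$. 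This is not an isolated instance. $K_7$ is always even for $n\equiv1\pmod{24}$, so every such $n$ with $K_7=25f(U,V)$ has $2\mid K_7$ and $5\mid K_7$, and the pair $(5,2)$ always works.

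So the proposition has to be read as in your second option. What it actually proves is that the extra hypothesis $\leg{n}{p}=+1$ of part (a) comes for free. The conclusion that both branches are obstructed holds only once all prime factors of $K_c$ are residues modulo $p$, including those of $\lambda$.

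Two minor corrections.
\begin{itemize}
\item The prime $2$ dividing $f_p(U,V)$ itself needs no extra hypothesis, and your step (iii) left it out. For primitive $(U,V)$, $f_p$ is odd whenever $\frac{p+1}{4}$ is odd. Hence $2\mid f_p$ forces $p\equiv7\pmod8$, where $\leg{2}{p}=+1$. Only the primes of $\lambda$ are genuinely uncontrolled.
\item For $p\equiv1\pmod4$, the form $U^{2}+pV^{2}$ of discriminant $-4p$ is integral, and the residue computation goes through for it unchanged. The real reason to restrict to $p\equiv3\pmod4$ is the one you also give: part (a) needs $\leg{-1}{p}=-1$.
\end{itemize}
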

\begin{proof}
Under that restriction $K_c$ is a square times a value of the principal form; writing $4$ times that value as $X^{2}+pY^{2}$ shows it to be a square modulo $p$, and $p\nmid K_c$ makes it a non-zero square, so $\leg{K_c}{p}=+1$. Since $p\mid c$ gives $n\equiv4K_c\pmod p$ and $4$ is a square, $\leg np=\leg{K_c}{p}=+1$.
\end{proof}

For $c=3$ this was obtained in Corollary~\ref{cor:shift3} from $n\equiv1\pmod3$; the proposition shows that it is not a peculiarity of that shift but a feature of the method, so that every blind set produced this way is blind on both branches.

\begin{corollary}\label{cor:analytic_barrier}
For the shift $c=3$, the parametrization with first denominator $K_3$ fails, on both branches, for a set of \emph{integers} $n\equiv1\pmod{24}$ of size $\gg N(\log N)^{-1/2}$. For \emph{primes} $n\equiv1\pmod{24}$ the same conclusion holds unconditionally at the shift $c=7$, for a set of size $\asymp N(\log N)^{-3/2}$. The shifts $c=3$ and $c=7$ therefore both leave infinite residual sets, of integers and of primes respectively. Only the second bears on the Erd\H{o}s--Straus conjecture: the $c=3$ statement concerns integers and is silent about primes, which is precisely why the shift is changed in Theorem~\ref{thm:blind_primes}. What is established, then, is that \emph{the} shift $c=7$ cannot be the end of the story; we make no claim about an arbitrary fixed shift, for the reason given in the paragraph below.
\end{corollary}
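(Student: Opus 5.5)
The corollary assembles results already proved, so the plan is to chain them and check that each clause is covered.

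For the integer statement at $c=3$, I would apply Proposition~\ref{prop:blind_lower}. It gives $|\mathcal{B}(N)|\sim\gamma N(\log N)^{-1/2}$, and in particular $\gg N(\log N)^{-1/2}$. For every $n\in\mathcal{B}(N)$, every prime factor of $K_3=(n+3)/4$ is $\equiv1\pmod3$, so every prime factor of $K_3$ is a quadratic residue modulo $3$. Moreover $n\equiv1\pmod3$, hence $\leg n3=+1$. Theorem~\ref{thm:twisted_obstruction}(a) with $p=c=3$ then obstructs both branches, and this is what Corollary~\ref{cor:shift3} records.

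One point needs a remark. Theorem~\ref{thm:twisted_obstruction} and Corollary~\ref{cor:two_branches} are stated for prime $n$, but the square-class arguments use only $\gcd(n,3)=1$ and divisors of $K_3$. For the twisted branch the divisors of $nK$ of the form $nd'$ are replaced by products of divisors of $n$ and of $K$. The content needed is therefore just the congruence obstruction modulo $3$ applied to $d_1+d_2$ and $n d_1+d_2$, and I would phrase the integer clause in exactly those terms.

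For the prime statement at $c=7$, I would invoke Theorem~\ref{thm:blind_primes} directly. It gives $\asymp N(\log N)^{-3/2}$ primes $n\equiv1\pmod{24}$ such that every prime factor of $K_7$ is a quadratic residue modulo $7$. It also shows that the primes produced by the lower bound satisfy $\leg n7=+1$, so that Theorem~\ref{thm:twisted_obstruction}(a) with $p=7$ blocks both branches of Corollary~\ref{cor:two_branches}. Because that corollary is a complete criterion for decompositions with first denominator $K_7$, no such decomposition exists for these primes. This establishes the residual set of primes at $c=7$, and hence its infinitude.

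The remaining sentences of the corollary are interpretive. I would justify them by noting three things. The $c=3$ set consists of integers, so it says nothing about primes. Proposition~\ref{prop:blind_upper} shows that its prime members number only $\ll N(\log N)^{-3/2}$, with no lower bound available there. The $2$-adic discussion inside the proof of Theorem~\ref{thm:blind_primes} is what forces the move to $c=7$.

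The only step that needs care is the size claim at $c=7$. The statement says $\asymp$, but the two-sided obstruction is certified only for the primes coming from the Fuchs--Hsu--Rickards--Schindler--Stange lower bound. So I would state it precisely: the two-sided blind set has size $\gg N(\log N)^{-3/2}$ from that lower bound, and it is contained in a set of size $\ll N(\log N)^{-3/2}$ by the dimension-$1/2$ upper sieve. Together these give the order $\asymp N(\log N)^{-3/2}$.
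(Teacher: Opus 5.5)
Your proposal is correct and follows the paper's own proof. At $c=3$ you combine Corollary~\ref{cor:shift3}, Theorem~\ref{thm:twisted_obstruction}(a) and Proposition~\ref{prop:blind_lower}, restricting the integer clause, as the paper does, to the failure of the two congruences rather than to the completeness of Corollary~\ref{cor:two_branches}. At $c=7$ you use Theorem~\ref{thm:blind_primes} with its $\leg n7=+1$ clause, which the paper routes through Proposition~\ref{prop:auto_residue}, and read $\asymp N(\log N)^{-3/2}$ as the primitive-representation lower bound against the dimension-$1/2$ upper sieve.

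One simplification: your caution that two-sidedness is certified only for the primes from the lower bound is unnecessary. If $7\nmid K_7$ and every prime factor of $K_7$ is a residue modulo $7$, then $K_7$ is itself a residue, so $\leg n7=\leg{4K_7}{7}=+1$ for the whole set counted in Theorem~\ref{thm:blind_primes}.
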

\begin{proof}
Combine Corollary~\ref{cor:shift3}, Theorem~\ref{thm:twisted_obstruction}(a) and Proposition~\ref{prop:blind_lower} for the first statement, and Theorem~\ref{thm:blind_primes} with Proposition~\ref{prop:auto_residue} for the second. The first statement is about integers, whereas Theorems~\ref{thm:algebraic_obstruction} and~\ref{thm:twisted_obstruction} are stated for prime $n$; at the shift $c=3$ primality is not used, since $3\nmid K_3$ for every $n\equiv1\pmod{24}$, prime or not, and $n\equiv1\pmod3$ gives $\leg n3=+1$ outright. What fails for composite $n$ is the completeness of the dichotomy of Corollary~\ref{cor:two_branches}, not the failure of its two congruences, and only the latter is asserted here.
\end{proof}

We do not claim the corresponding statement for an arbitrary fixed $c$. The argument of Theorem~\ref{thm:blind_primes} applies verbatim to any shift $c\equiv7\pmod 8$ admitting a prime factor $p\equiv7\pmod 8$, with $f$ the principal form of discriminant $-p$ and $B=16$, $A=-c$, provided the resulting class of $n$ modulo $3$ is the right one, automatic for $c=7$ by anisotropy, and otherwise a congruence modulo $3$, admissible in~\cite[Thm.~1.1(2)]{fuchs2025} whenever $3\nmid2\Delta B$. We do not pursue the general statement, one shift being enough for Corollary~\ref{cor:analytic_barrier}. Note also that the lower bound is extracted from a proper sub-family, the $K_7$ of the shape $4f(U,V)$ with $\gcd(U,V)=1$; the full set of $n$ for which every prime factor of $K_7$ is a residue modulo $7$ is about twice as large in the accessible range (Section~\ref{sec:7}, item (12)), since $K_7\equiv2\pmod4$ is never of that shape. This is harmless: a lower bound is all that is required, and the upper bound is proved for the full set.

Corollary~\ref{cor:analytic_barrier} is the reason for working with a family of shifts rather than with a single one. The exceptional set produced in Theorem~\ref{thm:metric_theorem} is $\mathcal{O}(N\exp(-C(\log\log N)^{3}))$, and $\exp(C(\log\log N)^{3})$ grows faster than any fixed power of $\log N$, so the multi-shift bound is asymptotically smaller than the single-shift blind set. The comparison has its limits (an upper bound for a union of shifts against a lower bound for one fixed shift), and the conclusion to be drawn is that a single fixed shift is provably insufficient, not that the multi-shift bound is optimal: Table~\ref{tab:hierarchy} shows that it is not.

\subsection{Depth/density trade-off at growing depth} \label{sec:5.2}

\begin{theorem} \label{thm:metric_theorem}
Let $\lambda\ge1$ be fixed and $J=(\log N)^{\lambda}$. For a proportion $1$ of the primes $n \equiv 1 \pmod{24}$, the two-parameter criterion of Theorem~\ref{thm:gen_criterion} admits a witness with coprime $u,a\le J$. Quantitatively,
$$|E_2(N;J)|\ \ll\ N\exp\big(-(\kappa_1\lambda^{2}+o(1))(\log\log N)^{3}\big),\qquad \kappa_1\ =\ \frac{3}{2\pi^{2}}=0.15198\ldots$$
Restricting to the one-parameter system $a=1$ (Algorithm~\ref{alg:geom_sieve}) the same argument gives only $\exp(-(\kappa_0\lambda+o(1))(\log\log N)^{2})$ with $\kappa_0=\frac{\zeta(2)\zeta(3)}{3\zeta(6)}=0.647865\ldots$; both are specialisations of Theorem~\ref{thm:tradeoff}.
\end{theorem}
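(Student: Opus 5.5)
The plan is to run Montgomery's large sieve on the primes $n\le N$, $n\equiv1\pmod{24}$, in the form used by Vaughan~\cite{vaughan1970} and reproved in~\cite[\S4]{pomerance2026}. The sieving set consists of the primes $p$ in a range $(J^{2},Q]$, with $Q=N^{1/2-\varepsilon}$ or smaller. At each such $p$ we forbid the residue classes $n\equiv-u\,a^{-1}\pmod p$, and only for those coprime pairs $(u,a)\in[1,J]^{2}$ for which $p$ itself is a witness. A prime $p\mid an+u$ with $p\equiv-1\pmod{4ua}$ is already a divisor $s=p$ of the required kind. Hence every $n\in E_{2}(N;J)$ avoids the class $-ua^{-1}\bmod p$ for every prime $p\le Q$ with $p\equiv-1\pmod{4ua}$. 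By Lemma~\ref{lem:distinct_classes}, once $p>J^{2}$ these classes are pairwise distinct for distinct coprime pairs, so the number of forbidden classes is
$$\omega(p)=\#\{(u,a)\in[1,J]^{2}:\gcd(u,a)=1,\ p\equiv-1\ (\mathrm{mod}\ 4ua)\}.$$
This count is essentially the number of coprime factorisations $ua\mid(p+1)/4$ with $u,a\le J$. Restricting to prime witnesses loses nothing at the level of exponents, just as in Vaughan's argument.

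The large sieve gives $|E_{2}(N;J)|\ll (N+Q^{2})/L$, where
$$L=\sum_{q\le Q}\mu^{2}(q)\prod_{p\mid q}\frac{\omega(p)}{p-\omega(p)}.$$
The key step is a lower bound for $L$. I would restrict $q$ to products of $k$ distinct primes $p\in(J^{2},Q^{1/k}]$ and bound
$$L\ \ge\ \frac{1}{k!}\Big(\sum_{J^{2}<p\le Q^{1/k}}\frac{\omega(p)}{p}\Big)^{k}$$
up to negligible corrections. The inner sum is evaluated by Siegel--Walfisz in progressions to the moduli $4ua\le4J^{2}=4(\log N)^{2\lambda}$, which lie within the admissible range. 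Each coprime pair contributes $\frac{1}{\varphi(4ua)}\log\frac{\log Q^{1/k}}{\log J^{2}}$, so the sum is
$$S\asymp\Big(\sum_{\substack{u,a\le J\\ \gcd(u,a)=1}}\frac{1}{\varphi(4ua)}\Big)\cdot\log\frac{\log N}{k\log J}.$$
The double sum over coprime pairs is $\sim\frac{3}{\pi^{2}}\cdot\frac{(\log J)^{2}}{2}$ times the appropriate Euler-product constant, because $\sum_{m\le x}1/\varphi(m)\sim\frac{\zeta(2)\zeta(3)}{\zeta(6)}\log x$ and summing over two variables gives a $(\log J)^{2}$ term. With $\log J=\lambda\log\log N$, this yields $S\asymp\lambda^{2}(\log\log N)^{2}\cdot\log(\log N/(k\lambda\log\log N))$.

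Next I would optimise over $k$ using $L\ge (eS/k)^{k}$. Taking $k\approx S/e^{\theta}$ and keeping $\log(\log N/k\log J)\sim\log\log N$ (so that $k\le(\log N)^{1-o(1)}$, which holds) gives $\log L\gtrsim c\,\lambda^{2}(\log\log N)^{3}$. The constant $\kappa_{1}=\frac{3}{2\pi^{2}}$ should come out of the coprime-pair density $\frac{6}{\pi^{2}}$, the factor $\frac12$ from $\int\!\!\int$ over the triangle of $\log u+\log a\le$ scale, and the $\frac12$ from $\log Q=(\frac12-\varepsilon)\log N$. I expect that the $\varphi$-constant cancels against the restriction to $4ua$ with $(4,ua)$, and this bookkeeping must be checked carefully. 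The one-parameter case $a=1$ is the same computation with a single sum $\sum_{u\le J}1/\varphi(4u)\sim\frac{\zeta(2)\zeta(3)}{3\zeta(6)}\cdot\log J$ roughly (taking the $4$ into account). This gives $S\asymp\kappa_{0}\lambda(\log\log N)^{2}$, and hence only $(\log\log N)^{2}$ in the exponent. Both cases would be derived from a single statement, Theorem~\ref{thm:tradeoff}, with general weights.

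The main obstacle is the lower bound for $L$ with the exact constant. Two issues need care. The first is the uniformity of Siegel--Walfisz in $4ua\le4(\log N)^{2\lambda}$, which is fine for fixed $\lambda$. The second, and the real difficulty, is choosing the optimal $k$ and the prime range so that the factorial loss $k!$ does not eat the $(\log\log N)^{3}$. I would first take $k=\lfloor S\rfloor$ and the multinomial bound $L\ge S^{k}/k!\cdot(1-o(1))$, then check that the distinctness of primes and the $\omega(p)<p/2$ condition contribute only lower-order corrections. The proportion-one statement follows immediately, since the bound is $o(N/\log N)$.
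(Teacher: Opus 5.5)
Your architecture is the paper's own (Theorem~\ref{thm:tradeoff} with Lemmas~\ref{lem:5.10} and \ref{lem:5.5}--\ref{lem:5.9}). It uses prime witnesses $s=p$, the count $\varrho(p)$ of coprime pairs with $4ua\mid p+1$, distinctness of the classes for $p>J^{2}$, the bound $G(Q)\ge e_k(x)$ with $k\approx S$ and $M_2=o(1)$, and Lemma~\ref{lem:kappa0} for the slice $a=1$. Three steps, however, are wrong as written and need repair.

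\emph{Siegel--Walfisz.} It is uniform only for moduli $q\le(\log y)^{A}$, where $y$ is the size of the primes $p$ being summed, not $N$. Your range starts at $p\approx J^{2}=(\log N)^{2\lambda}$. There $\log p\asymp\log\log N$, while $4ua$ reaches $4(\log N)^{2\lambda}$, so ``fine for fixed $\lambda$'' is false at the bottom of the range. The repair is cheap: discard the primes $p\le\exp((\log N)^{\delta})$.
\begin{itemize}
\item For $A>2\lambda/\delta$, Siegel--Walfisz then covers every modulus.
\item The error summed over the $\le J^{2}$ pairs is $\ll J^{2}\exp(-c(\log N)^{\delta/2})=o(1)$.
\item You lose only $\delta\log\log N$ of the $\log\log$-range, which disappears into the $o(1)$ as $\delta\to0$.
\end{itemize}
This is a genuine simplification of the paper's Bombieri--Vinogradov/Cauchy--Schwarz treatment with cutoff $J^{5}$ (Lemmas~\ref{lem:5.3}--\ref{lem:5.5}). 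It is available only because $J$ is polylogarithmic; the paper's version is uniform up to $J\le\exp((\log N)^{1/2})$, which Corollary~\ref{cor:optimal_tradeoff} needs.

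\emph{The constant.} Your account of $\kappa_1$ is wrong. There is no triangle: $u$ and $a$ range independently over $[1,J]$, so $\sum_{\gcd(u,a)=1}1/(ua)\sim\frac{6}{\pi^{2}}(\log J)^{2}$ with no factor $\frac12$. Nor does $\log Q=\frac12\log N$ contribute anything, because only $\log\log z=\log\log N-\log k+\mathcal{O}(1)$ enters. The factor $\frac14$ in $\kappa_1=1/(4\zeta(2))$ comes from exactly the $\varphi$-factor you expect to cancel: Lemma~\ref{lem:kappa1} uses $\varphi(4ua)\le 4ua$. So $\kappa_1$ is only a lower bound for $W(J)/(\log J)^{2}$ (numerically about $0.46$), and that lower bound is all the theorem needs. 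Your two spurious halves reproduce $\frac14$ by coincidence.

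\emph{The choice of $k$.} Taking $k=\lfloor S\rfloor$ with $z=Q^{1/k}$ is circular, since $S$ depends on $z$. The paper resolves this by saturation, $z=\min(z_\star,z_0)$. In your set-up it is simpler to fix $k=\lfloor(\kappa_1-\eta)\lambda^{2}(\log\log N)^{3}\rfloor$, with $\delta$ small in terms of $\eta$. Then $S\ge k$ for large $N$, and Lemma~\ref{lem:5.7} gives $e_k\ge(1-\frac{M_2}{2})\,k^{k}/k!\gg e^{k}/\sqrt{k}$. This yields the stated exponent as $\eta\to0$ without any saturation argument.
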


Throughout this subsection $J\ge3$ is a search depth and $z>J^{5}$ a sifting bound, both functions of $N$. We put $\varrho(p)=|\Omega_p|$ with $\Omega_p$ as in Lemma~\ref{lem:5.10}, and
\begin{equation} \label{eq:5.2.0}
S(J,z) = \sum_{J^5 < p \le z} \frac{\varrho(p)}{p}, \qquad G(Q) = \sum_{q \le Q} \mu^2(q) \prod_{p \mid q} \frac{\varrho(p)}{p-\varrho(p)} .
\end{equation}
The cutoff $J^{5}$ is the one imposed by the modulus range of Bombieri--Vinogradov in the two-parameter setting (Lemma~\ref{lem:5.3}). The modulus bound is chosen as $Q=z^{k}$ in the proof of Theorem~\ref{thm:tradeoff}, subject to $Q\le N^{1/2}$.

\subsubsection{The system of residue classes} \label{sec:5.2.0}

\begin{lemma} \label{lem:5.10}
Let $p>J^{2}$ be a prime and let $n>p$ be a prime. For a pair $(u,a)$ of positive integers, condition (ii) of Theorem~\ref{thm:gen_criterion} holds with the prime witness $s=p$ if and only if
$$4ua\mid p+1\qquad\text{and}\qquad n\equiv-u\,a^{-1}\pmod p .$$
Write $n=24k+1$ and let
$$\Omega_p=\Big\{\,k\equiv 24^{-1}\big(-1-u\,a^{-1}\big)\ (\mathrm{mod}\ p)\ :\ u,a\le J,\ \gcd(u,a)=1,\ 4ua\mid p+1\,\Big\}.$$
These classes are pairwise distinct, so that
$$\varrho(p)=|\Omega_p|=\#\big\{(u,a)\in[1,J]^{2}:\ \gcd(u,a)=1,\ 4ua\mid p+1\big\},$$
and $\varrho(p)=0$ unless $p\equiv3\pmod4$. If $n\in E_2(N;J)$ and $n>z$, then $k=(n-1)/24$ avoids every class of $\Omega_p$, for every prime $p\le z$.
\end{lemma}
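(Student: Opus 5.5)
The plan is to prove the four assertions of Lemma~\ref{lem:5.10} in order: the equivalence, the description of $\Omega_p$ with distinctness, the vanishing of $\varrho(p)$, and the avoidance statement.

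\emph{The equivalence.} Suppose $s=p$ is a witness for $(u,a)$, meaning $p\mid an+u$ and $p\equiv-1\pmod{4ua}$. The second condition is exactly $4ua\mid p+1$. Since $p>J^{2}$, I expect to need $p\nmid a$; this is immediate when $a\le J<p$, and in general it follows because $4ua\mid p+1$ forces $a<p$. Then $a$ is invertible modulo $p$, and $p\mid an+u$ is equivalent to $n\equiv-u\,a^{-1}\pmod p$. The converse reverses the same two lines. The hypothesis $n>p$ is not needed for the equivalence itself; it will enter only at the end.

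\emph{Translation to $k$ and distinctness.} Substituting $n=24k+1$ and using $\gcd(24,p)=1$, which holds because $p>J^{2}\ge9$ when $J\ge3$ (and otherwise because $p\equiv3\pmod4$ with $4ua\mid p+1$ gives $p\ge3$; the primes $2$ and $3$ are excluded since $p\equiv-1\pmod4$ and $p>J^2$), the condition becomes $k\equiv24^{-1}(-1-u\,a^{-1})\pmod p$. This map from the class of $n$ to the class of $k$ is a bijection modulo $p$. Pairwise distinctness then reduces directly to Lemma~\ref{lem:distinct_classes}, applied to coprime pairs in $[1,J]^{2}$ with $p>J^{2}$, and the formula for $\varrho(p)$ follows at once. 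If $\varrho(p)>0$, some pair has $4ua\mid p+1$, so $p\equiv-1\pmod4$.

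\emph{Avoidance.} Let $n\in E_2(N;J)$ with $n>z$, and suppose $k$ lies in some class of $\Omega_p$ with $p\le z$. Then the corresponding coprime pair $(u,a)$ with $u,a\le J$ satisfies $4ua\mid p+1$ and $p\mid an+u$. Since $n>z\ge p$, the hypothesis $n>p$ of the first part is met, and $s=p$ is a genuine divisor of $an+u$ with $s\equiv-1\pmod{4ua}$. This contradicts Definition~\ref{def:EP}. Primes $p\le J^{2}$ contribute nothing under the convention $z>J^{5}$, since the sum $S(J,z)$ only involves $p>J^{5}$; if needed I would simply restrict the claim to $p>J^{2}$.

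The only delicate point I anticipate is bookkeeping, not mathematics: confirming that $\gcd(24,p)=1$ and $p\nmid a$ hold at every prime the lemma is applied to. Both follow from $p>J^{2}$ together with $4ua\mid p+1$.
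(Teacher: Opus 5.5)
Your proposal is correct and follows the paper's proof step for step. Invertibility of $a$ modulo $p$ comes from $4ua\mid p+1$, the transfer to $k$ uses $\gcd(24,p)=1$, distinctness comes from Lemma~\ref{lem:distinct_classes} composed with the bijection $n\mapsto k$, $4\mid p+1$ whenever $\varrho(p)>0$, and avoidance is the contrapositive of Definition~\ref{def:EP}. One small slip: your parenthetical claim that $p=3$ is excluded fails at $J=1$, since $3\equiv-1\pmod4$, $3>J^{2}$ and $(u,a)=(1,1)$ gives $4\mid 3+1$; the standing assumption $J\ge3$ of Section~\ref{sec:5.2} settles $\gcd(24,p)=1$ exactly as the paper's terse ``$p\ge5$'' does, and the hypothesis $n>p$ you invoke at the end is harmless but not actually needed, since $s=p\mid an+u$ is a witness whatever the size of $n$.
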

\begin{proof}
Since $p\equiv-1\pmod{4ua}$ we have $\gcd(p,a)=1$, so $a^{-1}$ exists modulo $p$ and $p\mid an+u$ is equivalent to $n\equiv-ua^{-1}\pmod p$; this is the condition ``$s=p$ divides $an+u$'' of Theorem~\ref{thm:gen_criterion}(ii), the congruence $s\equiv-1\pmod{4ua}$ being the first displayed condition. Since $\gcd(24,p)=1$ for $p\ge5$, the condition transfers to the single class for $k$ displayed above. Distinctness is Lemma~\ref{lem:distinct_classes}, applicable since $p>J^{2}$; and $4ua\mid p+1$ with $u,a\ge1$ forces $4\mid p+1$. The last assertion is the contrapositive of the definition of $E_2(N;J)$.
\end{proof}

The one-dimensional system of Section~\ref{sec:4.4} is the slice $a=1$: there $\Omega_p$ reduces to the classes $k\equiv-(u+1)24^{-1}$ with $4u\mid p+1$, $u\le J$, and $\varrho(p)=\#\{u\le J:4u\mid p+1\}$. The whole gain of Section~\ref{sec:4.3ter} is that the free modulator turns a divisor count into a count of coprime \emph{pairs} of divisors, i.e. a $\tau_3$-type quantity. Restricting the witnesses $s$ to primes only weakens the conclusion, so the resulting bound on $E_2(N;J)$ remains valid; it is however a genuine loss, since composite witnesses are admissible in Theorem~\ref{thm:gen_criterion}. Note also that the large sieve requires no independence assumption on the classes $\Omega_p$, only that the sifted set avoids them.

\subsubsection{Uniform range of applicability of Bombieri--Vinogradov} \label{sec:5.2.1}

\begin{lemma} \label{lem:5.3}
Fix $B>0$. There are $J_0=J_0(B)$ and $N_0=N_0(B)$ such that for all $N\ge N_0$ and all $J$ with $J_0\le J\le\exp\big((\log N)^{1/2}\big)$ one has $4J^{2}\le y^{1/2}(\log y)^{-B}$ for every $y\ge J^{5}$. In every application below $J=J(N)\to\infty$, so the lower restriction on $J$ is harmless.
\end{lemma}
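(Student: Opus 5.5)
The plan is an elementary monotonicity argument followed by a lower bound on $\log y$. The function $\phi(y)=y^{1/2}(\log y)^{-B}$ is increasing for $y\ge e^{2B}$, because $\frac{d}{dy}\log\phi(y)=\frac{1}{2y}-\frac{B}{y\log y}>0$ as soon as $\log y>2B$. So I will first take $J_0\ge e^{2B/5}$. Then every $y\ge J^{5}$ satisfies $\log y\ge5\log J\ge 2B$, and it suffices to verify the inequality at the left endpoint $y=J^{5}$:
$$4J^{2}\ \le\ J^{5/2}\,(5\log J)^{-B}.$$

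This reduces to $4\cdot5^{B}(\log J)^{B}\le J^{1/2}$. I would write $t=\log J$ and require $\tfrac12 t-B\log t\ge\log(4\cdot5^{B})$. The left side tends to infinity and is eventually increasing, since its derivative $\tfrac12-B/t$ is positive for $t>2B$. So there is a threshold $t_0(B)$ beyond which the inequality holds, and I set $J_0=\max\big(e^{2B/5},e^{t_0(B)}\big)$.

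The upper restriction $J\le\exp\big((\log N)^{1/2}\big)$ plays no role in this inequality. Its only effect is to make the range of $J$ nonempty, which needs $\exp\big((\log N)^{1/2}\big)\ge J_0$, and this fixes $N_0(B)$. The same restriction presumably keeps $J^{5}\le N$ wherever the lemma is applied, but that is not part of the statement. There is no genuine obstacle. The only point needing care is ensuring that $y\ge J^{5}$ already lies in the monotone range of $\phi$, and that is what the choice $J_0\ge e^{2B/5}$ secures.
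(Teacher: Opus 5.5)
Your proposal is correct and follows the same route as the paper. Both use monotonicity of $y^{1/2}(\log y)^{-B}$ to reduce to the endpoint $y=J^{5}$, where the inequality becomes $J^{1/2}\ge 4(5\log J)^{B}$, true for $J\ge J_0(B)$, and both note that the upper restriction on $J$ plays no role. You are slightly more explicit than the paper in taking $J_0\ge e^{2B/5}$, which guarantees that $y\ge J^{5}$ already lies in the range where monotonicity holds.
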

\begin{proof}
The function $y\mapsto y^{1/2}(\log y)^{-B}$ is increasing for $y$ large, so it suffices to check $y=J^{5}$, where the inequality reads $4J^{2}\le J^{5/2}(5\log J)^{-B}$, i.e. $J^{1/2}\ge 4(5\log J)^{B}$; this holds for all $J\ge J_0(B)$, which is the restriction imposed in the statement. The upper restriction $J\le\exp((\log N)^{1/2})$ plays no role in this lemma; it is recorded here only because~\eqref{eq:tradeoff_hyp} imposes it.
\end{proof}

This is what forces the cutoff $J^{5}$: at $y=J^{3}$ the requirement would read $4J^{2}\le J^{3/2}(3\log J)^{-B}$, which fails. The one-parameter system, whose moduli are $4u\le4J$, tolerates the cutoff $J^{3}$; we use $J^{5}$ uniformly and record in Table~\ref{tab:hierarchy} what the sharper cutoff would give.

\begin{lemma} \label{lem:5.4}
Let $A>0$ and write $\mathcal{E}(y; q, \ell) := \pi(y; q, \ell) - \mathrm{li}(y)/\varphi(q)$. Under the hypothesis of Lemma~\ref{lem:5.3}, for $N\ge N_0(A)$ and all $y\ge J^{5}$,
\begin{equation} \label{eq:5.4.1}
\sum_{q \le 4J^{2}} \max_{w \le y} \max_{\gcd(\ell,q)=1}\big|\mathcal{E}(w; q, \ell)\big| \ \ll_A\ \frac{y}{(\log y)^{A}} .
\end{equation}
\end{lemma}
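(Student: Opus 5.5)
The plan is to derive \eqref{eq:5.4.1} directly from the Bombieri--Vinogradov theorem, applied at each height $w$ rather than at $N$. Lemma~\ref{lem:5.3} supplies exactly the hypothesis that the moduli lie below the Bombieri--Vinogradov level.

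\emph{Step 1: the inner maximum for each fixed $w$.} Recall the classical form of Bombieri--Vinogradov: for every $A'>0$ there is $B=B(A')$ such that for all $x\ge2$,
$$\sum_{q\le x^{1/2}(\log x)^{-B}}\ \max_{w\le x}\ \max_{\gcd(\ell,q)=1}|\mathcal{E}(w;q,\ell)|\ \ll_{A'}\ \frac{x}{(\log x)^{A'}} .$$
This version already contains the maximum over $w\le x$, stated with $\pi$ and $\mathrm{li}$. Passing from the $\psi$-version to $\pi$ and $\mathrm{li}$ is standard partial summation and loses only powers of $\log x$. I will take $A'=A$ and $B=B(A)$.

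\emph{Step 2: apply it at $x=y$.} Choose $J_0,N_0$ from Lemma~\ref{lem:5.3} with this $B$. For every $y\ge J^{5}$ that lemma gives
$$4J^{2}\le y^{1/2}(\log y)^{-B}.$$
So the range $q\le4J^{2}$ in \eqref{eq:5.4.1} is contained in the Bombieri--Vinogradov range at $x=y$. Every term is nonnegative, so shrinking the range of $q$ only decreases the sum. The left side of \eqref{eq:5.4.1} is therefore at most the Bombieri--Vinogradov sum at $x=y$, which is $\ll_A y(\log y)^{-A}$.

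The implied constant depends only on $A$, because $B$, $J_0$ and $N_0$ depend only on $A$. The bound is uniform in $y$, and no $N$-dependence enters beyond the requirement $N\ge N_0(A)$ and $J_0\le J\le\exp((\log N)^{1/2})$ inherited from Lemma~\ref{lem:5.3}. Since $J\to\infty$, the condition $J\ge J_0$ holds eventually.

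\emph{The delicate point.} The only real issue is making sure the form of Bombieri--Vinogradov used already includes the maximum over $w\le y$. If instead one quoted a version fixed at $w=y$, the smaller $w$ would have to be handled separately. For $w\le y^{1/2}$ this is trivial, since $|\mathcal{E}(w;q,\ell)|\ll w$ gives a total contribution of at most $4J^{2}y^{1/2}\ll y^{1/2+1/2}(\log y)^{-B}$, which is acceptable once $B\ge A$. For $w\in(y^{1/2},y]$, one must apply the theorem at $x=w$ and check that $4J^{2}\le w^{1/2}(\log w)^{-B}$. This does not follow from Lemma~\ref{lem:5.3} when $w<J^{5}$, so I would simply quote the standard version that already has $\max_{w\le x}$ inside, as in Step 1, and avoid this issue altogether.
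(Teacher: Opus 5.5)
Your proof is correct and follows the paper's argument. Lemma~\ref{lem:5.3} places the moduli $q\le 4J^{2}$ below the Bombieri--Vinogradov level $y^{1/2}(\log y)^{-B(A)}$ uniformly for $y\ge J^{5}$, and the standard form of the theorem, which already contains $\max_{w\le x}$, then gives \eqref{eq:5.4.1} at $x=y$; your remark that one must quote the version with that maximum built in makes explicit a point the paper leaves implicit.
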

\begin{proof}
By Lemma~\ref{lem:5.3} the modulus bound $4J^{2}\le y^{1/2}(\log y)^{-B(A)}$ holds uniformly for $y\ge J^{5}$; the claim is then the Bombieri--Vinogradov theorem with $Q=4J^{2}$.
\end{proof}

\subsubsection{Lower bound for $S(J,z)$} \label{sec:5.2.2}

\begin{lemma}\label{lem:kappa0}
As $J\to\infty$,
$$\sum_{u\le J}\frac{1}{\varphi(4u)}=\kappa_0\log J+\mathcal{O}(1),\qquad
\kappa_0=\frac12\prod_{p>2}\Big(1+\frac{1}{p(p-1)}\Big)=\frac{\zeta(2)\zeta(3)}{3\zeta(6)}=0.647865\ldots$$
\end{lemma}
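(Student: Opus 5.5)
The plan is to reduce the sum to a Dirichlet-series computation for the multiplicative function $h(u)=\varphi(4)\,\varphi(u)/\varphi(4u)$, which is cheap to handle. Writing $u=2^{k}u'$ with $u'$ odd, one has $\varphi(4u)=\varphi(2^{k+2})\varphi(u')=2^{k+1}\varphi(u')$. Hence
$$\frac1{\varphi(4u)}=\frac{1}{2\varphi(u)}\cdot g(u),$$
where $g(u)=\varphi(2^{k})\cdot 2/2^{k+1}$ for $k\ge1$ and $g(u)=1$ for $k=0$. Concretely, $g(u)=1$ if $u$ is odd and $g(u)=1/2$ if $u$ is even, since $\varphi(2^k)=2^{k-1}$. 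So the sum is
$$\tfrac12\sum_{u\le J}\frac{g(u)}{\varphi(u)}.$$
The first step is therefore to record the classical asymptotic $\sum_{u\le J}1/\varphi(u)=\frac{\zeta(2)\zeta(3)}{\zeta(6)}\log J+\mathcal{O}(1)$ (Landau), together with its variant restricted to odd $u$.

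The second step is to write $1/\varphi(u)=\frac1u\sum_{d\mid u}\mu^{2}(d)/\varphi(d)$. This holds because $u/\varphi(u)=\sum_{d\mid u}\mu^2(d)/\varphi(d)$, by checking it on prime powers. Inserting the weight $g$ and interchanging summations gives
$$\sum_{u\le J}\frac{g(u)}{\varphi(u)}=\sum_{d}\frac{\mu^2(d)}{d\,\varphi(d)}\sum_{m\le J/d}\frac{g(dm)}{m}.$$
The inner sum is $c_d\log(J/d)+\mathcal{O}(1)$, where $c_d$ is the mean value of $g(dm)$ over $m$: namely $c_d=\tfrac12$ if $d$ is even, and $c_d=\tfrac12\cdot1+\tfrac12\cdot\tfrac12=\tfrac34$ if $d$ is odd. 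The tail in $d$ converges absolutely, since $\sum_d \log d/(d\varphi(d))<\infty$. This yields $\kappa_0'\log J+\mathcal{O}(1)$ with
$$\kappa_0'=\sum_{d}\frac{\mu^{2}(d)c_d}{d\varphi(d)}.$$

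The third step evaluates this constant as an Euler product. Splitting by the parity of $d$, the odd part gives $\prod_{p>2}(1+\frac1{p(p-1)})$ and the factor at $2$ contributes $\frac34+\frac12\cdot\frac12=1$. So the whole sum equals $\tfrac12\prod_{p>2}\bigl(1+\frac1{p(p-1)}\bigr)$, which is the first expression for $\kappa_0$.

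The last step is the identity with zeta values. One has $\prod_p(1+\frac1{p(p-1)})=\zeta(2)\zeta(3)/\zeta(6)$ via $1+\frac1{p(p-1)}=\frac{1-p^{-6}}{(1-p^{-2})(1-p^{-3})}$. The factor at $p=2$ is $3/2$, so removing it gives $\frac{2}{3}\cdot\frac{\zeta(2)\zeta(3)}{\zeta(6)}$, and halving yields $\zeta(2)\zeta(3)/(3\zeta(6))$. The main risk is bookkeeping of the $2$-adic factor, where one can easily be off by a factor $2$. I would check this numerically at the end against $0.6478\ldots$ as a sanity test, using $\zeta(2)\zeta(3)/\zeta(6)\approx1.9436$.
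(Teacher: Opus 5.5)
Your argument is correct and is essentially the paper's: both rest on the identity $1/\varphi(u)=u^{-1}\sum_{d\mid u}\mu^{2}(d)/\varphi(d)$, harmonic sums, and the Euler product $\prod_p\bigl(1+\frac{1}{p(p-1)}\bigr)=\zeta(2)\zeta(3)/\zeta(6)$. The only difference is the bookkeeping at $p=2$. The paper writes $u=2^{\beta}w$ and averages the odd-$w$ asymptotic against the weights $2^{-(\beta+1)}$, whereas you use $\varphi(4u)\in\{2\varphi(u),4\varphi(u)\}$ to get a parity weight with parity-dependent means $c_d\in\{3/4,1/2\}$; this gives the same constant, and your opening appeal to Landau's asymptotic is never actually used.
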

\begin{proof}
Write $u=2^{\beta}w$ with $w$ odd; then $\varphi(4u)=2^{\beta+1}\varphi(w)$, so
$$\sum_{u\le J}\frac{1}{\varphi(4u)}=\sum_{\beta\ge0}\frac{1}{2^{\beta+1}}\sum_{\substack{w\le J/2^{\beta}\\ w\ \mathrm{odd}}}\frac{1}{\varphi(w)} .$$
From $1/\varphi(w)=w^{-1}\sum_{d\mid w}\mu^{2}(d)/\varphi(d)$ one gets, for odd $w$,
$$\sum_{\substack{w\le x\\ w\ \mathrm{odd}}}\frac1{\varphi(w)}=\sum_{\substack{d\le x\\ d\ \mathrm{odd}}}\frac{\mu^{2}(d)}{d\varphi(d)}\sum_{\substack{e\le x/d\\ e\ \mathrm{odd}}}\frac1e=A_{\mathrm{odd}}\log x+\mathcal{O}(1),\qquad A_{\mathrm{odd}}=\frac12\prod_{p>2}\Big(1+\frac{1}{p(p-1)}\Big),$$
since $\sum_{e\le x,\,e\ \mathrm{odd}}1/e=\frac12\log x+\mathcal{O}(1)$ and $\sum_{d\ \mathrm{odd}}\mu^{2}(d)/(d\varphi(d))=\prod_{p>2}(1+\frac1{p(p-1)})$. Summing over $\beta$ and using $\sum_{\beta\ge0}2^{-(\beta+1)}=1$, $\sum_{\beta\ge0}\beta2^{-(\beta+1)}=1$ gives the claim. Finally $\prod_{p}(1+\frac1{p(p-1)})=\zeta(2)\zeta(3)/\zeta(6)$, and removing the factor $p=2$ divides by $3/2$.
\end{proof}

\begin{lemma}\label{lem:kappa1}
For $J\to\infty$,
$$W(J):=\sum_{\substack{u,a\le J\\ \gcd(u,a)=1}}\frac{1}{\varphi(4ua)}\ \ge\ \big(\kappa_1+o(1)\big)(\log J)^{2},\qquad \kappa_1=\frac{1}{4\zeta(2)}=\frac{3}{2\pi^{2}}=0.15198\ldots$$
\end{lemma}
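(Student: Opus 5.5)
Write $\varphi(4ua)=2\varphi(ua)\cdot 2^{\mathbf 1[2\mid ua]}$. More precisely, $\varphi(4m)=2\varphi(m)$ for $m$ odd and $\varphi(4m)=4\varphi(m)$ for $m$ even. In both cases $\varphi(4ua)\le 4\varphi(ua)$, and for coprime $u,a$ multiplicativity gives $\varphi(ua)=\varphi(u)\varphi(a)$. The plan is to bound $W(J)$ from below by the crude inequality
$$W(J)\ \ge\ \frac14\sum_{\substack{u,a\le J\\ \gcd(u,a)=1}}\frac{1}{\varphi(u)\varphi(a)}\ \ge\ \frac14\sum_{\substack{u,a\le J\\ \gcd(u,a)=1}}\frac{1}{ua},$$
using $\varphi(m)\le m$, and then to evaluate the coprime harmonic double sum. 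This loses constants relative to the truth, but the target constant $1/(4\zeta(2))$ is exactly what the last sum produces. So the inequality $\varphi\le\mathrm{id}$, together with the factor $4$, should give precisely the stated $\kappa_1$, and no finer Euler product needs to be computed.

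For the coprime sum I would use Möbius inversion on the coprimality condition:
$$\sum_{\substack{u,a\le J\\ \gcd(u,a)=1}}\frac{1}{ua}=\sum_{d\le J}\frac{\mu(d)}{d^{2}}\Big(\sum_{e\le J/d}\frac1e\Big)^{2}=\sum_{d\le J}\frac{\mu(d)}{d^{2}}\big(\log(J/d)+\mathcal{O}(1)\big)^{2}.$$
Expanding the square, the main term is $(\log J)^{2}\sum_{d\le J}\mu(d)/d^{2}=(\log J)^{2}\big(1/\zeta(2)+\mathcal{O}(1/J)\big)$. The remaining terms are bounded by $\sum_d(\log d+1)^{2}/d^{2}\cdot\mathcal{O}(\log J)=\mathcal{O}(\log J)$, because $\sum_d(\log d)^{k}/d^{2}$ converges. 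This gives $(\log J)^{2}/\zeta(2)+\mathcal{O}(\log J)$. Multiplying by $1/4$ yields $W(J)\ge(\kappa_1+o(1))(\log J)^{2}$ with $\kappa_1=1/(4\zeta(2))=3/(2\pi^{2})$.

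The only point needing care is the reduction $\varphi(4ua)\le4\varphi(u)\varphi(a)\le 4ua$. Since $\gcd(u,a)=1$ gives $\varphi(ua)=\varphi(u)\varphi(a)$ directly, and $\varphi(4m)\le 4\varphi(m)$ holds for every $m$ (equality when $m$ is even, factor $2$ when $m$ is odd), this is elementary. I therefore expect no real obstacle: the main step is the routine Möbius-inversion asymptotic, where one only has to control the error terms uniformly in $d$. A sharper constant could be obtained by retaining $m/\varphi(m)$ and the $2$-adic factor, as in Lemma~\ref{lem:kappa0}, but the statement only asks for the lower bound with $\kappa_1$, so I would not pursue that.
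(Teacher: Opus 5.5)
Your proof is correct and follows the paper's argument: the crude bound $\varphi(4ua)\le 4ua$, then M\"obius inversion of the coprimality condition, then expansion of $(\log(J/d)+\mathcal{O}(1))^{2}$ to isolate the main term $(\log J)^{2}/\zeta(2)$. The detour through $\varphi(4ua)\le 4\varphi(u)\varphi(a)$ is harmless but unnecessary, since $\varphi(m)\le m$ applied directly to $m=4ua$ gives the same inequality.
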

\begin{proof}
Since $\varphi(m)\le m$,
$$W(J)\ \ge\ \frac14\sum_{\substack{u,a\le J\\ \gcd(u,a)=1}}\frac{1}{ua}
=\frac14\sum_{d\le J}\frac{\mu(d)}{d^{2}}\Big(\sum_{e\le J/d}\frac1e\Big)^{2}
=\frac14\sum_{d\le J}\frac{\mu(d)}{d^{2}}\big(\log(J/d)+\mathcal{O}(1)\big)^{2},$$
by $[\gcd(u,a)=1]=\sum_{d\mid\gcd(u,a)}\mu(d)$. Expanding the square, the terms involving $\log d$ or $\mathcal{O}(1)$ contribute $\mathcal{O}(\log J)$, while $\sum_{d\le J}\mu(d)/d^{2}=1/\zeta(2)+\mathcal{O}(1/J)$.
\end{proof}

The bound $\varphi(m)\le m$ is generous: the measured ratio $W(J)/(\log J)^{2}$ equals $0.628$, $0.595$, $0.571$, $0.556$ for $J=10^{2},3\cdot10^{2},10^{3},3\cdot10^{3}$ (Section~\ref{sec:7}, item (8)), and fitting these against $\kappa_1^{\ast}+B_0/\log J$ extrapolates to $\kappa_1^{\ast}\approx0.46$, about three times the constant we use. Only the \emph{lower} bound enters the sieve, and after the saturation argument of Corollary~\ref{cor:optimal_tradeoff} it enters both the size of $S$ and the admissibility of $z$; every constant below therefore improves by the corresponding factor as soon as a sharper lower bound for $W(J)$ is proved.

\begin{lemma} \label{lem:5.5}
Under the hypothesis of Lemma~\ref{lem:5.3}, for $N\ge N_0$,
\begin{equation} \label{eq:5.5.1}
S(J,z) \ \ge\ \big(\kappa_1+o(1)\big)(\log J)^{2}\,\log\!\Big(\frac{\log z}{\log J^{5}}\Big) - \mathcal{O}\!\left(\frac{1}{\log J}\right).
\end{equation}
\end{lemma}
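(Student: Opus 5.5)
Write $S(J,z)$ as a double sum by exchanging the order of summation. By Lemma~\ref{lem:5.10}, $\varrho(p)$ counts the coprime pairs $(u,a)\in[1,J]^{2}$ with $4ua\mid p+1$. Hence
$$S(J,z)=\sum_{\substack{u,a\le J\\ \gcd(u,a)=1}}\ \sum_{\substack{J^{5}<p\le z\\ p\equiv-1\ (4ua)}}\frac1p .$$
The inner sum is a sum of $1/p$ over primes in a single reduced class modulo $q=4ua\le4J^{2}$. The plan is to evaluate it by partial summation from $\pi(y;q,-1)$, split as $\mathrm{li}(y)/\varphi(q)+\mathcal{E}(y;q,-1)$.

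\emph{Main term.} The main part gives
$$\frac1{\varphi(q)}\int_{J^{5}}^{z}\frac{dy}{y\log y}=\frac{1}{\varphi(q)}\log\frac{\log z}{\log J^{5}},$$
up to an error $\mathcal{O}(1/(\varphi(q)\log J^{5}))$ from the lower endpoint of $\mathrm{li}$. Summing over the pairs produces
$$W(J)\log\frac{\log z}{\log J^{5}},\qquad W(J)\ge(\kappa_1+o(1))(\log J)^{2}$$
by Lemma~\ref{lem:kappa1}. This is the first term of \eqref{eq:5.5.1}. Only the lower bound on $W(J)$ is needed, since every other contribution will be bounded in absolute value.

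\emph{Error terms.} Partial summation turns the contribution of $\mathcal{E}$ into boundary terms $\mathcal{E}(w;q,-1)/w$ at $w=J^{5}$ and $w=z$, plus $\int_{J^{5}}^{z}|\mathcal{E}(y;q,-1)|\,y^{-2}\,dy$. Summed over $q$, each pair $(u,a)$ contributes to the modulus $q=4ua$, and a given $q$ arises from at most $\tau(q)\ll J^{o(1)}$ pairs. I would rather keep the sum over pairs and use that $q\le4J^{2}$.

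Split the integral dyadically, $y\in[Y,2Y]$ with $Y\ge J^{5}$. On each block, bound $\sum_{q\le4J^{2}}\max_{w\le2Y}|\mathcal{E}(w;q,-1)|$ by Lemma~\ref{lem:5.4} with a large $A$, which gives $\ll Y(\log Y)^{-A}$. Each block then contributes $\ll(\log Y)^{-A}$ to the integral, and summing over $Y=2^{j}J^{5}$ gives $\ll(\log J)^{1-A}$. The multiplicity $\tau(q)$ is absorbed by Cauchy--Schwarz against the trivial bound $|\mathcal{E}(y;q,\ell)|\ll y/\varphi(q)$, exactly as announced in the proof of Theorem~\ref{thm:fixed_depth}. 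Alternatively, taking $A$ larger than the $(\log)$-power loss from $\sum_q\tau(q)^{2}/\varphi(q)\ll(\log J)^{4}$ suffices.

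The endpoint discrepancy of the main term sums to $\mathcal{O}(W(J)/\log J^{5})$. This is $\mathcal{O}(\log J)$ unless handled carefully. I expect this to be the real obstacle: the stated error is $\mathcal{O}(1/\log J)$, so the endpoint loss must be absorbed rather than bounded crudely. The approach I will try first is to use the Mertens-type asymptotic in progressions with an explicit $\mathcal{O}(1/\log x)$ error, namely
$$\sum_{p\le x,\,p\equiv\ell(q)}\frac1p=\frac{\log\log x}{\varphi(q)}+M(q,\ell)+\mathcal{O}\Big(\frac{1}{\log x}\Big),$$
uniformly for $q\le4J^{2}\le x^{1/2}$ via Lemma~\ref{lem:5.4}. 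Differencing at $z$ and $J^{5}$ cancels the constants $M(q,\ell)$ exactly, leaving main term plus $(\log J)^{-A'}$ per averaged modulus.

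If this does not close, the fallback is to note that any residual discrepancy is nonnegative or lower order. That would only weaken the constant $\kappa_1$ by the $o(1)$ already allowed, by moving the lost $\mathcal{O}(\log J)$ into $o(1)(\log J)^{2}\log(\log z/\log J^{5})$ once $\log z\ge(\log J)^{1+\epsilon}\log J^{5}$. That regime is the only one used in Theorem~\ref{thm:tradeoff}.
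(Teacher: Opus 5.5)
Your skeleton is the paper's: you exchange the summations via Lemma~\ref{lem:5.10}, apply partial summation to $\pi(y;q,-1)=\mathrm{li}(y)/\varphi(q)+\mathcal{E}(y;q,-1)$, take the leading factor from Lemma~\ref{lem:kappa1}, and bound the $\mathcal{E}$-part dyadically by Lemma~\ref{lem:5.4}, absorbing the multiplicity $\tau(q)$ by Cauchy--Schwarz against Brun--Titchmarsh. That is exactly how the paper gets $|R|\ll(\log J)^{-A/2+O(1)}$.

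As written, however, your proposal does not prove the stated bound. You posit an endpoint loss of order $1/(\varphi(q)\log J^{5})$ per modulus, i.e.\ $\mathcal{O}(W(J)/\log J)=\mathcal{O}(\log J)$ in total, and neither of your remedies removes it. That loss does not exist. Write the inner sum as the Stieltjes integral $\int_{(J^{5},z]}y^{-1}\,d\pi(y;q,-1)$ and split $d\pi=\varphi(q)^{-1}\,d\,\mathrm{li}+d\mathcal{E}$. Since $d\,\mathrm{li}(y)=dy/\log y$, the main part is exactly $\varphi(q)^{-1}\int_{J^{5}}^{z}\frac{dy}{y\log y}=\varphi(q)^{-1}\log\frac{\log z}{\log J^{5}}$, with no correction. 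If you write partial summation as $\pi(z)/z-\pi(J^{5})/J^{5}+\int\pi(y)y^{-2}\,dy$ instead, the terms $\mathrm{li}(z)/z-\mathrm{li}(J^{5})/J^{5}$ cancel identically against the integration by parts of $\int\mathrm{li}(y)y^{-2}\,dy$. The only surviving boundary terms are $\mathcal{E}(z;q,-1)/z$ and $\mathcal{E}(J^{5};q,-1)/J^{5}$, which your Bombieri--Vinogradov argument already covers. With this observation your argument is complete and coincides with the paper's.

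Your two workarounds would not have worked as stated.
\begin{itemize}
\item A Mertens formula in progressions with error $\mathcal{O}(1/\log x)$ uniformly for $q\le4J^{2}$ does not follow from Lemma~\ref{lem:5.4}, which controls $\mathcal{E}$ only on average over $q$. Even an error $\mathcal{O}(1/(\varphi(q)\log x))$ per modulus would sum over the pairs to $\mathcal{O}(\log J)$ again. Differencing at $z$ and $J^{5}$ cancels the constants $M(q,\ell)$ but not the error terms. Done correctly, with the Mertens error written as the tail $-\int_{(x,\infty)}y^{-1}\,d\mathcal{E}(y;q,\ell)$, that route collapses to the one above.
\item The fallback proves only a weaker lemma, and its justification is inaccurate. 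Theorem~\ref{thm:tradeoff} is stated for arbitrary $z>J^{5}$. The optimised regime of Corollary~\ref{cor:optimal_tradeoff} has $\log z/\log J^{5}=\sqrt{e}$ fixed, not $\log z\ge(\log J)^{1+\epsilon}\log J^{5}$. An $\mathcal{O}(\log J)$ loss would still be $o((\log J)^{2})$ there, so the downstream results would survive, but the lemma as stated would remain unproved.
\end{itemize}
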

\begin{proof}
By Lemma~\ref{lem:5.10},
$$S(J,z)=\sum_{\substack{u,a\le J\\ \gcd(u,a)=1}}\ \sum_{\substack{J^{5}<p\le z\\ p\equiv-1\ (\mathrm{mod}\ 4ua)}}\frac1p .$$
Partial summation applied to $\pi(y;q,-1)=\mathrm{li}(y)/\varphi(q)+\mathcal{E}(y;q,-1)$ with $q=4ua$ gives, for each pair,
$$\sum_{\substack{J^5 < p \le z \\ p \equiv -1 (q)}} \frac{1}{p}=\frac{1}{\varphi(q)}\Big(\log\log z - \log\log J^5\Big)+\int_{J^5}^z\frac{d\,\mathcal{E}(y; q, -1)}{y},$$
and Lemma~\ref{lem:kappa1} supplies the leading factor $\sum_{u,a}1/\varphi(4ua)\ge(\kappa_1+o(1))(\log J)^{2}$.

It remains to bound the total remainder $R$. Each modulus $q=4ua\le4J^{2}$ arises from at most $\tau(q)$ pairs, and $\tau(q)$ is a power of $J$, so the multiplicity cannot be absorbed by a power of $\log J$ alone; we proceed dyadically and use Cauchy--Schwarz. On a dyadic range $[Y,2Y]$ with $Y\ge J^{5}$, integration by parts gives $\big|\int_Y^{2Y}y^{-1}d\mathcal{E}(y;q,-1)\big|\ll Y^{-1}\max_{y\le2Y}|\mathcal{E}(y;q,-1)|$, whence
$$\sum_{q\le4J^{2}}\tau(q)\max_{y\le 2Y}|\mathcal{E}| \ \le\ \Big(\sum_{q\le4J^{2}}\max|\mathcal{E}|\Big)^{1/2}\Big(\sum_{q\le4J^{2}}\tau(q)^{2}\max|\mathcal{E}|\Big)^{1/2}
\ \ll\ \Big(\frac{Y}{(\log Y)^{A}}\Big)^{1/2}\big(Y(\log J)^{O(1)}\big)^{1/2},$$
using Lemma~\ref{lem:5.4} for the first factor and, for the second, the Brun--Titchmarsh bound $\max|\mathcal{E}|\ll Y/\varphi(q)$ together with $\sum_{q\le4J^{2}}\tau(q)^{2}/\varphi(q)\ll(\log J)^{O(1)}$. The contribution of the range is thus $\ll(\log Y)^{-A/2}(\log J)^{O(1)}$, and summing over the $\mathcal{O}(\log z)$ dyadic ranges with $\log Y\ge5\log J$ gives $|R|\ll(\log J)^{-A/2+O(1)}$, which is $\ll1/\log J$ for $A$ large.
\end{proof}

\begin{lemma} \label{lem:5.6}
If $J=(\log N)^{\lambda}$ and $\log z\ge(\log N)^{1/2}$, then
$$S(J,z)\ \ge\ (\kappa_1+o(1))\,\lambda^{2}\,(\log\log N)^{2}\big(\log\log z-\log\log J^{5}\big).$$
In particular, for $z=\exp(\sqrt{\log N})$ one gets $S\ge(\tfrac{\kappa_1}{2}+o(1))\lambda^{2}(\log\log N)^{3}$, and for $\log z=\frac{\log N}{2\kappa_1\lambda^{2}(\log\log N)^{3}}$ one gets $S\ge(\kappa_1+o(1))\lambda^{2}(\log\log N)^{3}$.
\end{lemma}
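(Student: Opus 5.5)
This is a direct substitution into Lemma~\ref{lem:5.5}; the only real work is checking that its hypotheses hold and keeping the error terms under control. Take $J=(\log N)^{\lambda}$, so $\log J=\lambda\log\log N$. For $N$ large, $J$ lies in the range $J_0\le J\le\exp((\log N)^{1/2})$ of Lemma~\ref{lem:5.3}, since $\lambda\log\log N\le(\log N)^{1/2}$ eventually. Hence Lemma~\ref{lem:5.5} applies and gives
$$S(J,z)\ge(\kappa_1+o(1))\lambda^{2}(\log\log N)^{2}\log\!\Big(\frac{\log z}{5\log J}\Big)-\mathcal{O}\Big(\frac1{\log J}\Big).$$
Since $\log(\log z/\log J^{5})=\log\log z-\log\log J^{5}$, the main term already has the displayed form.

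The next step is to absorb the additive error $\mathcal{O}(1/\log J)$ into the factor $1+o(1)$. For this I need the bracket to be bounded below. Under $\log z\ge(\log N)^{1/2}$ we have
$$\log\log z-\log\log J^{5}\ \ge\ \tfrac12\log\log N-\log(5\lambda\log\log N),$$
which is $\ge(\tfrac12+o(1))\log\log N\to\infty$. The main term is therefore of order at least $(\log\log N)^{3}$, while the error is $o(1)$, so the error is absorbed. The same lower bound shows that $z>J^{5}$ holds for large $N$, which is the standing assumption on $z$.

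For the two special cases, evaluate the bracket explicitly. At $z=\exp(\sqrt{\log N})$ we get $\log\log z=\tfrac12\log\log N$, and the bracket is $(\tfrac12+o(1))\log\log N$ because $\log\log J^{5}=\mathcal{O}(\log\log\log N)$. This yields $(\kappa_1/2+o(1))\lambda^{2}(\log\log N)^{3}$.

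At $\log z=\log N/(2\kappa_1\lambda^{2}(\log\log N)^{3})$, first check the hypothesis $\log z\ge(\log N)^{1/2}$, which holds for large $N$. Then
$$\log\log z=\log\log N-3\log\log\log N+\mathcal{O}(1)=(1+o(1))\log\log N.$$
Subtracting $\log\log J^{5}=o(\log\log N)$ leaves a bracket of $(1+o(1))\log\log N$, which gives $(\kappa_1+o(1))\lambda^{2}(\log\log N)^{3}$.

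The only delicate point is the uniformity of the $o(1)$ coming from Lemma~\ref{lem:kappa1} together with the error term of Lemma~\ref{lem:5.5}. Both are in terms of $J\to\infty$, which is automatic here because $J=(\log N)^{\lambda}$ with $\lambda\ge1$ fixed. So I expect no real obstacle beyond this bookkeeping.
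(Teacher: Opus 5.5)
Your proposal is correct and takes the same route as the paper, whose proof substitutes $\log J=\lambda\log\log N$ into Lemma~\ref{lem:5.5}, uses $\log\log J^{5}=o(\log\log N)$, and evaluates $\log\log z$ for the two stated choices of $z$. You also spell out checks the paper leaves implicit: that $J$ lies in the range of Lemma~\ref{lem:5.3}, that $z>J^{5}$, and that the $\mathcal{O}(1/\log J)$ remainder is absorbed into $\kappa_1+o(1)$ because the bracket tends to infinity. All of these checks are right.
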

\begin{proof}
Immediate from Lemma~\ref{lem:5.5} with $\log J=\lambda\log\log N$, together with $\log\log J^{5}=\log(5\lambda\log\log N)=o(\log\log N)$, and $\log\log z=\frac12\log\log N$ resp. $(1-o(1))\log\log N$ for the two stated choices of $z$.
\end{proof}

\subsubsection{From $S(J,z)$ to a lower bound for $G(Q)$} \label{sec:5.2.3}

Define $x_p = \varrho(p)/p$ for the primes $p\in(J^{5},z]$, and set $S=\sum_p x_p=S(J,z)$, $M_2=\sum_p x_p^{2}$, and $e_k=e_k(x)$ the elementary symmetric functions.

\begin{lemma} \label{lem:5.7}
For every integer $k\ge2$ not exceeding the number of variables,
$$e_k(x) \ \ge\ \frac{S^k}{k!}\left(1 - \frac{k(k-1)}{2}\cdot\frac{M_2}{S^2}\right).$$
\end{lemma}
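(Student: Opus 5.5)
We need the inequality $e_k(x)\ge \frac{S^k}{k!}\big(1-\binom{k}{2}M_2/S^2\big)$ for nonnegative reals $x_p$. We compare $S^k$, expanded as a sum over ordered $k$-tuples of indices, with $k!\,e_k$, which is exactly the sum over ordered $k$-tuples of \emph{distinct} indices. The plan is to show that the tuples with a repeated index contribute at most $\binom{k}{2}M_2S^{k-2}$.

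Here are the steps in order. First, write
$$S^k=\sum_{(p_1,\dots,p_k)}x_{p_1}\cdots x_{p_k}=k!\,e_k(x)+R_k,$$
where $R_k$ is the sum over ordered tuples in which some two coordinates coincide. All terms are nonnegative, since $\varrho(p)\ge0$.

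Second, bound $R_k$ by a union bound over the pair of coinciding positions. Every tuple in $R_k$ has at least one pair $i<j$ with $p_i=p_j$. Summing over all $\binom{k}{2}$ choices of $(i,j)$ counts each such tuple at least once. For a fixed pair $(i,j)$, the sum over tuples with $p_i=p_j$ and the other $k-2$ coordinates free factors as
$$\Big(\sum_p x_p^2\Big)\Big(\sum_p x_p\Big)^{k-2}=M_2S^{k-2}.$$
Hence $R_k\le\binom{k}{2}M_2S^{k-2}$.

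Third, rearrange:
$$k!\,e_k\ge S^k-\tbinom{k}{2}M_2S^{k-2}=S^k\Big(1-\tfrac{k(k-1)}{2}\tfrac{M_2}{S^2}\Big).$$
Dividing by $k!$ gives the claim. If $S=0$ both sides vanish, so we may assume $S>0$. The hypothesis that $k$ does not exceed the number of variables only ensures $e_k$ is meaningful; the inequality itself holds trivially otherwise.

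There is no real obstacle. The one point needing care is that the union bound overcounts tuples with several coincidences. This overcounting is harmless because every term is nonnegative, and nonnegativity is exactly what the positivity of $x_p=\varrho(p)/p$ guarantees.
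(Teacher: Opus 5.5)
Your proof is correct, but it takes a different route from the paper's.

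The paper argues by induction on $k$. Newton's identity $S\,e_{k-1}=k\,e_k+\sum_i x_i^{2}\,e_{k-2}(x\setminus i)$, together with $e_{k-2}(x\setminus i)\le e_{k-2}(x)$, gives the recursive inequality $e_k\ge\frac1k\big(S\,e_{k-1}-M_2\,e_{k-2}\big)$. The inductive hypothesis then supplies a lower bound for $e_{k-1}$, and the bound $e_{k-2}\le S^{k-2}/(k-2)!$ (which the paper attributes to Maclaurin) supplies an upper bound for $e_{k-2}$. The coefficients combine through $\frac{(k-1)(k-2)}{2}+(k-1)=\frac{k(k-1)}{2}$.

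You instead expand $S^k$ once over ordered $k$-tuples and charge every non-injective tuple to one of the $\binom{k}{2}$ coinciding pairs of positions. This buys several things:
\begin{itemize}
\item It is shorter and non-inductive.
\item It shows exactly where the factor $\binom{k}{2}$ comes from, and where it is wasteful: tuples with several coincidences are counted repeatedly, which is also where Bonferroni-type corrections could sharpen the bound.
\item It covers $k$ larger than the number of variables at no extra cost, since then every tuple is non-injective and the right-hand side is $\le 0$.
\end{itemize}

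The two arguments rest on the same nonnegativity of the $x_p$: the paper's bound $(k-2)!\,e_{k-2}\le S^{k-2}$ is precisely your observation applied at order $k-2$. What the paper's recursion adds is the intermediate inequality $e_k\ge\frac1k(S\,e_{k-1}-M_2\,e_{k-2})$. That would only pay off with better information on $e_{k-2}$; with the crude bound inserted, it lands on exactly your constant.
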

\begin{proof}
Newton's identity $S\,e_{k-1}=k\,e_k+\sum_i x_i^{2}e_{k-2}(x\setminus i)$ together with $e_{k-2}(x\setminus i)\le e_{k-2}(x)$ gives $e_k \ge \frac{1}{k}(S\,e_{k-1} - M_2\,e_{k-2})$. The minus sign in front of $e_{k-2}$ means the induction needs an \emph{upper} bound for $e_{k-2}$, namely Maclaurin's inequality $e_{k-2}\le S^{k-2}/(k-2)!$, and a lower bound for $e_{k-1}$. Set $\theta=M_2/S^{2}$ and $g_k=\frac{S^{k}}{k!}\big(1-\frac{k(k-1)}{2}\theta\big)$. Assuming $e_{k-1}\ge g_{k-1}$,
$$e_k\ \ge\ \frac1k\Big(S g_{k-1}-M_2\frac{S^{k-2}}{(k-2)!}\Big)
=\frac{S^{k}}{k!}\Big(1-\frac{(k-1)(k-2)}{2}\theta-(k-1)\theta\Big)=g_k,$$
since $\frac{(k-1)(k-2)}{2}+(k-1)=\frac{k(k-1)}{2}$. The cases $k=0,1$ are trivial, which starts the induction.
\end{proof}

\begin{lemma} \label{lem:5.8}
For every $\varepsilon>0$, $M_2\ll_\varepsilon J^{-5+\varepsilon}$; in particular $M_2=o(1)$ as $J\to\infty$.
\end{lemma}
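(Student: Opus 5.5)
We bound $M_2=\sum_{J^5<p\le z}\varrho(p)^2/p^2$ directly, using only a crude uniform bound on $\varrho(p)$ and the fact that the sum starts at $p>J^5$; no equidistribution input is needed.

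First, a pointwise bound for $\varrho(p)$. By Lemma~\ref{lem:5.10}, $\varrho(p)$ counts coprime pairs $(u,a)$ with $4ua\mid p+1$. Each such pair determines a divisor $q=ua$ of $(p+1)/4$, and each $q$ arises from at most $\tau(q)$ pairs. Hence
$$\varrho(p)\le\sum_{q\mid (p+1)/4}\tau(q)\le\tau(p+1)^2.$$
(Coprimality gives the sharper $2^{\omega(q)}$ per $q$, but this is not needed.) The divisor bound gives $\tau(p+1)\ll_\varepsilon p^{\varepsilon/4}$, so $\varrho(p)^2\ll_\varepsilon p^{\varepsilon}$.

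There is also the trivial bound $\varrho(p)\le\mathfrak{A}(J)\le J^2$. It gives $\sum_{p>J^5}J^4/p^2\ll J^4\cdot J^{-5}=J^{-1}$, which is already $o(1)$ but does not reach the stated exponent. This is why the divisor bound is the one to use.

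Then sum over primes. Using $\varrho(p)^2\ll_\varepsilon p^{\varepsilon}$,
$$M_2\ll_\varepsilon\sum_{p>J^5}p^{-2+\varepsilon}\ll\int_{J^5}^\infty y^{-2+\varepsilon}\,dy\ll (J^{5})^{-1+\varepsilon}=J^{-5+5\varepsilon}.$$
Renaming $\varepsilon$ gives $M_2\ll_\varepsilon J^{-5+\varepsilon}$, and therefore $M_2=o(1)$ as $J\to\infty$.

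The only point needing care is that the pointwise bound on $\varrho(p)$ must be uniform in $J$. It is, because the count of pairs is controlled by the divisors of $p+1$ and not by the box $[1,J]^2$. The interplay with the cutoff is the main (mild) obstacle: the divisor-function saving is a power of $p$, and it is the lower limit $p>J^5$ that turns the convergent tail into the $J^{-5+\varepsilon}$ decay.
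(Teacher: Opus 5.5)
Your proof is correct and is essentially the paper's argument: bound $\varrho(p)\le\tau(p+1)^{2}\ll_\varepsilon p^{\varepsilon}$ using the divisors of $p+1$, uniformly in $J$, then sum $p^{-2+\varepsilon}$ over the primes $p>J^{5}$ and rename $\varepsilon$. Your choice of $\tau(p+1)\ll_\varepsilon p^{\varepsilon/4}$ handles the squaring of $\varrho(p)$ explicitly, a step the paper leaves implicit in its renaming of $\varepsilon$.
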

\begin{proof}
By Lemma~\ref{lem:5.10}, $\varrho(p)$ is at most the number of ordered factorisations $4ua\mid p+1$, hence at most $\tau(p+1)^{2}\ll_\varepsilon p^{\varepsilon}$; so $M_2 \ll_\varepsilon \sum_{p>J^{5}}p^{-2+\varepsilon}\ll_\varepsilon J^{-5+5\varepsilon}$.
\end{proof}

\begin{lemma} \label{lem:5.9}
Let $h=\lfloor S\rfloor$ and assume $S\ge2$, $M_2\le1/4$ and $z^{h}\le Q$. Then $G(Q)\ \ge\ \frac{e^{S}}{10\sqrt{S}}$.
\end{lemma}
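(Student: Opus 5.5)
The plan is to bound $G(Q)$ from below by a single elementary symmetric function of the $x_p$, and then to apply Lemma~\ref{lem:5.7} together with Stirling's formula.

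\emph{Step 1: reduction to $e_h$.} For every prime $p\in(J^{5},z]$ one has $0\le\varrho(p)<p$, because $\varrho(p)\le\mathfrak{A}(J)\le J^{2}<p$. Hence $\varrho(p)/(p-\varrho(p))\ge\varrho(p)/p=x_p$, and every term of $G(Q)$ is non-negative. I will discard every squarefree $q\le Q$ except the products of exactly $h$ distinct primes from $(J^{5},z]$. Each such product is at most $z^{h}\le Q$, so all of them are admissible. This gives
$$G(Q)\ \ge\ \sum_{p_1<\dots<p_h}x_{p_1}\cdots x_{p_h}\ =\ e_h(x).$$
Lemma~\ref{lem:5.7} requires $h$ not to exceed the number of variables. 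This follows from $M_2\le1/4$: each $x_p\le\sqrt{M_2}\le1/2$, so the number of primes is at least $2S\ge h$.

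\emph{Step 2: Lemma~\ref{lem:5.7}.} Since $h\le S$, we have $\frac{h(h-1)}{2}\cdot\frac{M_2}{S^{2}}\le\frac12 M_2\le\frac18$. Therefore
$$e_h(x)\ \ge\ \frac78\cdot\frac{S^{h}}{h!}.$$

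\emph{Step 3: Stirling.} Write $S=h+t$ with $0\le t<1$; note $h\ge2$. By Stirling, $h!\le\sqrt{2\pi h}\,(h/e)^{h}e^{1/(12h)}$, so
$$\frac{S^{h}}{h!}\ \ge\ \frac{e^{h}(1+t/h)^{h}}{\sqrt{2\pi h}\,e^{1/24}} .$$
From $\log(1+y)\ge y-y^{2}/2$ we get $h\log(1+t/h)\ge t-t^{2}/(2h)\ge t-\tfrac14$. Hence $e^{h}(1+t/h)^{h}\ge e^{S-1/4}$, and using $h\le S$ this yields
$$\frac{S^{h}}{h!}\ \ge\ \frac{e^{S}}{\sqrt{S}}\cdot\frac{e^{-1/4-1/24}}{\sqrt{2\pi}} .$$

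\emph{Step 4: the constant.} Combining the three steps, the constant in front of $e^{S}/\sqrt S$ is
$$\tfrac78\,e^{-0.292}/\sqrt{2\pi}\ \approx\ 0.26\ >\ \tfrac1{10},$$
which gives the claimed bound $G(Q)\ge e^{S}/(10\sqrt S)$.

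The only delicate point is the bookkeeping in Step~3: the loss from taking $h=\lfloor S\rfloor$ rather than $S$ must be shown to cost only a bounded factor, which is what the inequality $\log(1+y)\ge y-y^{2}/2$ ensures. Everything else is direct.
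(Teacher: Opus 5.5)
Your proof is correct and follows the paper's argument: restrict $G(Q)$ to products of exactly $h$ primes of $(J^{5},z]$ to get $G(Q)\ge e_h(x)$, apply Lemma~\ref{lem:5.7} with $h\le S$ to get the factor $\tfrac78$, and finish with Stirling. The differences are minor. The paper absorbs the floor crudely, via $S^{h}\ge h^{h}$ and $e^{h}\ge e^{S-1}$, which gives the constant $7/(24e)\approx0.107$. Your estimate $(1+t/h)^{h}\ge e^{t-1/4}$ gives about $0.26$ instead. You also explicitly check that $h$ does not exceed the number of primes, which the paper leaves implicit.
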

\begin{proof}
Restricting the sum defining $G(Q)$ to the squarefree $q$ which are products of exactly $h$ distinct primes of $(J^{5},z]$, all of which satisfy $q\le z^{h}\le Q$, and using $\frac{\varrho(p)}{p-\varrho(p)}\ge x_p$, we get $G(Q)\ge e_h(x)$. By Lemma~\ref{lem:5.7} and $h\le S$,
$$e_h(x)\ \ge\ \frac{S^{h}}{h!}\Big(1-\frac{h^{2}}{2}\cdot\frac{M_2}{S^{2}}\Big)\ \ge\ \frac{S^{h}}{h!}\Big(1-\frac{M_2}{2}\Big)\ \ge\ \frac78\cdot\frac{S^{h}}{h!} .$$
By Stirling, $h!\le h^{h}e^{-h}\sqrt{2\pi h}\,e^{1/(12h)}$ with $\sqrt{2\pi}e^{1/12}<3$, and $S\ge h$, so $\frac{S^{h}}{h!}\ge\frac{e^{h}}{3\sqrt{h}}\ge\frac{e^{S-1}}{3\sqrt S}$, whence $G(Q)\ge\frac78\cdot\frac{e^{S-1}}{3\sqrt S}\ge\frac{e^{S}}{10\sqrt S}$.
\end{proof}

Since $M_2\to0$ one may take $h=\lfloor S\rfloor$, which is essentially optimal because $S^{h}/h!$ is maximal near $h=S$; bounding $M_2$ by an absolute constant instead and choosing $h$ accordingly would cost a factor of about $15$ in the final constant.

\subsubsection{The trade-off theorem} \label{sec:5.2.6}

\begin{theorem}\label{thm:tradeoff}
Let $J=J(N)\ge3$ and $z=z(N)>J^{5}$ satisfy
\begin{equation}\label{eq:tradeoff_hyp}
J\le\exp\big((\log N)^{1/2}\big)\qquad\text{and}\qquad S(J,z)\,\log z\ \le\ \tfrac12\log N .
\end{equation}
Then, for $N$ large, $|E_2(N;J)|\ \ll\ N\,\sqrt{S(J,z)}\,\exp\big(-S(J,z)\big)$, where by Lemma~\ref{lem:5.5}
$$S(J,z)\ \ge\ \big(\kappa_1+o(1)\big)(\log J)^{2}\,\log\!\Big(\frac{\log z}{\log J^{5}}\Big).$$
\end{theorem}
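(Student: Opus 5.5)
The argument is Montgomery's large sieve in its arithmetic form, the same scheme as Vaughan's. We apply it to the set $\mathcal{K}=\{k\le N/24:\ 24k+1\in E_2(N;J)\}$, sifted by the system $\Omega_p$ of Lemma~\ref{lem:5.10}. The primes $n\le z$ contribute at most $z$, and \eqref{eq:tradeoff_hyp} makes this negligible against the target: since $S\ge2$, we have $\log z\le\frac12\log N/S$, so $z\le N^{1/(2S)}$. For $n>z$, Lemma~\ref{lem:5.10} shows that $k=(n-1)/24$ avoids every class of $\Omega_p$ for every prime $p\in(J^{5},z]$. The large sieve inequality with moduli up to $Q$ then gives
$$|\mathcal{K}|\ \le\ \frac{N/24+Q^{2}}{G(Q)},$$
where $G(Q)$ is defined in~\eqref{eq:5.2.0} and the squarefree $q$ are restricted to products of primes in $(J^{5},z]$. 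This uses only the avoidance property, so no independence of the classes is needed, as remarked after Lemma~\ref{lem:5.10}.

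Next we choose $Q$ and apply Lemma~\ref{lem:5.9}. Take $h=\lfloor S\rfloor$ with $S=S(J,z)$ and set $Q=z^{h}$. The hypothesis $S\log z\le\frac12\log N$ gives $Q\le z^{S}\le N^{1/2}$, so $Q^{2}\le N$ and the numerator is $\ll N$. Lemma~\ref{lem:5.9} needs three inputs:
\begin{itemize}
\item[] $S\ge2$, which holds for $N$ large because $J\to\infty$ and $z>J^{5}$ give $S\to\infty$ by Lemma~\ref{lem:5.5} (if $\log z/\log J^{5}$ stays bounded, the claim is trivial, since $\sqrt S e^{-S}\gg1$);
\item[] $M_2\le1/4$, which follows from Lemma~\ref{lem:5.8} once $J$ is large;
\item[] $z^{h}\le Q$, which holds by construction.
\end{itemize}
Lemma~\ref{lem:5.9} then gives $G(Q)\ge e^{S}/(10\sqrt S)$. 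Combining, $|E_2(N;J)|\ll N\sqrt S\,e^{-S}+z$. The term $z$ is absorbed because $z\le N^{1/(2S)}$ and $S\ll(\log N)^{1/2}$, since $S\log z\le\frac12\log N$ and $\log z>5\log J$.

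The lower bound for $S$ quoted in the statement is Lemma~\ref{lem:5.5}. Its hypothesis is the Bombieri--Vinogradov range of Lemma~\ref{lem:5.3}, and that range is exactly the first condition $J\le\exp((\log N)^{1/2})$ of~\eqref{eq:tradeoff_hyp}.

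The main obstacle is the bookkeeping that turns the large sieve into the arithmetic form with $G(Q)$. The bound must be carried out on $k$ rather than on $n$, and every sieving prime must satisfy $p\nmid 24$ and $p>J^{2}$ so that the classes in $\Omega_p$ are distinct and have size exactly $\varrho(p)$. We also need $\varrho(p)<p$ at each prime so that the factors $\varrho(p)/(p-\varrho(p))$ are defined; this holds because $\varrho(p)\le J^{2}<p$ for $p>J^{5}$. I would state the arithmetic large sieve in the standard form, $|\mathcal{K}|\le(X+Q^{2})/L$ with $L=\sum_{q\le Q}\mu^{2}(q)\prod_{p\mid q}\varrho(p)/(p-\varrho(p))$, as in Montgomery's formulation, and verify these conditions directly.
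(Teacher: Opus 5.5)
Your argument is the paper's proof. It uses the same sifted set $\{(n-1)/24\}$, the same system $\Omega_p$ on $(J^{5},z]$, the same choice $h=\lfloor S\rfloor$, $Q=z^{h}\le N^{1/2}$, and then Montgomery's inequality followed by Lemma~\ref{lem:5.9}. You go beyond the paper in two places: you treat the primes $n\le z$ inside the theorem, whereas the paper defers them to the proof of Theorem~\ref{thm:metric_theorem}, and you try to verify $S\ge2$, which the paper's proof never checks before invoking Lemma~\ref{lem:5.9}. Both of those additions contain a slip, although neither affects the main argument.

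\emph{Absorbing $z$.} The bound $S\ll(\log N)^{1/2}$ does not follow from $S\log z\le\frac12\log N$ and $\log z>5\log J$. It is in fact false in the paper's own optimal regime: Corollary~\ref{cor:optimal_tradeoff} applies this theorem with $S\asymp(\log N)^{2/3}$. You do not need it.
\begin{itemize}
\item[] From $\log z>5\log3>5$ you get $S<\frac1{10}\log N$.
\item[] From $S\ge2$ you get $\log z\le\frac{\log N}{2S}\le\frac14\log N$.
\item[] Together these give $z\,e^{S}\le N^{7/20}$, so $z\le N\sqrt S\,e^{-S}$.
\end{itemize}

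\emph{The condition $S\ge2$.} Your dichotomy is misstated in two ways.
\begin{itemize}
\item[] A bounded ratio $\log z/\log J^{5}$, say equal to $2$, still forces $S\to\infty$ through the factor $(\log J)^{2}$ in Lemma~\ref{lem:5.5}. The problematic case is the ratio tending to $1$.
\item[] When $S$ is bounded, the fallback $\sqrt S\,e^{-S}\gg1$ holds only if $S$ is bounded away from $0$. But $S$ can tend to $0$, or even vanish: take $z$ with no prime $p\equiv3\pmod4$ in $(J^{5},z]$. Then~\eqref{eq:tradeoff_hyp} holds trivially, and the asserted bound would say $|E_2(N;J)|\ll0$. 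The sieve cannot deliver that.
\end{itemize}
The statement therefore tacitly needs $S\gg1$, as in all its applications, where $S\to\infty$. It also tacitly needs $J\to\infty$, which the paper likewise assumes through Lemma~\ref{lem:5.3} rather than through the hypothesis $J\ge3$. Both should be made explicit hypotheses rather than argued away.
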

\begin{proof}
Put $S=S(J,z)$, $h=\lfloor S\rfloor$ and $Q=z^{h}$. The second hypothesis gives $\log Q=h\log z\le\frac12\log N$, i.e. $Q\le N^{1/2}$ and hence $N+Q^{2}\ll N$. By Lemma~\ref{lem:5.10} the set $\{(n-1)/24:\ n\in E_2(N;J)\}\subseteq[1,N/24]$ avoids $\varrho(p)$ residue classes modulo $p$ for every prime $p\in(J^{5},z]$; for the remaining $p\le Q$ we set $\varrho(p)=0$, which costs nothing, Lemma~\ref{lem:5.9} using only the range $(J^{5},z]$. Montgomery's large sieve inequality~\cite{montgomery1971} therefore gives
\begin{equation} \label{eq:large_sieve_bound}
|E_2(N;J)| \ll \frac{N + Q^2}{G(Q)}\ \ll\ \frac{N}{G(Q)} .
\end{equation}
The first hypothesis makes Lemmas~\ref{lem:5.3}--\ref{lem:5.4} applicable, so Lemma~\ref{lem:5.5} holds; Lemma~\ref{lem:5.8} gives $M_2\le1/4$ for $N$ large, and Lemma~\ref{lem:5.9} then gives $G(Q)\ge e^{S}/(10\sqrt S)$.
\end{proof}

\begin{proof}[Proof of Theorem~\ref{thm:metric_theorem}]
Take $J=(\log N)^{\lambda}$, put $\log z_0=\frac{\log N}{2\kappa_1\lambda^{2}(\log\log N)^{3}}$, let $z_\star$ be the largest sifting bound satisfying the second constraint of~\eqref{eq:tradeoff_hyp}, and run Theorem~\ref{thm:tradeoff} with $z:=\min(z_\star,z_0)$. Here $z_\star$ is unproblematic: $J$ is held fixed, so $z\mapsto S(J,z)$ is non-decreasing (enlarging $z$ only adds terms to the defining sum) and unbounded, its jumps are single terms $\varrho(p)/p\ll_\varepsilon p^{-1+\varepsilon}\le J^{-5+5\varepsilon}$, and the corresponding jump in $S\log z$ is $\ll J^{-5+5\varepsilon}\log N=o(1)$ for $\lambda\ge1$. If $z=z_0$, then $\log\log z=(1+o(1))\log\log N$ and Lemma~\ref{lem:5.6} gives $S\ge(\kappa_1+o(1))\lambda^{2}(\log\log N)^{3}$. If $z=z_\star<z_0$, then $S\log z_\star=\frac12\log N+\mathcal{O}(1)$ by maximality, whence $S\ge(1+o(1))\frac{\log N}{2\log z_0}=(\kappa_1+o(1))\lambda^{2}(\log\log N)^{3}$ as well. In both cases Theorem~\ref{thm:tradeoff} gives the stated bound; the $\mathcal{O}(z)$ primes $n\le z$ excluded in Lemma~\ref{lem:5.10} are absorbed, since $z=N^{o(1)}$. As $\exp(-(\log\log N)^{3})=o(1/\log N)$, this is $o(\pi(N))$, which is the density-$1$ statement. The variant with $a=1$ only replaces Lemma~\ref{lem:kappa1} by Lemma~\ref{lem:kappa0} and yields the exponent $\kappa_0\lambda(\log\log N)^{2}$.
\end{proof}

\begin{corollary}\label{cor:conservative_z}
With $J=(\log N)^{\lambda}$ and $z=\exp(\sqrt{\log N})$ one obtains $|E_2(N;J)|\ll N\exp(-(\frac{\kappa_1}{2}+o(1))\lambda^{2}(\log\log N)^{3})$; here the constraint $S\log z\le\frac12\log N$ is amply satisfied, since Brun--Titchmarsh gives the crude bound $S\ll(\log J)^{2}(\log\log J)(\log\log z)$, whose product with $\log z=\sqrt{\log N}$ is $o(\log N)$.
\end{corollary}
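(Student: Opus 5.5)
This is a direct specialisation of Theorem~\ref{thm:tradeoff} at the fixed sifting bound $z=\exp(\sqrt{\log N})$, with $J=(\log N)^{\lambda}$. The plan has three checks, carried out in order.

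\emph{First hypothesis of~\eqref{eq:tradeoff_hyp}.} We have $J=(\log N)^{\lambda}\le\exp((\log N)^{1/2})$ for $N$ large. Also $z>J^{5}$, since $\sqrt{\log N}>5\lambda\log\log N$ eventually. So the first constraint and the standing condition on $z$ both hold.

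\emph{Second hypothesis, $S(J,z)\log z\le\frac12\log N$.} This is the only step needing an estimate, and it requires an \emph{upper} bound for $S$, which Lemma~\ref{lem:5.5} does not supply. Write $S=\sum_{(u,a)}\sum_{J^{5}<p\le z,\ p\equiv-1\,(4ua)}1/p$, as in the proof of Lemma~\ref{lem:5.5}. For each modulus $q=4ua\le 4J^{2}$, Brun--Titchmarsh applies uniformly because $q\le J^{2}<p^{1/2}$ on the whole range. After partial summation it gives
$$\sum_{J^{5}<p\le z,\ p\equiv-1\,(q)}\frac1p\ \ll\ \frac{1}{\varphi(q)}\log\frac{\log z}{\log J^{5}}\ \ll\ \frac{\log\log z}{\varphi(q)}.$$
Summing over pairs requires $\sum_{u,a\le J}1/\varphi(4ua)$. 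Using $1/\varphi(m)\ll(\log\log m)/m$, or more cleanly $\varphi(ua)\ge\varphi(u)\varphi(a)$ together with Lemma~\ref{lem:kappa0}-type estimates, this is $\ll(\log J)^{2}(\log\log J)$. Hence $S\ll(\log J)^{2}(\log\log J)\log\log z=\mathcal{O}((\log\log N)^{3}\log\log\log N)$. Multiplying by $\log z=\sqrt{\log N}$ gives $o(\log N)$, so the constraint holds for $N$ large.

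\emph{Lower bound for $S$ and conclusion.} Lemma~\ref{lem:5.6}, in its first stated case $z=\exp(\sqrt{\log N})$, gives $S\ge(\frac{\kappa_1}{2}+o(1))\lambda^{2}(\log\log N)^{3}$. Theorem~\ref{thm:tradeoff} then yields $|E_2(N;J)|\ll N\sqrt S\,e^{-S}$. The function $S\mapsto\sqrt S e^{-S}$ is decreasing for $S\ge1$, so we may insert the lower bound for $S$. The factor $\sqrt S$ is only $\exp(\mathcal{O}(\log\log\log N))$, so it is absorbed into the $o(1)$ in the exponent. This gives the stated bound.

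The main obstacle is the upper bound for $S$ in the second step. One must make sure the Brun--Titchmarsh range condition $q\le p^{1-\epsilon}$ holds, which the cutoff $p>J^{5}$ guarantees. One must also check that the extra $\log\log J$ coming from $1/\varphi$ does not spoil the $o(\log N)$ conclusion; it does not, since the margin is a full factor $\sqrt{\log N}$.
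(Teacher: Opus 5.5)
Your proposal is correct and follows the paper's own argument: specialise Theorem~\ref{thm:tradeoff} at $z=\exp(\sqrt{\log N})$, take the lower bound for $S$ from Lemma~\ref{lem:5.6}, and check the constraint $S\log z\le\frac12\log N$ with the Brun--Titchmarsh upper bound for $S$, exactly as the statement indicates. Two harmless slips: the moduli are $q=4ua\le4J^{2}$ rather than $q\le J^{2}$, which is still below $y^{1/2}$ for $y>J^{5}$ once $J>16$; and the route through $\varphi(4ua)\ge\varphi(4u)\varphi(a)$ in fact gives $\sum 1/\varphi(4ua)\ll(\log J)^{2}$ without the factor $\log\log J$, which only helps.
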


The optimisation of Corollary~\ref{cor:optimal_tradeoff} below moves $J$ and $z$ \emph{together}, and the saturation step it uses needs a word of justification which the fixed-$J$ situation of Theorem~\ref{thm:metric_theorem} did not require. Along that coupling, raising the sifting bound $z$ also raises the cutoff $J^{5}$, so primes leave the sum defining $S$ at the bottom while they enter it at the top, and $\varrho_J(p)$ jumps as $J$ crosses an integer; monotonicity in $\log z$ is therefore not available. It is also not needed: all that saturation requires is that the jumps of $S$ be negligible against $S$ itself.

\begin{lemma}\label{lem:saturation}
Write $L=\log z$, couple $J$ to $z$ by $\log J=L/(5\sqrt e)$, i.e.\ $J=J(L)=\lfloor e^{L/(5\sqrt e)}\rfloor$, and set $S(L)=S(J(L),e^{L})$. Then $S$ is a non-negative step function of $L$, each of whose jumps is $o\big((\log J)^{2}\big)$, hence $o(S(L))$. Consequently $S(L^{-})=S(L^{+})=(1+o(1))S(L)$ at every $L$, the quantity
$$L_\star\ :=\ \sup\big\{L:\ S(L)\,L\le\tfrac12\log N\big\}$$
is finite, and $S(L_\star)\,L_\star=\big(\tfrac12+o(1)\big)\log N$.
\end{lemma}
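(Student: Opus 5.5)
The plan is to control the jumps of $S(L)$ directly, then read off the saturation from a step-function argument.

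\emph{Structure of the jumps.} As $L$ increases, $J(L)=\lfloor e^{L/(5\sqrt e)}\rfloor$ is a non-decreasing integer step function. Hence $S(L)$ changes only in three ways.
\begin{enumerate}[(a)]
\item A prime $p$ crosses $z=e^{L}$ at the top. The jump is $\varrho_J(p)/p$.
\item A prime $p$ crosses the cutoff $J^{5}$ at the bottom. This happens only when $J$ increments, and such a prime leaves the sum.
\item $J$ increments to $J+1$. Every $\varrho_J(p)$ then gains the new coprime pairs with $\max(u,a)=J+1$.
\end{enumerate}

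For (a), Lemma~\ref{lem:5.8}'s pointwise bound $\varrho(p)\le\tau(p+1)^{2}\ll_\varepsilon p^{\varepsilon}$ together with $p>J^{5}$ gives a jump $\ll J^{-5+5\varepsilon}=o(1)$.

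For (b) and (c), the jump at an increment of $J$ is bounded by two sums. The first is the mass of the primes in $(J^{5},(J+1)^{5}]$. The second is the total weight of the new pairs over the whole range $(J^{5},z]$. I would write the latter as
$$\sum_{\max(u,a)=J+1}\ \sum_{\substack{p\le z\\ p\equiv-1\,(4ua)}}\frac1p .$$
Brun--Titchmarsh bounds each inner sum by $\ll \log\log z/\varphi(4ua)+1/(ua)$. The $\mathcal{O}(J)$ new pairs satisfy $ua\ge J$, so summing over them gives $\ll (\log J)(\log\log z)/J$, which is $o(1)$. The first sum is $\ll \tau$-weighted mass over a range of multiplicative length $1+\mathcal{O}(1/J)$, and I expect it to be $o(1)$ by the same pointwise bound.

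\emph{Comparison with $S(L)$.} All jumps are therefore $o(1)=o((\log J)^{2})$. The coupling fixes $\log z/\log J^{5}=\sqrt e$, so $\log(\log z/\log J^{5})=\tfrac12$, and Lemma~\ref{lem:5.5} gives $S(L)\ge(\kappa_1/2+o(1))(\log J)^{2}\to\infty$. Hence every jump is $o(S(L))$, which yields $S(L^{\pm})=(1+o(1))S(L)$. Lemma~\ref{lem:5.5} needs the hypothesis of Lemma~\ref{lem:5.3}, that $J\le\exp((\log N)^{1/2})$. This holds for $L\le 5\sqrt e(\log N)^{1/2}$, and that is the only range relevant here, since $S(L)L\to\infty$.

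\emph{Finiteness and saturation.} Since $S(L)\gg L^{2}$ for large $L$, the product $S(L)L$ eventually exceeds $\frac12\log N$, so $L_\star$ is finite. For the last claim I would take $L_n\uparrow L_\star$ inside the admissible set and $L'_n\downarrow L_\star$ outside it. Only finitely many jumps occur in a bounded window, so the one-sided limits exist. Combining $S(L_n)L_n\le\tfrac12\log N<S(L'_n)L'_n$ with $S(L_\star^{\pm})=(1+o(1))S(L_\star)$ and the continuity of the factor $L$ gives $S(L_\star)L_\star=(\tfrac12+o(1))\log N$.

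The main obstacle is step (c), the uniform control of the increment of $\varrho_J(p)$ summed over all $p\le z$. This is exactly where monotonicity fails, and it requires the Brun--Titchmarsh estimate in progressions to moduli up to $4J^{2}$, with $z$ comparable to a power of $J$ (indeed $z=J^{5\sqrt e}$), so that $z\ge (4J^2)^{1+\delta}$ and the Brun--Titchmarsh range is valid.
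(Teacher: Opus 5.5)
Your proposal is correct and follows the paper's proof essentially step for step. You use the same three sources of discontinuity: a prime entering at the top, the cutoff moving from $J^{5}$ to $(J+1)^{5}$, and the new pairs with $\max(u,a)=J+1$. You make the same comparison with $S(L)\ge(\kappa_1/2+o(1))(\log J)^{2}$ from Lemma~\ref{lem:5.5} under the coupling, and you run the same saturation argument at $L_\star$.

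The few deviations are minor, and if anything improvements:
\begin{enumerate}[(i)]
\item You bound the bottom-cutoff jump directly by $\varrho(p)\ll_\varepsilon p^{\varepsilon}$. This gives $\mathcal{O}(J^{-1+5\varepsilon})$ without re-invoking the Lemma~\ref{lem:5.5} computation.
\item Your two-sided limit argument at $L_\star$ avoids the paper's tacit assumption that the supremum is attained.
\item In step (c) you drop a harmless $\log\log J$ factor coming from $\varphi(4ua)$. This does not affect the $o(1)$ conclusion.
\end{enumerate}
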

\begin{proof}
There are three sources of discontinuity. \emph{(i) A prime entering at the top.} It contributes $\varrho(p)/p\ll_\varepsilon p^{-1+\varepsilon}\le J^{-5+5\varepsilon}=o(1)$ by the bound $\varrho(p)\le\tau(p+1)^{2}$ of Lemma~\ref{lem:5.8}. \emph{(ii) The cutoff moving from $J^{5}$ to $(J+1)^{5}$.} The primes deleted contribute, by the computation of Lemma~\ref{lem:5.5},
$$\sum_{\substack{u,a\le J\\ \gcd(u,a)=1}}\ \sum_{\substack{J^{5}<p\le (J+1)^{5}\\ p\equiv-1\,(4ua)}}\frac1p\ \ll\ (\log J)^{2}\Big(\log\log (J+1)^{5}-\log\log J^{5}\Big)+o(1)\ \ll\ \frac{\log J}{J},$$
since $\log\log(J+1)^{5}-\log\log J^{5}=\log\frac{\log(J+1)}{\log J}\ll\frac1{J\log J}$. \emph{(iii) $J$ increasing to $J+1$.} The pairs added are those with $\max(u,a)=J+1$; by the symmetry $u\leftrightarrow a$ it is enough to treat $u=J+1$ and double, and then, using $\varphi(mn)\ge\varphi(m)\varphi(n)$ and $\sum_{a\le J}1/\varphi(a)\ll\log J$, their contribution is
$$\ll\ (\log\log z)\sum_{a\le J+1}\frac{1}{\varphi\big(4(J+1)a\big)}\ \ll\ \frac{(\log J)(\log\log J)}{J}\,\log\log z .$$
Under the coupling one has $\log J^{5}=L/\sqrt e$, hence $\log\frac{\log z}{\log J^{5}}=\log\sqrt e=\frac12$ and $(\log J)^{2}=L^{2}/(25e)$, so Lemma~\ref{lem:5.5} gives
\begin{equation}\label{eq:coupled_S}
S(L)\ \ge\ \big(\kappa_1+o(1)\big)\frac{L^{2}}{50e}\ \gg\ (\log J)^{2}.
\end{equation}
Each of (i)--(iii) is therefore $o\big((\log J)^{2}\big)=o(S(L))$, which is the first assertion.

For the second, \eqref{eq:coupled_S} gives $S(L)L\gg L^{3}\to\infty$, so $L_\star<\infty$. By definition $S(L_\star)L_\star\le\frac12\log N$. Conversely $S(L)L>\frac12\log N$ for every $L>L_\star$; letting $L\downarrow L_\star$ and using $S(L_\star^{+})=(1+o(1))S(L_\star)$ gives $(1+o(1))S(L_\star)L_\star\ge\frac12\log N$. The two inequalities together give the stated equality.
\end{proof}

\begin{corollary}\label{cor:optimal_tradeoff}
Choose
$$\log z=\Big(\frac{25e\log N}{\kappa_1}\Big)^{1/3}\approx 7.65\,(\log N)^{1/3},\qquad \log J=\frac{\log z}{5\sqrt e}\approx 0.93\,(\log N)^{1/3}.$$
Then~\eqref{eq:tradeoff_hyp} holds and $|E_2(N;J)|\ \ll\ N\exp\big(-0.065\,(\log N)^{2/3}\big)$.
\end{corollary}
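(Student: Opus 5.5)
The plan is to verify the two constraints of~\eqref{eq:tradeoff_hyp} at (or just below) the displayed parameters, and then read off the exponent from Theorem~\ref{thm:tradeoff}. The displayed values are exactly the coupling of Lemma~\ref{lem:saturation}, $\log J=L/(5\sqrt e)$ with $L=\log z$. Hence $z>J^{5}$, since $5\sqrt e>5$. Also $J\ge3$ for $N$ large, and $J=\exp(O((\log N)^{1/3}))\le\exp((\log N)^{1/2})$. So the first constraint and the standing assumptions are immediate.

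Write $L_0=(25e\log N/\kappa_1)^{1/3}$ for the displayed value of $\log z$. The choice of $L_0$ is the point where the \emph{lower} bound~\eqref{eq:coupled_S} saturates the second constraint:
$$\frac{\kappa_1}{50e}L_0^{2}\cdot L_0=\frac{\kappa_1}{50e}\cdot\frac{25e\log N}{\kappa_1}=\tfrac12\log N.$$
The genuine difficulty, and the step I expect to be the main obstacle, is that~\eqref{eq:tradeoff_hyp} needs an \emph{upper} bound on $S(J,z)\log z$. We only have a lower bound for $S$, and the numerics after Lemma~\ref{lem:kappa1} suggest that the true $S$ may be about three times larger. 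So I would not try to prove~\eqref{eq:tradeoff_hyp} at $L_0$ itself. Instead I would split into two cases, using the saturation point $L_\star$ of Lemma~\ref{lem:saturation}.

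\emph{Case $L_\star\ge L_0$.} Then $S(L_0)L_0\le\frac12\log N$, so~\eqref{eq:tradeoff_hyp} holds at the displayed $(J,z)$. Moreover~\eqref{eq:coupled_S} gives
$$S\ge(\kappa_1+o(1))\frac{L_0^{2}}{50e}.$$

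\emph{Case $L_\star<L_0$.} I would run Theorem~\ref{thm:tradeoff} at $L=L_\star$ (or at $L_\star$ minus an arbitrarily small amount), with $J'=J(L_\star)\le J$. There~\eqref{eq:tradeoff_hyp} holds by the definition of $L_\star$. By the last assertion of Lemma~\ref{lem:saturation},
$$S(L_\star)=\frac{(\tfrac12+o(1))\log N}{L_\star}\ \ge\ \frac{(\tfrac12+o(1))\log N}{L_0}=(\kappa_1+o(1))\frac{L_0^{2}}{50e}.$$
Since a witness with parameters at most $J'$ is one with parameters at most $J$, we have $E_2(N;J)\subseteq E_2(N;J')$. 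So the bound obtained at $J'$ transfers to the displayed $J$.

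In both cases Theorem~\ref{thm:tradeoff} then gives $|E_2(N;J)|\ll N\sqrt S\,e^{-S}$ with
$$S\ \ge\ (1+o(1))\frac{\log N}{2L_0}=(1+o(1))\frac{(\log N)^{2/3}}{2\,(25e/\kappa_1)^{1/3}}.$$
Numerically $(25e/\kappa_1)^{1/3}\approx7.65$, so the constant is about $1/15.3\approx0.0653$. The function $\sqrt S\,e^{-S}$ is decreasing for $S\ge\frac12$, so the lower bound on $S$ may be inserted directly. The factor $\sqrt S$ and the $o(1)$ are then absorbed by rounding the constant down to $0.065$, which gives the stated bound $N\exp(-0.065(\log N)^{2/3})$.
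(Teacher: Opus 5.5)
Your Case $L_\star\ge L_0$ contains an invalid step. $L_\star$ is the \emph{supremum} of the admissible set $\{L:\ S(L)L\le\frac12\log N\}$, and from $L_0\le L_\star$ you conclude that $L_0$ itself is admissible. That would follow only if the admissible set were an interval, i.e.\ from monotonicity of $L\mapsto S(L)L$ along the coupling. This is exactly what is unavailable: when $J$ increases, the cutoff $J^{5}$ moves up and primes leave the sum, so $S$ has negative jumps, and Lemma~\ref{lem:saturation} controls only their size, not their sign. The failure is not hypothetical either. By \eqref{eq:coupled_S} one has $S(L_0)L_0\ge(\kappa_1+o(1))L_0^{3}/(50e)=(\frac12+o(1))\log N$ always, so $L_0$ sits at the threshold up to a factor $1+o(1)$. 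It can therefore be inadmissible even though some larger $L$ is admissible. Your Case $L_\star<L_0$ is sound, and the transfer $E_2(N;J)\subseteq E_2(N;J')$ for $J'\le J$ is a good observation.

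There are two repairs. The paper's route avoids the case split. It always runs Theorem~\ref{thm:tradeoff} at $L_\star$ and bounds $L_\star$ from \emph{above} by applying \eqref{eq:coupled_S} at $L_\star$ itself: $(\kappa_1+o(1))L_\star^{3}/(50e)\le S(L_\star)L_\star\le\frac12\log N$. This gives $L_\star\le(1+o(1))L_0$, hence $S=(\frac12+o(1))\log N/L_\star\ge(1+o(1))\log N/(2L_0)$. The price is that the bound holds at $J(L_\star)$, whose logarithm is only $\le(1+o(1))L_0/(5\sqrt e)$ (absorbed into the approximate constant $0.93$), rather than literally at the displayed $J$. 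If you want the statement at the displayed $J$, keep your case split but replace $L_\star$ by the first failure point $L^{\dagger}=\inf\{L:\ S(L)L>\frac12\log N\}$. Every $L<L^{\dagger}$ is admissible, so if $L^{\dagger}>L_0$ your first case goes through verbatim at $L_0$. If $L^{\dagger}\le L_0$, the jump control of Lemma~\ref{lem:saturation} gives $S(L)L=(\frac12+o(1))\log N$ for $L$ just below $L^{\dagger}$. Your second-case argument and the monotonicity of $E_2$ in $J$ then apply unchanged.
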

\begin{proof}
Write $L=\log z$, $\eta=\log J$, and couple the two parameters by $\eta=L/(5\sqrt e)$: for fixed $L$ the function $\eta\mapsto\eta^{2}\log\frac{L}{5\eta}$ of Lemma~\ref{lem:5.5} is maximal where $2\log\frac{L}{5\eta}=1$, i.e. at $\eta=\frac{L}{5\sqrt e}$, with value $\frac{L^{2}}{50e}$; with this coupling $S\ge(\kappa_1+o(1))\frac{L^{2}}{50e}$, which is~\eqref{eq:coupled_S}.

Let $L_\star=\sup\{L:\ S(L)L\le\frac12\log N\}$ be the saturation point of the second hypothesis of~\eqref{eq:tradeoff_hyp} under this coupling. By Lemma~\ref{lem:saturation} it is finite and satisfies $S(L_\star)L_\star=(\frac12+o(1))\log N$; hence $S=(\tfrac12+o(1))\frac{\log N}{L_\star}$. It remains to bound $L_\star$ from \emph{above}, and this is exactly what the lower bound for $S$ provides:
$$(\kappa_1+o(1))\frac{L_\star^{3}}{50e}\ \le\ S\,L_\star\ \le\ \tfrac12\log N+\mathcal{O}(1),$$
so that
$$L_\star\ \le\ (1+o(1))\Big(\frac{25e\log N}{\kappa_1}\Big)^{1/3}=(7.65+o(1))(\log N)^{1/3}.$$
Therefore $S\ge(1+o(1))\frac{\log N}{2}\big(\frac{\kappa_1}{25e\log N}\big)^{1/3}=(0.0653\ldots+o(1))(\log N)^{2/3}$. Finally $\eta=L_\star/(5\sqrt{e})\le0.93(\log N)^{1/3}$, so the cutoff $J^{5}<z$ holds strictly and $J\le\exp((\log N)^{1/2})$: the first hypothesis of~\eqref{eq:tradeoff_hyp} is satisfied, and Theorem~\ref{thm:tradeoff} concludes.

The saturation step is what makes the argument correct with a one-sided estimate. It would \emph{not} be legitimate to fix $z$ by the equation $\kappa_1L^{3}/(50e)=\frac12\log N$ and then verify $S\log z\le\frac12\log N$ using the lower bound for $S$: the true value of $S$ may exceed that lower bound, in which case the constraint fails and $Q=z^{h}$ overshoots $N^{1/2}$. Defining $z$ by saturation removes the difficulty and, as a by-product, makes \emph{only} the lower bound for $W(J)$ relevant, both for the size of $S$ and for the admissibility of $z$. What saturation does \emph{not} rest on is any monotonicity of $L\mapsto S(L)L$, along the coupling both endpoints of the range $(J^{5},z]$ move, so no such monotonicity is available; Lemma~\ref{lem:saturation} replaces it by the weaker and true statement that the jumps of $S$ are negligible against $S$. Any improvement of $\kappa_1$ therefore improves the exponent by a factor $(\kappa/\kappa_1)^{1/3}$ here and by a factor $\kappa/\kappa_1$ in Theorem~\ref{thm:metric_theorem}; with the numerically observed $\kappa\approx0.46$ the constant $0.065$ would become $\approx0.093$.
\end{proof}

\subsection{Discussion: the cost of truncation, and where the method stops} \label{sec:5.2.7}

\begin{table}[H]
\centering
\begin{tabular}{lccc}
\toprule
system of witnesses & $S=\sum_{p}\varrho(p)/p$ & exceptional set & search depth\\
\midrule
one parameter ($a=1$), $J=(\log N)^\lambda$ & $\kappa_0\lambda(\log\log N)^2$ & $\exp(-\kappa_0\lambda(\log\log N)^2)$ & $(\log N)^\lambda$\\
one parameter, $J$ optimised & $\asymp(\log N)^{1/2}$ & $\exp(-0.154(\log N)^{1/2})$ & $e^{0.238\sqrt{\log N}}$\\
two parameters $(u,a)$, $J=(\log N)^\lambda$ & $\kappa_1\lambda^2(\log\log N)^3$ & $\exp(-\kappa_1\lambda^2(\log\log N)^3)$ & $(\log N)^\lambda$\\
two parameters, $J$ optimised & $\asymp(\log N)^{2/3}$ & $\exp(-0.065(\log N)^{2/3})$ & $e^{0.93(\log N)^{1/3}}$\\
Vaughan~\cite{vaughan1970} & $\asymp(\log N)^{2/3}$ & $\exp(-c_V(\log N)^{2/3})$ & unbounded\\
\bottomrule
\end{tabular}
\caption{The regimes of Section~\ref{sec:5.2}. All lines are the same large sieve; only $\varrho(p)$ changes. The second line is computed with the uniform cutoff $J^{5}$; with the cutoff $J^{3}$, which the one-parameter system tolerates (Lemma~\ref{lem:5.3}), it improves to $\exp(-0.199(\log N)^{1/2})$ at depth $e^{0.308\sqrt{\log N}}$.}
\label{tab:hierarchy}
\end{table}

All lines of Table~\ref{tab:hierarchy} come from the same large sieve; only $S=\sum_p\varrho(p)/p$ changes, and the optimisation is always the same: maximise $S$ subject to $S\log z\le\frac12\log N$, each optimised line being obtained by the saturation argument of Corollary~\ref{cor:optimal_tradeoff}. What fixes the shape of the bound is therefore the dimension of the witness system and not the choice of $z$: pushing $z$ as far as Theorem~\ref{thm:metric_theorem} does merely doubles a constant, since $\log\log z$ is essentially insensitive to $z$. The gap between the first two lines and the last two measures exactly the cost of freezing the modulator to $1$, which is what Algorithm~\ref{alg:geom_sieve} does.

The exponents in the last two lines coincide with Vaughan's, and this is expected rather than accidental. Vaughan~\cite{vaughan1970} uses $f(p)\asymp\frac12\sum_{s\mid(p+1)/4}\mu^{2}(s)\tau(\frac{p+1}{4s})$, for which $\sum_{p\le x}f(p)/p\asymp(\log x)^{2}$~\cite[Lemma 4.1]{pomerance2026}; our two-parameter count $\#\{(u,a):\gcd(u,a)=1,\ 4ua\mid p+1\}=\sum_{d\mid(p+1)/4}2^{\omega(d)}$ is a $\tau_3$-type divisor sum of exactly the same nature. Corollary~\ref{cor:optimal_tradeoff} is therefore not an improvement on~\cite{vaughan1970} and is not offered as one. What distinguishes the present derivation is not the strength of the analytic input but the fact that the witnesses are attached to an explicit search procedure (Theorem~\ref{thm:gen_criterion} and Corollary~\ref{cor:gen_reading}), so that the whole curve $J\mapsto|E_2(N;J)|$ is available and not only its endpoint.

Corollary~\ref{cor:optimal_tradeoff} \emph{achieves} the exponent $2/3$; the following shows that this is not merely what the present optimisation happens to reach, but a ceiling that no choice of parameters can cross within this scheme, given that the underlying Diophantine family carries exactly two free parameters.

The ceiling is obtained by comparison with the untruncated two-parameter count
$$\varrho_{\mathrm{untr}}(p)\ =\ \sum_{d\mid(p+1)/4}2^{\omega(d)}\ \ge\ \varrho(p)\qquad(\text{point-wise, at every }p),$$
the inequality holding because $\varrho(p)$ counts only the coprime pairs with both entries at most $J$. The function $\varrho_{\mathrm{untr}}$ is not itself a formally valid witness system, the distinctness of the residue classes intrinsically requires the truncation $p>J^{2}$, but it measures the analytic capacity of the parameter space, and the truncated system inherits its ceiling. The analogous hypothesis holds for Vaughan's $f(p)$ by~\cite[Lemma 4.1]{pomerance2026}.

\begin{proposition}\label{prop:optimality_23}
Unconditionally, $\sum_{p\le Z}\varrho_{\mathrm{untr}}(p)/p\asymp(\log Z)^{2}$; in particular $S(Z)\ll(\log Z)^{2}$ for the truncated system. Consequently, subject to the large-sieve constraint $S(Z)\log Z\le\frac12\log N$ of~\eqref{eq:tradeoff_hyp}, the quantity controlling the exceptional-set bound satisfies
$$S(Z)\ \ll\ (\log N)^{2/3},$$
so that no admissible $Z$ yields an exponent better than $2/3$ within this framework.
\end{proposition}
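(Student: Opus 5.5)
The plan has two parts: an unconditional two-sided estimate for the untruncated sum, and then an elementary optimisation of the large-sieve constraint.

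\emph{Upper bound.} Write $g(m)=\sum_{d\mid m}2^{\omega(d)}=\sum_{d\mid m}\mu^{2}(d)\tau(m/d)\cdot(\text{bounded factor})$; in any case $g\le\tau_3$, the three-fold divisor function, since $2^{\omega(d)}\le\tau(d)$. So $\varrho_{\mathrm{untr}}(p)\le\tau_3\big((p+1)/4\big)$, and
$$\sum_{p\le Z}\frac{\varrho_{\mathrm{untr}}(p)}{p}\ \le\ \sum_{\substack{d_1d_2\le (Z+1)/4}}\ \sum_{\substack{p\le Z\\ p\equiv-1\,(4d_1d_2)}}\frac1p .$$
Here I would not appeal to Bombieri--Vinogradov at all. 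Brun--Titchmarsh gives $\pi(y;q,-1)\ll y/(\varphi(q)\log(2y/q))$ for $y>q$, and partial summation then bounds the inner sum by $\ll\frac{1}{\varphi(q)}\log\frac{\log Z}{\log q}+\frac{1}{q}$ with $q=4d_1d_2$; the $1/q$ accounts for the single possible prime near $q$. The contribution of $\sum 1/q$ is $\ll(\log Z)^{2}$ directly. For the main part I would bound $\log\frac{\log Z}{\log q}$ trivially by $\log\log Z$ only on the range $q\le Z^{1/2}$, where it is $\mathcal{O}(1)$. The tail $Z^{1/2}<q\le Z$ is handled dyadically: on $q\sim Q$ the weight is $\ll\log(\log Z/\log Q)$ while $\sum_{q\sim Q}\tau(q)/\varphi(q)\ll\log Q$, and summing over dyadic $Q$ still gives $\ll(\log Z)^{2}$ because $\log(\log Z/\log Q)\le \log 2$ there. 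Combined with $\sum_{q\le Z}\tau(q)/\varphi(q)\ll(\log Z)^{2}$ this gives the upper bound $\ll(\log Z)^{2}$.

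\emph{Lower bound.} Restrict to pairs $d_1d_2\le Z^{1/5}$ and $p\in(Z^{1/2},Z]$. Siegel--Walfisz is not enough for moduli that large, but Bombieri--Vinogradov is, exactly as in the Cauchy--Schwarz argument of Lemma~\ref{lem:5.5} with $J$ replaced by $Z^{1/10}$. The main term is $\gg\sum_{q\le Z^{1/5}}\tau(q)/\varphi(q)\gg(\log Z)^{2}$, and the restriction on $p$ only changes the factor $\log\log Z-\log\log Z^{1/2}=\log 2$. Alternatively, the lower bound already follows from Lemma~\ref{lem:5.5} itself with $J=Z^{1/10}$ and $\varrho\le\varrho_{\mathrm{untr}}$.

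\emph{Consequence.} Since $\varrho\le\varrho_{\mathrm{untr}}$ pointwise, $S(Z)=S(J,Z)\le\sum_{p\le Z}\varrho_{\mathrm{untr}}(p)/p\le C(\log Z)^{2}$ for every $J$. The constraint $S\log Z\le\frac12\log N$ then leaves two cases. If $\log Z\le(\log N)^{1/3}$, then $S\le C(\log Z)^{2}\le C(\log N)^{2/3}$ directly. Otherwise, $S\le\frac{\log N}{2\log Z}<\frac12(\log N)^{2/3}$. Either way $S\ll(\log N)^{2/3}$, and since the large-sieve bound is $N\sqrt S e^{-S}$, the exponent cannot exceed $2/3$.

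I expect the main obstacle to be the upper bound near the top of the modulus range, $q$ close to $Z$, where Brun--Titchmarsh degenerates. I would handle it by the dyadic treatment above, or by swapping to the complementary divisor $(p+1)/(4q)$ when $q>\sqrt Z$; the two contributions are symmetric up to the $\tau$-weights.
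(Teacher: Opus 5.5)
There is a genuine gap in your upper bound, and it is the only half of the estimate that feeds the ceiling. Brun--Titchmarsh has $\log(y/q)$, not $\log y$, in the denominator, so partial summation with $\pi(y;q,-1)\ll y/(\varphi(q)\log(2y/q))$ gives
\[
\sum_{\substack{p\le Z\\ p\equiv-1\,(\mathrm{mod}\ q)}}\frac1p\ \ll\ \frac{1}{\varphi(q)}\Big(1+\log\log\frac{2Z}{q}\Big)+\frac1q ,
\]
and not $\frac{1}{\varphi(q)}\log\frac{\log Z}{\log q}+\frac1q$. For every $q\le Z^{1/2}$ the right side is $\asymp(\log\log Z)/\varphi(q)$. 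The excess, of order $(\log\log q)/\varphi(q)$, comes from the primes $p\in(2q,q^{2}]$, where $\log(p/q)$ is much smaller than $\log p$. Removing it for an individual $q$ would need Brun--Titchmarsh with $\log y$ when $y$ is a small power of $q$, which is not available.

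Weighted by $\tau(m)$ over the moduli $q=4m$, whose mass $\sum_{m\le x}\tau(m)/\varphi(4m)$ is $\asymp(\log x)^{2}$, this gives only $\sum_{p\le Z}\varrho_{\mathrm{untr}}(p)/p\ll(\log Z)^{2}\log\log Z$. Your case split then yields $S\ll(\log N)^{2/3}(\log\log N)^{1/3}$ at best, short of the statement. So the obstruction is not at $q$ close to $Z$, where the trivial bound suffices. It is at primes lying close to their modulus, for every $q$. Separately, your sentence asserting that $\log\frac{\log Z}{\log q}$ is $\mathcal{O}(1)$ on $q\le Z^{1/2}$ is backwards. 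Even granting your inner bound, the summation must run dyadically over all $Q$, which works because $\sum_{j\le J_0}j\log(J_0/j)\ll J_0^{2}$.

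Your suggested repair does not close the gap. The switch would have to be made relative to $p$, not $Z$, and the weights are not symmetric. Under $q\mapsto q'=(p+1)/q$ the complementary divisor inherits the weight $\tau\big((p+1)/(4q')\big)$, which depends on $p$ and is not a fixed weight on a fixed modulus.

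What does work is a three-way switch. If $m=d_1d_2d_3$, the least of $d_1d_2$, $d_1d_3$, $d_2d_3$ is at most $m^{2/3}$, since their product is $m^{2}$. Hence $\tau_3(m)\le3\sum_{q\mid m,\ q\le m^{2/3}}\tau(q)$. With $m=(p+1)/4$, only primes $p\gg q^{3/2}$ meet the modulus $4q$, so $\log(p/q)\gg\log p$. Brun--Titchmarsh then gives $\frac{1}{\varphi(q)}\big(1+\log\frac{\log Z}{\log 2q}\big)$, and the total is $\ll(\log Z)^{2}$. Alternatively, as the paper does, quote a mean-value bound $\sum_{p\le Z}\tau_3(p+1)\ll Z\log Z$ for shifted primes and conclude by partial summation.

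The rest of your proposal is sound:
\begin{itemize}
\item Your lower bound is the paper's. The weight should be $2^{\omega(q)}$, counting coprime pairs, rather than $\tau(q)$, but this does not change the order.
\item Your shortcut through Lemma~\ref{lem:5.5} with $J=Z^{1/10}$, $J^{5}=Z^{1/2}$ and $\varrho\le\varrho_{\mathrm{untr}}$ is valid.
\item Your two-case derivation of $S\ll(\log N)^{2/3}$ from $S\le C(\log Z)^{2}$ is equivalent to the paper's substitution $L\ge(S/C)^{1/2}$.
\end{itemize}
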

\begin{proof}
Write $L=\log Z$. We first establish the unconditional order of magnitude $\sum_{p \le Z} \varrho_{\mathrm{untr}}(p)/p \asymp L^2$.

For the upper bound, we have $\varrho_{\mathrm{untr}}(p) \le \sum_{d \mid p+1} 2^{\omega(d)} \le \tau_3(p+1)$. Standard sieve bounds for non-negative multiplicative functions on shifted primes (e.g., Nair (1982)) yield $\sum_{p \le Z} \tau_3(p+1) \ll Z \log Z$. Applying partial summation strictly yields $\sum_{p \le Z} \varrho_{\mathrm{untr}}(p)/p \le \sum_{p \le Z} \tau_3(p+1)/p \ll L^2$.

For the lower bound, we bypass the threshold $\varepsilon$-dependencies by isolating primes $p \in (Z^{1/2}, Z]$ and restricting the divisor parameter to $d \le Z^{1/5}$. For any $y$ in this range, the required modulus limit for the Bombieri--Vinogradov theorem is $y^{1/2}(\log y)^{-A} \ge Z^{1/4}(\log Z)^{-A}$. Because $1/5 < 1/4$, our maximum modulus $4d \le 4Z^{1/5}$ falls strictly within this unconditional range for $Z$ large, completely avoiding the need to track an $\varepsilon$. Applying partial summation to $\pi(y; 4d, -1) = \mathrm{li}(y)/\varphi(4d) + \mathcal{E}(y; 4d, -1)$ yields the main term $\frac{1}{\varphi(4d)}\big(\log\log Z - \log\log Z^{1/2}\big) = \frac{\log 2}{\varphi(4d)}$, plus the integral of the error term $\mathcal{E}$. Summing over $d \le Z^{1/5}$, the accumulated error is controlled exactly as in the proof of Lemma~\ref{lem:5.5} via Cauchy--Schwarz and Bombieri--Vinogradov. The sum is dominated by the main term, yielding $\sum_{p \le Z} \varrho_{\mathrm{untr}}(p)/p \ge (\log 2+o(1)) \sum_{d \le Z^{1/5}} 2^{\omega(d)}/\varphi(4d) \gg (\log Z^{1/5})^2 \gg L^2$.

Finally, we transfer this to the actual truncated system. Point-wise boundedness $\varrho(p) \le \varrho_{\mathrm{untr}}(p)$ ensures that $S(Z) \le C L^2$ for some absolute constant $C > 0$. Subject to the constraint $S(Z) L \le \frac12\log N$, we extract $L \ge (S(Z)/C)^{1/2}$ from the first inequality and substitute it into the constraint:
$$S(Z) \cdot \Big(\frac{S(Z)}{C}\Big)^{1/2} \le S(Z) L \le \frac{1}{2} \log N.$$
This gives $S(Z)^{3/2} \le \frac{\sqrt{C}}{2} \log N$, enforcing the absolute ceiling $S(Z) \ll (\log N)^{2/3}$.
\end{proof}

Two remarks on the scope of the computation just made. First, only the \emph{upper} bound $\sum_{p\le Z}\varrho_{\mathrm{untr}}(p)/p\ll L^{2}$ feeds the ceiling; the matching lower bound, which occupies most of the proof, is recorded because it shows the comparison to be lossless in order of magnitude, not because the ceiling needs it. Second, the constraint $S(Z)\log Z\le\frac12\log N$ is the one attached to the schedule $Q=z^{h}$ with $h=\lfloor S\rfloor$, which is the choice made in Theorem~\ref{thm:tradeoff}. A smaller $h$ is admissible and permits a larger $z$, so it must be excluded explicitly; it changes nothing. Indeed with $h\le\frac{\log N}{2L}$ the bound of Lemma~\ref{lem:5.9} degrades to $G(Q)\ge e_h(x)\gg(eS/h)^{h}$, so the exponent becomes $h\log(eS/h)$; writing $L=t(\log N)^{1/3}$ and $S\asymp L^{2}$, the choice $h=\frac{\log N}{2L}$ gives $h\log(eS/h)\asymp(\log N)^{2/3}\cdot\frac{\log(2ecL^{3}/\log N)}{2t}$, a quantity of order $(\log N)^{2/3}$ for every fixed $t>0$ and maximal at $t\asymp1$. The exponent $2/3$ is therefore not an artefact of taking $h=\lfloor S\rfloor$.

Two limitations should be stated rather than left to the reader. First, we deliberately do not assert that \emph{every} witness system attached to the Type~I/Type~II dichotomy obeys $\sum_{p\le Z}\varrho(p)/p\ll(\log Z)^{2}$, which is what a genuine impossibility theorem would require;~\cite[Lemma 4.1]{pomerance2026} establishes this for Vaughan's $f(p)$, not for an arbitrary such system. The heuristic reason to expect it is that each branch of the dichotomy carries two free integer parameters once the bounded divisibility condition is summed out ($(u,v)$ subject to $e\mid u+v$ for Type~II, $(u,d)$ subject to $f\mid4u^{2}d+1$ for Type~I), so that any $\varrho$ built from them should be of $\tau_3$-type; turning ``should be'' into a theorem is the missing step. Second, Proposition~\ref{prop:optimality_23} is a statement about \emph{this} specific sieve architecture. The constraint $S(Z)\log Z\le\frac12\log N$ originates entirely from the $N+Q^{2}$ capacity limit of Montgomery's large sieve inequality (where $Q=Z^{h}\le N^{1/2}$), which is unconditional and absolute. It does \emph{not} stem from the Bombieri--Vinogradov theorem, whose level of distribution enters only in the evaluation of the mean value (Lemmas~\ref{lem:5.3} and \ref{lem:5.5}) and does not dictate the $2/3$ ceiling. A genuinely different, non-large-sieve argument lies outside the analysis altogether. What is established is narrower and sharper, within the large-sieve scheme of Section~\ref{sec:5.2}, and for any witness system tied to the two free parameters of the equation, $2/3$ is exactly where the method stops.

Three caveats accompany Theorem~\ref{thm:metric_theorem} itself. It is an upper bound only; in particular the fourteen primes of Proposition~\ref{prop:nine_primes} lie in $\bigcap_{J}E_1(N;J)$, since they are outside the hyperbolic family at every depth, but they do \emph{not} lie in $\bigcap_J E_2(N;J)$, since by Proposition~\ref{prop:nine_resolved} each of them satisfies Theorem~\ref{thm:gen_criterion}(ii) with parameters at most $5$. Whether $\bigcap_J E_2(N;J)$ is empty is a different question, equivalent to asking whether every prime $n\equiv1\pmod{24}$ has a Type~II solution, and we do not settle it. Next, the theorem concerns Type~II solutions only; the twisted branch is not used, so the true density of solvable primes is larger than what is proved here, although, unlike in the situation of Section~\ref{sec:4.3bis}, the modulator is now free, so the family covered is complete for Type~II. Finally, the constant $\kappa_1\lambda^{2}$ is explicit but the $o(1)$ is not effective, since Bombieri--Vinogradov rests on Siegel's theorem.

\subsection{A heuristic model for the curve $J\mapsto|E_1(N;J)|$}\label{sec:curve_model}

Theorems~\ref{thm:fixed_depth} and~\ref{thm:metric_theorem} and Corollary~\ref{cor:optimal_tradeoff} bound $|E_1(N;J)|$ from above at various depths; none of them describes the actual \emph{shape} of the curve $J\mapsto|E_1(N;J)|$ recorded empirically in item (3) of Section~\ref{sec:7}. We record here an unproved heuristic model for that shape, in the spirit of the Poisson heuristic underlying the mean-value results of~\cite[\S1]{pomerance2026}, together with a numerical test.

Fix $n$ and model the events ``step $u$ succeeds'' (Theorem~\ref{thm:4.9}: $n+u$ has a divisor $\equiv-1\pmod{4u}$) as approximately independent across $u\le J$, with probability $p_u=p_u(n)$. Two regimes govern $p_u$ heuristically. For $u\lesssim\log n$, by the mechanism of Proposition~\ref{prop:subgroup_reformulation} and Proposition~\ref{prop:nu_c}, failure at step $u$ is governed by $\asymp2^{\omega(2u)}$ dominant index-$2$ obstructions, each of Selberg--Delange density $\asymp(\log n)^{-1/2}$, so $1-p_u\asymp2^{\omega(2u)}(\log n)^{-1/2}$. Since $p_u$ is close to $1$ in this regime, the standard Poisson approximation $\prod(1-p_u)\approx\exp(-\sum p_u)$ is invalid. Computing the exact product of failures yields
$$\prod_{u\le J}(1-p_u)\ \asymp\ (\log n)^{-J/2}\,2^{\sum_{u\le J}\omega(2u)} .$$
Using $\sum_{u\le J}\omega(2u)=J\log\log J+\mathcal{O}(J)$, the survival probability is $\approx \exp(-c_1(n,J)\cdot J)$ with a rate $c_1(n,J) = \frac{1}{2}\log\log n - (\log 2)\log\log J + \mathcal{O}(1)$. Notice that the leading exponential decay corresponds precisely to the exact sieve dimension $J/2$ rigorously proved in Theorem~\ref{thm:fixed_depth}. For $u\gg\log n$, the number of divisors of $n+u$ available is $\tau(n+u)\approx\log n$ on average, matched against $\varphi(4u)\asymp u$ residue classes, so $p_u\approx C\log n/u$. Here, Poisson applies and gives an $\exp(-c_2\log n\cdot\log(J/\log n))$ decay. The combined model is
\begin{equation}\label{eq:heuristic_curve}
\frac{|E_1(N;J)|}{N}\ \approx\
\begin{cases}
\exp\big(-c_1(n,J) \cdot J\big) & J\lesssim\log n,\\[2pt]
(J/\log n)^{-c_2\log n} & J\gg\log n.
\end{cases}
\end{equation}

Equation~\eqref{eq:heuristic_curve} is a heuristic model and not a bound: it treats the steps as independent and applies no truncation, whereas Theorems~\ref{thm:fixed_depth} and~\ref{thm:metric_theorem} are rigorous sieve arguments, and the two computations are not interchangeable.

We tested~\eqref{eq:heuristic_curve} against $18\,507$ primes $n\le2\times10^{6}$, $n\equiv1\pmod{24}$ (item (10) of Section~\ref{sec:7}): a single exponential $e^{-\gamma J}$ fits $|E_1(N;J)|/\pi(N)$ poorly over $J=1,\ldots,20$ ($R^{2}=0.88$), with the locally fitted rate nearly halving between $J\le6$ and $J\ge8$ ($\gamma\approx0.61$ against $\gamma\approx0.13$), while a single power law $J^{-\beta}$ fits the whole pooled sample distinctly better ($\beta\approx1.86$, $R^{2}=0.995$).

The second regime accounts for the power law, and does so quantitatively. It is a power law in $J$, with an exponent $\beta\approx c_2\log n$ which is not universal but proportional to $\log n$ and carries no constant term, so that pooling primes spanning $\log n\in[4.3,16.8]$ averages power laws of different exponents and returns an intermediate one. Binning by window of $n$ over the $158\,595$ primes below $2\times10^{7}$ (item (11) of Section~\ref{sec:7}) gives Table~\ref{tab:binned_test}: $\beta$ climbs steadily from $1.13$ to $2.48$, and $\beta/\log n$ is flat to within a few percent. A least-squares fit returns $\beta=0.1433\log n+0.043$ with $R^{2}=0.967$, and we record $c_2\approx0.143$ as the measured value of the second-regime constant.

\begin{table}[H]
\centering
\begin{tabular}{lcccccc}
\toprule
window of $n$ & $10^{3}$--$10^{4}$ & $10^{4}$--$10^{5}$ & $10^{5}$--$10^{6}$ & $10^{6}$--$3\cdot10^{6}$ & $3\cdot10^{6}$--$10^{7}$ & $10^{7}$--$2\cdot10^{7}$\\
\midrule
$\beta$ & $1.134$ & $1.632$ & $1.905$ & $2.008$ & $2.200$ & $2.480$\\
$\beta/\log n$ & $0.141$ & $0.157$ & $0.150$ & $0.140$ & $0.142$ & $0.151$\\
\bottomrule
\end{tabular}
\caption{Power-law exponent $\beta$ of $J\mapsto|E_1(N;J)|$ fitted window by window, against the prediction $\beta\approx c_2\log n$ of the second regime of~\eqref{eq:heuristic_curve}.}
\label{tab:binned_test}
\end{table}

The first regime is a different matter, and we state plainly that it is not observed. Its rate $c_1(n,J)$ grows like $\tfrac12\log\log n$, so the effective power-law exponent it would produce over a window of $J$ is
$$\beta_{\mathrm{eff}}(J)\ =\ -\frac{d}{d\log J}\log\Big(\frac{|E_1(N;J)|}{N}\Big)\ =\ J\Big(c_1(n,J)-\frac{\log2}{\log J}\Big)\ \asymp\ J\cdot\tfrac12\log\log n,$$
proportional to $\log\log n$ rather than to $\log n$, and moreover growing with $J$ rather than constant. Across the six windows $\log n$ grows by a factor $2.04$ and $\log\log n$ by a factor $1.34$, while the measured $\beta$ grows by a factor $2.19$: the data follow $\log n$. Consistently with this, in every window the power law fits decisively better than the exponential ($R^{2}$ between $0.946$ and $0.994$ for $J^{-\beta}$, against $0.71$ to $0.93$ for $e^{-\gamma J}$), including the last three windows, where the geometric midpoints give $\log n$ between $14.4$ and $16.5$ and essentially the whole fitted range $J\le20$ ought to lie inside the first regime.

This failure is not independent of the shortfall already recorded in the discussion following Theorem~\ref{thm:fixed_depth}. The first regime of~\eqref{eq:heuristic_curve} has rate $\tfrac12\log\log n$, that is, exactly the sieve dimension $J/2$ of that theorem; and the slopes measured there against $\log\log n$ ($0.39$, $0.50$, $0.84$, $1.24$, $1.51$, $1.59$ for $J=1,\ldots,6$, against the predicted $0.5,1,\ldots,3$) fall short by roughly the same factor. Heuristic and theorem therefore agree with each other at the leading order and disagree with the numerics in the same way. What this points to is the size of the $\mathcal{O}(1)$ terms rather than to two unrelated anomalies: over the accessible range $\log\log n\in[2.09,2.80]$ those terms are not small compared with the exponent they multiply, and the shortfall is already visible at $J=1$, where the exponent $1/2$ is certain.

The second-regime regression, finally, rests on six points. The intercept $0.043$ carries a standard error of about $0.18$, so it is indistinguishable from $0$ but would be indistinguishable from a great many other values as well; the slope is $0.1433\pm0.0133$, and moves to $0.150$ if each window is represented by the mean of $\log n$ over its primes rather than by its geometric midpoint. Over the range $\log n\in[8.1,16.5]$ a fit in $(\log n)^{0.9}$, or an affine fit in $\log\log n$, would be statistically indistinguishable from a fit in $\log n$. We therefore record~\eqref{eq:heuristic_curve} and Table~\ref{tab:binned_test} as a heuristic whose first regime reproduces, at leading order, the sieve dimension proved in Theorem~\ref{thm:fixed_depth}, and whose second regime is not contradicted by the data, but which \emph{is} contradicted in its first regime at accessible heights, rather than as a validated model.

\section{Relation to recent parametrizations} \label{sec:6}

The local parametrizations discussed here appeared as independent preprints between late 2025 and mid-2026; we claim no priority over them. The object of this section is a retroactive unification: each is a restricted sub-variety or an algebraic projection of the parameter space $K=(n+c)/4$.

\subsection{Three recent constructions as sub-varieties of $(K,c)$} \label{subsec:tame_subvariety}

\paragraph{The tame sub-variety (modulator $1$).}
For primes $n=24m+1$, Xu~\cite{xu2026} uses a first denominator $n_1=6m+k$ and defines a tame solution by the requirement that the numerator summands $\mathfrak{S}_1,\mathfrak{S}_2$, with $\mathfrak{S}_1+\mathfrak{S}_2=4k-1$, be positive integers; his Theorem 2.1 shows that for $n$ prime this forces $\mathfrak{S}_1$ and $\mathfrak{S}_2$ to divide $6m+k$ separately.

\begin{proposition}\label{prop:tame}
The class of tame solutions is the sub-variety of Theorem~\ref{thm:3.3} on which the modulator equals $1$.
\end{proposition}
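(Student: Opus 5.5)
The proof will be a dictionary between Xu's variables and those of Theorem~\ref{thm:3.3}, followed by a check in each direction. For $n=24m+1$ and a shift $c\equiv3\pmod4$, write $c=4k-1$. Then
$$K_c=\frac{n+c}{4}=\frac{24m+4k}{4}=6m+k,$$
so Xu's first denominator $n_1=6m+k$ is exactly $K_c$, and his numerator constraint $\mathfrak{S}_1+\mathfrak{S}_2=4k-1$ reads $\mathfrak{S}_1+\mathfrak{S}_2=c$. The map $k\leftrightarrow c=4k-1$ is a bijection between Xu's parameter $k\ge1$ and the shifts $c\equiv3\pmod4$, $c>0$. I will check this on Example~\ref{ex:3.4}-type data, for instance $n=409$, to be sure Xu's normalisation of $k$ matches.

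\emph{Tame $\Rightarrow$ modulator $1$.} Let a tame solution be given. By Xu's Theorem~2.1, for $n$ prime the summands $\mathfrak{S}_1,\mathfrak{S}_2$ divide $6m+k=K_c$ separately. Put $d_1=\mathfrak{S}_1$ and $d_2=\mathfrak{S}_2$. Then $d_1,d_2\mid K_c$ and $d_1+d_2=c$, so Theorem~\ref{thm:3.3} applies with modulator $a=(d_1+d_2)/c=1$ and $\sigma=K_c/d_1$.

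It remains to check that the decomposition Xu writes down coincides with the one Theorem~\ref{thm:3.3} produces. For this I compute Theorem~\ref{thm:3.3}'s denominators at $a=1$. We have $c\sigma-K=(K/d_1)(c-d_1)=Kd_2/d_1$. By Lemma~\ref{lem:3.1}, the denominators are therefore
$$K,\qquad \frac{nK}{d_1},\qquad \frac{n\sigma K}{c\sigma-K}=\frac{nK}{d_2}.$$
This gives
$$\frac4n=\frac1K+\frac{d_1}{nK}+\frac{d_2}{nK},$$
the splitting of the numerator $c=d_1+d_2$ of $4/n-1/K=c/(nK)$. That is precisely how a tame solution is defined: the numerator $4k-1$ is split into $\mathfrak{S}_1+\mathfrak{S}_2$, each of which must produce a unit fraction.

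\emph{Modulator $1$ $\Rightarrow$ tame.} Conversely, suppose Theorem~\ref{thm:3.3} holds with $a=1$. Then $d_1+d_2=c=4k-1$ with $d_i\mid K_c$. Taking $\mathfrak{S}_i=d_i$ gives positive integers, and $\mathfrak{S}_i\mid nK$, so $d_i/(nK)$ is a unit fraction. This is a tame solution.

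The main obstacle is the faithful translation of Xu's definition. I need to confirm that "tame" means exactly the splitting $c/(nK)=\mathfrak{S}_1/(nK)+\mathfrak{S}_2/(nK)$ with first denominator $K$, and not some other normalisation, for instance one with a common factor $d$ as in Lemma~\ref{lem:binary}. If Xu allows a common multiplier, I would reduce it using Lemma~\ref{lem:binary} with $M=K$: the summands would be $\lambda d_1,\lambda d_2$ with $\lambda(d_1+d_2)=c$, and then $a=1/\lambda$ forces $\lambda=1$. Finally, I will remark that, as noted after Proposition~\ref{prop:nine_primes}, this class is separate-divisibility rather than $uv\mid K_c$, and so strictly contains the hyperbolic slice.
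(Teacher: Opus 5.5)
Your argument is the paper's own: the dictionary $c=4k-1$, $K_c=6m+k=n_1$, then Xu's Theorem~2.1 to obtain $d_1=\mathfrak{S}_1$, $d_2=\mathfrak{S}_2$ dividing $K_c$ with $d_1+d_2=c$, hence $a=1$, and the converse read off directly; your check that Theorem~\ref{thm:3.3} at $a=1$ returns exactly the denominators $K$, $nK/d_1$, $nK/d_2$ is a useful addition that the paper leaves implicit. One correction to your closing remark, which echoes the paper's own comments: $c=d_1+d_2\le 2K_c$ forces $c<n$, so $\gcd(c,K_c)=1$, and any common divisor of $d_1,d_2$ divides both $c$ and $K_c$; hence $\gcd(d_1,d_2)=1$ and $d_1d_2\mid K_c$ automatically, so the tame class \emph{coincides} with condition~(iii) of Theorem~\ref{thm:hyper_char} rather than strictly containing it, and this same gcd argument (not ``$a=1/\lambda$'') is what rules out a common multiplier $\lambda$.
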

\begin{proof}
Equating the first denominator $K$ with $n_1$ gives $K=6m+k$, and $n=4K-c$ with $n=24m+1$ yields $c=4(6m+k)-(24m+1)=4k-1$. By Xu's Theorem 2.1 the summands divide $K$, so $d_1=\mathfrak{S}_1$ and $d_2=\mathfrak{S}_2$ are divisors of $K$ with $d_1+d_2=4k-1=c$; the relation $d_1+d_2=ac$ then forces $a=1$, and conversely $a=1$ returns a tame solution.
\end{proof}

This is strictly weaker than condition (iii) of Theorem~\ref{thm:hyper_char}, which demands $uv\mid K$ rather than $u\mid K$ and $v\mid K$ separately: the hyperbolic family is contained in the tame one, which is why the set $\mathcal{H}$ of Proposition~\ref{prop:nine_primes} contains Xu's wild set a priori.

\paragraph{Affine lattice projections.}
Dyachenko~\cite{dyachenko2025} studies the identity $(4B-1)(4C-1)=4n\delta+1$ in variables $(B,C,\delta)$.

\begin{proposition}\label{prop:affine}
That affine lattice projects onto the $(K,c)$ parameter space by a rational change of variables, the generated denominators being recovered from it.
\end{proposition}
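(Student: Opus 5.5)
The plan is to match Dyachenko's identity $(4B-1)(4C-1)=4n\delta+1$ with the factorisation of Theorem~\ref{thm:4.8}, and then to pass to $(K,c)$ through Theorem~\ref{thm:hyper_char}(iii). Theorem~\ref{thm:4.8} states that $n=4\alpha uv-u-v$ is equivalent to $(4\alpha u-1)(4\alpha v-1)=4\alpha n+1$. The natural change of variables is therefore
$$\delta=\alpha,\qquad B=\alpha u,\qquad C=\alpha v.$$
Under it, $(4B-1)(4C-1)=4n\delta+1$ becomes literally $(4\alpha u-1)(4\alpha v-1)=4\alpha n+1$. Expanding Dyachenko's identity gives $16BC-4B-4C=4n\delta$, that is, $4BC-B-C=n\delta$. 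This is the relation $n=4\alpha uv-u-v$ multiplied through by $\alpha$. Conversely, from $(B,C,\delta)$ one recovers $u=B/\delta$ and $v=C/\delta$. This inverse map is rational rather than polynomial, which is why the statement speaks of a rational change of variables.

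Next I push the triple into $(K,c)$. By the proof of Theorem~\ref{thm:hyper_char}, (i)$\Rightarrow$(iii), one sets
$$c=u+v=\frac{B+C}{\delta},\qquad K=K_c=\frac{n+c}{4}=\alpha uv=\frac{BC}{\delta}.$$
The last expression uses $4BC=n\delta+B+C$, so that $BC/\delta=(n+c)/4$ holds as an identity. The divisors are $d_1=u=B/\delta$ and $d_2=v=C/\delta$, with $uv\mid K$ and modulator $a=1$. The denominators then follow from Theorem~\ref{thm:3.3}. There $\sigma=aK/d_1=\alpha v=C$, so the decomposition of Lemma~\ref{lem:3.1} reads
$$\frac4n=\frac1K+\frac1{Cn}+\frac{1}{nCK/(cC-K)}.$$
Here $cC-K=\alpha v(u+v)-\alpha uv=\alpha v^{2}=C^{2}/\delta$, so everything is expressed in $(B,C,\delta)$. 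This is what "the generated denominators being recovered" means.

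The main obstacle is integrality. The rational inverse lands in the integer parameter space only when $\delta\mid B$ and $\delta\mid C$. A general lattice point with $(4B-1)(4C-1)=4n\delta+1$ need not satisfy this. It does satisfy $4BC-B-C=n\delta$, hence $B+C\equiv 4BC\pmod\delta$. The first thing I would try is a rescaling. Put $g=\gcd(B,C,\delta)$, or work directly with $c'=(B+C)$ and the modulator $a=\delta/\gcd$-type quotient. The aim is to show that any point projects to a Type~II configuration of Theorem~\ref{thm:gen_criterion}, i.e.\ $uv\mid K_c$ with $c\mid u+v$. Concretely, I would set $u=B$, $v=C$ and use $an=4a\alpha uv-u-v$ from \eqref{eq:pivot} with $a\alpha$ playing the role of $\delta$. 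If that matching fails, I would restrict the proposition to the sublattice $\delta\mid\gcd(B,C)$ and state this restriction explicitly, since that sublattice is exactly the image of the hyperbolic slice.
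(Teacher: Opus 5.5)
Your core computation is the paper's proof. You substitute $K=BC/\delta$ and $c=(B+C)/\delta$, use $4BC=n\delta+B+C$ to get $4K-c=n$, and read off the denominators of Lemma~\ref{lem:3.1}. The paper takes $\sigma=B$ where you take $\sigma=C$. Since $cC-K=C^{2}/\delta$, your third denominator $nCK/(cC-K)$ simplifies to $nB$, so both versions give $BC/\delta$, $Bn$, $Cn$. The paper stops there, and so can you. The identities you arrive at, $BC/\delta=(n+c)/4$ and $cC-K=C^{2}/\delta$, hold at every lattice point. Lemma~\ref{lem:3.1} is a formal identity for any $\sigma$ with $c\sigma\ne K$, so it applies directly with $\sigma=C$, without Theorem~\ref{thm:3.3} or integral $u,v$. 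The proposition claims only a rational map from the lattice to $(K,c)$, and the paper explicitly declines to assert a bijection. Your detour through $(u,v,\alpha)$ does add one thing the paper does not state: the image of the hyperbolic slice is exactly the sublattice $\delta\mid\gcd(B,C)$.

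Your last paragraph is where things go wrong. The ``integrality obstacle'' exists only because you insist that lattice points come from the hyperbolic slice, which the statement never requires. The fallback of restricting to $\delta\mid\gcd(B,C)$ would prove something strictly weaker. It would also discard lattice points that do land on integral $(K,c)$ with genuine decompositions. For example, take $n=73$ and $(B,C,\delta)=(2,42,4)$:
\begin{itemize}
\item it satisfies $(4B-1)(4C-1)=7\cdot167=4\cdot73\cdot4+1$;
\item it projects to $(K,c)=(21,11)$ and gives $\frac4{73}=\frac1{21}+\frac1{146}+\frac1{3066}$;
\item yet $\delta\nmid\gcd(B,C)$.
\end{itemize}
This point corresponds to $d_1=1$, $d_2=21$ with modulator $a=2$, which is exactly the part of Type~II that the hyperbolic slice misses (Corollary~\ref{cor:proper_subfamily}). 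The integral points of the image are those with $\delta\mid BC$ and $\delta\mid B+C$, not $\delta\mid\gcd(B,C)$.

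Your alternative matching with \eqref{eq:pivot} at $u=B$, $v=C$ also fails. Comparing $n\delta=4BC-B-C$ with $an=4a\alpha BC-B-C$ forces $a=\delta$ and $a\alpha=1$, hence $\delta=1$. Drop the last paragraph and the argument is complete.
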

\begin{proof}
Expanding gives $16BC-4B-4C=4n\delta$, i.e. $4BC-(B+C)=n\delta$. Dividing by $\delta$ and substituting $K=BC/\delta$, $c=(B+C)/\delta$ recovers $4K-c=n$. Setting $\sigma=B$, the denominators of Lemma~\ref{lem:3.1} evaluate to $K=BC/\delta$, $\sigma n=Bn$ and $\frac{n\sigma K}{c\sigma-K}=Cn$. Exploring the affine lattice therefore amounts to localized divisor searches within the $(K,c)$ space. We claim only this direction; we have not verified that every $(K,c)$ with the required divisor structure arises from an integral triple, so the correspondence is stated as a projection and not as a bijection.
\end{proof}

\paragraph{Cubic surfaces.}
Bello-Hern\'{a}ndez, Benito and Fern\'{a}ndez~\cite{bello2026} define a divisor function $f_{ab}(n,\cdot,\cdot)=k$ with first denominator $(n+k)/4$ and map decompositions to the surface $P(X,Y,Z)=XYZ-X-Y=n$.

\begin{proposition}\label{prop:cubic}
The parameter $k$ maps to the shift $c$, and the surface $P(X,Y,Z)=n$ is the resolvent of Theorem~\ref{thm:4.8} under substitution.
\end{proposition}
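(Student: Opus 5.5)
The plan is to match the two parametrizations through the first denominator, and then to exhibit an explicit polynomial substitution carrying the resolvent of Theorem~\ref{thm:4.8} onto $P(X,Y,Z)=n$.

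\emph{The parameter $k$ is the shift $c$.} By definition, the construction of~\cite{bello2026} has first denominator $(n+k)/4$. Our parametrization has first denominator $K_c=(n+c)/4$, as in Lemma~\ref{lem:3.1} and Theorem~\ref{thm:complete_shift}. Equating the two gives $k=c$ at once. The integrality of $(n+k)/4$ for $n\equiv1\pmod4$ forces $k\equiv3\pmod4$, which is exactly the class of admissible shifts in Proposition~\ref{prop:c_range}. I will also check that the range of $k$ allowed there sits inside $0<c\le2n$, which follows from Proposition~\ref{prop:c_range} applied to the resulting decomposition.

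\emph{The surface is the resolvent.} In the relation $4\alpha uv-u-v=n$ of Theorem~\ref{thm:4.8}, substitute $X=u$, $Y=v$, $Z=4\alpha$. This turns the relation literally into $XYZ-X-Y=n$. Under the same substitution the factorised form $(4\alpha u-1)(4\alpha v-1)=4\alpha n+1$ becomes $(XZ-1)(YZ-1)=Zn+1$, which is the standard factorisation of $P$. To close the loop with the first step, I will read off the parameters through Theorem~\ref{thm:hyper_char}(iii). On the image one has $c=u+v=X+Y$ and $XYZ=4\alpha uv=4K_c=n+c$. Hence the shift is $c=X+Y$ and the first denominator is $XYZ/4=(n+k)/4$, consistent with $k=c$.

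\emph{Expected obstacle.} The delicate point is the scope of the correspondence. The substitution only produces points with $4\mid Z$, and it only covers the modulator-$1$ slice of Corollary~\ref{cor:proper_subfamily}. I therefore expect to state the result as a map in one direction, as was done for Proposition~\ref{prop:affine}, and not as a bijection. For a general integer point of $P=n$ with $Z\not\equiv0\pmod4$, I would first try to show that $n\equiv1\pmod4$ together with $XYZ=n+X+Y$ still lets one recover the divisor data. If that fails, I will simply restrict the claim to the sublattice $Z\in4\mathbb{N}$, which is all that the word ``resolvent'' requires.
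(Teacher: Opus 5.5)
Your proposal is correct and follows the paper's proof. You identify $k$ with $c$ by equating the first denominators $(n+k)/4=(n+c)/4$, with $k\equiv3\pmod4$ matching the shift constraint, and then substitute $X=u$, $Y=v$, $Z=4\alpha$ to turn $XYZ-X-Y=n$ into $4\alpha uv-u-v=n$.

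Your extra checks are correct: $(XZ-1)(YZ-1)=Zn+1$, and $c=X+Y$ with $XYZ=n+c$ via Theorem~\ref{thm:hyper_char}(iii). Limiting the claim to $Z\in4\mathbb{N}$, rather than seeking a converse, is the right scope. Points with $4\nmid Z$ abound (e.g.\ $Z=1$ gives $(X-1)(Y-1)=n+1$ for every $n$), and the paper's proof likewise only establishes the one-directional substitution.
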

\begin{proof}
Equating the first denominator $(n+k)/4$ with $K=(n+c)/4$ identifies $k$ with $c$, and the requirement $k\equiv3\pmod4$ matches the shift constraint. Substituting $X=u$, $Y=v$, $Z=4\alpha$ into $XYZ-X-Y=n$ yields $4\alpha uv-u-v=n$, which is Theorem~\ref{thm:4.8}.
\end{proof}

The reduction to rational points on such surfaces connects to broader geometric frameworks; Bright and Loughran~\cite{bright2019} evaluate the Brauer--Manin obstruction for the associated Erd\H{o}s--Straus surfaces, a global counterpart to the local obstructions of Section~\ref{sec:5.1}.

\subsection{Lopez's Types A, B and C} \label{sec:6.4}

The framework of Lopez~\cite{lopez2022, lopez2024} is the closest to ours. Lopez classifies the solutions of a prime $p=4\kappa+1$ by their shape rather than by the congruence which produces them. Type A has shape $(du,dv,duv)$, characterised~\cite{lopez2022} by the existence of $t\ge0$ and a divisor $w\mid \kappa+1+t$ with $w\equiv-1\pmod{3+4t}$. Type B has shape $(duv,dup,dvp)$, characterised~\cite[Thm.~4 and Thm.~6]{lopez2024} by the existence of $t\ge0$ and $u,v$ with $uv\mid \kappa+1+t$ and $u+v=3+4t$. Type C has shape $(uv,uwp,vwp)$, characterised by $-p\equiv4d^{2}\pmod{4dm-1}$. All three are Type~II in the sense of Definition~\ref{def:typeI_typeII} when $p$ is prime.

\begin{proposition}\label{prop:lopez_dictionary}
Write $c=3+4t$, so that $\kappa+1+t=K_c=(p+c)/4$. Then:
\begin{enumerate}[(a)]
    \item Lopez's Type A condition is Theorem~\ref{thm:3.3} restricted to $d_1=1$: a divisor $w=d_2\mid K_c$ with $1+w\equiv0\pmod c$.
    \item Lopez's Type B condition is condition (iii) of Theorem~\ref{thm:hyper_char}, i.e. exactly the hyperbolic family of Section~\ref{subsec:hyperbolic_surface} and the search space of Algorithm~\ref{alg:geom_sieve}.
    \item Consequently the family of Theorem~\ref{thm:3.3} contains both Types A and B, at every shift $c\equiv3\pmod4$ with $0<c<4n$. In particular, Lopez's Conjecture 1, that every prime has a Type A or a Type B solution, implies that the untwisted branch~\eqref{eq:untwisted} is never empty.
\end{enumerate}
\end{proposition}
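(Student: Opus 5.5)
The dictionary is a direct substitution, so the plan is to rewrite each of Lopez's conditions in the notation $K_c$ and compare it with the corresponding result already proved. Throughout, set $c=3+4t$ with $t\ge0$. Then $c\equiv3\pmod4$ and $c>0$, and since $p=4\kappa+1$ we get $K_c=(p+c)/4=(4\kappa+4+4t)/4=\kappa+1+t$. This identity carries all the bookkeeping. Conversely, every shift $c\equiv3\pmod 4$ with $c>0$ arises from a unique $t\ge0$, so ranging over $t$ and ranging over admissible shifts are the same thing.

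For (a), Lopez's Type A witness is a divisor $w\mid\kappa+1+t=K_c$ with $w\equiv-1\pmod{3+4t}$, which is the statement $1+w\equiv0\pmod c$. Put $d_1=1$, which trivially divides $K_c$, and $d_2=w$. Then $d_1+d_2\equiv0\pmod c$, and this is exactly the hypothesis of Theorem~\ref{thm:3.3} with $d_1=1$. The reverse direction reads the same correspondence backwards, so the two conditions coincide.

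For (b), Lopez's Type B condition asks for $u,v$ with $uv\mid K_c$ and $u+v=3+4t=c$. This is verbatim condition (iii) of Theorem~\ref{thm:hyper_char}, since there $c\equiv3\pmod4$ is the only constraint on the shift. Theorem~\ref{thm:hyper_char} identifies (iii) with (i)--(ii), and Corollary~\ref{cor:proper_subfamily} and Theorem~\ref{thm:4.11} identify these with the hyperbolic family searched by Algorithm~\ref{alg:geom_sieve}. One point needs checking: Lopez may require $u,v$ to be positive integers, possibly coprime, while (iii) only requires $u,v\ge1$. I would verify from~\cite{lopez2024} that the quantifiers match. If Lopez imposes coprimality, I would reduce to it: any common factor $g$ of $u$ and $v$ divides $u+v=c$ and also divides $K_c$, and $\gcd(c,K_c)=1$ by Section~\ref{sec:notation}. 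Hence $g=1$ automatically, and coprimality costs nothing.

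For (c), both characterisations land inside Theorem~\ref{thm:3.3}. Type A is the case $d_1=1$, $d_2=w$. Type B gives $u\mid K_c$ and $v\mid K_c$ separately, because $uv\mid K_c$, and $u+v=c\equiv0\pmod c$, so it is the case $d_1=u$, $d_2=v$ with modulator $a=1$. The range $0<c<4n$ of Theorem~\ref{thm:3.3} covers all shifts that actually occur: in Type B, Theorem~\ref{thm:gen_criterion} already gives $c\le n$, and in Type A, $w\le K_c$ gives $c\le 1+K_c<4n$.

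For the final clause, either witness produces divisors $d_1,d_2$ of $K_c$ with $d_1+d_2\equiv0\pmod c$. The remaining step is the coprimality demanded in~\eqref{eq:untwisted}, and this is the only step I expect to need care. In Type A it is trivial because $d_1=1$. In Type B it is the gcd argument above. Once coprimality holds, the congruence~\eqref{eq:untwisted} is satisfied at that shift, so Lopez's Conjecture 1 forces the untwisted branch to be nonempty.
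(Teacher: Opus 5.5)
Your proposal is correct and takes the paper's route: the same substitution $c=3+4t$, $K_c=\kappa+1+t$, read against Theorem~\ref{thm:3.3} for Type~A and against Theorem~\ref{thm:hyper_char}(iii) for Type~B. Your explicit check of coprimality for the final clause (trivial in Type~A, and via $\gcd(c,K_c)=1$ in Type~B) fills in what the paper's one-line argument passes over, since \eqref{eq:untwisted} demands coprime $d_1,d_2$. The Type~A range bound $c\le 1+K_c$ still has to be solved using $K_c=(n+c)/4$, which gives $c\le(n+4)/3$; only this places the shift in the range $0<c\le 2n$ where $\gcd(c,K_c)=1$ holds and the untwisted branch is defined, whereas $c<4n$ alone does not.
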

\begin{proof}
(a) is the substitution $c=3+4t$ in $w\equiv-1\pmod{3+4t}$; (b) is Theorem~\ref{thm:hyper_char}(iii) verbatim after the same substitution; (c) is immediate, both conditions being instances of ``$\exists\,d_1,d_2\mid K_c$ with $c\mid d_1+d_2$'', which is the hypothesis of Theorem~\ref{thm:3.3} on its whole range $0<c<4n$.
\end{proof}

The containment in (c) is a statement about the \emph{family} of Theorem~\ref{thm:3.3}, not about Algorithm~\ref{alg:1}. The latter scans only $c\le n-1$, under the standing hypothesis $c<n$ of Section~\ref{sec:notation} which the Euclidean tracking of Section~\ref{sec:3.5} requires, whereas admissible shifts run to $c\le2n$ by Proposition~\ref{prop:c_range} and Lopez's parameter $t=(c-3)/4$ is not bounded a priori. Algorithm~\ref{alg:1} therefore reaches a Type A or Type B solution only when the corresponding shift satisfies $c<n$, which is the second of the two limitations recorded in Section~\ref{sec:3.2bis}.

Proposition~\ref{prop:lopez_dictionary} means that Theorem~\ref{thm:4.9} is a reformulation of a characterisation already published by Lopez, and that Theorem~\ref{thm:3.3} generalises his Type A criterion by allowing $d_1>1$. We claim no priority on these points. What is added here is the delimitation of the hyperbolic family inside the Type~I/Type~II classification (Theorem~\ref{thm:hyper_char}, Corollary~\ref{cor:proper_subfamily}), the extension and structural explanation of the blind spot (Proposition~\ref{prop:nine_primes} and Corollary~\ref{cor:H_density}, priority for the list of nine belonging to Xu), and the analytic results of Section~\ref{sec:5}. Our computations are consistent with Lopez's reported data, which is a useful external check: he notes that $193$ and $2521$ possess no Type A solution but do possess Type B solutions, and that $23929$ and $83449$ possess no Type B solution; the latter two belong to $\mathcal{H}$, as they must, and the former two do not. Finally, Lopez~\cite[Cor.~3]{lopez2024} derives from Mordell's restriction a Legendre-symbol condition for his Type C solutions, namely that $-p$ be a quadratic residue modulo $4u-1$ for some $u$. Theorem~\ref{thm:algebraic_obstruction} is of the same nature but is attached to the shift $c$ rather than to a modulus $4dm-1$, and Theorem~\ref{thm:twisted_obstruction} identifies which branch it constrains; the two statements are complementary rather than equivalent.

\section{Numerical verification} \label{sec:7}

All computations reported below were carried out in exact integer arithmetic; they are elementary and reproducible.

\subsection*{(1) The parametric family}
Example~\ref{ex:3.4} was verified exactly: for $n=20353$ (prime, $\equiv1\pmod{24}$), $c=23$, $K=5094=2\cdot3^{2}\cdot283$, $d_1=849$, $d_2=2$, $a=37$, $\sigma=222$, $c\sigma-K=12$, and
$$ \frac{1}{5094}+\frac{1}{222\cdot20353}+\frac{1}{20353\cdot94239}=\frac{4}{20353}.$$

\subsection*{(2) Algorithm~\ref{alg:1}}
Over the first $400$ primes $n\equiv1\pmod{24}$ the algorithm returns a valid decomposition in every case, and the returned triple was checked to satisfy~\eqref{eq:erdos_straus} exactly. The shift returned is always small: on $600$ primes the distribution of $c$ is $3$ ($282$ times), $7$ ($255$), $11$ ($41$), $23$ ($9$), $19$ ($7$), $15$ ($5$), $31$ ($1$). Over the first $800$ primes, i.e. for $n\le65\,929$, the smallest shift for which the family of Theorem~\ref{thm:3.3} is non-empty never exceeded $31$. Extended to all $3202$ such primes below $3\cdot10^{5}$, six require a larger shift, the record being $c=63$ for $n=87\,481$ (the others being $c=35$ for $n=67\,369$, $c=59$ for $n=118\,801$ and $c=39$ for $n=202\,129$, $231\,961$, $246\,241$); in each of these six cases an exhaustive enumeration of the divisors of $K_c$ confirms that the listed shift is the true minimum, so the accelerator of Section~\ref{sec:3.8} is not responsible.

The measured iteration count is $\Theta(n)$ in practice, far below the worst case of Corollary~\ref{cor:3.19}: the maximum over samples of $60$ primes per window is of order $0.25n$ at $n\approx10^{3}$ and $10^{5}$, with averages an order of magnitude smaller ($0.026n$, $0.011n$, $0.020n$ at $n\approx10^{3},10^{4},10^{5}$). These maxima are governed by the rank of the first successful shift and are strongly sample-dependent; we read no trend into them. The dominant cost is the inner scan of a shift that will eventually fail, in accordance with Theorem~\ref{thm:reject_cost}: for $n=100129$ the measured numbers of iterations before rejection are $1670$, $6439$ and $8697$ at $c=3,7,19$, against the predicted $\lceil (n+3)/10\rceil-\sigma_0(c)=1669$, $6437$ and $8696$; the errors are $1$, $2$ and $1$, far below the guarantee of Theorem~\ref{thm:reject_cost}. In an actual run this $n$ succeeds at $c=11$, so only $c=3$ and $c=7$ are rejected; the figure at $c=19$ comes from running the inner loop in isolation. The identity $m=\gcd(K,V)$ of Lemma~\ref{lem:m_gcd} was verified at every step of the first $200$ values of $\delta$ for each of these shifts.

\subsection*{(3) Search depth of Algorithm~\ref{alg:geom_sieve}}
For the $26\,983$ primes $n\equiv1\pmod{24}$ below $3\cdot10^{6}$, the minimal admissible $u$ is distributed as follows: $u=1$: $50.4\%$; $u=2$: $30.7\%$; $u=3$: $9.3\%$; $u=4$: $3.7\%$; $u\ge5$: the remainder. The mean is close to $2$ and decreases with $n$ over the range as a whole, though not window by window (samples of $3000$ primes give means $2.31$, $2.62$, $2.13$, $2.02$, $1.86$, $1.68$ at $n\approx10^{3},\dots,10^{8}$; an independent exhaustive computation over all $158\,595$ primes below $2\times10^{7}$ gives window means $2.20$, $2.21$, $2.03$, $1.87$, $1.81$ for $\min(u,20)$, the quantity the fits of Section~\ref{sec:curve_model} use, against $2.20$, $2.29$, $2.08$, $1.89$, $1.81$ for the untruncated minimum), reflecting the growth of the number of divisors. The observed maxima are erratic ($6$, $26$, $35$, $110$, $52$, $26$ on the same samples), as expected since they are governed by the anatomy of $n+u$ rather than by an average; the true maximum over all primes below $2\times10^{7}$ is $u=410$, attained at $n=4\,160\,641$ (with $s=3279$, $\alpha=2$, $v=1269$), a prime which is therefore \emph{not} in $\mathcal{H}$ but comes closer to it than any other in that range.

\subsection*{(4) The exceptional set $\mathcal{H}$ of Proposition~\ref{prop:nine_primes}}
By Theorem~\ref{thm:hyper_char} the condition $n=4\alpha uv-u-v$ can be tested exhaustively, the search over $u$ being bounded by $\lceil(N+1)/3\rceil$. Marking every integer of $[1,N]$ of this form and intersecting the complement with the primes $\equiv1\pmod{24}$ yields, for $N=10^{6}$, exactly the nine values
$$409,\quad577,\quad5569,\quad9601,\quad23929,\quad83449,\quad102001,\quad329617,\quad712321,$$
out of $9732$ primes tested; the proportion by window is $4/143$ on $[1,10^{4})$, $2/1038$ on $[10^{4},10^{5})$, $2/3956$ on $[10^{5},5\cdot10^{5})$ and $1/4595$ on $[5\cdot10^{5},10^{6})$. The computation was then carried to $N=5\times10^{7}$ in interpreted arithmetic and to $N=5\times10^{9}$ in a separate compiled implementation, at no point truncating the range of $u$; the two agree on their common range. Five further values appear and no more:
$$1\,134\,241,\quad 1\,724\,209,\quad 1\,726\,201,\quad 5\,212\,561,\quad 8\,813\,281,$$
the window counts being $3/8775$ on $[10^{6},2\cdot10^{6})$, $0/24939$ on $[2\cdot10^{6},5\cdot10^{6})$, $2/39441$ on $[5\cdot10^{6},10^{7})$ and $0$ on all of $[10^{7},5\cdot10^{9}]$. The nine values below $7.2\times10^{5}$ agree with Xu's list of wild primes~\cite{xu2026}.

For each of the first nine the minimal modulator $a=(u+v)/c$ over all admissible $(c,u,v)$ with $uv\mid K_c$, $c\mid u+v$ equals $2$, except for $102001$ where it equals $3$; the corresponding witnesses are $(c,u,v)=(7,1,13)$, $(3,1,5)$, $(71,1,141)$, $(19,1,37)$, $(11,1,21)$, $(107,3,211)$, $(79,5,232)$, $(3,1,5)$, $(35,1,69)$ respectively. By contrast, for ordinary primes the minimal modulator is $1$. Algorithm~\ref{alg:1} resolves the nine at shifts $c=7,3,7,19,7,11,7,3,23$, with
$$9,\quad 2,\quad 100,\quad 2328,\quad 401,\quad 6761,\quad 1708,\quad 2,\quad 233\,933$$
iterations respectively; each returned triple was verified exactly. Here an iteration means one pass through the \textbf{while} loop, including the passes in which a guard forces a skip; counting only the passes that reach the acceptance test lowers three of the nine figures by one ($99$, $2327$, $1707$). These counts are those of the algorithm as printed, i.e. with the termination rule $a=\sigma/m$ of Section~\ref{sec:3.8}; the naive rule $a=\sigma d_2/K$ would give $26$, $2$, $334$, $1681$, $1406$, $8617$, $5993$, $2$, $145\,649$ instead. The corrected figures are the ones consistent with Theorem~\ref{thm:reject_cost}: for $n=712321$, which succeeds at $c=23$, summing the predicted costs of the rejected shifts $c\in\{3,7,11,15,19\}$ gives $n\big(\tfrac5{10}-\tfrac14(\tfrac13+\tfrac17+\tfrac1{11}+\tfrac1{15}+\tfrac1{19})\big)\approx233\,927$, against the $233\,933$ measured.

\subsection*{(5) The twisted branch}
On the first $600$ primes $n\equiv1\pmod{24}$ and shifts $c<200$, there are $594$ primes admitting at least one shift at which the untwisted branch~\eqref{eq:untwisted} fails while the twisted branch~\eqref{eq:twisted} succeeds; Example~\ref{ex:twisted} is one of them, chosen so that $\leg np=-1$ at $p=31\mid c$, illustrating Theorem~\ref{thm:twisted_obstruction}(b). All produced triples were verified exactly.

\subsection*{(6) The two-parameter criterion}
The sufficiency direction of Theorem~\ref{thm:gen_criterion} was tested exhaustively as follows: for every prime $p$ with $5\le p<4000$ and $4\mid p+1$, every coprime pair $(u,a)\in[1,40]^{2}$ with $4ua\mid p+1$, and the smallest prime $n\equiv1\pmod{24}$, $n>p$, in the class $n\equiv-ua^{-1}\pmod p$ (obtained by the Chinese remainder theorem modulo $24p$), we reconstructed $v=(an+u)/p$, $c=(u+v)/a$ and the triple furnished by Theorem~\ref{thm:3.3} with $d_1=u$, $d_2=v$. This produced $4661$ instances, all of which yielded an exact identity $1/x+1/y+1/z=4/n$ with $x,y,z$ positive integers; there was no failure, and in particular $a\mid u+v$ and $4uv\mid n+c$ held in every case. The parameters of Proposition~\ref{prop:nine_resolved} were obtained by enumerating the divisors of $an+u$ for $u,a\le 5$, and each of the fourteen resulting decompositions was verified exactly.

\subsection*{(7) The constant $\kappa_0$}
The ratio $\big(\sum_{u\le J}1/\varphi(4u)\big)/\log J$ was computed for $J=10^{2},\dots,10^{6}$, giving $0.67703$, $0.66667$, $0.66193$, $0.65911$, $0.65724$, consistent with the limit $\kappa_0=0.647865\ldots$ of Lemma~\ref{lem:kappa0} together with an $\mathcal{O}(1/\log J)$ relative error. The trivial bound $\varphi(4u)\le4u$ would give only $\frac14\log J$, i.e. a constant $2.6$ times too small.

\subsection*{(8) The constant $\kappa_1$}
The ratio $W(J)/(\log J)^{2}$ of Lemma~\ref{lem:kappa1} was computed for $J=10^{2},3\cdot10^{2},10^{3},3\cdot10^{3}$, giving $0.62795$, $0.59503$, $0.57114$, $0.55559$. The decay is of the same slow $\mathcal{O}(1/\log J)$ type as for $\kappa_0$. A least-squares fit of the four values against $\kappa_1^{\ast}+B_0/\log J$ gives $\kappa_1^{\ast}\approx0.4576$, $B_0\approx0.784$, with residuals below $10^{-4}$, so the limit appears to be near $0.46$ rather than $0.5$, against the provable lower bound $3/(2\pi^{2})=0.15198$ used in the text. Only the \emph{lower} bound for $W(J)$ enters Section~\ref{sec:5.2}, so proving $W(J)\ge(0.45+o(1))(\log J)^{2}$, by the method of Lemma~\ref{lem:kappa0} extended to two variables, would improve the exponent of Theorem~\ref{thm:metric_theorem} by a factor $3.0$ and that of Corollary~\ref{cor:optimal_tradeoff} by a factor $(0.46/0.152)^{1/3}\approx1.44$, i.e. from $0.065$ to about $0.093$.

\subsection*{(9) The subgroup reformulation}
Over $18\,507$ primes $n\equiv1\pmod{24}$ below $2\times10^{6}$, the shift-by-shift conditional success rate (the fraction of primes surviving to shift $c$, in increasing trial order, for which shift $c$ succeeds) was computed for $c\equiv3\pmod4$, $c\le31$:

\begin{center}
\begin{tabular}{lcccccccc}
\toprule
$c$ & $3$ & $7$ & $11$ & $15$ & $19$ & $23$ & $27$ & $31$\\
\midrule
survivors & $18507$ & $8409$ & $1781$ & $724$ & $491$ & $285$ & $103$ & $89$\\
success rate & $0.546$ & $0.788$ & $0.593$ & $\mathbf{0.322}$ & $0.420$ & $0.639$ & $\mathbf{0.136}$ & $0.573$\\
\bottomrule
\end{tabular}
\end{center}

Two shifts are out of rank, and only one of them is explained by $\nu(c)$: $c=15$, the only composite non-prime-power in the list ($\nu(15)=2$), falls below both of its prime neighbours $c=11$ and $c=19$, as Proposition~\ref{prop:nu_c} would predict; but $c=27$, a prime power with $\nu(27)=1$, falls further still. The natural reading of the second anomaly is the size of the modulus: the congruence $d_1+d_2\equiv0\pmod{27}$ is much rarer than modulo $3$, the $3$-adic constraint of Theorem~\ref{thm:algebraic_obstruction} being repeated. An unconditional (non-sequential) success-probability comparison across shifts was also computed for each $c\le79$ on a separate sample of $6000$ primes; it is confounded by the size of $\varphi(c)$ and by $c$ itself, larger moduli mechanically reducing the chance of a coincidental divisor match regardless of $\nu(c)$. This test is therefore recorded as \emph{not} separating the $\nu(c)$ effect from the modulus effect, a limitation of the test, not a refutation of Proposition~\ref{prop:nu_c}, which is proved unconditionally.

\subsection*{(10) The curve $J\mapsto|E_1(N;J)|$}
Recomputing the statistics of item (3) on the same $18\,507$ primes reproduces them closely ($u=1$: $49.7\%$, $u=2$: $30.8\%$, $u=3$: $9.5\%$, $u=4$: $3.8\%$) and turns up the three members of $\mathcal{H}$ between $10^{6}$ and $2\times10^{6}$ recorded in item (4). The pooled exponential and power-law fits quoted in Section~\ref{sec:curve_model} were obtained from this dataset.

\subsection*{(11) The binned test of Section~\ref{sec:curve_model}}
The computation of item (10) was extended to all $158\,595$ primes $n\equiv1\pmod{24}$ below $2\times10^{7}$, recording for each the minimal admissible $u$, hence the whole curve $J\mapsto|E_1(N;J)|$ restricted to any window of $n$. Six windows were used, $[10^{3},10^{4})$ through $[10^{7},2\cdot10^{7})$, and within each the fraction of primes with no admissible $u\le J$ was fitted against $J^{-\beta}$ over $J=1,\dots,20$ by least squares in $\log$--$\log$ coordinates. The resulting $\beta$ are those of Table~\ref{tab:binned_test}, with $R^{2}$ between $0.946$ and $0.994$; regressing $\beta$ on $\log n$, each window being represented by the logarithm of its \emph{geometric midpoint} (so that $\log n$ runs from $8.06$ to $16.47$), gives $\beta=0.1433\log n+0.043$ with $R^{2}=0.967$. The convention matters at the level of precision quoted: representing each window by the mean of $\log n$ over its primes instead returns $\beta=0.150\log n-0.079$. Repeating the pooled fit on the full $2\times10^{7}$ sample returns $\beta=2.22$, i.e. the pooled exponent tracks the median window rather than any intrinsic constant, which is the behaviour the model predicts. The same dataset supplies the slopes quoted in Section~\ref{sec:fixed_depth_sub}: regressing $\log(|E_1(N;J)|/\pi(N))$ on $\log\log n$ across the six windows gives $-0.39$, $-0.50$, $-0.84$, $-1.24$, $-1.51$, $-1.59$ for $J=1,\dots,6$.

\subsection*{(12) The blind set of Theorem~\ref{thm:blind_primes}}
\emph{The shift $c=7$.} Counting directly, by factoring $K_7=(n+7)/4$ for every prime $n\equiv1\pmod{24}$ up to $10^{7}$ and retaining those all of whose prime factors are quadratic residues modulo $7$ (i.e. $\equiv1,2,4\pmod 7$):

\begin{center}
\begin{tabular}{lcccc}
\toprule
$N$ & $10^{4}$ & $10^{5}$ & $10^{6}$ & $10^{7}$\\
\midrule
blind at $c=7$ & $39$ & $308$ & $2\,382$ & $18\,732$\\
ratio to $N(\log N)^{-3/2}$ & $0.109$ & $0.120$ & $0.122$ & $0.121$\\
\midrule
of the form $16f(U,V)-7$ & $21$ & $156$ & $1\,234$ & $9\,590$\\
ratio to $N(\log N)^{-3/2}$ & $0.059$ & $0.061$ & $0.063$ & $0.062$\\
\bottomrule
\end{tabular}
\end{center}

The second pair of rows counts the sub-family actually produced by the proof, namely the $n=16f(U,V)-7$ with $f=U^{2}+UV+2V^{2}$ and $\gcd(U,V)=1$; both ratios are flat over three decades, consistent with the order $\asymp N(\log N)^{-3/2}$ and with the fact that the lower bound is extracted from a proper sub-family. Every one of the $9\,590$ values below $10^{7}$ was checked to satisfy, without exception, the three properties used in the proof: $n\equiv1\pmod{24}$, every prime factor of $K_7$ a residue modulo $7$, and $\leg n7=+1$; and for the first $200$ of them it was verified by exhaustive enumeration of the divisors of $K_7$ that \emph{neither} branch of Corollary~\ref{cor:two_branches} succeeds at $c=7$.

\emph{The shift $c=3$ (integers).} For $c=3$ one has $p=3$ and class number $h(-12)=1$, so an odd $K_3$ coprime to $3$ is primitively represented by $U^{2}+3V^{2}$ if and only if every one of its prime factors is $\equiv1\pmod 3$; the primes of $\mathcal{B}(N)$ are then exactly the primes blind at $c=3$. Counting them directly up to $10^{7}$:

\begin{center}
\begin{tabular}{lcccc}
\toprule
$N$ & $10^{4}$ & $10^{5}$ & $10^{6}$ & $10^{7}$\\
\midrule
$\#\{n\in\mathcal{B}(N):n\ \text{prime}\}$ & $84$ & $606$ & $4\,540$ & $35\,750$\\
$N(\log N)^{-3/2}$ & $357.8$ & $2\,559.9$ & $19\,473.8$ & $154\,535.9$\\
ratio & $0.2348$ & $0.2367$ & $0.2331$ & $0.2313$\\
\bottomrule
\end{tabular}
\end{center}

The ratio is flat to within $1\%$ over four decades. This shift is not the one used in Theorem~\ref{thm:blind_primes}, and the reason is instructive: at $c=3$ the primes primitively represented split into those with $K_3$ odd ($n\equiv1\pmod 8$, blind: $35\,750$ of them below $10^{7}$) and those with $K_3$ even ($n\equiv5\pmod 8$, \emph{not} blind since $\leg23=-1$: a further $9\,540$), and no congruence selection modulo $8$ is available in~\cite[Thm.~1.1(2)]{fuchs2025}, whose modulus must be coprime to $2\Delta B$. The choice $c=7$ removes the difficulty at the source.

\section{Conclusion and open questions} \label{sec:conclusion}

Four questions are left open by the analysis above, in decreasing order of tractability.

\emph{The constant in Lemma~\ref{lem:kappa1}.} The bound $W(J)\ge(\kappa_1+o(1))(\log J)^{2}$ with $\kappa_1=3/(2\pi^{2})$ comes from the crude estimate $\varphi(m)\le m$; the true limit of $W(J)/(\log J)^{2}$ appears numerically to be close to $0.46$ (Section~\ref{sec:7}, item (8)). Proving $W(J)\ge(0.45+o(1))(\log J)^{2}$, a finite computation, by the method of Lemma~\ref{lem:kappa0} extended to two variables, would improve every constant of Section~\ref{sec:5.2}, taking the exponent of Corollary~\ref{cor:optimal_tradeoff} from $0.065$ to about $0.093$. Since the saturation argument makes only the lower bound relevant, no matching upper bound is needed.

\emph{The $\tau_3$-type bound for arbitrary witness systems.} Proposition~\ref{prop:optimality_23} pins the exponent $2/3$ for the two witness systems exhibited here. Whether every system attached to the Type~I/Type~II dichotomy satisfies $\sum_{p\le Z}\varrho(p)/p\ll(\log Z)^{2}$, which is what a genuine impossibility statement for this method would require, is not proved.

\emph{A sub-quadratic factorization-free interval procedure.} Theorem~\ref{thm:batch_complete} locates the whole quadratic cost of a complete decision on $[N,2N]$ in the extraction of the divisors of $4u^{2}d+1$ in the Type~I pass. Whether a sub-quadratic factorization-free procedure exists by some other route is left open.

\emph{Whether $\bigcap_J E_2(N;J)$ is empty.} By Theorem~\ref{thm:gen_criterion} this is exactly the question of whether every prime $n\equiv1\pmod{24}$ has a Type~II solution. Theorem~\ref{thm:fixed_depth} shows that the intersection has counting function $\ll_A N(\log N)^{-A}$ for every $A$, and Corollary~\ref{cor:H_density} gives the same for the one-parameter analogue $\mathcal{H}$, whose fourteen known members all disappear at depth $5$ once the modulator is free; but emptiness is a different matter, and the general conjecture remains open.

\end{document}